\titlespacing{\paragraph}{0em}{0em}{0.5em}
\titlespacing{\subparagraph}{0em}{0em}{0.5em}
\theoremstyle{plain}
\newtheorem{theorem}{Theorem}[section]
\newtheorem{proposition}[theorem]{Proposition}
\newtheorem{lemma}[theorem]{Lemma}
\newtheorem*{theorem*}{Theorem}
\newtheorem*{ThmA}{Theorem A}
\newtheorem*{ThmB}{Theorem B}
\newtheorem*{ThmC}{Theorem C}
\newtheorem*{ThmD}{Theorem D}
\newtheorem*{ThmE}{Theorem E}
\theoremstyle{definition}
\newtheorem{definition}[theorem]{Definition}
\newtheorem*{notation}{Notation}
\theoremstyle{remark}
\newtheorem*{remark}{Remark}
\renewcommand{\Gamma}{\varGamma}
\renewcommand{\epsilon}{\varepsilon}
\renewcommand{\bar}{\overline}
\renewcommand{\hat}{\widehat}
\renewcommand{\leq}{\leqslant}
\renewcommand{\geq}{\geqslant}
\newcommand{\normaleq}{\trianglelefteq}
\newcommand{\divides}{\bigm|}
\newcommand{\fs}{\mathcal{F}}
\newcommand{\N}{\mathbb{N}}
\newcommand{\SL}{\mathrm{SL}} 
\newcommand{\SU}{\mathrm{SU}} 
\newcommand{\GF}{\mathrm{GF}} 
\newcommand{\syl}{\mathrm{Syl}}
\newcommand{\GL}{\mathrm{GL}}
\newcommand{\Sp}{\mathrm{Sp}}
\newcommand{\PGL}{\mathrm{PGL}}
\newcommand{\PSL}{\mathrm{PSL}}
\newcommand{\PSU}{\mathrm{PSU}}
\newcommand{\Sz}{\mathrm{Sz}}
\newcommand{\Sym}{\mathrm{Sym}} 
\newcommand{\Alt}{\mathrm{Alt}} 
\newcommand{\Dih}{\mathrm{Dih}}
\newcommand{\Aut}{\mathrm{Aut}}
\newcommand{\Out}{\mathrm{Out}}
\newcommand{\Inn}{\mathrm{Inn}}
\newcommand{\Hom}{\mathrm{Hom}}
\newcommand{\sbt}{\,\begin{picture}(-1,1)(-1,-4)\circle*{2}\end{picture}\ }
\def \wt {\widetilde}
\begin{document}

\title[Exotic Fusion Systems Related to Sporadic Simple Groups]{Exotic Fusion Systems Related to Sporadic Simple Groups (version with code)}
\author{Martin van Beek}
\thanks{Part of this work contributed to the author's PhD thesis at the University of Birmingham under the supervision of Prof. Chris Parker. The author gratefully acknowledges the support received from the EPSRC (EP/N509590/1) during this period.}

\begin{abstract}
We describe several exotic fusion systems related to the sporadic simple groups at odd primes. More generally, we classify saturated fusion systems supported on Sylow $3$-subgroups of the Conway group $\mathrm{Co}_1$ and the Thompson group $\mathrm{F}_3$, and a Sylow $5$-subgroup of the Monster $\mathrm{M}$, as well as a particular maximal subgroup of the latter two $p$-groups. This work is supported by computations in MAGMA.
\end{abstract}

\maketitle

\section{Introduction}
A fusion system over a finite $p$-group $S$ is a category satisfying certain conditions modeled on properties of finite groups and the internal actions associated to their Sylow $p$-subgroups. The typical example of a fusion system arises just like this: as the \emph{$p$-fusion category} of a finite group. In this case, certain additional conditions are satisfied which may be abstracted as additional axioms, defining the class of \emph{saturated} fusion systems. However, not all saturated fusion systems can be realized as the $p$-fusion category of some finite group, giving rise to \emph{exotic} fusion systems.  

Over the course of this work, we completely classify all saturated fusion systems supported on Sylow $3$-subgroups of the Conway group $\mathrm{Co}_1$ and the Thompson group $\mathrm{F}_3$, and a Sylow $5$-subgroup of the Monster $\mathrm{M}$. In addition, we also classify saturated fusion systems supported on a particular maximal subgroup of a Sylow $3$-subgroup of $\mathrm{F}_3$, and of a Sylow $5$-subgroup of $\mathrm{M}$. Of particular interest in this determination is the occurrence of several exotic fusion systems supported on these $p$-groups. In total we uncover sixteen new exotic systems up to isomorphism, seven of which are simple, giving a rich source of reasonably complicated examples. 

We have not yet considered the implications of these new exotic fusion systems to any of the areas in which fusion systems have application (see \cite{OliAsch} for a survey), and have studied them purely for their interesting structural properties, and for their appearance in other ongoing classification programs concerning fusion systems. Since exotic fusion systems themselves are still poorly understood, at this moment a considerable amount of attention is just focused on determining new families of examples with the ultimate goal of discerning exotic fusion systems from those occurring as $p$-fusion categories of finite groups, without having to rely on heavy machinery from finite group theory.

Our first main result is the following, and is proved via \cref{Co1Essens}, \cref{Sp63} and \cref{Co1}:

\begin{ThmA}\hypertarget{ThmA}{}
Let $\fs$ be a saturated fusion system on a $3$-group $S$ with $S$ isomorphic to a Sylow $3$-subgroup of $\mathrm{Co}_1$. If $O_3(\fs)=\{1\}$ then $\fs$ is isomorphic to the $3$-fusion category of $\mathrm{Co}_1$, $\Sp_6(3)$ or $\Aut(\Sp_6(3))$.
\end{ThmA}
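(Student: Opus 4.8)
The plan is to determine $\fs$ by a local analysis organized around Alperin's fusion theorem and the amalgam method for fusion systems. Since $O_3(\fs)=\{1\}$, $\fs$ cannot equal $\fs_S(S)$ (otherwise $S\normaleq\fs$, forcing $O_3(\fs)=S$), so $\fs$ has at least one essential subgroup. Alperin's fusion theorem presents $\fs$ as generated by $\Aut_\fs(S)$ together with the automorphism groups $\Aut_\fs(E)$ as $E$ ranges over the $\fs$-essential subgroups; since each such $E$ is $\fs$-centric, the local subsystem $N_\fs(E)$ is constrained and hence is realized by a finite group $L_E$ with $O_3(L_E)=E$ and $N_S(E)\in\syl_3(L_E)$, and likewise $N_\fs(S)$ is realized by a group $L_S$ with $O_3(L_S)=S$. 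Thus the problem reduces to pinning down, up to $\fs$-conjugacy: (i) which subgroups $E\leq S$ are $\fs$-essential; (ii) in each case the outer automizer $\Out_\fs(E)=\Aut_\fs(E)/\Inn(E)$, together with its distinguished Sylow $3$-subgroup coming from $N_S(E)$ and its action on $E$ (and, when $E$ is non-abelian, on $Z(E)$ and $E/\Phi(E)$); and (iii) the group $\Out_\fs(S)$.

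For (i)--(ii) I would first record the structure of $S$: that $|S|=3^9$, its nilpotency class, and the lattice of its characteristic subgroups --- a normal extraspecial-type subgroup, the maximal elementary abelian subgroups, the Thompson subgroup, and the unipotent radicals of the parabolics of the $C_3(3)$-geometry underlying the $3$-fusion of $\Sp_6(3)$ and of $\mathrm{Co}_1$. Any $\fs$-essential $E$ is fully normalized, $\fs$-centric and $\fs$-radical, which already confines $E$ to a short explicit list of subgroups of $S$; in particular $O_3(\Out_\fs(E))=\{1\}$. For each candidate $E$ one then imposes that $\Out_\fs(E)$ contain a strongly $3$-embedded subgroup. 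By the classification of finite groups with this property, $\Out_\fs(E)$ is severely constrained: modulo its largest normal $3'$-subgroup it either has a cyclic Sylow $3$-subgroup (as for automizers like $\SL_2(3)$ or $\GL_2(3)$) or involves one of $\PSL_2(3^a)$ with $a\geq 2$, $\PSU_3(3^a)$, $\PSL_2(8)$, or a small exceptional group such as $M_{11}$; in each case only a short list of faithful $\GF(3)$-modules can occur as the relevant section of $E$. Matching these possibilities against the actual subgroup structure of $S$ leaves finitely many admissible configurations of essential subgroups, each with a short list of admissible automizers.

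With the admissible configurations in hand, the argument divides --- matching \cref{Sp63} and \cref{Co1} --- according to the isomorphism types of the essential automizers and of $\Out_\fs(S)$. Several configurations are eliminated because the prescribed amalgam is not realized by any saturated fusion system: typically some $\fs$-conjugacy class then has no receptive representative, or $\Out_\fs(S)$ cannot normalize the given essential data, or the only system the data could generate would have $O_3(\fs)\neq\{1\}$. The surviving configurations I expect to be exactly three: two of them give the essential amalgam of $\Sp_6(3)$ and differ only in whether $\Out_\fs(S)$ realizes the diagonal outer automorphism of $\Sp_6(3)$ --- yielding $\fs_S(\Sp_6(3))$ or $\fs_S(\Aut(\Sp_6(3)))$ --- while the third features essential data not occurring in the $\Sp_6(3)$-geometry (a larger essential automizer, or an additional $\fs$-class of essential subgroups), which forces $\fs$ to contain the essential amalgam of $\mathrm{Co}_1$ and hence $\fs\cong\fs_S(\mathrm{Co}_1)$ by the uniqueness of the saturated fusion system generated by that amalgam on $S$. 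Since all three target systems are $3$-fusion categories of finite groups, this also confirms that no new exotic system lives on this $S$.

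The main obstacle is step (i)--(ii): because $|S|=3^9$, enumerating the $\fs$-centric-radical subgroups, computing the relevant sections of $\Aut(S)$ and their actions on these subgroups and on their center/Frattini quotients, and checking the strongly-$3$-embedded condition account for the bulk of the work and are where the MAGMA computations are needed. The subsequent realizability and saturation bookkeeping, and the final identifications with $\Sp_6(3)$, $\Aut(\Sp_6(3))$ and $\mathrm{Co}_1$, are comparatively routine, but still demand care to ensure that no stray combination of admissible automizers assembles into a fourth saturated fusion system on $S$.
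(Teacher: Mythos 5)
Your proposal correctly mirrors the paper's strategy: use MAGMA to confine the possible essentials to a short list, constrain the automizers through the strongly $3$-embedded condition applied to $\Out_\fs(E)$, show the under-generated configurations have $O_3(\fs)\neq\{1\}$, and then reconstruct the surviving configurations uniquely. One detail worth knowing: for the $\Sp_6(3)$ identification the paper does not argue directly from amalgam uniqueness but instead invokes Onofrei's parabolic-system theorem to recognize a group of Lie type in characteristic $3$, while the $\mathrm{Co}_1$ case requires a nontrivial argument that the lifted automizer data determines a unique overgroup isomorphic to $2.\mathrm{M}_{12}$ inside $\GL_6(3)$; both are less ``routine'' than your sketch suggests.
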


We point out that the $3$-fusion system of $\mathrm{Co}_1$ has been identified by work of Oliver \cite[Theorem A]{BobTodd} but from a different starting point than what is considered in this paper. We remark that the proofs of \cite[Theorem A]{BobTodd} and \hyperlink{ThmA}{Theorem A} do not depend on each other, however they both reduce to a situation where one has strong information about the local actions in the fusion system. At this point, either paper could use the other's result but yet again, different (and complementary) approaches are taken to prove the uniqueness of the fusion system of $\mathrm{Co}_1$.

We now move on to the construction of some exotic fusion systems. We use the same methodology to prove \hypertarget{ThmB}{Theorem B} and \hypertarget{ThmC}{Theorem C}, although the arguments vary slightly depending on the structure of the underlying $p$-group $S$. The author first encountered the systems in \hypertarget{ThmB}{Theorem B} while classifying certain fusion systems which contain only two essential subgroups \cite{MainThm}. These systems arise as a fusion theoretic generalization of weak BN-pairs of rank $2$, a collection of amalgams classified by work of Delgado and Stellmacher \cite{Greenbook}. In \hypertarget{ThmB}{Theorem B} one of the exotic systems we uncover arises as a fusion system ``completion" of an amalgam of $\mathrm{F}_3$-type, as defined in \cite{Greenbook}. In the case of the group $\mathrm{F}_3$, the corresponding amalgam generates the entire group. This is in contrast to the fusion system case, where the $3$-fusion category of $\mathrm{F}_3$ requires another set of $3$-local actions, corresponding to the maximal subgroups of $\mathrm{F}_3$ of shape $3^5:2.\Sym(6)$, to be properly generated.

\begin{ThmB}\hypertarget{ThmB}{}
Let $\fs$ be a saturated fusion system on a $3$-group $S$ with $S$ isomorphic to a Sylow $3$-subgroup of $\mathrm{F}_3$. If $O_3(\fs)=\{1\}$ then either $\fs$ is isomorphic to the $3$-fusion category of $\mathrm{F}_3$; or $\fs$ is isomorphic to one of two exotic examples. In all cases, $\fs$ is simple.
\end{ThmB}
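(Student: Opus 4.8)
The plan is to mirror the strategy used for Theorem A, specialised to the case $S\in\syl_3(\mathrm{F}_3)$ (a $3$-group of order $3^{10}$): first classify the $\fs$-essential subgroups of $S$ together with their automizers, then assemble the finitely many fusion systems these local data can generate, and finally decide saturation, exoticity and simplicity for each. I would begin by recording the structure of $S$ that the argument needs — its centre, Frattini quotient, chief series, the characteristic subgroups, and in particular the short list of subgroups that can be $\fs$-essential. Since an essential subgroup $E$ is $\fs$-centric (hence self-centralising in $S$) and $\Out_\fs(E)$ contains a strongly $3$-embedded subgroup with $\Aut_S(E)\in\syl_3(\Aut_\fs(E))$, the known list of groups with a strongly $3$-embedded subgroup, together with the requirement that $E/\Phi(E)$ and the noncentral chief factors of $E$ support a faithful such action, should cut the candidates down to a handful. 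I expect these to be exactly the subgroups of $S$ visible inside the $3$-local subgroups of $\mathrm{F}_3$, namely those attached to the rank-$2$ amalgam of $\mathrm{F}_3$-type in the sense of \cite{Greenbook} and the one lying in the maximal subgroup of shape $3^5:2.\Sym(6)$, with the $3'$-part $O^{3'}(\Aut_\fs(E))$ forced into a short list of groups of the form $\SL_2(3^a)$ (possibly extended by a field or graph automorphism) by an analysis of quadratic and cubic action on $E$ and of the action on $N_S(E)/E$.

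Next I would invoke Alperin's fusion theorem, so that $\fs$ is generated by $\Aut_\fs(S)$ together with the $\Aut_\fs(E)$ as $E$ runs over the $\fs$-essential subgroups, and exploit the hypothesis $O_3(\fs)=\{1\}$, which forbids any nontrivial subgroup of $S$ from being normalised by all of these local subgroups simultaneously; this immediately rules out configurations with a single essential class and constrains which combinations of essentials may coexist. When exactly two essential classes occur, the corresponding pair of $3$-local subgroups forms a weak BN-pair (amalgam) of rank $2$, so the Delgado--Stellmacher classification \cite{Greenbook} identifies it — here the $\mathrm{F}_3$-type amalgam — and one reads off the small number of fusion systems it supports; when three or more essential classes occur, the extra $3^5:2.\Sym(6)$-type action is forced and a comparison with the $3$-fusion category of $\mathrm{F}_3$ identifies $\fs$ with $\fs_{\mathrm{F}_3}(S)$. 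The expected output is $\fs_{\mathrm{F}_3}(S)$ together with exactly two further abstract candidates, distinguished by the automizers of their essential subgroups. Establishing that these two new candidates are genuinely saturated is where the MAGMA computations come in: one builds the fusion system generated by the chosen automorphisms on $S$ and checks the saturation axioms directly (equivalently, verifies that the chosen family is a conjugation family consisting of fully automised, receptive subgroups).

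For exoticity, suppose one of the two new systems equals $\fs_G(S)$ for a finite group $G$. Passing to $G/O_{3'}(G)$ and to a minimal such $G$, a standard reduction forces $G$ to be almost simple with $S\in\syl_3(G)$; I would then use the Classification of Finite Simple Groups to enumerate the finite almost simple groups whose Sylow $3$-subgroup is isomorphic to $S$ — a very short list, centred on $\mathrm{F}_3$ — and check in each case that the induced fusion at the essential subgroups fails to match the constructed pattern, a contradiction. Simplicity of all three systems then follows by computing the focal and hyperfocal subgroups: together with $\Aut_\fs(S)$ these give $O^{3}(\fs)=O^{3'}(\fs)=\fs$, while the absence of a proper nontrivial normal subsystem reduces once more to the non-existence of a subgroup of $S$ simultaneously normalised by every member of the generating family.

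\textbf{Main obstacle.} The crux is the combinatorial heart of the second step: excluding the spurious combinations of essential subgroups and automizers. The group $S$ of order $3^{10}$ has an intricate subgroup lattice, several candidate modules for the essential automizers are close to self-dual, and deciding exactly which configurations can coexist inside a saturated system — in particular which ones are forced to have $O_3(\fs)\neq\{1\}$ or to collapse onto $\fs_{\mathrm{F}_3}(S)$ — requires delicate $p$-group computations, which is precisely where computer assistance becomes indispensable. A secondary difficulty is making the non-realizability argument watertight: one must be confident that the CFSG sweep of candidate overgroups is exhaustive and that the fusion data of the exotic systems genuinely differs, on the nose, from that of $\mathrm{F}_3$.
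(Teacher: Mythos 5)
Your overall outline — pin down the candidate essential subgroups, use the Alperin--Goldschmidt theorem and $O_3(\fs)=\{1\}$ to prune combinations, assemble the finitely many candidates, then decide exoticity through CFSG — is in the right spirit and matches the first steps of the paper's argument. But there is a structural misunderstanding at the crux of the casework that would cause you to miss one of the two exotic systems entirely.

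You partition the analysis as: two essential classes give a rank-$2$ weak BN-pair identified by Delgado--Stellmacher, and three or more collapse onto $\fs_S(\mathrm{F}_3)$. This partition is wrong. The two exotic systems in \hyperlink{ThmB}{Theorem B} are \emph{not} distinguished merely by automizers — they have entirely different sets of essential subgroups: $\mathcal{E}(\mathcal{H})=\{E_1,E_2\}$ and $\mathcal{E}(\mathcal{D})=\{E_1,E_3^S\}$. In $\mathcal{D}$, the essential $E_3$ (elementary abelian of order $3^5$) is \emph{properly contained} in the essential $E_1=J(S)$. Because one essential lies inside the other, the pair $(N_\fs(E_1),N_\fs(E_3))$ does not form a weak BN-pair of rank $2$ in the sense of \cite{Greenbook}, and that classification simply does not apply. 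The weak BN-pair machinery is germane only to $\mathcal{H}$, where $E_1$ and $E_2$ are incomparable. Accordingly, the paper is forced to establish saturation of $\mathcal{D}$ by a completely different mechanism (\cref{JasonAdd}, adjoining extra $\SL_2(9)$-automorphisms to a fully normalised abelian subgroup of a constrained subsystem), not by MAGMA checks, and the ``three or more essential classes'' case is not some additional $3^5{:}2.\Sym(6)$ phenomenon but precisely $\mathcal{E}(\fs)=\{E_1,E_2,E_3^S\}$ recovering $\mathcal{G}$.

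This is not a cosmetic issue: the inclusion $E_3<E_1$ is the interesting feature linking these examples to pearls and pushing-up configurations, and it is why the automizer $\SL_2(9)$ of $E_3$ can lift to $E_1$ rather than $S$. A BN-pair dichotomy cannot see this. As a secondary point, your exoticity argument for the ``CFSG sweep'' is closest to the paper's treatment of $\mathcal{H}$ (\cref{HExotic}); for $\mathcal{D}$ the paper instead exhibits $E_1$ as a proper non-trivial strongly closed subgroup and applies \cref{SCExotic} (Flores--Foote), because the inclusion $E_3<E_1\normaleq S$ makes $E_1$ strongly closed. Your simplicity argument via focal/hyperfocal subgroups would need to be replaced by (or supplemented with) this strongly closed analysis to handle $\mathcal{D}$ correctly.
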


\begin{ThmC}\hypertarget{ThmC}{}
Let $\fs$ be a saturated fusion system on a $5$-group $S$ with $S$ isomorphic to a Sylow $5$-subgroup of $\mathrm{M}$. If $O_5(\fs)=\{1\}$ then either $\fs$ is isomorphic to the $5$-fusion category of $\mathrm{M}$; or $\fs$ is isomorphic to one of two exotic examples. In all cases, $\fs$ is simple.
\end{ThmC}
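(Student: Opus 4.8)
The plan is to follow the route used for Theorems~A and~B. Fix $S\in\syl_5(\mathrm{M})$ and record, with the aid of MAGMA, the relevant group theory: $|S|=5^9$; $S$ has a normal extraspecial subgroup $Q$ of order $5^7$ with $S/Q$ elementary abelian of order $5^2$, both visible inside the central $5$-local subgroup $5^{1+6}{:}2{\cdot}\mathrm{J}_2{:}4$ of $\mathrm{M}$; and one computes the characteristic subgroups of $S$ relevant to fusion together with $\Out(S)$, which bounds $\Aut_\fs(S)/\Inn(S)$. By Alperin's fusion theorem, $\fs$ is generated by $\Aut_\fs(S)$ and the automizers $\Aut_\fs(E)$ of the $\fs$-essential subgroups $E\le S$; the hypothesis $O_5(\fs)=\{1\}$ forces these essentials to exist and to be spread out enough that no nontrivial subgroup of $S$ is normalized by $\Aut_\fs(S)$ together with all the $\Aut_\fs(E)$.

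The heart of the proof is the determination of the $\fs$-essential subgroups. Each essential $E$ is $\fs$-centric and $\fs$-radical---so $C_S(E)\le E$ and $O_5(\Out_\fs(E))=\{1\}$---and $\Out_\fs(E)$ has a strongly $5$-embedded subgroup in which $\Out_S(E)$ is a Sylow $5$-subgroup. One runs through the subgroups of $S$ meeting the centric and radical conditions, computes $\Out_S(E)$ and $N_{\Aut(E)}(\Out_S(E))$, and invokes the (CFSG-dependent) classification of finite groups with a strongly $5$-embedded subgroup; in the cases that survive, $\Out_\fs(E)$ modulo $O_{5'}(\Out_\fs(E))$ has a normal subgroup isomorphic to $\SL_2(5)$, $\SL_2(25)$ or $\SU_3(5)$, or else $\Out_S(E)$ is cyclic. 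This leaves a short explicit list of pairs $(E,\Out_\fs(E))$, trimmed further by the $\Aut_\fs(S)$-action on the set of essentials. I expect this enumeration to be the main obstacle: the subgroup lattice of a group of order $5^9$ is very large, so one must prune aggressively---using the centric and radical conditions, the constraint $O_5(\fs)=\{1\}$, and the normalizer structure that $S$ inherits from $\mathrm{M}$---before any structural or amalgam-theoretic argument becomes manageable. Note that $Q$, although $\fs$-centric and $\fs$-radical, is \emph{not} essential, since the relevant automizer is an extension of $2{\cdot}\mathrm{J}_2$, which has no strongly $5$-embedded subgroup; $Q$ nevertheless constrains the assembly, as $N_\fs(Q)$ and the centric-radical subgroups of $Q$ must be consistent with the whole.

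With the finite list of admissible essential data in hand, the next step is to assemble the saturated fusion systems realizing it. For each family of candidate essential subgroups closed under $\fs$-conjugacy and compatible with some choice of $\Aut_\fs(S)\le\Aut(S)$, one checks the saturation axioms---either directly, via the reduction of Broto--Castellana--Grodal--Levi--Oliver of saturation to $\fs$-centric subgroups, or (as for $\mathrm{F}_3$) by recognizing the configuration as a rank-two amalgam and appealing to the classification of Delgado and Stellmacher \cite{Greenbook}, along the lines of \cite{MainThm}---and verifies that the resulting system has $O_5=\{1\}$. This produces exactly five saturated fusion systems on $S$; one is identified with the $5$-fusion category $\fs_S(\mathrm{M})$ by matching its essential subgroups and automizers against the $5$-local structure of $\mathrm{M}$.

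It remains to show the other four systems are exotic and to settle simplicity. For exoticity, suppose $\fs=\fs_S(G)$ for a finite group $G$ with $S\in\syl_5(G)$; replacing $G$ by a minimal such group we may assume $O_{5'}(G)=\{1\}$, and since $O_5(G)\le O_5(\fs)=\{1\}$ we get $F^*(G)$ a direct product of nonabelian simple groups. Comparing $5$-parts with $|S|=5^9$ and using that $S$ has the fixed isomorphism type of a Sylow $5$-subgroup of $\mathrm{M}$, a $K$-group argument based on the CFSG restricts $F^*(G)$, and hence $G$, to a short list; for each candidate the $5$-local subgroups of $G$---equivalently, the essential subgroups of $\fs_S(G)$---fail to reproduce the essential data of the system at hand, a contradiction. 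Finally, for the $\mathrm{M}$-system and three of the four exotic systems, simplicity follows by verifying that $\{1\}$ and $S$ are the only strongly closed subgroups and that $O^5(\fs)=O^{5'}(\fs)=\fs$, both of which can be read off from the list of essentials; the remaining exotic system is non-simple, possessing a proper nontrivial normal subsystem visible directly from its essential data.
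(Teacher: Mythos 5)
Your outline for enumerating the essentials and assembling the five saturated systems is in the spirit of the paper's proof, but two steps in your endgame run into a structural feature of the exotic examples that you have not anticipated: three of the four exotic systems ($\mathcal{D}$, $O^{5'}(\mathcal{D})$ and the pruned subsystem $O^{5'}(\mathcal{D})_{\mathcal{P}}$ in the paper's notation) have a proper non-trivial strongly closed subgroup, namely $E_1 = C_S(Z_2(S))$: all of their essential subgroups lie inside $E_1$, and $E_1$ is normal in $S$.

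\textbf{Gap 1 (exoticity).} Your argument that a realizing group $G$ with $O_{5'}(G)=O_5(G)=\{1\}$ has $F^*(G)$ a direct product of nonabelian simple groups is fine, and the CFSG comparison against $|S|=5^9$ works for the system $\mathcal{H}$, where the essential data force $S\le\langle\Omega(Z(S))^G\rangle\le F^*(G)$. But in the $\mathcal{D}$-family this step collapses: $E_1$ being strongly closed means every $\fs$-conjugate of every element of $\Omega(Z(S))$ stays inside $E_1<S$, so you cannot conclude $S\le F^*(G)$, and the naive ``compare Sylow $5$-subgroups of candidate simple groups with $S$'' argument no longer applies. The paper handles these cases by a separate criterion based on Flores and Foote: if $\fs$ has a minimal proper non-trivial strongly closed subgroup $A$ and no normal subsystem supported on $A$, then $\fs$ must be exotic (the paper's \cref{SCExotic}). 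Without something like this, the exoticity of three of your four systems is not established.

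\textbf{Gap 2 (simplicity).} You propose to prove simplicity of the three simple exotic systems by verifying that $\{1\}$ and $S$ are the only strongly closed subgroups. This is false for $O^{5'}(\mathcal{D})$ and $O^{5'}(\mathcal{D})_{\mathcal{P}}$: both have $E_1$ as a proper non-trivial strongly closed subgroup, yet both are simple. This is precisely the phenomenon the paper highlights in its introduction as a departure of fusion systems from the group-theoretic intuition. Having no proper strongly closed subgroups is a sufficient criterion for simplicity (together with $O^{5'}$- and $O^5$-closedness) but is not necessary, and here it is violated. The paper's argument instead shows that any proper non-trivial normal subsystem of $O^{5'}(\mathcal{D})$ would have to be supported on $E_1$, forces a lift of an order-$24$ cyclic group to $\Aut(E_1)$ that cannot occur if the normal subsystem has no essentials, and then concludes $E_1$ would be normal in $O^{5'}(\mathcal{D})$, contradicting \cref{normalinF}. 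You would need to supply an argument of this sort; your present criterion would incorrectly flag these systems as non-simple or leave simplicity unresolved.

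The remainder of your plan (MAGMA-assisted essential enumeration, assembly via saturation-preserving constructions, recognition of $\fs_S(\mathrm{M})$, and noting $\mathcal{D}$'s non-simplicity via the index-$2$ normal subsystem $O^{5'}(\mathcal{D})$) matches what the paper does. The key missing idea is the proper strongly closed subgroup $E_1$ and the machinery needed to turn it into an exoticity criterion while simultaneously respecting that it does not obstruct simplicity.
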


\hyperlink{ThmB}{Theorem B} is proved as \cref{ThmB1} while \hyperlink{ThmC}{Theorem C} is proved as \cref{5nontriv}, \cref{ThmC1} and \cref{ThmC2}. We note that the process by which we construct some of the systems in \hyperlink{ThmB}{Theorem B} and \hyperlink{ThmC}{Theorem C} can also be applied to the $2$-fusion category of $\mathrm{J}_3$. In this application, one obtains three proper saturated subsystems, all of which contain no non-trivial normal $2$-subgroups. However, unlike the odd prime cases, the subsystems recovered are realizable by finite groups. Indeed, these subsystems are isomorphic to the $2$-fusion categories of $\mathrm{J}_2$, $\PSL_3(4):2$ and $\PGL_3(4):2$, as demonstrated in \cite[Theorem 4.8]{OliMod}.

Interestingly, we record that some of the exotic fusion systems described in \hyperlink{ThmB}{Theorem B} and \hyperlink{ThmC}{Theorem C} contain a unique proper non-trivial strongly closed subgroup which does not support a normal fusion subsystem. In both cases, this strongly closed subgroup is the centralizer of the second center of the Sylow $p$-subgroup, and is also essential in the fusion system. This is another instance where fusion systems seem to depart from the conventions of finite simple groups. As witnessed in \cite[Corollary 1.4]{Flores}, if $G$ is a finite simple group with a non-trivial strongly closed subgroup $A$ then $N_G(A)$ controls strong $G$-fusion in $S\in\syl_p(G)$ and so $\fs_S(G)$ is not simple in this instance.

Where we have a proper non-trivial strongly closed subgroup $T$, we are able to descend to exotic subsystems supported on $T$, and we speculate that this may be an illustration of a more generic method to construct exotic subsystems of exotic fusion systems. The examples we obtain in the theorems below arise in this fashion.

\begin{ThmD}\hypertarget{ThmD}{}
Let $S$ be isomorphic to a Sylow $3$-subgroup of $\mathrm{F}_3$. Then, up to isomorphism, there are two saturated fusion system supported on $C_S(Z_2(S))$ in which $C_S(Z_2(S))$ is not normal. Both of these systems are exotic and only one is simple.
\end{ThmD}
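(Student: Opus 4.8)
The plan is to identify $T := C_S(Z_2(S))$ as the unique proper non-trivial strongly closed subgroup in certain of the exotic systems from Theorem B, and then to analyse directly the fusion systems supported on $T$. First I would establish the structure of $T$ as an abstract $3$-group: compute $Z_2(S)$, its centralizer, and record the characteristic subgroups of $T$ (in particular $Z(T)$, $Z_2(T)$, the Frattini subgroup, and the chief series of $\mathrm{Out}(T)$-invariant sections), since all of this controls which subgroups of $T$ can be essential. This is the ``particular maximal subgroup'' referenced in the abstract, so its structure is already in hand from the analysis of Theorem B; I would import those facts. The key structural input is that $T$ is itself non-abelian with a fairly rigid automorphism group, so that the candidates for essential subgroups and for $\mathrm{Out}_\fs(T)$ are severely constrained.

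Next, assuming $\fs$ is a saturated fusion system on $T$ with $T \ntrianglelefteq \fs$, I would run the standard essential-subgroup analysis: by Alperin's fusion theorem $\fs$ is generated by $\mathrm{Aut}_\fs(T)$ together with $\mathrm{Aut}_\fs(E)$ for $E$ ranging over the $\fs$-essential subgroups, and since $T$ is not normal there is at least one essential subgroup $E \lneq T$. Using the structure of $T$, I would enumerate the $\fs$-centric subgroups admitting a strongly $3$-embedded subgroup in their outer automorphism group; I expect only a short list of candidates (up to $T$-conjugacy), and for each candidate the possibilities for $\mathrm{Out}_\fs(E)$ are pinned down by Bombieri–type considerations on $\mathrm{GL}_n(3)$ acting on the relevant elementary abelian sections, together with the requirement that $N_\fs(E)$ be saturated and constrained. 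Then I would check which combinations of $\{\mathrm{Aut}_\fs(E)\}$ and $\mathrm{Aut}_\fs(T)$ actually fit together into a saturated system — this is where the saturation axioms (the extension/Sylow axioms) cut the list down, and I anticipate exactly two surviving possibilities.

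To complete the proof I would (a) exhibit the two systems concretely — realising each as a subsystem of one of the exotic systems of Theorem B, supported on the strongly closed subgroup $T$, which gives saturation for free and simultaneously shows they are \emph{not} isomorphic to each other (their essential subgroups or their $\mathrm{Out}_\fs(T)$ differ); (b) prove exoticity: if one of them were $\mathrm{F}_T(G)$ for a finite group $G$ with $T \in \mathrm{Syl}_3(G)$ and $O_3(G)=1$, then analysing the components and $3$-local structure of $G$ via the known list of simple groups with such a Sylow $3$-subgroup would force $G$ to realise the ambient Theorem B system, contradicting that the latter is exotic — alternatively, I would transfer exoticity directly from Theorem B using the strongly closed embedding; and (c) determine which of the two is simple by computing $O^{3}(\fs)$ and $O^{3'}(\fs)$ and checking for proper normal subsystems of index a power of $3$ or of index prime to $3$, using the normal-subsystem machinery (Aschbacher). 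The main obstacle I expect is step (b): ruling out \emph{all} finite-group realisations is delicate because $T$ is not Sylow in any obvious characteristic-$3$ simple group, so one must either invoke a signalizer/component argument carefully or, more robustly, lean on the fact that a realisation of a subsystem on a strongly closed subgroup would propagate to a realisation of the over-system — this reduction is the crux and I would present it in detail rather than as a calculation.
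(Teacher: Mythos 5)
Your architecture matches the paper's in broad strokes: you construct the two systems as subsystems of the exotic $\mathcal{D}$ from Theorem B supported on $E_1 = C_S(Z_2(S))$, prove uniqueness by pinning down $\Aut_{\fs}(E_3)$ and $\Aut_{\fs}(E_1)$ via saturation lifts of a cyclic (or semidihedral) $2$-group from $N_{\Aut_{\fs}(E_3)}(\Aut_{E_1}(E_3))$, and decide simplicity by a normal-subsystem analysis. Two points need correction. First, saturation of the subsystems does not come "for free" from $E_1$ being strongly closed: the paper has to build $\mathcal{D}^*$ as $\langle \fs_{E_1}(E_1{:}K),\ \Aut_{\mathcal{D}}(E_3)\rangle$ for a specific $2$-group $K$ inside a model of $N_{\mathcal{D}}(E_1)$, and invoke \cref{JasonAdd} rather than any automatic inheritance from the strongly closed embedding. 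Second, and more seriously, your fallback reduction for exoticity — that a realisation of the subsystem on the strongly closed subgroup would "propagate to a realisation of the over-system" — is not a theorem and runs in the wrong direction; there is no general principle forcing a finite group realising $\mathcal{D}^*$ to sit inside one realising $\mathcal{D}$.

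The paper's actual exoticity argument sidesteps this entirely, and is worth internalising: it applies \cref{SCExotic} (the Flores--Foote consequence) \emph{directly to the subsystems on $E_1$}, taking as input that $N_{E_1}(E_3)$ is a proper non-trivial strongly closed subgroup of both $\mathcal{D}^*$ and $O^{3'}(\mathcal{D}^*)$, carries no normal subsystem, and is non-abelian. So the "crux" you correctly identify is resolved not by lifting to $\mathcal{D}$ nor by a component analysis of hypothetical realisations, but by finding a new strongly closed subgroup one level down and running the Flores--Foote machinery again at that level. Your branch (a), a from-scratch classification of simple groups with the right Sylow $3$-subgroup $E_1$, would also ultimately work, but its stated conclusion ("would force $G$ to realise the ambient Theorem B system") is not sound as reasoning and should be replaced by the internal strongly closed subgroup argument.
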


\begin{ThmE}\hypertarget{ThmE}{}
Let $S$ be isomorphic to a Sylow $5$-subgroup of $\mathrm{M}$. Then, up to isomorphism, there are nine saturated fusion system supported on $C_S(Z_2(S))$ in which $C_S(Z_2(S))$ is not normal. All of these systems are exotic and two are simple.
\end{ThmE}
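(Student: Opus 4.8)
The plan is to run, for the $5$-group $T:=C_S(Z_2(S))$, the same programme used to prove Theorems~A--C, and then to match the output against the systems produced in Theorem~C. First I would record the internal structure of $T$: its order, the upper central chain $Z(T)\le Z_2(T)\le\cdots$, the Frattini quotient $T/\Phi(T)$, its characteristic subgroups, and the full automorphism group $\Aut(T)$, together with the $T$-conjugacy (and $\Aut(T)$-conjugacy) classes of those subgroups of $T$ which are $\fs$-centric. By the Alperin--Goldschmidt fusion theorem, the hypothesis that $T$ is not normal in $\fs$ is equivalent to $\fs$ having at least one essential subgroup, and in that case $\fs=\langle \Aut_\fs(E): E\text{ essential}\rangle\cdot N_\fs(T)$. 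So the task splits into (i) enumerating the admissible collections of essential subgroups together with their $\fs$-automizers, (ii) forming the generated fusion systems and verifying saturation, and (iii) sorting the results up to isomorphism and deciding exoticness and simplicity.

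For (i), I would use that an essential subgroup $E\le T$ is $\fs$-centric and $\fs$-radical, that $\Out_\fs(E)$ contains a strongly $5$-embedded subgroup, and that $\Out_\fs(E)$ acts faithfully on $\Omega(Z(E))$ or, more generally, on the relevant $\mathbb{F}_5$-chief factors of $E$ while normalising the image of $\Aut_T(E)$. For $p=5$ the groups with a strongly $5$-embedded subgroup form a very short list, and intersecting this with the faithful-action constraint on the few $\fs$-centric $\fs$-radical subgroups of $T$ leaves only a small number of candidate pairs $(E,\Out_\fs(E))$: a short list of rank-one configurations in characteristic $5$ (of $\SL_2(5)$-, $\SL_2(25)$- and $\GL_2(5)$-type, possibly together with a further group admitting a strongly $5$-embedded subgroup) acting on preimages of natural modules, as dictated by the $5$-local geometry of $\mathrm{M}$ inside which $T$ sits. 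This part is largely a finite computation: one computes in MAGMA the $\fs$-centric $\fs$-radical subgroups of $T$, their outer automorphism groups, and the overgroups of $\Aut_T(E)\Inn(E)/\Inn(E)$ admitting a strongly $5$-embedded subgroup.

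For (ii), for each admissible configuration I would set $\fs:=\langle \Aut_\fs(E_1),\dots,\Aut_\fs(E_k),\Aut_\fs(T)\rangle$ and verify the saturation axioms. The practical route is to check the axioms on a set of representatives for the $\fs$-conjugacy classes of subgroups of $T$ (a finite, MAGMA-assisted check), or equivalently to recognise $\fs$ as generated over the saturated normaliser system $N_\fs(T)$ by a compatible family of essential automizers; one also records the fusion-preserving action of $\Aut(T)$ in order to identify systems up to isomorphism. Carrying this out, I expect exactly nine isomorphism types with $T\not\normaleq\fs$ (the configurations that instead yield $T\normaleq\fs$ are the constrained systems excluded by the hypothesis). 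For each of the nine I would then compute $O^5(\fs)$, $O^{5'}(\fs)$, and the strongly closed subgroups: for seven of them one of these yields a proper non-trivial normal subsystem, while for the remaining two I would show $O^5(\fs)=O^{5'}(\fs)=\fs$ and that $1$ and $T$ are the only strongly closed subgroups, so that exactly two are simple.

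The step I expect to be the main obstacle is establishing that all nine systems are exotic. The standard reduction assumes $\fs=\fs_5(G)$ for a finite group $G$ with $T\in\syl_5(G)$ chosen minimal with this property; then $O_{5'}(G)=1$, and an analysis of $F^*(G)$ (a product of quasisimple groups, each with $5$-local structure tightly constrained by $T$ and the essential automizers found above) combined with the classification of finite simple groups should show that no such $G$ exists. The subtle point, flagged in the introduction, is that these $\fs$ also occur as subsystems of the exotic systems of Theorem~C supported on the strongly closed subgroup $T$ --- and a subsystem of an exotic system may perfectly well be realizable --- so the group-theoretic elimination genuinely has to be performed, separately for each of the nine, keeping careful track of which items of $5$-local data (the isomorphism type of $N_T(Z(T))$, of the essential automizers, of $C_T(Z_2(T))$, and so on) survive the reduction to $F^*(G)$. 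Here the MAGMA computations pin down $\Aut(T)$ and the centric-radical data, but the classification-based argument ruling out finite realizations is the conceptual heart of Theorem~E.
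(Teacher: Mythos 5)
Your overall programme---enumerate candidate essential subgroups with their automizers, form generated systems, verify saturation, sort by isomorphism, then decide exoticity and simplicity---matches the shape of the paper's argument. However, there is a genuine flaw in your simplicity step. You propose to certify the two simple systems by showing that ``$1$ and $T$ are the only strongly closed subgroups.'' That criterion is sound as a sufficient condition but fails as a method here, because one of the two simple systems, namely $O^{5'}(\mathcal{L}_{\mathcal{P}})$, \emph{does} contain a proper non-trivial strongly closed subgroup: $N_{E_1}(E_3)$ is strongly closed in every $\fs$ with $O^{5'}(\mathcal{L}_{\mathcal{P}})\le\fs\le\mathcal{L}_{\mathcal{P}}$ (\cref{LPrune5}). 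Simplicity of $O^{5'}(\mathcal{L}_{\mathcal{P}})$ is instead obtained by showing that no normal \emph{subsystem} is supported on $N_{E_1}(E_3)$ --- a normal subsystem there would force $E_3\normaleq\fs$, which is impossible since $E_3\not\normaleq E_1$. So running your test would incorrectly declare $O^{5'}(\mathcal{L}_{\mathcal{P}})$ non-simple and you would miscount. This is precisely the unusual phenomenon the paper emphasises: these systems can be simple yet carry a proper non-trivial strongly closed subgroup, and indeed \cref{SCExotic} turns that presence into an exoticity certificate rather than a bar to simplicity.

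Beyond that, your proposal misses the organising principle the paper uses to get the count of nine cleanly. Rather than a flat enumeration of essential automizer configurations with per-case saturation and CFSG checks, the paper constructs two prototype systems on $E_1$ by descent from the exotic system $\mathcal{D}$ on $S$: $\mathcal{D}^*=\langle \fs_{E_1}(E_1{:}K),\Aut_{\mathcal{D}}(E_3)\rangle$ via \cref{JasonAdd}, and $\mathcal{L}_{\mathcal{P}}$ via \cref{Pruning} from the index-$2$ subsystem $\mathcal{L}\le\mathcal{D}^*$. \cref{DStarAut} then forces any saturated $\fs$ on $E_1$ with $E_1\not\normaleq\fs$ to have $\Out_\fs(E_1)$ conjugate into $\Out_{\mathcal{D}^*}(E_1)$, and the nine systems are counted as the $5'$-index subsystems of $\mathcal{D}^*$ (five, via \cite[Theorem I.7.7]{ako} applied to $\Out_{\mathcal{D}^*}(E_1)/\Out_{O^{5'}(\mathcal{D}^*)}(E_1)$) together with those of $\mathcal{L}_{\mathcal{P}}$ (four). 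This also batches the exoticity argument: a single CFSG analysis of $F^*(G)$ handles the $\mathcal{D}^*$-family in \cref{DStar}, while the $\mathcal{L}_{\mathcal{P}}$-family is dispatched at once by \cref{SCExotic} using the strongly closed $N_{E_1}(E_3)$ --- so the group-theoretic elimination is \emph{not} performed ``separately for each of the nine'' as you anticipate. Your flat-enumeration method could in principle be made to work, but without the descent from $\mathcal{D}$ and the $5'$-index-subsystem lattice you do not have a mechanism for producing the number nine, and with the flawed strongly-closed criterion you would not land the right two simple systems.
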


The exotic fusion systems described in \hyperlink{ThmD}{Theorem D} and \hyperlink{ThmE}{Theorem E} are reminiscent of the exotic fusion systems supported on $p$-groups of maximal class, as determined in \cite{ParkerMax}. There, in almost all cases where $\fs$ is an exotic fusion systems, there is a class of essential subgroups which are \emph{pearls}: essential subgroups isomorphic to $p^2$ or $p^{1+2}_+$. It is clear that for a fusion system $\fs$ with a pearl $P$, $O^{p'}(\Out_{\fs}(P))\cong \SL_2(p)$ and so these occurrences are strongly connected to certain pushing up configurations in local group theory. In our case, the analogous set of essential subgroups $P$ are of the form $p^4\times C_P(O^{p'}(\Out_{\fs}(P)))$ where $O^{p'}(\Out_{\fs}(P))\cong \SL_2(p^2)$, and in one of our cases $C_P(O^{p'}(\Out_{\fs}(P)))$ is non-trivial.  We speculate that both the systems containing pearls and our examples are part of a much larger class of exotic fusion systems which arise as the odd prime counterparts to ``obstructions to pushing up" in the sense of Aschbacher \cite{asch1}. A clear understanding of this would go some way to explaining the dearth of exotic fusion systems at the prime $2$.

With this work, we move closer to classifying all saturated fusion systems supported on Sylow $p$-subgroups of the sporadic simple groups, for $p$ an odd prime, complementing several other results in the literature. Indeed, all that remains is the study of saturated fusion systems on Sylow $3$-subgroups of the Fischer groups, the Baby Monster and the Monster. For the reader's convenience, we tabulate the known results with regards to fusion systems on Sylow $p$-subgroups of sporadic simple groups in \cref{tbl : SporTable}.

We remark that, perhaps aside from the Sylow $3$-subgroup of $\mathrm{Fi}_{22}$, the remaining cases are large and complex enough that it is laborious and computationally expensive to verify any results using the fusion systems package in MAGMA \cite{MAGMA}, \cite{Comp1}. Throughout this work, we lean on a small portion of these algorithms for the determination of the essentials subgroups of the saturated fusion systems under investigation (as in \cref{EssenDeter}), although the techniques used in \cite{parkersem} and \cite{G2pPaper} could be employed here instead. We record that several of the main theorems have been verified using the full potential of this MAGMA package. However, we believe it is important to provide handwritten arguments in order to exemplify some of the more interesting structural properties of the fusion systems described within, while simultaneously elucidating some of the computations performed implicitly by the MAGMA package. For the sake of brevity, the MAGMA code we use is not included here and has instead been relegated to an alternate version of this paper \cref{code}.

Our notation and terminology for finite groups is a jumble of conventions from \cite{asch2}, \cite{gor} and \cite{kurz}, and we hope that our usage will be clear from context. With regards to notation concerning the sporadic simple groups, we will generally follow the Atlas \cite{atlas} with the exception of Thompson's sporadic simple group, which we refer to as $\mathrm{F}_3$ instead of the usual $\mathrm{Th}$, except in \cref{tbl : SporTable}. We make this choice to emphasize the connection with ``amalgams of type $\mathrm{F}_3$" as defined in \cite{Greenbook} and \cite{F3}. For fusion systems, we almost entirely follow the conventions in \cite{ako}.

\begin{table}[H]
    \caption{Fusion systems on non-abelian Sylow $p$-subgroups of sporadic groups for $p$ odd.}
        \begin{tabular}{|c|c|c|c|}\hline
            Simple Group & $|S|$ & Reference & \#Exotic Systems \\
             & & & Supported \\\hline
            ${}^2\mathrm{F}_{4}(2)'$, $\mathrm{J}_{2}$, $\mathrm{J}_{4}$, $\mathrm{M}_{12}$, $\mathrm{M}_{24}$, $\mathrm{Ru}$, $\mathrm{He}$ & $3^3$ & \cite{RV1+2} & 0 \\\hline
            $\mathrm{J}_{3}$ & $3^5$ & \cite{Comp1} & 0\\\hline
            $\mathrm{Co}_{1}$ & $3^9$ & \cref{Co1Sec} & 0 \\\hline
            $\mathrm{Co}_{2}$, $\mathrm{McL}$ & $3^6$ &\cite{Baccanelli} & 0 \\\hline
            $\mathrm{Co}_{3}$ & $3^7$ & \cite{Comp1} & 0 \\\hline
            $\mathrm{Fi}_{22}$ & $3^9$ & - & Open \\\hline
            $\mathrm{Fi}_{23}, \mathrm{B}$ & $3^{13}$ & - & Open \\\hline
            $\mathrm{Fi}_{24}'$ & $3^{16}$ & - & Open \\\hline
            $\mathrm{Suz}$, $\mathrm{Ly}$ & $3^7$ & \cite{Comp1} & 0 \\\hline
            $\mathrm{HN}$ & $3^6$ &\cite{Comp1} & 0 \\\hline
            $\mathrm{Th}$ & $3^{10}$ & \cref{F3Sec} & $2$ \\\hline
            $\mathrm{M}$ & $3^{20}$ & - & Open \\\hline
            $\mathrm{Co}_{1}$ & $5^4$ & \cite{Sp4} & 24 \\\hline
            $\mathrm{Co}_{2}$, $\mathrm{Co}_{3}$, $\mathrm{Th}$, $\mathrm{HS}$, $\mathrm{McL}$, $\mathrm{Ru}$ & $5^3$ &\cite{RV1+2} & 0 \\\hline
            $\mathrm{HN}, \mathrm{Ly}, \mathrm{B}$ & $5^6$ &\cite{parkersem} & 0 \\\hline
            $\mathrm{M}$ & $5^9$ & \cref{MonSec} & 4 \\\hline
            $\mathrm{Fi}_{24}'$, $\mathrm{He}$, $\mathrm{O'N}$ & $7^3$ &\cite{RV1+2} & 3 \\\hline
            $\mathrm{M}$ & $7^6$ &\cite{parkersem} & 27 \\\hline
            $\mathrm{J}_4$ & $11^3$ & \cite{RV1+2} & 0 \\\hline
            $\mathrm{M}$ & $13^3$ &\cite{RV1+2} & 0\\\hline
        \end{tabular}
    \label{tbl : SporTable}
\end{table}

\section{Preliminaries: Groups}\label{GrpSec}

We start with some elementary observations regarding the Thompson subgroup of a finite $p$-group and the related notion of failure to factorize modules. For a more in depth account of this phenomena, see \cite[Section 9.2]{kurz}.

\begin{definition}\label[definition]{thomp}
Let $S$ be a finite $p$-group. Set $\mathcal{A}(S)$ to be the set of all elementary abelian subgroup of $S$ of maximal rank. Then the \emph{Thompson subgroup} of $S$ is defined as $J(S):=\langle A \mid A\in\mathcal{A}(S)\rangle$.
\end{definition}

\begin{proposition}\label[proposition]{BasicJS}
Let $S$ be a non-trivial finite $p$-group. Then the following hold:
\begin{enumerate}
\item $J(S)$ is a non-trivial characteristic subgroup of $S$;
\item $\Omega_1(C_S(J(S)))=\Omega_1(Z(J(S)))=\bigcap_{A\in\mathcal{A}(S)} A$; and
\item if $J(S)\le T\le S$, then $J(S)=J(T)$.
\end{enumerate}
\end{proposition}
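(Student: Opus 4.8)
The plan is to extract all three statements directly from the definition of $J(S)$, treating (i) and (iii) as formalities and reserving the one genuine argument for (ii). For (i): whenever $S\neq 1$ it contains an element of order $p$, so the maximal rank $m$ of an elementary abelian subgroup of $S$ is positive, $\mathcal{A}(S)\neq\emptyset$, and hence $J(S)\neq 1$; and any $\alpha\in\Aut(S)$ maps an elementary abelian subgroup of rank $m$ to another such subgroup, so it permutes $\mathcal{A}(S)$ and fixes $J(S)=\langle A\mid A\in\mathcal{A}(S)\rangle$, giving $J(S)\chr S$.

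For (iii): if $J(S)\le T\le S$ then $T$ contains every $A\in\mathcal{A}(S)$, so $T$ has elementary abelian subgroups of rank $m$ and, being contained in $S$, none of larger rank; thus the elementary abelian subgroups of $T$ of maximal rank are precisely the members of $\mathcal{A}(S)$, i.e.\ $\mathcal{A}(T)=\mathcal{A}(S)$, whence $J(T)=\langle A\mid A\in\mathcal{A}(T)\rangle=\langle A\mid A\in\mathcal{A}(S)\rangle=J(S)$.

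For (ii): set $D=\bigcap_{A\in\mathcal{A}(S)}A$, an elementary abelian subgroup of $S$. Each $A\in\mathcal{A}(S)$ is abelian and contains $D$, so $D$ centralizes every such $A$ and hence centralizes $J(S)$; since also $D\le J(S)$ we obtain $D\le Z(J(S))$, and therefore $D\le\Omega(Z(J(S)))\le\Omega(C_S(J(S)))$, the last inclusion because $Z(J(S))\le C_S(J(S))$. The remaining, and only substantive, point is $\Omega(C_S(J(S)))\le D$: given $A\in\mathcal{A}(S)$ and $x\in C_S(J(S))$ with $x^p=1$, the inclusion $A\le J(S)$ shows that $x$ centralizes $A$, so $A\langle x\rangle$ is abelian, and being generated by elements of order dividing $p$ it is elementary abelian; as it contains $A$, maximality of $|A|$ forces $A\langle x\rangle=A$, i.e.\ $x\in A$. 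Hence every element of order dividing $p$ of $C_S(J(S))$ lies in each $A\in\mathcal{A}(S)$, so $\Omega(C_S(J(S)))\le\bigcap_{A\in\mathcal{A}(S)}A=D$, and the chain $D\le\Omega(Z(J(S)))\le\Omega(C_S(J(S)))\le D$ collapses to equalities. The single step worth flagging as the main obstacle is this last inclusion, established by the familiar device of enlarging a maximal elementary abelian subgroup by a centralizing element of order $p$ and appealing to maximality of the rank; everything else is bookkeeping with the definition.
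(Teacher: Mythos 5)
Your proof is correct and is precisely the standard argument recorded in Kurzweil--Stellmacher (the reference the paper cites for this result without reproducing a proof): (i) and (iii) fall out of $\mathcal{A}(S)$ being permuted by $\Aut(S)$ and being preserved under passage to overgroups of $J(S)$, and (ii) hinges on enlarging a maximal-rank $A$ by a centralizing element of order $p$ and invoking maximality. No gaps; this matches the source proof the paper delegates to.
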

\begin{proof}
See \cite[{{9.2.8}}]{kurz} for parts (i) and (iii). Additionally, by part (d) of that result, we see that $\Omega_1(C_S(J(S)))\le \Omega_1(Z(J(S)))$. Since $Z(J(S))\le C_S(J(S))$, it is clear that $\Omega_1(C_S(J(S)))=\Omega_1(Z(J(S)))$. 

Let $a\in \bigcap_{A\in\mathcal{A}(S)} A$. Then $a$ has order $p$ and $[a, A]=\{1\}$ for all $A\in\mathcal{A}(S)$. By definition, $[a, J(S)]=\{1\}$ so that $a\in \Omega_1(C_S(J(S)))$ and $\bigcap_{A\in\mathcal{A}(S)} A\le \Omega_1(C_S(J(S)))$. Now, for $x\in C_S(J(S))$ of order $p$, we have that $x\le C_S(J(S))\le C_S(A)$ for all $A\in\mathcal{A}(S)$. Hence, $x\in \Omega_1(C_S(A))$ for all $A\in\mathcal{A}(S)$. But now, $\langle x\rangle A$ is elementary abelian of order at least as large as $A$ and by the definition of $\mathcal{A}(S)$, we have that $x\in A$. Therefore, $x\in \bigcap_{A\in\mathcal{A}(S)} A$ and $\Omega_1(C_S(J(S)))=\bigcap_{A\in\mathcal{A}(S)} A$, completing the proof of (ii).
\end{proof}

\begin{definition}
Let $G$ be a finite group, $V$ a $\GF(p)G$-module and $A\le G$. If 
\begin{enumerate}
\item $A/C_A(V)$ is an elementary abelian $p$-group;
\item $[V,A]\ne\{1\}$; and 
\item $|V/C_V(A)|\leq |A/C_A(V)|$
\end{enumerate}
then $V$ is a \emph{failure to factorize module} (abbrev. FF-module) for $G$ and $A$ is an \emph{offender} on $V$. 
\end{definition}

We will also make liberal use of several coprime action results, often without explicit reference. 

\begin{proposition}[Coprime Action]
Suppose that a group $G$ acts on a group $A$ coprimely, and $B$ is a $G$-invariant subgroup of $A$. Then the following hold:
\begin{enumerate}
\item $C_{A/B}(G)=C_A(G)B/B$;
\item $[A, G]=[A,G,G]$;
\item $A=[A,G]C_A(G)$ and if $A$ is abelian then $A=[A,G]\times C_A(G)$; and
\item if $G$ acts trivially on $A/\Phi(A)$, then $G$ acts trivially on $A$.
\end{enumerate}
\end{proposition}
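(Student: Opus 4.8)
The plan is to prove part (i) first and then read off (ii), (iii) and (iv) from it by purely formal manipulations; the one genuinely substantial ingredient is the coprime-lifting step inside (i), for which I would ultimately lean on a standard reference such as \cite{kurz} or \cite{gor}. Throughout I write the $G$-action on the right, use that $\gcd(|G|,|A|)=1$ forces $|G|$ to act invertibly on every abelian section of $A$, and take $B\normaleq A$ so that $A/B$ is defined. In part (i), the inclusion $C_A(G)B/B\le C_{A/B}(G)$ is immediate, so the content is the reverse: given $a\in A$ with $aB\in C_{A/B}(G)$, the coset $aB$ is $G$-invariant and I must produce a $G$-fixed point inside it. I would induct on $|A|$; replacing $A$ by $\langle a\rangle B$ one may assume $[A,G]\le B$, and choosing a minimal $G$-invariant subgroup $C\normaleq A$ with $C\le B$ and passing to $A/C$ reduces matters to the case where $B$ is abelian (for $A$ solvable, in particular for the nilpotent $A$ arising in practice, one simply takes $C$ abelian and the reduction is transparent). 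For abelian $B$, writing $a^g=a\,c_g$ with $c_g\in B$ yields the cocycle identity $c_{gh}=c_h\,c_g^{\,h}$, and forming the product $\prod_{g\in G}c_g\in B$ and exploiting the invertibility of $|G|$ on $B$, a short computation produces $c\in B$ with $ac\in C_A(G)$, as required.

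Granting (i), the remaining parts are quick. The subgroups $[A,G]$ and $[A,G,G]$ are $G$-invariant and normal in the semidirect product $A\rtimes G$, and $G$ acts trivially on $A/[A,G]$; so applying (i) with $B=[A,G]$ gives $A/[A,G]=C_A(G)[A,G]/[A,G]$, that is $A=[A,G]C_A(G)$, the first assertion of (iii). When in addition $A$ is abelian, the endomorphism $\theta\colon A\to A$ defined by $\theta(a)=\prod_{g\in G}a^g$ has image in $C_A(G)$, annihilates $[A,G]$ by a telescoping cancellation, and acts as $a\mapsto a^{|G|}$ on $C_A(G)$, hence injectively there; this forces $\ker\theta=[A,G]$ with $[A,G]\cap C_A(G)=\{1\}$, so the product is direct. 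For (ii), apply the identity just proved to $\bar A:=A/[A,G,G]$: there $[\bar A,G,G]=\{1\}$, so $[\bar A,G]\le C_{\bar A}(G)$ and therefore $\bar A=[\bar A,G]C_{\bar A}(G)=C_{\bar A}(G)$, which says exactly that $[A,G]\le[A,G,G]$; the reverse containment is immediate. For (iv), the hypothesis means $[A,G]\le\Phi(A)$, so (iii) gives $A=[A,G]C_A(G)=\Phi(A)C_A(G)$, and since $\Phi(A)$ consists of non-generators this forces $A=C_A(G)$, that is, $G$ acts trivially on $A$.

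The hard part is thus isolated in part (i): once a single $G$-invariant coset of a $G$-invariant normal subgroup is known to meet $C_A(G)$, all four statements fall out by the bookkeeping above. Within (i), the one delicate point is the reduction to abelian $B$ when $A$ is not assumed solvable, which needs a little of the structure theory of minimal normal subgroups of $A\rtimes G$; in the solvable (in particular $p$-group) situations relevant to this paper that reduction is routine, and for the general statement I would simply cite \cite{kurz} and \cite{gor} rather than reproduce it.
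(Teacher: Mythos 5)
The paper does not prove this proposition; it records it and refers to \cite[Chapter 8]{kurz}, so there is no ``paper's own proof'' to compare against. What you have written is the standard argument, and it is correct in outline: you isolate (i) as the substantive step, reduce to the case $[A,G]\le B$ by replacing $A$ with $\langle a\rangle B$, reduce further to abelian $B$ by passing modulo a minimal $G$-invariant normal subgroup and inducting, and then settle the abelian case by the cocycle/averaging computation with $\prod_{g}c_g$, using that $|G|$ is invertible on $B$. Deducing $A=[A,G]C_A(G)$ from (i) with $B=[A,G]$, then (ii) by passing to $A/[A,G,G]$, the direct decomposition via the transfer-type endomorphism $\theta(a)=\prod_g a^g$, and (iv) from the non-generator property of $\Phi(A)$, are all just as you say. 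Two small points deserve a word. First, the passage to $A/[A,G,G]$ in (ii) silently uses that $[A,G,G]\normaleq A$; this is not automatic, but it does follow once $A=[A,G]C_A(G)$ is in hand (both factors normalize $[[A,G],G]$: $[A,G]$ and $G$ do so for the usual reason, and for $c\in C_A(G)$ one has $[k,g]^c=[k^c,g]\in[[A,G],G]$), so the order in which you derive (iii) before (ii) matters and should be kept. Second, your induction for (i) is most cleanly run on $|B|$ rather than $|A|$ (after the reduction $A=\langle a\rangle B$ the two coincide in effect, but as phrased the inductive step that shrinks $B$ to $C$ does not obviously shrink $A$). You are right that the only genuinely non-elementary ingredient is the reduction to abelian $B$ when $A$ is not solvable -- in the literature this ultimately rests on Glauberman's lemma and hence on the Odd Order Theorem -- and deferring that to \cite{kurz} or \cite{gor}, exactly as the paper itself does, is entirely reasonable, especially since every application in this paper is to a $p$-group $A$.
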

\begin{proof}
See, for instance, \cite[Chapter 8]{kurz}.
\end{proof}

In conclusion (iv) in the statement above, one can say a little more. The following is a classical result of Burnside, but the version we use follows from \cite[(I.5.1.4)]{gor}.

\begin{lemma}[Burnside]\label[lemma]{burnside}
Let $S$ be a finite $p$-group. Then $C_{\Aut(S)}(S/\Phi(S))$ is a normal $p$-subgroup of $\Aut(S)$.
\end{lemma}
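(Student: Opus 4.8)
The plan is to split the statement into its two halves: normality, which is purely formal, and the $p$-group property, which is the content of Burnside's basis theorem and is where the real work lies.

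First I would note that $\Phi(S)$ is a characteristic subgroup of $S$, so every $\alpha\in\Aut(S)$ maps $\Phi(S)$ to itself and hence induces an automorphism $\bar\alpha$ of the elementary abelian group $S/\Phi(S)$. The assignment $\alpha\mapsto\bar\alpha$ is a group homomorphism $\Aut(S)\to\Aut(S/\Phi(S))\cong\GL_d(p)$ (where $d$ is the rank of $S/\Phi(S)$), and by definition its kernel is exactly $K:=C_{\Aut(S)}(S/\Phi(S))$. Being a kernel, $K$ is normal in $\Aut(S)$, which settles the first assertion; it remains only to show $K$ is a $p$-group.

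For the second part, since $K$ is finite it suffices (by Cauchy) to rule out an element $\sigma\in K$ of prime order $q\neq p$. Given such a $\sigma$, the cyclic group $\langle\sigma\rangle$ has order coprime to $p=|S|$'s prime and acts on $S$, and by the definition of $K$ it acts trivially on $S/\Phi(S)$. At this point I would invoke part (iv) of the Coprime Action proposition: a group acting coprimely on $S$ and trivially on $S/\Phi(S)$ acts trivially on $S$. Hence $\sigma$ centralizes $S$; as $\Aut(S)$ acts faithfully on $S$ this forces $\sigma=1$, a contradiction. Therefore no prime $q\neq p$ divides $|K|$, so $K$ is a $p$-group, completing the proof.

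The single real ingredient here is part (iv) of Coprime Action — the propagation of triviality from $S/\Phi(S)$ down to $S$ for a coprime action — and since that result is already available in the excerpt, the argument above is essentially complete; the only "obstacle" is organizational, namely recognizing that the problem reduces to it. If one instead wanted a self-contained proof one would run the classical Burnside argument directly: choose $x_1,\dots,x_d\in S$ whose images form a basis of $S/\Phi(S)$, write $\sigma(x_i)=x_ic_i$ with $c_i\in\Phi(S)$, and iterate $\sigma$, observing that on a $q'$-element the commutators are pushed into ever deeper terms of the descending chain $S\geq\Phi(S)\geq\Phi(\Phi(S))\geq\cdots$ until $\sigma$ is seen to be trivial — but this merely re-proves the coprime-action input, so I would not pursue it.
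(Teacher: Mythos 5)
The paper does not give a proof at all: the statement is attributed to Burnside and cited from Gorenstein's book, so there is no internal argument to compare against. Your derivation is correct and is a genuine argument where the paper gives only a reference. You correctly observe normality as a kernel, and you reduce the $p$-group claim to Cauchy's theorem plus part (iv) of the Coprime Action proposition already recorded in the paper (applied to the cyclic group $\langle\sigma\rangle$ acting on the $p$-group $S$), finishing by faithfulness of $\Aut(S)$ on $S$. One thing worth being explicit about: Coprime Action (iv) is, at bottom, Burnside's basis theorem in thin disguise, so your argument is really a reduction of one stated result to another rather than an independent proof of either --- but since both are independently asserted in the paper, the logic is sound. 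Your closing remark that a fully self-contained proof would just re-run the classical commutator-descent argument and re-prove the coprime-action input is accurate and shows you see where the real content lies.
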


\begin{lemma}\label[lemma]{GrpChain}
Let $E$ be a finite $p$-group and $Q\le A\le \Aut(E)$. Suppose there exists a normal chain $\{1\} =E_0 \normaleq E_1  \normaleq E_2 \normaleq \dots \normaleq E_m = E$ of subgroups such that for each $\alpha \in A$, $E_i\alpha = E_i$ for all $0 \le i \le m$. If for all $1\le i\le m$, $Q$ centralizes $E_i/E_{i-1}$, then $Q\le O_p(A)$.
\end{lemma}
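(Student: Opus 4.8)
The plan is to work with the \emph{stability subgroup} of the chain inside $A$, namely
\[
K \;=\; \{\alpha\in A \;:\; [E_i,\alpha]\le E_{i-1}\ \text{ for all }\ 1\le i\le m\},
\]
that is, the set of $\alpha\in A$ centralising every section $E_i/E_{i-1}$. By hypothesis $Q\le K$, so it is enough to prove that $K$ is a normal $p$-subgroup of $A$: granting this, $K\le O_p(A)$ by maximality of the latter, and hence $Q\le O_p(A)$.

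That $K$ is a subgroup of $A$, and in fact normal in $A$, comes from a routine commutator manipulation using the assumption that \emph{every} element of $A$ fixes each $E_i$ setwise. Concretely, for $\alpha\in K$, $\beta\in A$ and $x\in E_i$ one has $x^{\beta^{-1}\alpha\beta}=x\cdot\bigl([x^{\beta^{-1}},\alpha]\bigr)^{\beta}$; since $x^{\beta^{-1}}\in E_i^{\beta^{-1}}=E_i$ the commutator $[x^{\beta^{-1}},\alpha]$ lies in $E_{i-1}$, and then $\bigl([x^{\beta^{-1}},\alpha]\bigr)^{\beta}\in E_{i-1}^{\beta}=E_{i-1}$, so $[E_i,\alpha^\beta]\le E_{i-1}$ and $\alpha^\beta\in K$. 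The same kind of computation shows that $K$ is closed under products and inverses, so $K\trianglelefteq A$.

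The real content is that $K$ is a $p$-group, and here I would show that $K$ has no element of prime order $q\ne p$, which forces $K$ to be a $p$-group. So suppose $\alpha\in K$ has order $q\ne p$ and set $B=\langle\alpha\rangle$; then $B$ acts coprimely on the $p$-group $E$, stabilises the chain, and satisfies $[E_i,B]\le E_{i-1}$ for all $i$. I claim any coprime action of this shape is trivial, so that $\alpha=1$ in $\Aut(E)$ — a contradiction. The claim follows by induction on $m$: since $E_{m-1}\trianglelefteq E_m=E$ and $[E,B]\le E_{m-1}$, coprime action gives $E=C_E(B)E_{m-1}$; applying the inductive hypothesis to $B$ acting on $E_{m-1}$ equipped with the chain $\{1\}=E_0\trianglelefteq\cdots\trianglelefteq E_{m-1}$ yields $[E_{m-1},B]=\{1\}$, i.e.\ $E_{m-1}\le C_E(B)$, and therefore $E=C_E(B)$, as claimed. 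Combining, $K$ is a normal $p$-subgroup of $A$, whence $Q\le K\le O_p(A)$. I expect this coprime induction to be the only non-formal step; the one thing to watch is that the $E_i$ form merely a \emph{subnormal} series of $E$, but the induction only ever uses the top inclusion $E_{m-1}\trianglelefteq E$, which is part of the hypothesis, so no difficulty arises (alternatively one could simply quote the classical stability-group theorem).
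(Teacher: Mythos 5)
Your proof is correct and is essentially the standard argument for the classical stability-group theorem (Kaloujnine/Hall), which the paper simply invokes by citing \cite[(I.5.3.3)]{gor} without reproducing the proof. The three steps you isolate — normality of the full chain-stabiliser $K$ in $A$ via the setwise-invariance hypothesis, reduction to showing $K$ has no elements of prime order $q\ne p$, and the coprime induction using $E=C_E(B)E_{m-1}$ — are exactly the ingredients in the textbook proof, and your observation that only the top inclusion $E_{m-1}\normaleq E$ (not normality of every $E_i$ in $E$) is used in the coprime step is the right point to flag.
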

\begin{proof}
See \cite[{(I.5.3.3)}]{gor}.
\end{proof}

\begin{lemma}[A$\times$B-Lemma]
Let $AB$ be a finite group which acts on a $p$-group $V$. Suppose that $B$ is a $p$-group, $A=O^p(A)$ and $[A,B]=\{1\}=[A, C_V(B)]$. Then $[A,V]=\{1\}$.
\end{lemma}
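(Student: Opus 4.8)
The plan is to reduce at once to the case that $A$ is a $p'$-group, and then to run a minimal-counterexample argument whose endgame is the three subgroups lemma.

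First I would use that $A=O^p(A)$ means precisely that $A$ is generated by its elements of order prime to $p$. So if $a\in A$ has order prime to $p$, then $X:=\langle a\rangle$ is a $p'$-group, and $[X,B]\le[A,B]=\{1\}$, $[X,C_V(B)]\le[A,C_V(B)]=\{1\}$; hence, once the lemma is proved for $p'$-groups, applying it to $X$ gives $[V,X]=\{1\}$, and as these $a$ generate $A$ we obtain $[V,A]=\{1\}$. So we may assume that $A$ is a $p'$-group, in which case $A$ acts coprimely on every $p$-group we meet and, in particular, the coprime-action identity $[W,A]=[W,A,A]$ is available throughout.

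Now suppose the lemma fails and choose a counterexample $(V,A,B)$ with $|V|$ minimal. The crucial consequence of minimality is that every proper $AB$-invariant subgroup $W<V$ is centralized by $A$: the triple $(W,A,B)$ inherits all the hypotheses (note $C_W(B)\le C_V(B)$), and $|W|<|V|$, so $[W,A]=\{1\}$. I would apply this to $W=[V,B]$. This subgroup is $AB$-invariant, and it is a proper subgroup of $V$ because $B$ is a $p$-group acting on the non-trivial $p$-group $V$; hence $[V,B]\le C_V(A)$. Then I would invoke the three subgroups lemma inside $G:=V\rtimes(A\times B)$: since $A$ and $B$ commute in $G$ we have $[A,B,V]=[\{1\},V]=\{1\}$, and by the previous sentence $[B,V,A]=[[V,B],A]\le[C_V(A),A]=\{1\}$; therefore $[V,A,B]=\{1\}$, that is, $B$ centralizes $[V,A]$, i.e. $[V,A]\le C_V(B)$. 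But now the hypothesis $[A,C_V(B)]=\{1\}$ forces $A$ to centralize $[V,A]$, i.e. $[V,A,A]=\{1\}$, and coprime action then gives $[V,A]=[V,A,A]=\{1\}$ — contradicting that $(V,A,B)$ was a counterexample. Hence no counterexample exists.

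I expect the only genuinely delicate point to be the opening reduction and its use of coprime action: the hypothesis $A=O^p(A)$ is indispensable — a single $p$-element commuting with $B$ and centralizing $C_V(B)$ can act non-trivially on $V$ — and it feeds into the argument exactly by allowing us to replace $A$ by a $p'$-group and then to use $[V,A]=[V,A,A]$. Everything afterwards is routine minimal-counterexample bookkeeping, the only small care needed being to verify that $[V,B]$ is $AB$-invariant and properly contained in $V$ so that the minimality statement applies to it.
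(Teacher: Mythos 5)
The paper does not actually prove this lemma --- it simply cites Aschbacher's \emph{Finite Group Theory}, (24.2) --- so there is no in-paper argument for me to compare against word for word. Your proof is correct and self-contained: the opening reduction to $A$ a $p'$-group (using that $A=O^p(A)$ means $A$ is generated by its $p'$-elements) is exactly as in the standard treatments and is, as you say, indispensable, and the minimal-counterexample argument closing with the three subgroups lemma and the coprime identity $[V,A]=[V,A,A]$ is sound. The one point worth being explicit about is that $[V,B]<V$ follows from nilpotence of the finite $p$-group $V\rtimes B$, and that $A$ normalizes $[V,B]$ because $[A,B]=1$ forces $A$ to normalize $B$; you flag both and they check out. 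Where your argument genuinely diverges from the usual textbook route (both Aschbacher's (24.2) and Kurzweil--Stellmacher's $\mathrm{P}\times\mathrm{Q}$-lemma): the standard proof first passes to the Frattini quotient via Burnside's lemma, so that $V$ may be taken elementary abelian, then writes $V=[V,A]\oplus C_V(A)$ by Maschke and observes that $B$ preserves both summands and that a $p$-group acting on a non-zero $\GF(p)$-space has non-zero fixed points, which lands $C_{[V,A]}(B)$ in $C_V(B)\cap[V,A]\le C_V(A)\cap[V,A]=0$. Your minimal-counterexample plus three-subgroups route avoids the reduction to the elementary abelian case entirely; the two proofs are of comparable length, and yours trades the module-theoretic decomposition for the commutator calculus you carry out.
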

\begin{proof}
See \cite[(24.2)]{asch2}.
\end{proof}

For the following, we refer to \cite[pg 100]{Greenbook} and \cite[pg 150 (a)-(f)]{F3} for the notion of an amalgam of type $\mathrm{F}_3$, noting that such amalgams are unique up to \emph{parabolic isomorphism}. We provide the following result, which appears to have evaded the literature up until this point.

\begin{proposition}\label[proposition]{F3Unique}
Let $\mathcal{A}$ be an amalgam of type $\mathrm{F}_3$. Then $\mathcal{A}$ is unique up to isomorphism.
\end{proposition}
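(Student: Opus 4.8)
The plan is to deduce the statement from the classification of weak BN-pairs of rank $2$ in \cite{Greenbook}, the only thing to be added being the removal of the indeterminacy coming from permuting the two parabolics. Write $\mathcal{A}=(G_1\hookleftarrow G_{12}\hookrightarrow G_2)$ for the two parabolics of $\mathcal{A}$ together with their common subgroup. By \cite{Greenbook} (see also \cite{F3}), any two amalgams of type $\mathrm{F}_3$ are isomorphic \emph{up to a parabolic isomorphism}: there are isomorphisms between the parabolics, compatible with the embeddings of $G_{12}$, which are however permitted to interchange $G_1$ and $G_2$. Hence it suffices to show that such an interchange can never occur, that is, that the labelling of the two parabolics is intrinsic to an amalgam of type $\mathrm{F}_3$.

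For this I would invoke the explicit isomorphism types of $G_1$ and $G_2$ recorded in \cite{Greenbook} and \cite{F3} and observe that $G_1\not\cong G_2$ as abstract groups. Since the isomorphism type of each parabolic is part of the specification of ``type $\mathrm{F}_3$'', any parabolic isomorphism $\mathcal{A}\to\mathcal{A}'$ between amalgams of type $\mathrm{F}_3$ must carry each parabolic of $\mathcal{A}$ to the parabolic of $\mathcal{A}'$ of the same isomorphism type, and so cannot interchange the two factors. A parabolic isomorphism that respects the factor labelling is precisely an isomorphism of amalgams, so the uniqueness up to parabolic isomorphism established in \cite{Greenbook} upgrades to uniqueness up to isomorphism.

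The only step with any real content is thus the verification that $G_1\not\cong G_2$, and this is immediate once their structure is to hand---the two parabolics differ already in, say, the order or the isomorphism type of $O_p(G_i)$---with the rest being bookkeeping. I remark that this cheap argument is special to the asymmetric shape of the $\mathrm{F}_3$-amalgam: for a symmetric example such as the weak BN-pair of type $\mathrm{G}_2(p)$, where the two parabolics are abstractly isomorphic, one could not argue in this way, and would instead have to exhibit an explicit isomorphism of $\mathcal{A}$ onto its mirror $(G_2\hookleftarrow G_{12}\hookrightarrow G_1)$; by a Goldschmidt-type analysis this reduces to checking that a suitable automorphism of $G_{12}$ extends compatibly to both $G_1$ and $G_2$, a contingency which does not arise here.
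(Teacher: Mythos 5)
Your argument rests on a misreading of what a \emph{parabolic isomorphism} is in Delgado--Stellmacher. A parabolic isomorphism between rank-$2$ amalgams $(G_1, G_2, G_{12})$ and $(G_1', G_2', G_{12}')$ is a pair of isomorphisms $\phi_i\colon G_i\to G_{\sigma(i)}'$ each carrying $G_{12}$ onto $G_{12}'$, but with \emph{no} requirement that $\phi_1|_{G_{12}}=\phi_2|_{G_{12}}$. That agreement on the common Borel is precisely what distinguishes an amalgam isomorphism from a parabolic isomorphism, and it is the agreement---not the possible permutation $\sigma$ of the two vertices---that is the obstruction one must overcome to upgrade parabolic uniqueness (which is what \cite{Greenbook} gives) to uniqueness up to isomorphism (the statement of the proposition). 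So ruling out the swap by observing $G_1\not\cong G_2$ does not close the argument: for a fixed labelling there can still be several amalgam isomorphism classes in one parabolic isomorphism class.

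The piece that actually does the work is Goldschmidt's lemma \cite[(2.7)]{goldschmidt}: the amalgam isomorphism classes of amalgams parabolically isomorphic to a fixed $(G_1, G_2, G_{12})$ are parametrized by the double cosets $A_1\backslash \Aut(G_{12})/A_2$, where $A_i$ is the image in $\Aut(G_{12})$ of $N_{\Aut(G_i)}(G_{12})$. Uniqueness therefore amounts to the factorization $\Aut(G_{12})=A_1A_2$, a genuine computation that one must perform whether or not $G_1\cong G_2$, and it is exactly this that the paper carries out in MAGMA. Your closing remark, suggesting the Goldschmidt analysis is only needed for ``symmetric'' amalgams such as $\mathrm{G}_2(p)$, has it backwards---the double coset count is always the content, and the vertex involution is a secondary refinement of it.
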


For this, we apply the computer implementation of Goldschmidt's lemma \cite[(2.7)]{goldschmidt} found in Cano's PhD Thesis \cite[pg 34]{cano} (mirrored in \cref{code}) in MAGMA. This takes as input four groups: $P_1$, $B_1$, $P_2$, $B_2$. It then outputs a $4$-tuple, of which the first entry is the one we are interested in. We appeal to the online version of the Atlas of Finite Group Representations \cite{OnlineAtlas} for a matrix representation of the group $\mathrm{F}_3$, namely its $248$-dimensional representation over $\GF(2)$. We then use \cite{OnlineAtlas} to obtain the matrices which generate two distinct maximal subgroups of $\mathrm{F}_3$ which contain a Sylow $3$-subgroup. These groups represent $P_1$ and $P_2$ in our case. 

By the main result of \cite{F3}, the parabolic subgroups defining an amalgam of type $\mathrm{F}_3$ are unique up to isomorphism and so, the groups $P_1$ and $P_2$ have the isomorphism type of the parabolic groups in any $\mathrm{F}_3$-type amalgam. Hence, we are justified in our choice of subgroups to take. Then the groups $B_i$ are defined as $N_{P_i}(S_i)$, where $S_i$ is any Sylow $3$-subgroup of $P_i$, for $i\in\{1,2\}$. The function then outputs $1$ as its first entry, and so the amalgam is unique.

Our final results in this section with regards to groups and modules concerns the identification of some local actions within the groups $\mathrm{Co}_1$, $\Sp_6(3)$ and $\mathrm{M}$.

\begin{lemma}\label[lemma]{36Iden}
Suppose that $G$ is a finite group with $O_3(G)=\{1\}$ and $V$ is a faithful $\mathrm{GF}(3)G$-module of dimension $6$. Assume that for $S\in\syl_3(G)$, $S\cong 3^{1+2}_+$, $G=O^{3'}(G)$ and there is an elementary abelian subgroup $A\le S$ of order $9$ with $|V/C_V(A)|=|V/C_V(a)|=3^3$ for all $a\in A^{\#}$. Then $G\cong \PSL_3(3)$ or $2.\mathrm{M}_{12}$.
\end{lemma}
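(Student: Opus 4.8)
The plan is to reduce the problem to a short list of candidate groups via the classification of finite simple groups with a Sylow $3$-subgroup of order $27$, and then to eliminate all but the two asserted possibilities by examining the $6$-dimensional $\GF(3)$-modules at hand; the decisive structural input is a dichotomy on the action of $A$ on $V$.

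First I would record the elementary consequences of the hypotheses. Here $|G|_3=|S|=27$ and $G$ embeds faithfully in $\GL_6(3)$; since $S\cong 3^{1+2}_+$ has exponent $3$ and $\Phi(S)=[S,S]=Z(S)$, every subgroup of $S$ of order $9$ is elementary abelian and contains $Z:=Z(S)$, so $Z\le A$. Because $C_V(A)=\bigcap_{a\in A^{\#}}C_V(a)$ and each of these subspaces is $3$-dimensional, they coincide; write $W:=C_V(A)=C_V(a)$ for all $a\in A^{\#}$, and note that $W$ is an $S$-submodule of $V$ (as $A\normaleq S$) of dimension $3$. A short argument then gives the dichotomy that will organise the endgame: either $[V,A]\le W$ --- equivalently $A$ acts quadratically on $V$, equivalently every $a\in A^{\#}$ has Jordan type $(2,2,2)$ on $V$ --- or some $a_0\in A^{\#}$ has $[V,a_0,a_0]\ne\{1\}$ and hence Jordan type $(3,2,1)$ on $V$. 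A routine reduction --- using $O_3(G)=\{1\}$, $G=O^{3'}(G)$, the faithfulness of $V$ and the fact that $3^{1+2}_+$ is directly indecomposable --- brings us to the case that $G$ is almost simple, with $F^*(G)$ quasisimple, $S$ a Sylow $3$-subgroup of $F^*(G)$, and socle $\bar{L}:=F^*(G)/Z(F^*(G))$ a simple group whose Sylow $3$-subgroup is isomorphic to $3^{1+2}_+$.

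At this stage I would invoke the classification of finite simple groups having a Sylow $3$-subgroup of order $27$ (classical, and in any event immediate from the classification of finite simple groups once $|G|$ is restricted to divide $|\GL_6(3)|$): the simple groups with Sylow $3$-subgroup isomorphic to $3^{1+2}_+$ are exactly $\PSL_3(3)$, $\PSU_3(3)$, $\mathrm{M}_{12}$ and $\mathrm{J}_2$. Both $\mathrm{J}_2$ and $\mathrm{M}_{12}$ have no non-trivial $\GF(3)$-module of dimension at most $6$, so these are excluded; and for $\PSU_3(3)$ one observes that on any faithful $6$-dimensional $\GF(3)$-module the long-root unipotent elements (which constitute $Z(S)^{\#}$, and so lie in every subgroup of $S$ of order $9$) and the regular unipotent elements act with fixed spaces of different dimensions, so that no subgroup $A$ with the stated property can exist --- ruling out $\PSU_3(3)$ and its extensions. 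Tracking which central and outer extensions of $\PSL_3(3)$ and $\mathrm{M}_{12}$ still admit a faithful $6$-dimensional $\GF(3)$-module carrying a valid $A$ then leaves exactly $G\cong\PSL_3(3)$, acting on the symmetric square $S^2(\GF(3)^3)$ of its natural module (the abelian unipotent radical of the Siegel parabolic of $\Sp_6(3)$), and $G\cong 2.\mathrm{M}_{12}$, acting on the ternary Golay module (the module appearing in $3^6:2.\mathrm{M}_{12}\le\mathrm{Co}_1$); one then checks directly that both of these actions satisfy the hypotheses, with $A$ taken to be, respectively, $U_{\alpha}Z(S)$ for a root subgroup $U_{\alpha}$ of $\PSL_3(3)$ fixing a $2$-space of $\GF(3)^3$ pointwise, and a suitable elementary abelian subgroup of order $9$ of $2.\mathrm{M}_{12}$.

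The substantive step, and the one I expect to be the main obstacle, is this last elimination: proving that the condition $|V/C_V(A)|=|V/C_V(a)|=3^3$ genuinely rules out every remaining $6$-dimensional $\GF(3)$-module of the surviving groups. I would run it through the dichotomy above. In the quadratic case $W=C_V(A)=[V,A]$, $A$ acts trivially on $W$ and on $V/W$, and each $a\in A^{\#}$ induces an isomorphism $V/W\to W$; identifying $V/W$ with $W$, the image of $A$ in $\mathrm{End}(W)$ is then a $2$-dimensional $\GF(3)$-subspace of $M_3(\GF(3))$ all of whose non-zero elements are invertible, which forces $A$ into a copy of the field $\GF(27)$ inside $M_3(\GF(3))$ (so that $W$ becomes a $1$-dimensional $\GF(27)$-space), and $N_G(A)$ is recovered from $\Stab_{\GL(V)}(W)$ together with this field structure. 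In the non-quadratic case an element $a_0\in A^{\#}$ of Jordan type $(3,2,1)$, together with the equality $C_V(a)=W$ for all $a\in A^{\#}$, again pins down $V$ as an $S$-module up to a short list and recovers the $S^2(\GF(3)^3)$ picture. In both cases $G=\langle N_G(Z),N_G(A)\rangle$, and the isomorphism type of $G$ is determined by the shapes of these two $3$-local subgroups and their embeddings in $\GL(V)$ --- an amalgam-recognition step of exactly the kind used elsewhere in this section. Since the arithmetic involved, although elementary, is voluminous, the efficient route --- and the one the author indicates --- is to carry out this verification directly in MAGMA.
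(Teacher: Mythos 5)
The paper offers no written proof of this lemma: immediately before its statement the author records only that it is ``easily verified in MAGMA''. Your CFSG-based sketch is therefore a genuinely different route, and more informative than the paper's one-line deferral. The overall plan --- reduce to $F^*(G)$ quasisimple, restrict the socle via $|S|=27$ and $|G|\mid|\GL_6(3)|$, then eliminate survivors by comparing Jordan types on candidate $6$-dimensional modules --- is sound, and the positive identifications ($\PSL_3(3)$ on $\Sym^2(\GF(3)^3)$, $2.\mathrm{M}_{12}$ on the $6$-dimensional ternary Golay module) are both correct, provided $A$ is taken to be $Z(S)U_\alpha$ with $U_\alpha$ the transvection root group fixing the \emph{same} hyperplane of $\GF(3)^3$ as $Z(S)$ does; the other order-$9$ subgroup of $S$ fails the requirement $C_V(A)=C_V(a)$. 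The $\PSU_3(3)$ exclusion via Jordan types also checks out: on $\Sym^2$ of the natural module a long-root element fixes a $3$-space but the short-root (regular) unipotents that populate $A\setminus Z(S)$ fix only a $2$-space, and any reducible $6$-dimensional module fails even more grossly.

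That said, several steps are stated too loosely to stand as written. The ``routine reduction'' to $F^*(G)$ quasisimple with a single component is genuinely not routine: one must separately rule out $E(G)=\{1\}$ and $E(G)$ having two or more components, and one needs $S\le E(G)$, not merely the direct indecomposability of $3^{1+2}_+$; the cleaner argument (used elsewhere in this paper, in the proof of \cref{HExotic}) is that $\Omega(Z(S))$ has order $3$ and would have to lie in the trivial intersection of any two components. Your candidate list is also off: under the constraint $|G|\mid|\GL_6(3)|$ that you invoke, $\mathrm{J}_2$ cannot occur at all, since $5^2\mid|\mathrm{J}_2|$ while $|\GL_6(3)|_5=5$, so the surviving socles are exactly $\PSL_3(3)$, $\PSU_3(3)$ and $\mathrm{M}_{12}$. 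Calling the reduced $G$ ``almost simple'' is incorrect, since $2.\mathrm{M}_{12}$ --- one of the lemma's conclusions --- is not almost simple; what is true is that $F^*(G)$ is quasisimple with $G/Z(F^*(G))$ almost simple. Finally, the closing amalgam-recognition remark is superfluous once CFSG has pinned down the socle: at that point the determination of which $6$-dimensional $\GF(3)$-modules of the surviving quasisimple covers admit a suitable $A$ is the finite verification the author delegates to MAGMA, and invoking a local-to-global identification step adds nothing.
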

\begin{proof}
Let $G$ be a minimal counterexample with respect to $|G|$. By \cite[8.3.4(a)]{kurz}, $O_{3'}(G)=\langle C_{O_{3'}(G)}(a)\mid a\in A^\#\rangle$ and since $C_V(a)=C_V(A)$ for all $a\in A^\#$, we have that $O_{3'}(G)$ normalizes $C_V(A)$. Set $T:=\langle A^{AO_{3'}(G)}\rangle$ so that $C_V(A)=C_V(T)\le C_V(O_{3'}(T))$. By coprime action again, $V=[V, O_{3'}(T)]\times C_V(O_{3'}(T))$. But now, 
\[ C_{[V, O_{3'}(T)]}(A)\le [V, O_{3'}(T)]\cap C_V(A)=[V, O_{3'}(T)]\cap C_V(T)=\{1\}\]
and as $A$ is a $3$-group, we must have that $[V, O_{3'}(T)]=\{1\}$. Since $G$ acts faithfully on $V$, we infer that $O_{3'}(T)=\{1\}$. Then, as $A\le T \le AO_{3'}(G)$ and $T\cap O_{3'}(G)\le O_{3'}(T)=\{1\}$, we conclude that $A=T$ is normalized by $O_{3'}(G)$. In particular, $[A, O_{3'}(G)]=\{1\}$.

Since $O_3(G)=\{1\}$ and $F^*(G)$ is self-centralizing in $G$, we have shown that $G$ contains a component, $K$ say, whose order is divisible by $3$. Then $E:=\langle K^G\rangle$ is normalized by $S$ and so we deduce that it contains $Z(S)$. Note that since $m_3(S)=2$ and $O_3(E)\le O_3(G)=\{1\}$, $E$ contains at most two components of $G$ whose orders are divisible by $3$. Indeed, since $S$ is a $3$-group, we see that $S$ normalizes these components. If $E$ contains exactly two components of $G$ whose orders are divisible by $3$, $K_1$ and $K_2$ say, then $K_i\cap S\normaleq S$ for $i\in\{1,2\}$ so that $Z(S)\le K_1\cap K_2\le Z(E)$. Hence, $Z(S)\le O_3(Z(E))\le O_3(G)$, a contradiction. 

Thus, $E=K$ is quasisimple. Now, $K=\langle Z(S)^K\rangle=\langle Z(S)^G\rangle$ and so $K$ is a component of $H:=\langle Z(S)^G\rangle S$ so that $H=O^{3'}(H)$ is almost-quasisimple. Note that $O_3(H)$ is trivial for otherwise $Z(S)\le O_3(H)\cap K\le O_3(K)\le O_3(G)$ and since $O_3(G)=\{1\}$, this is a contradiction. Hence, by minimality, either $H\cong \PSL_3(3)$ or $2.\mathrm{M}_{12}$; or $G=H$. In the former case, we deduce that $H=K\normaleq G$ and since $S\le H$ and $G=O^{3'}(G)$, we have that $G=H$.

Hence, a minimal counterexample of this lemma is almost quasisimple. Now, $|A|^2=3^4>3^3=|V/C_V(A)|$ so that $V$ is a \emph{$2$F-module} for $G$ with \emph{offender} $A$ in the language of \cite{2F3}. By \cite[Table 1]{2F3}, $G$ is isomorphic to either a group of Lie type in characteristic $3$ or $2.\mathrm{M}_{12}$. The groups of Lie type in characteristic $3$ with Sylow $3$-subgroup isomorphic to $S$ are well known (see \cite[(3.3)]{GLS3}), and so we have that $G\cong 2.\mathrm{M}_{12}$, $\PSL_3(3)$ or $\SU_3(3)$. Now, $\SU_3(3)$ has only one non-trivial module of dimension $6$ over $\GF(3)$, the \emph{natural module}. But for this module, we have that $|C_V(B)|=3^2$ for any subgroup $B$ of the Sylow $3$-subgroup which has order $9$.
\end{proof}

In the following proposition, MAGMA is used to verify certain calculations. The actual code itself may be found in \cref{code}.

\begin{lemma}\label[lemma]{J2Iden}
Suppose that $Q\cong 5^{1+6}_+$, $G\le \Out(Q)$ and write $V=Q/Z(Q)$ and $S\in\syl_5(G)$. Suppose the following hold:
\begin{enumerate}
    \item $S$ is elementary abelian of order $25$;
    \item $G=\langle S^G\rangle$;
    \item $O_5(G)=\{1\}$; and
    \item $|C_V(S)|=5$ and $|C_V(s)|=25$ for all $s\in S^{\#}$.
\end{enumerate}
Then $G\cong 2.\mathrm{J}_2$.
\end{lemma}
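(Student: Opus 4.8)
The plan is to realise $G$ inside a conformal symplectic group and then read off its isomorphism type from its $5$-local structure. Since $Q$ has exponent $5$, the commutator map induces a non-degenerate symplectic form $V\times V\to Z(Q)\cong\GF(5)$, and $v\mapsto[\,\cdot\,,v]$ identifies $\Inn(Q)$ with the kernel of $\Aut(Q)\to\GL(V)\times\Aut(Z(Q))$; hence $\Out(Q)\cong\mathrm{CSp}_6(5)$ acts faithfully on $V$, the induced action on $Z(Q)$ being given by the similitude character. As $G=\langle S^G\rangle$ is generated by elements of $5$-power order and $|\Aut(Z(Q))|=4$ is coprime to $5$, each such element, and so $G$ itself, centralises $Z(Q)$; thus $G\le\Sp_6(5)=\Sp(V)$. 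From now on I treat $G$ as a subgroup of $\Sp_6(5)$ acting faithfully on $V$, with $S\in\syl_5(G)$ elementary abelian of order $25$ satisfying (iv) and (v), and $O_5(G)=\{1\}$.

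The next step is to pin down $V$ as a $\GF(5)S$-module. The trivial module being the only simple $\GF(5)S$-module, (iv) forces $\mathrm{soc}(V)=C_V(S)$ to be one-dimensional; and since $S\le\Sp(V)$ provides an $S$-isomorphism $V\cong V^{\ast}$, the head $V/\mathrm{rad}(V)$ is likewise one-dimensional, so $V$ is cyclic over $\GF(5)S$. A cubically acting $s\in S^{\#}$ with $\dim C_V(s)=2$ must act on $V$ as two Jordan blocks of size $3$, and a complementary generator of $S$ then acts as an order-$5$ automorphism of this $\GF(5)\langle s\rangle$-module whose fixed space is two-dimensional and meets $\mathrm{soc}(V)$ in a line; an elementary calculation with such modules shows these constraints determine $V|_S$ up to isomorphism, and that it is the restriction to a Sylow $5$-subgroup of the $6$-dimensional $\GF(5)$-module of $2.\mathrm{J}_2$. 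From this I would compute $N_{\Sp_6(5)}(S)$ and $C_{\Sp_6(5)}(S)$ and hence tightly constrain $N_G(S)=G\cap N_{\Sp_6(5)}(S)$; in particular $N_G(S)/C_G(S)$ embeds in a fixed subgroup of $\GL_2(5)$, and $S$ is not normal in $G$ because $O_5(G)=\{1\}$.

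Finally I would identify $G$ globally. The most efficient route is to invoke the classification of the maximal subgroups of $\Sp_6(5)$ (equivalently of $\PSp_6(5)$): one verifies that a subgroup with $O_5(G)=\{1\}$ which is generated by its Sylow $5$-subgroups and realises this $5$-local configuration lies in no geometric (Aschbacher $\mathcal{C}_i$) maximal subgroup and that, among the remaining almost simple maximals, only those with socle $\mathrm{J}_2$ occur; uniqueness of the $6$-dimensional $\GF(5)$-module of $2.\mathrm{J}_2$ then gives $G\cong 2.\mathrm{J}_2$. (Alternatively, from $N_G(S)$ together with $N_G(\langle t\rangle)$ for a suitably chosen cubically acting $t\in S^{\#}$ one can build a rank-$2$ amalgam and appeal to its uniqueness, or cite a $5$-local characterisation of $\mathrm{J}_2$.) I expect the main obstacle to be exactly this last step — excluding every other subgroup of $\Sp_6(5)$ compatible with (i)--(v); here the fixed-point hypotheses (iv)--(v) are doing the real work, eliminating smaller configurations such as $\Alt(5)\times\Alt(5)$ as well as larger $5$-local overgroups. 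A purely hand-written treatment has to work through the subgroup structure of the $\mathcal{C}_i$-maximals of $\Sp_6(5)$, which is laborious; computationally the statement reduces to enumerating, up to conjugacy in $\mathrm{CSp}_6(5)$, the subgroups satisfying (i)--(v), a short check in MAGMA \cite{MAGMA}.
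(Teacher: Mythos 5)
The paper gives no written proof of this lemma: immediately before its statement the author simply records that ``these results are easily verified in MAGMA,'' so there is no handwritten argument to compare against. Your proposal therefore goes further than the paper. The opening reductions are correct and clean: for $Q\cong 5^{1+6}_+$ of exponent $5$ one indeed has $\Out(Q)\cong\mathrm{CSp}_6(5)$, and since $G=\langle S^G\rangle$ is generated by $5$-elements and $\Aut(Z(Q))$ has order $4$, the similitude character kills $G$, giving $G\le\Sp_6(5)$. This is exactly the normalisation a MAGMA enumeration would implicitly perform, and stating it is a genuine improvement in transparency over the paper.

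The two remaining steps are where you are asserting rather than proving. First, the claim that (iv)--(v) pin down $V|_S$ up to $\GF(5)S$-isomorphism is plausible but not established: symplectic self-duality gives $\operatorname{soc}(V)$ and $V/\operatorname{rad}(V)$ both one-dimensional as you say, but that alone does not force uniseriality, and one must also rule out the other Jordan type compatible with $\dim C_V(s)=2$ for a unipotent element of $\Sp_6(5)$ (namely $(5,1)$) interacting badly with the rest of $S$; the cubicity hypothesis (v) is doing real work here and deserves more than one sentence. Second, and more seriously, the final identification step — eliminating every other subgroup of $\Sp_6(5)$ compatible with (i)--(v) — is left as an appeal to the classification of maximal subgroups together with unverified assertions that no $\mathcal{C}_i$-maximal contains such a $G$. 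You acknowledge this is the hard part, and rightly so: it is essentially the entire content of the lemma, and in the paper it is delegated entirely to MAGMA. So your proposal is a sound skeleton for a hand proof, and is consistent in spirit with how the paper disposes of the lemma, but as written it has the same residual reliance on a computational check (or an unfinished case analysis of the maximal subgroups of $\Sp_6(5)$) as the original.
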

\begin{proof}
Since $Q$ is extraspecial, $O_5(G)=\{1\}$ and $G=O^{5'}(G)$, applying \cite{Winter} we have that $G$ is isomorphic to a subgroup of $\Sp_6(5)$ and $Q/Z(Q)$ may be identified with the natural module for $\Sp_6(5)$ in this action. We appeal to \cite[Table 8.28, Table 8.29]{LowMax} for the list of maximal subgroups of $\Sp_6(5)$. These are 
\[2.\mathrm{J}_2,\, \Sp_2(5)\circ \mathrm{GO}_3(5),\, \mathrm{GU}_3(5).2, \Sp_2(5^3).3,\, \Sp_2(5)^3:\Sym(3),\, \Sp_2(5)\times \Sp_4(5),\]
\[5^6:\GL_3(5),\, 5^{3+4}:\GL_2(5)\times \Sp_2(5)\,\,\text{and}\,\,5^{1+4}_+:C_4\times \Sp_4(5).\] 
Aiming for a contradiction, assume throughout that $G\not\cong 2.\mathrm{J}_2$.

We compute that the maximal subgroups in which a Sylow $5$-subgroup fixes a subspace of dimension $1$ are $2.\mathrm{J}_2$, $\Sp_2(5)\circ \mathrm{GO}_3(5)$, $5^6:\GL_3(5)$, $5^{3+4}:\GL_2(5)\times \Sp_2(5)$ and $5^{1+4}_+:C_4\times \Sp_4(5)$. We refer to these subgroups as $M_1,\dots, M_5$ respectively. In $M_2$, one can compute that there is a $5$-element which fixes a subspace of dimension $3$ and as a Sylow $5$-subgroup of $M_1$ has order $25$, $G$ cannot be isomorphic to a subgroup of $M_1$. If $G$ is isomorphic to a subgroup of $M_3$, then as $O_5(G)=\{1\}$, $G$ projects as a subgroup of $\GL_3(5)$. But every subgroup of $\GL_3(5)$ which has a Sylow $5$-subgroup of order $25$ has a normal $5$-subgroup, a contradiction.

Similarly, if $G$ is isomorphic to a subgroup of $M_4$, then $G$ is isomorphic to a subgroup of $\GL_2(5)\times \Sp_2(5)$. Indeed, since $G:=\langle S^G\rangle$, $|S|=25$ and $O_5(G)=\{1\}$, it follows that $G\cong \SL_2(5)\times \Sp_2(5)$. Hence, $GO_5(M_4)=O^{5'}(M_4)$. Let $L\le G$ be such that $L\normaleq G$ and $L\cong \SL_2(5)\cong \Sp_2(5)$. Then $L$ contains a Sylow $2$-subgroup $T$ of $LO_5(M_4)\normaleq G$. By a calculation, we have that $C_{GO_5(M_4)}(T)\cong 2\times \SL_2(5)$. Since $C_G(T)\cong 2\times \SL_2(5)$, we have that $C_{GO_5(M_4)}(T)=C_G(T)$. However, for $R\in\syl_5(C_{GO_5(M_4)}(T))$, we have that $|C_V(R)|=5^5$, a clear contradiction.

If $G$ is isomorphic to a subgroup of $M_5$ then $G$ is isomorphic to a subgroup of $C_4\times \Sp_4(5)$. Since $G=\langle S^G\rangle$, we see that $G$ is isomorphic to a subgroup of $\Sp_4(5)$. Using MAGMA, since $O_5(G)=\{1\}$, $|S|=25$ and $G=\langle S^G\rangle$, we calculate that $G\cong \SL_2(25)$ or $\SL_2(5)\times \SL_2(5)$. Moreover, the center of $\Sp_4(5)$ is equal to the center of a Sylow $2$-subgroup of $\Sp_4(5)$ and it follows from computations that $G$ centralizes the center of a Sylow $2$-subgroup of $L5:=O^{5'}(M_5)$, which we denote by $T$. Then $G=G'\le C_{L5}(T)'\cong \Sp_4(5)$ and so $G$ is contained in a specified complement to $O_5(M_5)$ in $L_5$. But then we calculate for all such candidates for $G$ that $|C_V(S)|=5^4$, a contradiction. 

Hence, $G$ is isomorphic to a proper subgroup of $M_1\cong 2\mathrm{J}_2$. But, appealing to \cite{atlas} for a list of maximal subgroups of $\mathrm{J}_2$, the only maximal subgroups of $2.\mathrm{J}_2$ which have a Sylow $5$-subgroup of order $25$ also have a normal $5$-subgroup, a contradiction. 
\end{proof}

\section{Preliminaries: Fusion Systems}\label{FusSec}

We now let $S$ be a finite $p$-group and $\fs$ a saturated fusion system on $S$, referring to \cite{ako} and \cite{craven} for standard terminology and results regarding fusion systems. We use the remainder of this section to reaffirm some important concepts regarding fusion systems which pertain to this work, and mention some vital results from other sources in the literature.

We begin with the notion of isomorphism for fusion systems.

\begin{definition}
Let $\fs$ be a saturated fusion system on a $p$-group $S$ and let $\alpha: S\to T$ be a group isomorphism. Define $\fs^\alpha$ to be the fusion system on $T$ with \[\Hom_{\fs^\alpha}(P, Q)=\{\alpha^{-1}\gamma \alpha \mid \gamma\in\Hom_{\fs}(P\alpha^{-1}, Q\alpha^{-1}\}\] for $P,Q\le T$.

We then say that a fusion system $\mathcal{E}$ over a $p$-group $T$ is \emph{isomorphic} to $\fs$, written $\mathcal{E}\cong \fs$, if there a group isomorphism $\alpha: S\to T$ with $\mathcal{E}=\fs^\alpha$.
\end{definition}

\begin{remark}
We note that our definition of isomorphism coincides with morphisms defined in \cite[Definition II.2.2]{ako} which are surjective and have trivial kernel.
\end{remark}

Importantly, for $G$ a finite group, $S\in\syl_p(G)$ and $K$ a normal $p'$-subgroup of $G$, writing $\bar{G}:=G/K$, we have that $\fs_S(G)\cong \fs_{\bar{S}}(\bar{G})$. This is often viewed as one of the main attractions for working with fusion systems in place of finite groups. 

We recall that $\fs$ is \emph{realizable} if there is a finite group $G$ and $S\in\syl_p(G)$ such that $\fs=\fs_S(G)$, and $\fs$ is \emph{exotic} otherwise. By the above observation, if we aim to show that $\fs$ is realized by a finite group $G$, then we may as well assume that $O_{p'}(G)=\{1\}$.

\begin{notation}
Let $\fs$ be a fusion system and let $\fs_1, \fs_2$ be fusion subsystems of $\fs$. That is, $\fs_i$ is a subcategory of $\fs$ which is itself a fusion system. Write $\langle \fs_1, \fs_2\rangle_S$ for the smallest subsystem of $\fs$ supported on $S$ which contains both $\fs_1$ and $\fs_2$. 

At various points, we may also write $\langle \mathcal{M}_1, \mathcal{M}_2,\dots\rangle_S$ where $\mathcal{M}_i$ is some set of morphisms contained in $\fs$ and by this we mean the smallest subsystem of $\fs$ supported on $S$ which contains $\mathcal{M}_i$ for all $i$. We also mix the two conventions e.g. $\langle \fs_1, \mathcal{M}_1, \mathcal{M}_2\rangle_S$ is the smallest subsystem of $\fs$ supported on $S$ containing $\fs_1$, $\mathcal{M}_1$ and $\mathcal{M}_2$.

We emphasize that saturation is not imposed here. So even if $\fs$, $\fs_1$ and $\fs_2$ are saturated, then $\langle \fs_1, \fs_2\rangle_S$ need not be saturated. 
\end{notation}

We denote the set of $\fs$-centric subgroup of $\fs$ by $\fs^c$ and the fully $\fs$-normalized, $\fs$-centric-radical subgroups of $\fs$ by $\fs^{frc}$, referring to \cite[Definition I.2.4, Definition I.3.1]{ako} for the appropriate definitions. We present the following result as a lemma, but in truth it may be considered as part of the definition of saturation of a fusion system.

\begin{lemma}\label[lemma]{extaxiom}
Let $\fs$ be a saturated fusion system on a $p$-group $S$. For a fully $\fs$-normalized subgroup $P$ of $S$ and $R$ a subgroup of $N_S(P)$ strictly containing $P$, the morphisms in $N_{\Aut_{\fs}(P)}(\Aut_R(P))$ lift to $\fs$-automorphisms of $R$.
\end{lemma}
\begin{proof}
Since $P$ is fully $\fs$-normalized and $\fs$ is saturated, $P$ is \emph{receptive}, as defined in \cite[Definition I.2.2]{ako}. Hence, for $\alpha\in N_{\Aut_{\fs}(P)}(\Aut_R(P))$ and $N_\alpha:=\{g\in N_S(P) |{}^\alpha c_g\in \Aut_S(P)\}$, there is $P<R\le N_\alpha\le N_S(P)$ such that there is $\hat{\alpha}\in\Hom_{\fs}(N_\alpha, S)$ with $R\hat{\alpha}=R$ and $\hat{\alpha}|_P=\alpha$. Indeed, $\hat{\alpha}$ restricts to $\bar{\alpha}\in \Aut_{\fs}(R)$, as desired.
\end{proof}

Often, the morphisms we choose to lift in \cref{extaxiom} can chosen to lift all the way to certain \emph{essential subgroups} of $\fs$.

\begin{definition}
Let $\fs$ be a saturated fusion system on $S$ and let $E<S$. Then $E$ is essential in $\fs$ if $E$ is fully $\fs$-normalized, $\fs$-centric and has the property that $\Out_{\fs}(E)$ contains a strongly $p$-embedded subgroup. 

We denote by $\mathcal{E}(\fs)$ the essential subgroups of $\fs$.
\end{definition}

\begin{lemma}\label[lemma]{essfrc}
We have that $\mathcal{E}(\fs)\subseteq \fs^{frc}$.
\end{lemma}
\begin{proof}
See \cite[Proposition I.3.3(a)]{ako}.
\end{proof}

In later sections, our treatment of saturated fusion systems will focus specifically on the actions associated to essential subgroups, and the morphisms lifted to them. The reasoning behind this is that a saturated fusion system is completely determined by this information. This observation is contained in the following theorem.

\begin{theorem}[Alperin -- Goldschmidt Fusion Theorem]
Let $\fs$ be a saturated fusion system over a $p$-group $S$ and let $\mathcal{E}^0(\fs)$ be a set of representatives of the $\fs$-conjugacy classes of $\mathcal{E}(\fs)$. Then \[\fs=\langle \Aut_{\fs}(Q), \Aut_{\fs}(S) \mid Q\in\mathcal{E}^0(\fs)\rangle_S.\]
\end{theorem}
\begin{proof}
See \cite[Theorem I.3.5]{ako} and \cite[Proposition 7.25]{craven}.
\end{proof}

Importantly, we recognize that if $P$ is a group which is not properly contained in any essential subgroup, then applying \cref{extaxiom}, for any subgroup $P<R\le N _S(P)$ the morphisms in $N_{\Aut_{\fs}(P)}(\Aut_R(P))$ lift to $\fs$-automorphisms of $S$. Moreover, if $P$ is properly contained in a unique essential subgroup $E$, and $E$ is invariant under $\Aut_{\fs}(S)$ (e.g. if $E$ is characteristic in $S$), then the morphisms in $N_{\Aut_{\fs}(P)}(\Aut_R(P))$ can be chosen to lift to $\fs$-automorphisms of $E$. We will refer to this frequently throughout as the \emph{extension axiom}.

Throughout the later portions of this work, we will often employ computational methods to determine a list of potential essential subgroups of a fusion system $\fs$ supported on a given $p$-group $S$ via the fusion systems package in MAGMA \cite{Comp1}, \cite{Webpage}. 

Roughly speaking, the algorithm first determines a list a subgroups of $S$ which are self-centralizing in $S$, a prerequisite to being essential. Since the groups with a strongly $p$-embedded subgroup are ``known", the isomorphism type of $N_S(E)/E$ for a potential essential subgroup $E$ should have a prescribed form too. Then further checks are carried out which verify that certain internal conditions in $E$ hold which necessarily hold if $E$ is essential in some saturated fusion system supported on $S$. These checks and more are described in \cite{Comp1}.

The following result is a useful tool for identifying automizers of essential subgroups.

\begin{theorem}\label[theorem]{SEFF}
Suppose that $E$ is an essential subgroup of a saturated fusion system $\fs$ over a $p$-group $S$, and assume that there are $\Aut_{\fs}(E)$-invariant subgroups $U\le V\le E$ such that $E=C_S(V/U)$ and $V/U$ is an FF-module for $G:=\Out_{\fs}(E)$. Then, writing $L:=O^{p'}(G)$ and $W:=V/U$, we have that $L/C_L(W)\cong \SL_2(p^n)$, $C_L(W)$ is a $p'$-group and $W/C_W(L)$ is a natural $\SL_2(p^n)$-module for some $n\in\N$.
\end{theorem}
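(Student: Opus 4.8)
Once the definition of ``essential'' is unwound, this is a statement about a finite group $G := \Out_{\fs}(E)$ which carries a strongly $p$-embedded subgroup and admits an FF-module $V$, and the plan is to reduce to $L := O^{p'}(G)$, pin down $L/C_L(V)$ via two classification theorems, and then read off the remaining conclusions from the structure of $\SL_2(p^n)$. For the reduction: since $E$ is essential, $G$ has a strongly $p$-embedded subgroup, so $p \divides |G|$, $O_p(G)=\{1\}$, and every such subgroup contains a Sylow $p$-subgroup of $G$. As $G/L$ is a $p'$-group we have $\syl_p(L)=\syl_p(G)$, so $L$ again has a strongly $p$-embedded subgroup, $O^{p'}(L)=L$, and $O_p(L)\le O_p(G)=\{1\}$. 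Moreover $V$ remains an FF-module for $L$: an offender on $V$ for $G$ pushes down to an offender for the faithful action of $G/C_G(V)$, which can be conjugated into $\syl_p(G/C_G(V))$ --- the image of $\syl_p(L)$ --- whose full preimage inside a Sylow $p$-subgroup of $L$ is then an offender on $V$ lying in $L$. Writing $\bar L:=L/C_L(V)$, we thus have a faithful FF-module for $\bar L$ with $O^{p'}(\bar L)=\bar L$.

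The crux is to combine two classifications. By the FF-module theorem of Meierfrankenfeld and Stellmacher, $\bar L$ is, up to the standard normalizations, a product of quasisimple ``FF-components'', each acting on a natural section of $V$ and appearing on a short explicit list: special linear groups (on the natural module or its dual), symplectic and orthogonal groups on their natural modules, symmetric and alternating groups on the natural permutation module, together with a few small exceptions. Now a finite group with a strongly $p$-embedded subgroup is never a direct product of two subgroups of order divisible by $p$, and $O^{p'}(L)=L$ forbids $p'$-direct factors, so $\bar L$ is a single FF-component; intersecting the list with the classification of finite groups possessing a strongly $p$-embedded subgroup leaves only $\SL_2(q)$ (for $p$ odd this is immediate; at $p=2$ one additionally invokes Bender's theorem and the fact that $\Sz(q)$ and $\PSU_3(q)$ support no FF-module). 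Hence $L/C_L(V)\cong \SL_2(p^n)$, which is conclusion (i). This is where I expect essentially all of the work to lie; the alternative is to cite a single lemma from the literature that already packages ``strongly $p$-embedded subgroup plus FF-module'' into exactly the three conclusions stated.

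It remains to extract the last two assertions from the structure of $\SL_2(p^n)$. For $C_L(V)$: we have $O_p(C_L(V))\le O_p(L)=\{1\}$, while a Sylow $p$-subgroup of $L$ maps onto the nontrivial Sylow $p$-subgroup of $L/C_L(V)\cong\SL_2(p^n)$, so $\syl_p(C_L(V))\subsetneq\syl_p(L)$; a Frattini argument applied to $C_L(V)\trianglelefteq L$, together with the fact that distinct Sylow $p$-subgroups of $L$ lie in conjugates of the strongly $p$-embedded subgroup meeting in $p'$-groups, then rules out $p\divides|C_L(V)|$, so $C_L(V)$ is a $p'$-group. For the module: viewed through $L/C_L(V)\cong\SL_2(p^n)$, the module $V$ is an FF-module on which the image of a root subgroup (of order $p^n$) is an offender --- indeed the offender must be a full root subgroup --- and the description of FF-modules for $\SL_2(p^n)$ then forces $V/C_V(O^p(L))$ to be a natural $\SL_2(p^n)$-module, completing the proof.
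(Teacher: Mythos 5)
The paper's own proof of this statement consists of a single citation: the result is \cite[Theorem 1.1]{henkesl2}, Henke's recognition theorem, which takes exactly the hypotheses ``strongly $p$-embedded subgroup plus FF-module on a subgroup of $\Omega(Z(E))$'' and delivers exactly the three stated conclusions. Your closing remark --- that ``the alternative is to cite a single lemma from the literature that already packages'' this --- is therefore precisely what happens, and is the intended route. Your from-scratch sketch is broadly the same underlying mathematics: the reduction to $L=O^{p'}(G)$ is correct (including the point that an offender for $G$ can be transported to one lying in $L$), and combining the FF-module classification with the classification of groups with a strongly $p$-embedded subgroup to single out $\SL_2(p^n)$ is the right key step; the argument at the end that an offender must project onto a full root group and hence $V/C_V(O^p(L))$ is the natural module is also sound.

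Where I am not convinced is the step establishing that $C_L(V)$ is a $p'$-group. The Frattini argument you describe, applied to $C_L(V)\normaleq L$ with $P\in\syl_p(C_L(V))$ nontrivial, gives $L=C_L(V)\,N_L(P)$, and since a nontrivial $p$-subgroup of the strongly $p$-embedded subgroup $H$ has $N_L(P)\le H$ (after conjugating $P$ into $H$), one obtains $L=C_L(V)\,H$. But this is not yet a contradiction: it is consistent for a normal subgroup and a strongly $p$-embedded subgroup to generate the whole group, and ``distinct Sylow $p$-subgroups lie in conjugates of $H$ meeting in $p'$-groups'' does not on its own rule out $p\divides|C_L(V)|$. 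To close this you would need a further idea --- for instance, passing to $L/O_{p'}(L)$ and invoking the fine structure of groups with a strongly $p$-embedded subgroup with trivial $p$- and $p'$-cores, or running the FF-inequality once more against the full preimage of the root group. Since Henke's theorem exists precisely to package these hypotheses into the three conclusions you want, I would simply cite it, as the paper does, rather than attempt to rederive the $p'$-ness of $C_L(V)$ by the sketched Frattini argument.
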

\begin{proof}
Since $E=C_S(W)$, we infer that $\Inn(E)=C_{\Aut_S(E)}(W)$ so that $C_G(W)$ is a $p'$-group. In particular, $G/C_G(W)$ has a strongly $p$-embedded subgroup and so too does $L/C_L(W)\cong LC_G(W)/C_G(W)=O^{p'}(G/C_G(W))$ by \cite[Remark 3.5]{henkesl2}. Then $W$ is an FF-module for $L/C_L(W)$ and we apply \cite[Theorem 5.6]{henkesl2} to obtain the result.
\end{proof}

The next two results of this section are pivotal in creating exotic fusion systems from $p$-fusion categories while maintaining saturation. The first of these techniques we refer to as ``pruning."

\begin{lemma}\label[lemma]{Pruning}
Suppose that $\fs$ is a saturated fusion system on $S$ and $P$ is an $\fs$-essential subgroup of $S$. Let $\mathcal{C}$ be a set of $\fs$-class representatives of $\fs$-essential subgroups with $P\in \mathcal{C}$. Assume that if $Q<P$ then $Q$ is not $S$-centric. Let $H_\fs(P)$ be the subgroup of $\Aut_{\fs}(P)$ which is generated by $\fs$-automorphisms of $P$ which extend to $\fs$-isomorphisms between strictly larger subgroups of $S$. If $H_{\fs}(P)\le K \le \Aut_{\fs}(P)$, then $\mathcal{G} = \langle \Aut_{\fs}(S), K, \Aut_{\fs}(E) \mid E\in\mathcal{C}\setminus \{P\}\rangle_S$ is saturated. Furthermore, $P$ is $\mathcal{G}$-essential if and only if $H_{\fs}(P)< K$.
\end{lemma}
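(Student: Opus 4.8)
The plan is to verify the saturation axioms for $\mathcal{G}$ using the characterization of saturated fusion systems via centric-radical subgroups, together with the fact that $\fs$ itself is saturated. First I would observe that $\mathcal{G}$ is a fusion system on $S$ in the abstract sense: it is generated by automorphism groups of subgroups of $S$, each containing $\Inn$ of the relevant subgroup, and $\Aut_{\mathcal{G}}(S) = \Aut_{\fs}(S)$ contains enough to conjugate everything into place. The key point to extract from the hypotheses is that the ``missing'' morphisms — those in $\Aut_{\fs}(P)$ but not in $K$ — are exactly the ones that do \emph{not} extend to strictly larger subgroups, so removing them does not disturb the extension axiom at any subgroup other than possibly $P$ itself and its $\fs$-conjugates.

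The core of the argument is to identify which subgroups are $\mathcal{G}$-centric-radical and to check saturation there. By the hypothesis that no proper subgroup $Q < P$ is $S$-centric, $P$ is minimal among $S$-centric subgroups in its ``region,'' so the Alperin--Goldschmidt generators of $\mathcal{G}$ are $\Aut_{\mathcal{G}}(S)$ together with $\Aut_{\mathcal{G}}(E) = \Aut_{\fs}(E)$ for $E \in \mathcal{C}\setminus\{P\}$ and $\Aut_{\mathcal{G}}(P) = K$. One should show $\mathcal{E}(\mathcal{G}) \subseteq \mathcal{E}(\fs)$: any essential subgroup of $\mathcal{G}$ must be $S$-centric and have $\Out_{\mathcal{G}}$ containing a strongly $p$-embedded subgroup, and since $\Aut_{\mathcal{G}}(Q) \le \Aut_{\fs}(Q)$ for all $Q$, with equality for $Q \notin P^{\fs}$, the candidates are constrained to lie among the $\fs$-essentials; for $P$ itself one uses that $H_{\fs}(P) \le K$ to control $\Out_{\mathcal{G}}(P)$. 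Then I would invoke the standard criterion (as in \cite[Theorem I.3.6 or the Stellmacher--Stancu-type criterion]{ako}) that a fusion system generated by automizers of a family $\mathcal{H}$ of $S$-centric subgroups closed under the relevant operations is saturated provided each member of $\mathcal{H}$ satisfies the Sylow and extension axioms in $\mathcal{G}$; the Sylow axiom is inherited because the relevant automizer groups are subgroups of the saturated $\fs$'s automizers and $S$ is Sylow in each, and the extension axiom is precisely what the definition of $H_{\fs}(P)$ and the hypothesis $H_{\fs}(P) \le K$ guarantee at $P$, while at every other generating subgroup the automizer is unchanged from $\fs$.

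The main obstacle I expect is the bookkeeping around the subgroups strictly between $P$ and $S$, and around $\fs$-conjugates of $P$: one must ensure that cutting $\Aut_{\fs}(P)$ down to $K$ does not create a subgroup that becomes centric-radical in $\mathcal{G}$ without having been so in $\fs$, and does not break the extension axiom at some overgroup $R$ with $P < R \le N_S(P)$ — this is where the hypothesis ``if $Q < P$ then $Q$ is not $S$-centric'' does its work, preventing new essential subgroups from appearing below $P$, and where one needs the fact that morphisms in $N_{\Aut_{\fs}(P)}(\Aut_R(P))$ retained in $K$ still lift. I would handle this by carefully comparing, for each $\fs$-centric-radical $Q$, the groups $\Aut_{\fs}(Q)$ and $\Aut_{\mathcal{G}}(Q)$, showing they differ only when $Q \in P^{\fs}$ and that in that case the retained automorphisms together with $\Aut_{\mathcal{G}}(S)$ and the unchanged other essentials still satisfy saturation at $Q$ — essentially because $K \supseteq H_{\fs}(P)$ means every automorphism of $P$ that \emph{needed} to extend still does, and the ones we discarded were never forced to extend and never participate in a strongly $p$-embedded configuration that survives. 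Finally I would note that $\mathcal{G} \le \fs$ as fusion systems, so all morphisms in $\mathcal{G}$ are genuine, and conclude saturation.
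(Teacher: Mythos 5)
The paper does not actually prove this lemma; it just cites \cite[Lemma 6.4]{Comp1}. So there is no in-paper argument to compare against. Evaluating your sketch on its own merits: you have the right conceptual pieces. You correctly identify that the hypothesis ``no proper subgroup of $P$ is $S$-centric'' prevents any new $\mathcal{G}$-centric (hence essential) subgroups from arising below $P$, since $\mathcal{G}$-centric in particular forces $S$-centric. You are also right that $\Aut_S(P)\le H_{\fs}(P)\le K$ (every $c_g$ for $g\in N_S(P)$ extends to $\langle g\rangle P>P$, using that an essential subgroup is proper in $S$), which gives the Sylow axiom at $P$. And your observation that any $\phi\in K$ with $N_\phi>P$ must extend in $\fs$ and hence lies in $H_{\fs}(P)\subseteq K$ is the reason the extension axiom at $P$ is not broken by passing to $K$.

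However, there is a genuine gap in the middle of the argument. When $\phi\in K$ has $N_\phi>P$, you need the $\fs$-extension $\bar\phi\in\Hom_{\fs}(N_\phi,S)$ to actually be a morphism of $\mathcal{G}$, not merely of $\fs$. This requires knowing that every $\fs$-morphism between subgroups of order strictly greater than $|P|$ already lies in $\mathcal{G}$. That claim is plausible but is not automatic from the generating set: one has to run Alperin--Goldschmidt in $\fs$ and perform a downward induction on order, using that any $\fs$-essential appearing in a factorization of such a morphism has order $\ge|N_\phi|>|P|$, hence is not $\fs$-conjugate to $P$ and is eventually expressible via $\Aut_{\fs}(E)$ for $E\in\mathcal{C}\setminus P$. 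Your write-up glosses over this and instead defers to a loosely cited ``standard criterion''; that criterion (whichever version you intend) still requires precisely this bookkeeping to verify its hypotheses, so it does not obviate the argument. I would also flag a second looseness: you assert $\Aut_{\mathcal{G}}(Q)=\Aut_{\fs}(Q)$ for $Q\notin P^{\fs}$ essential. The inclusion $\Aut_{\fs}(E)\le\Aut_{\mathcal{G}}(E)$ is immediate only for the chosen representatives $E\in\mathcal{C}\setminus P$; for an arbitrary $\fs$-conjugate $R$ of $E$ it again depends on having an isomorphism $E\to R$ inside $\mathcal{G}$, which is the same induction. These points need to be made explicit for the proof to close.
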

\begin{proof}
See \cite[Lemma 6.4]{Comp1}.
\end{proof}

\begin{proposition}\label[proposition]{JasonAdd}
Let $\fs_0$ be a saturated fusion system on a finite $p$-group $S$. Let $V\le S$ be a fully $\fs_0$-normalized subgroup, set $H=\Out_{\fs_0}(V)$ and let $\wt \Delta \le \Out(V)$ be such that $H$ is a strongly $p$-embedded subgroup of $\wt \Delta$. For $\Delta$ the full preimage of $\wt \Delta$ in $\Aut(V)$, write $\fs = \langle \fs_0, \Delta\rangle_S$. Assume further that
\begin{enumerate}
    \item $V$ is $\fs_0$-centric and minimal under inclusion amongst all $\fs$-centric subgroups; and
    \item no proper subgroup of $V$ is $\fs_0$-essential.
\end{enumerate}
Then $\fs$ is saturated.
\end{proposition}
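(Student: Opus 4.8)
The plan is to check that $\fs$ satisfies the saturation axioms by appealing to the standard principle that saturation is detected on the family $\fs^c$ of $\fs$-centric subgroups (which is closed under $\fs$-conjugacy and overgroups): it suffices to show that $\fs$ is $\fs^c$-generated and that the saturation axioms hold for subgroups lying in $\fs^c$ (see \cite{ako}). Since $\fs=\langle\fs_0,\Delta\rangle$, every $\fs$-morphism is a composite of restrictions of $\fs_0$-morphisms and restrictions of elements of $\Delta\le\Aut(V)$; granting the structural claims below — that the $\fs_0$-essential subgroups remain $\fs$-centric and that $V$ is $\fs$-centric — the required $\fs^c$-generation then follows from the Alperin--Goldschmidt theorem applied to $\fs_0$ together with $\Delta=\Aut_{\fs}(V)$.

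The crux is a straightening lemma: if $P$ is $\fs$-centric then no \emph{proper} restriction of an element of $\Delta$ can occur among the constituents of a composite representing an $\fs$-morphism with domain $P$. Indeed the domain of such a constituent is an $\fs$-conjugate of $P$, hence $\fs$-centric, and is contained in $V$, so by the minimality assumption (i) it must equal $V$. Assumption (ii), that no proper subgroup of $V$ is $\fs_0$-essential, is used to exclude the corresponding phenomenon on the $\fs_0$-side: it guarantees that an $\fs_0$-essential subgroup is either $\fs_0$-conjugate to $V$ or not $\fs_0$-subconjugate to $V$ at all, and combined with the minimality of $V$ this forces such a subgroup to be $\fs$-centric rather than to become subconjugate to $V$ inside $\fs$. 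From the straightening lemma I would read off: $V^{\fs}=V^{\fs_0}$; every $\fs$-isomorphism onto $V$ is a composite of $\fs_0$-maps and full $\Delta$-automorphisms of $V$ (each excursion through $\fs_0$-conjugates of $V$ collapsing to an $\fs_0$-automorphism of $V$), so that $\Aut_{\fs}(V)=\Delta$; and any $\fs$-centric subgroup $P$ not $\fs$-conjugate to $V$ is not $\fs$-subconjugate to $V$ at all, whence $\Hom_{\fs}(P,-)=\Hom_{\fs_0}(P,-)$, $P\in\fs_0^c$ and $P^{\fs}=P^{\fs_0}$. In particular $V$ is $\fs$-centric, its $\fs$-class being its $\fs_0$-class and $V$ being $\fs_0$-centric.

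It then remains to verify the axioms for the two kinds of subgroup in $\fs^c$. For $\fs$-centric $P$ not $\fs$-conjugate to $V$, the conjugacy class, the morphism sets to and from $P$, the groups $\Aut_S(P)$ and $N_S(P)$, and the extension data $N_\varphi$ all coincide with their $\fs_0$-analogues, so the axioms for $P$ follow from saturation of $\fs_0$. For the class of $V$: it is fully $\fs$-normalized since it is fully $\fs_0$-normalized, $V^{\fs}=V^{\fs_0}$ and $N_S(V)$ is unchanged; it is fully $\fs$-automized because $\Aut_S(V)\Inn(V)/\Inn(V)$ is a Sylow $p$-subgroup of $H$ and a Sylow $p$-subgroup of the strongly $p$-embedded subgroup $H$ is automatically a Sylow $p$-subgroup of $\wt\Delta$ (its normaliser in any larger Sylow subgroup of $\wt\Delta$ would lie in $H$ and strictly contain it), giving $\Aut_S(V)\in\syl_p(\Delta)=\syl_p(\Aut_{\fs}(V))$; and for $\fs$-receptivity I would write a given $\varphi\in\Iso_{\fs}(V',V)$ as $\delta\circ\varphi_0$ with $\varphi_0\in\Iso_{\fs_0}(V',V)$ and $\delta\in\Delta$, use that $\Aut_S(V)$ is Sylow in $\Delta$ to correct $\delta$ so that $N_\varphi\le N_{\varphi_0}$, extend $\varphi_0$ over $N_{\varphi_0}$ by $\fs_0$-receptivity of $V$, and reapply $\delta$. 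This yields $\fs^c$-saturation, hence saturation of $\fs$.

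\textbf{Main obstacle.} I expect the real work to be, first, the bookkeeping in the straightening lemma — pinning down exactly where assumptions (i) and (ii) are used, and confirming that $\Aut_{\fs}(V)$ equals $\Delta$ and is not enlarged by composition — and, second, the receptivity verification at $V$, the customary awkward axiom, where the hypothesis that $H$ is strongly $p$-embedded in $\wt\Delta$ does genuine work: it is precisely what keeps $\Aut_S(V)$ a Sylow $p$-subgroup of $\Aut_{\fs}(V)$.
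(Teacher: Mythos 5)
The paper does not prove this proposition at all: it cites \cite[Proposition 5.1]{BLOGeo} and \cite[Theorem C]{JasonTrees}. So you are attempting a genuinely different route (a direct verification of the saturation axioms along centric subgroups), and most of your scaffolding is sound. The reduction to $\fs^c$-generation and $\fs^c$-saturation, the ``straightening'' observation that a restriction of a $\Delta$-element appearing in a composite with $\fs$-centric source must be the full automorphism (via minimality (i)), the consequences $V^{\fs}=V^{\fs_0}$, $\Aut_{\fs}(V)=\Delta$, and the reduction of the axioms for centrics not conjugate to $V$ to saturation of $\fs_0$, and the verification that $V$ is fully automized via Sylow $p$-subgroups of a strongly $p$-embedded subgroup being Sylow in the ambient group --- all of this is correct in outline and matches what a from-scratch proof should do.

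The gap is the receptivity step at $V$. Writing $\varphi=\delta\circ\varphi_0$ with $\varphi_0\in\Iso_{\fs_0}(V',V)$ and $\delta\in\Delta$ is fine, but ``reapply $\delta$'' is not an available move: $\delta$ is an automorphism of $V$ only, and the extension $\bar\varphi_0$ of $\varphi_0$ sends $N_\varphi$ to a subgroup $\bar\varphi_0(N_\varphi)$ of $N_S(V)$ that in general properly contains $V$; there is no morphism in $\fs=\langle\fs_0,\Delta\rangle$ defined on $\bar\varphi_0(N_\varphi)$ restricting to $\delta$ unless $\delta\in\Aut_{\fs_0}(V)$ or $\bar\varphi_0(N_\varphi)=V$. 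The correct use of the strongly $p$-embedded hypothesis here is different from what you describe: one shows that if $\varphi\notin\Iso_{\fs_0}(V',V)$, equivalently $\bar\delta\in\wt\Delta\setminus H$, then $N_\varphi=V'$ and there is nothing to extend. Indeed, for $g\in N_\varphi$ one has $\varphi_0 c_g\varphi_0^{-1}\in\Aut_{\fs_0}(V)\cap\delta^{-1}\Aut_S(V)\delta$, whose image in $\Out(V)$ lies in $H\cap H^{\bar\delta}$; since $|H\cap H^{\bar\delta}|$ is prime to $p$ while the image of $\varphi_0 c_g\varphi_0^{-1}$ has $p$-power order, that image is trivial, so $c_g|_{V'}\in\Inn(V')$ and $g\in V'C_S(V')=V'$ because $V'$ is $\fs$-centric. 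If instead $\varphi\in\Iso_{\fs_0}(V',V)$, receptivity of $V$ in $\fs_0$ applies directly. So your instinct that the strongly $p$-embedded hypothesis ``does genuine work'' at receptivity is right, but the mechanism is a triviality claim about $N_\varphi$, not a correct-and-reapply argument.
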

\begin{proof}
See \cite[Proposition 5.1]{BLOGeo} or \cite[Theorem C]{JasonTrees}. 
\end{proof}

We recall the notion of normalizer fusion systems from \cite[Section I.6]{ako}, noting that for $P$ a fully $\fs$-normalized subgroup, $N_{\fs}(P)$ is a saturated fusion subsystem of $\fs$. We say $P$ is \emph{normal} in $\fs$ if $\fs=N_{\fs}(P)$ and we denote by $O_p(\fs)$ the unique largest normal subgroup of $\fs$. The following proposition connects normal subgroups of $\fs$, strongly closed subgroups of $\fs$ in the sense of \cite[Definition I.4.1]{ako}, and the essential subgroups of $\fs$.

\begin{proposition}\label[proposition]{normalinF}
Let $\fs$ be a saturated fusion system over a $p$-group $S$. Then the following are equivalent for a subgroup $Q\le S$:
\begin{enumerate}
    \item $Q\normaleq \fs$;
    \item $Q$ is strongly closed in $\fs$ and contained in every centric radical subgroup of $\fs$; and
    \item $Q$ is contained in each essential subgroup, $Q$ is $\Aut_{\fs}(E)$-invariant for any essential subgroup $E$ of $\fs$ and $Q$ is $\Aut_{\fs}(S)$-invariant.
\end{enumerate}
Moreover, if $Q$ is an abelian subgroup of $S$, then $Q\normaleq \fs$ if and only if $Q$ is strongly closed in $\fs$.
\end{proposition}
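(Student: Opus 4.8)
The plan is to prove the three conditions equivalent via the cycle (i) $\Rightarrow$ (ii) $\Rightarrow$ (iii) $\Rightarrow$ (i) and then to read off the final clause. The two ingredients doing the real work are the Alperin--Goldschmidt fusion theorem and the elementary fact that $\Out_\fs(P)$ has no non-trivial normal $p$-subgroup when $P$ is $\fs$-radical; everything else is bookkeeping. Throughout, $c_x$ denotes conjugation by $x$.

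I would dispose of the easy implications first. For (ii) $\Rightarrow$ (iii): since $\mathcal{E}(\fs)\subseteq\fs^{frc}$ every essential subgroup is $\fs$-centric radical, so the second half of (ii) gives $Q\le E$ for each essential $E$; and if $Q$ is strongly closed then any $\alpha\in\Aut_\fs(R)$ with $Q\le R$ satisfies $\alpha(Q)\subseteq Q$, hence $\alpha(Q)=Q$ by finiteness, which, taking $R=S$ and $R$ essential, yields the invariance assertions of (iii). For (iii) $\Rightarrow$ (i): from $\Inn(S)\le\Aut_\fs(S)$ stabilising $Q$ we get $Q\normaleq S$, so $N_\fs(Q)$ is a fusion subsystem of $\fs$ on $N_S(Q)=S$; condition (iii) ensures that $\Aut_\fs(S)$ and each $\Aut_\fs(E)$ (with $E$ essential, so $Q\le E$ and $SQ=S$, $EQ=E$, whence each such automorphism fixes $Q$ and extends by itself) already lie in $N_\fs(Q)$, so by Alperin--Goldschmidt $N_\fs(Q)\supseteq\fs$, i.e. $Q\normaleq\fs$.

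The substantive step is (i) $\Rightarrow$ (ii). Strong closure is immediate: $Q\normaleq\fs$ means every $\fs$-morphism extends fixing $Q$, and applying this to maps in $\Hom_\fs(\langle x\rangle,S)$ for $x\in Q$ is precisely the definition of strong closure. For the remaining part, suppose some $\fs$-centric radical $P$ fails to contain $Q$. Since $Q\normaleq S$, $QP$ is a $p$-group properly containing $P$, so $P<N_{QP}(P)$ and therefore $D:=Q\cap N_S(P)\not\le P$; as $P$ is $\fs$-centric this forces the image of $\Aut_D(P)\le\Aut_S(P)$ in $\Out_\fs(P)$ to be non-trivial. Given $\alpha\in\Aut_\fs(P)$, choose (using $Q\normaleq\fs$) an extension $\bar\alpha\in\Aut_\fs(PQ)$ with $\bar\alpha(Q)=Q$; then $\bar\alpha(d)\in Q\cap N_S(P)=D$ for every $d\in D$ (as $\bar\alpha$ fixes both $P$ and $Q$), and computing inside $\Aut(PQ)$ one gets $\alpha\, c_d|_P\,\alpha^{-1}=c_{\bar\alpha(d)}|_P\in\Aut_D(P)$. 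Hence $\Aut_D(P)$ is a normal $p$-subgroup of $\Aut_\fs(P)$ not contained in $\Inn(P)$, contradicting the $\fs$-radicality of $P$. So $Q$ lies in every $\fs$-centric radical subgroup, which completes (ii).

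Finally the ``moreover'' clause: one direction is (i) $\Rightarrow$ (ii). For the converse let $Q$ be abelian and strongly closed; since strong closure already supplies the invariance hypotheses of (iii) and $Q\normaleq S$, it suffices to show $Q\le P$ for every $\fs$-centric radical $P$. If not, I pick $g\in(Q\cap N_S(P))\setminus P$ as above; because $Q$ is abelian, $c_g|_P$ centralises $R:=Q\cap P\normaleq P$, and because $g\in Q\cap N_S(P)$ with $Q\normaleq S$ we have $[P,g]\le P\cap Q=R$, so $c_g|_P$ also centralises $P/R$. Thus $c_g|_P$ lies in the stability group $U$ of the chain $\{1\}\normaleq R\normaleq P$, which is a $p$-group because $P$ is (cf. \cref{GrpChain}), and $\Aut_\fs(P)$ normalises $U$ since it normalises $R$ by strong closure. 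Consequently $(U\cap\Aut_\fs(P))\Inn(P)$ is a normal $p$-subgroup of $\Aut_\fs(P)$ with non-trivial image in $\Out_\fs(P)$, again contradicting radicality; so $Q\le P$, giving (iii), hence (i). The step I expect to be the crux is exactly this passage ``normal, resp.\ abelian strongly closed $\Rightarrow$ contained in every centric radical subgroup'': the work lies in converting the local $p$-group configuration around $P$ into a non-trivial normal $p$-subgroup of $\Out_\fs(P)$, and in the abelian case it additionally rests on recognising the relevant automorphisms as stabilising a short normal chain.
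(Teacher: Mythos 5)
Your argument is correct and is, in substance, the standard proof of this fact, following the same route as the source the paper cites (Aschbacher--Kessar--Oliver, Prop.\ I.4.5 and Cor.\ I.4.7); the paper itself does not supply a proof but only refers to those results, so there is no in-paper argument to compare against. The two load-bearing steps you isolate are exactly the right ones: (iii)\,$\Rightarrow$\,(i) using Alperin--Goldschmidt together with $EQ=E$ for essential $E$; and the containment ``normal (resp.\ abelian and strongly closed) $\Rightarrow Q\le P$ for every $\fs$-centric radical $P$'' obtained by exhibiting a non-inner normal $p$-subgroup of $\Aut_\fs(P)$ coming from $Q\cap N_S(P)$, contradicting radicality. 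One small streamlining in the abelian ``moreover'' step: you appeal to the fact that the stability group of $\{1\}\normaleq R\normaleq P$ is a $p$-group and then intersect with $\Aut_\fs(P)$; it is slightly more direct to apply \cref{GrpChain} to the $p$-subgroup $\langle c_g|_P\rangle$ of $\Aut_\fs(P)$ together with the $\Aut_\fs(P)$-invariant chain $\{1\}\normaleq R\normaleq P$ (invariance of $R$ holding by strong closure), which immediately gives $c_g|_P\in O_p(\Aut_\fs(P))\setminus\Inn(P)$ and hence the contradiction, without any excursion through the full stability group. That said, your version is also valid.
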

\begin{proof}
See \cite[Proposition I.4.5]{ako} and \cite[Corollary I.4.7]{ako}.
\end{proof}

Fundamental to our analysis of fusion systems is the application of a plethora of known results from finite group theory. Particularly, given a fully normalized subgroup $Q$, we wish to understand the actions induced by $N_{\fs}(Q)$ and to do this, we wish to work in a finite group which models the behaviour of this normalizer subsystem.

\begin{theorem}[Model Theorem]\label[theorem]{model}
Let $\fs$ be a saturated fusion system over a $p$-group $S$. Assume that there is $Q\le S$ which is $\fs$-centric and normal in $\fs$. Then the following hold:
\begin{enumerate} 
\item There is a \emph{model} for $\fs$. That is, there is a finite group $G$ with $S\in\syl_p(G)$, $F^*(G)=O_p(G)$ and $\fs=\fs_S(G)$.
\item If $G_1$ and $G_2$ are two models for $\fs$, then there is an isomorphism $\phi: G_1\to G_2$ such that $\phi|_S = \mathrm{Id}_S$.
\item For any finite group $G$ with $S\in\syl_p(G)$, $F^*(G)=Q$ and $\Aut_G(Q)=\Aut_{\fs}(Q)$, there is $\beta\in\Aut(S)$ such that $\beta|_Q = \mathrm{Id}_Q$ and $\fs_S(G) =\fs^\beta$. Thus, there is a model for $\fs$ which is isomorphic to $G$.
\end{enumerate}
\end{theorem}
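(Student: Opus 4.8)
This is the Model Theorem for constrained fusion systems, and the plan is to follow the standard obstruction-theoretic route (as in \cite{ako}). Since $Q\normaleq\fs$ we have $\fs=N_\fs(Q)$, and since $Q$ is $\fs$-centric we have $C_S(Q)=Z(Q)$. Write $\Gamma:=\Aut_\fs(Q)$; then $\Inn(Q)\cong Q/Z(Q)$ is normal in $\Gamma$, the group $\Gamma$ acts on $Z(Q)$ with $\Inn(Q)$ in the kernel — so that $Z(Q)$ is a $\Z[\Gamma/\Inn(Q)]=\Z[\Out_\fs(Q)]$-module — and $\Aut_S(Q)\cong S/Z(Q)$ is a Sylow $p$-subgroup of $\Gamma$, because $Q$ is fully $\fs$-automized in the saturated system $\fs$. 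For part (i) I would construct $G$ as a group extension
\[ 1\longrightarrow Z(Q)\longrightarrow G\longrightarrow\Gamma\longrightarrow 1 \]
realizing this $\Gamma$-action on $Z(Q)$, arranged so that the preimage of $\Inn(Q)$ is isomorphic to $Q$ (as a central extension of $Q/Z(Q)$ by $Z(Q)$) and the preimage of $\Aut_S(Q)$ is isomorphic to $S$.

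The obstruction to the existence of such an extension lies in $H^3(\Gamma;Z(Q))$, and this group is a finite $p$-group since $Z(Q)$ is; hence restriction to the Sylow $p$-subgroup $\Aut_S(Q)$ is injective, by the usual transfer argument. The restriction of the obstruction to $H^3(\Aut_S(Q);Z(Q))$ is the obstruction to building the analogous extension of $\Aut_S(Q)$ by $Z(Q)$, and this is solved by the genuine extension $1\to Z(Q)\to S\to S/Z(Q)\to 1$; so the obstruction vanishes and the extension exists. I would then use the residual $H^2(\Gamma;Z(Q))$-ambiguity to force the preimage of $\Aut_S(Q)$ to be $S$ itself — equivalently, to verify that the class of the genuine extension $S$ lifts from $H^2(\Aut_S(Q);Z(Q))$ to $H^2(\Gamma;Z(Q))$. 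Identifying $Q$ and $S$ with their images in the resulting group $G$, one then has $Q\normaleq G$ with $C_G(Q)=Z(Q)\le Q$, which forces $E(G)=\{1\}$ and $O_{p'}(G)=\{1\}$ and hence $F^*(G)=O_p(G)$, while an order count gives $S\in\syl_p(G)$. That $\fs_S(G)=\fs$ then follows from the standard correspondence between constrained saturated fusion systems and finite groups: both $\fs$ and $\fs_S(G)$ are constrained saturated fusion systems on $S$ with the same centric normal subgroup $Q$ and the same automizer $\Aut_G(Q)=\Gamma=\Aut_\fs(Q)$ of it, and are therefore equal. (Alternatively one constructs the centric linking system of the constrained system $\fs$, via the same obstruction computation, and takes $G=\Aut_{\mathcal L}(Q)$, in which $S$ appears canonically as a Sylow subgroup via the distinguished monomorphism $\delta_Q$.)

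For the uniqueness statement (ii), let $G_1,G_2$ be models; then $O_p(G_i)=O_p(\fs)=:R$ (since $\fs_S(G_i)=\fs$), and $R$ is $\fs$-centric (as $C_S(R)\le C_S(Q)=Z(Q)\le R$), with $C_{G_i}(R)=Z(R)$ (as $R=F^*(G_i)$) and $\Aut_{G_i}(R)=\Aut_\fs(R)$ (as $R\normaleq G_i$ and $\fs_S(G_i)=\fs$). Thus each $G_i$ is an extension of $R$ by $\Aut_\fs(R)$ of exactly the shape treated in (i), whose restriction over the Sylow subgroup $\Aut_S(R)$ recovers the genuine group $S$; since restriction to a Sylow $p$-subgroup is injective on $H^2(\Aut_\fs(R);Z(R))$, the two extension classes coincide, and a comparison keeping track of the embedding of $S$ yields an isomorphism $\phi\colon G_1\to G_2$ with $\phi|_S=\mathrm{Id}_S$ — the final normalization being a routine argument of the kind governed by \cref{burnside} and \cref{GrpChain}. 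For part (iii), the hypothesis $F^*(G)=Q$ forces $E(G)=\{1\}$, $O_{p'}(G)=\{1\}$ and $O_p(G)=Q$, so $\fs_S(G)$ is a constrained saturated fusion system on $S$ with $Q$ centric and normal and, by hypothesis, $\Aut_G(Q)=\Aut_\fs(Q)$; comparing $\fs$ and $\fs_S(G)$ as in (ii) — they carry the identical datum $(Q,\Aut_\fs(Q))$ and differ only in the embedding of $S$ — produces $\beta\in\Aut(S)$ with $\beta|_Q=\mathrm{Id}_Q$ and $\fs_S(G)=\fs^\beta$, whereupon $\fs\cong\fs^\beta$ by \cref{AutSIso}, and re-identifying the Sylow subgroup of $G$ with $S$ via $\beta^{-1}$ exhibits $G$ as a model for $\fs$.

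I expect the real work to sit in part (i), and within it in the degree-two bookkeeping: vanishing of the degree-three obstruction is the clean transfer argument above, but arranging the degree-two ambiguity so that the Sylow subgroup of the model is $S$ on the nose, rather than merely abstractly isomorphic to it, is the delicate step — and it is precisely here that saturation of $\fs$ is genuinely used, since it is what makes the relevant class of $S$ ``stable'' under $\Gamma$ (equivalently, what equips the centric linking system with its distinguished subgroup). The two uniqueness statements are comparatively formal, resting on this same injectivity of restriction to a Sylow $p$-subgroup together with low-degree Frattini-type arguments.
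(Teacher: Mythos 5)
The paper proves this result by citing \cite[Theorem I.4.9]{ako}, so the relevant comparison is with the standard obstruction-theoretic proof given there (and originally in Broto--Castellana--Grodal--Levi--Oliver), which is indeed the route you are sketching. The overall plan — reduce to an extension problem, kill the degree-three obstruction by restriction to a Sylow $p$-subgroup and a transfer argument, then pin down the degree-two ambiguity so that the Sylow subgroup of the model is $S$ on the nose — is the right one, and your remarks on uniqueness via injectivity of restriction on $H^2$ and where saturation is genuinely used are accurate.

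However, the specific formulation of the obstruction contains a slip that, as written, makes the argument nonsensical. You set up the extension as
\[
1 \longrightarrow Z(Q) \longrightarrow G \longrightarrow \Gamma \longrightarrow 1
\]
with abelian kernel $Z(Q)$, and then claim the obstruction lives in $H^3(\Gamma;Z(Q))$. But there is \emph{no} $H^3$ obstruction to abelian-kernel extensions — those are classified outright by $H^2(\Gamma;Z(Q))$. The $H^3$ obstruction belongs to the \emph{non-abelian} extension problem
\[
1 \longrightarrow Q \longrightarrow G \longrightarrow \Out_{\fs}(Q) \longrightarrow 1
\]
realizing the abstract kernel $\Out_{\fs}(Q)\to\Out(Q)$; the obstruction class sits in $H^3\bigl(\Out_{\fs}(Q);Z(Q)\bigr)$, and, once nonempty, the set of equivalence classes is a torsor over $H^2\bigl(\Out_{\fs}(Q);Z(Q)\bigr)$. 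Correspondingly, the Sylow $p$-subgroup you should restrict to is $\Out_S(Q)\cong S/Q$, not $\Aut_S(Q)$, and the extension of $Q$ by $\Out_S(Q)$ realized by $S$ itself, namely $1\to Q\to S\to S/Q\to 1$, is what certifies that the restricted obstruction dies. Your transfer argument (restriction to a Sylow $p$-subgroup is split injective on $p$-torsion cohomology) then goes through verbatim with these substitutions, as does the $H^2$-matching step, which is precisely where the stable-element theorem and saturation come in. Parts (ii) and (iii) are fine once the same corrections are propagated. So: right strategy, same as the cited source, but the obstruction bookkeeping needs to be rewritten with $\Out_{\fs}(Q)$ and the non-abelian kernel $Q$ replacing $\Gamma$ and $Z(Q)$ throughout.
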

\begin{proof}
See \cite[Theorem I.4.9]{ako}.
\end{proof}

As with finite groups, we desire a more global sense of normality in fusion systems, not just restricted to $p$-subgroups. That is, we are interested in subsystems of a fusion system $\fs$ which are \emph{normal}. We use the notion of normality provided in \cite[Definition I.6.1]{ako}, noting that this condition is stronger than some of other definitions in the literature. 

By \cite[Theorem A]{CravenSub}, a proper, non-trivial normal subsystem of $\fs$ with respect to one of the accepted definitions of normality gives rise to a proper, non-trivial normal subsystem of $\fs$ with respect to the other accepted definitions. Thus, we can unambiguously declare $\fs$ to be \emph{simple} if it has no proper, non-trivial normal subsystems and so, for our purposes, the distinction between the definitions of normality is unimportant.

Of particular importance in our case is the normal subsystem $O^{p'}(\fs)$ of $\fs$, and more generally, the saturated subsystems of $\fs$ of \emph{index prime to $p$}, as in \cite[Definition I.7.3]{ako}. The following result characterizes some of the most important properties of these subsystems.

\begin{lemma}\label[lemma]{p'lemma}
Fix a saturated fusion system $\fs$ over a $p$-group $S$, and set $\mathcal{E}_0:=\langle O^{p'}(\Aut_{\fs}(P)) \mid P\le S\rangle_S$, as a (not necessarily saturated) fusion system on $S$. Define
\[\Aut_{\fs}^0(S):=\langle \alpha\in\Aut_{\fs}(S)\mid \alpha|_P\in\Hom_{\mathcal{E}_0}(P, S)\text{, some}\,\,P\in\fs^c\rangle\]
and let $\mathcal{E}$ be a saturated fusion system on $S$ of index prime to $p$ in $\fs$. Then
\begin{enumerate}
\item $\Aut_{\fs}^0(S)\le \Aut_{\mathcal{E}}(S)\le \Aut_{\fs}(S)$ and each group $L$ with $\Aut_{\fs}^0(S)\le L\le \Aut_{\fs}(S)$ gives rise to a unique saturated fusion subsystem of index prime to $p$ in $\fs$;
\item $\mathcal{E}\normaleq \fs$ if and only if $\Aut_{\mathcal{E}}(S)\normaleq \Aut_{\fs}(S)$; and
\item there is a unique minimal saturated subsystem $O^{p'}(\fs)\normaleq \fs$ of index prime to $p$, and $\Aut_{O^{p'}(\fs)}(S)=\Aut_\fs^0(S)$.
\end{enumerate}
In particular, $\Aut_{\fs}^0(S)=\Aut_{\fs}(S)$ implies that $\fs=O^{p'}(\fs)$, and $O^{p'}(O^{p'}(\fs))=O^{p'}(\fs)$.
\end{lemma}
\begin{proof}
See \cite[Theorem I.7.7]{ako}.
\end{proof}

If $\mathcal{E}$ is a saturated subsystem of index prime to $p$ in $\fs$ with $[\Aut_{\fs}(S) : \Aut_{\mathcal{E}}(S)]=r$, then we say that $\mathcal{E}$ has index $r$ in $\fs$.

We close this section with a result concerning strongly closed subgroups of fusion systems, and how they might be used to verify the exoticity of certain saturated fusion systems. In the analogous definition for finite groups, a result of Foote, building on work of Goldschmidt, promises that when $p=2$, the only simple groups $G$ which contain a non-trivial strongly closed subgroup $T<S\in\syl_2(G)$ are $\PSU_3(2^n)$ and $\Sz(2^n)$. Work of Flores and Foote \cite{Flores} complements the result in the odd prime case, using the classification of finite simple groups. From their results, we deduce the following consequence ready for use in fusion systems.

\begin{theorem}\label[theorem]{SCExotic}
Suppose that $\fs$ is a saturated fusion system over a $p$-group $S$ and $A$ is a proper non-trivial strongly closed subgroup chosen minimally with respect to adhering to these conditions. Assume that no normal subsystem of $\fs$ is supported on $A$. Then $\fs$ is exotic.
\end{theorem}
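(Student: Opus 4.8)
The plan is to establish the contrapositive: if $\fs$ is realizable, then $\fs$ has a normal subsystem supported on $A$. So suppose $\fs=\fs_S(G)$ for some finite group $G$ with $S\in\syl_p(G)$. Since the fusion system is unchanged on replacing $G$ by $G/O_{p'}(G)$, I may assume $O_{p'}(G)=\{1\}$. Because $A$ is strongly closed in $\fs=\fs_S(G)$, it is strongly closed in $G$ with respect to $S$ in the group-theoretic sense; this is exactly the situation governed by the theorems of Goldschmidt, Foote and Flores--Foote recalled above.

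First I would pass to $N:=\langle A^G\rangle\normaleq G$. Here $A\le N$, $N\cap S\in\syl_p(N)$, $O_{p'}(N)\le O_{p'}(G)=\{1\}$, and $\fs_{N\cap S}(N)\normaleq\fs$, since a normal subgroup of $G$ induces a normal subsystem of $\fs_S(G)$. The subgroup $O_p(N)$ is characteristic in $N$, hence normal in $G$, hence strongly closed in $\fs$, so $A\cap O_p(N)$ is strongly closed in $\fs$; being contained in $A$, minimality forces $A\cap O_p(N)\in\{\{1\},A\}$. If $A\le O_p(N)$ then $A^g\le O_p(N)\le S$ for every $g\in G$, and strong closure of $A$ in $G$ gives $A^g=A$; thus $A\normaleq G$ and $\fs_A(A)\normaleq\fs$ is (trivially) a normal subsystem supported on $A$, a contradiction. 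So I may assume $A\cap O_p(N)=\{1\}$.

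The heart of the matter is then the classification of Flores and Foote \cite{Flores}, which depends on the classification of finite simple groups. Applied to $N$ -- normally generated by the strongly closed $p$-subgroup $A$, with $O_{p'}(N)=\{1\}$ and $A\cap O_p(N)=\{1\}$ -- it determines the structure of $N$ closely: the layer $E(N)$ is nontrivial, its components are permuted transitively by $G$, have isomorphism type on an explicit short list, and $A$ is precisely the canonical strongly closed subgroup built from this configuration (a strongly closed subgroup equals its only $\fs$-conjugate, so $A$ is uniquely pinned down). For each entry of the list one either locates a $G$-invariant subgroup $H\le N$ with $H\cap S=A$ and $A\in\syl_p(H)$, so that $\fs_A(H)\normaleq\fs_S(G)$ is supported on $A$; or, in the semisimple Goldschmidt configurations built from $\PSU_3(2^n)$ or $\Sz(2^n)$ -- which by Foote's theorem occur only at $p=2$, where no such normal subgroup exists -- one verifies by hand that $\fs$ nonetheless has a normal subsystem over $A$. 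In either case the hypothesis that no normal subsystem is supported on $A$ is violated, completing the proof.

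I expect the main obstacle to be this last step: having the Flores--Foote list in hand and checking, configuration by configuration, that the associated strongly closed subgroup really does support a normal subsystem of the ambient system -- in the semisimple $p=2$ cases a genuine verification rather than a formality. The surrounding work -- identifying strong closure in $\fs$ with strong closure in $G$, ensuring the minimality of $A$ is preserved under the reduction to $N=\langle A^G\rangle$, and passing correctly between normal subgroups and normal subsystems -- is routine.
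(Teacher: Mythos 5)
Your overall strategy (contrapositive, reduce to $O_{p'}(G)=\{1\}$, then invoke Flores--Foote) is the right one, but there are two problems, one in the reduction and one -- the more serious one -- in the endgame.

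First, the reduction. You pass to $N=\langle A^G\rangle$ and, by minimality of $A$, reduce to $A\cap O_p(N)=\{1\}$. But this is not the hypothesis under which the Flores--Foote structure theorem applies: that theorem concerns the case $\mathcal{O}_A(G)=\{1\}$, where $\mathcal{O}_A(G)$ is the largest normal subgroup $M$ of $G$ with $A\cap M\in\syl_p(M)$. Your condition does not rule out, for instance, $O_p(N)=\{1\}$ with $A\in\syl_p(N)$, in which case $\mathcal{O}_A(G)\geq N\neq\{1\}$ and their classification simply does not bite -- though this is in fact one more place where a normal subsystem appears, namely $\fs_A(N)\normaleq\fs$. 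The paper sidesteps this by working directly with $\mathcal{O}_A(G)$: minimality of $A$ together with $O_{p'}(G)=\{1\}$ forces $A\cap\mathcal{O}_A(G)\in\{\{1\},A\}$, and the two options are $\mathcal{O}_A(G)=\{1\}$ or $A\in\syl_p(\mathcal{O}_A(G))$, the latter yielding $\fs_A(\mathcal{O}_A(G))\normaleq\fs$ immediately.

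Second, and more essentially, your endgame replaces a one-line contradiction with an open-ended case analysis. You describe Flores--Foote as pinning down $E(N)$ and propose to check configuration by configuration that a normal subsystem on $A$ exists, while conceding that in the $\PSU_3(2^n)$ and $\Sz(2^n)$ cases this is a genuine verification you have not carried out. The ingredient you are missing is that in the residual case $\mathcal{O}_A(G)=\{1\}$, the Flores--Foote theorems (\cite[Theorems 1.1 and 1.3]{Flores}) conclude that $A$ is \emph{elementary abelian}. Combined with \cref{normalinF} (an abelian strongly closed subgroup of a saturated fusion system is normal in that fusion system), this gives $A\normaleq\fs$, hence $\fs_A(A)\normaleq\fs$ is a normal subsystem supported on $A$ -- contradicting the hypothesis with no case division at all. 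So while your plan could perhaps be completed with considerable extra work, it misses the structural fact that makes the proof short.
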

\begin{proof}
Assume that $\fs$ and $A$ satisfy the hypotheses of the lemma, and suppose that there is a finite group $G$ with $\fs=\fs_S(G)$. We may as well choose $G$ such that $O_{p'}(G)=\{1\}$. Then $A$ is a proper non-trivial strongly closed subgroup of $G$. Following \cite{Flores}, let $\mathcal{O}_A(G)$ be the largest normal subgroup $N$ of $G$ such that $A\cap N\in\syl_p(N)$. Then $\mathcal{O}_A(G)\cap A$ is a strongly closed subgroup of $G$. By the minimality of $A$, and using that $O_{p'}(G)=\{1\}$, we deduce that either $A\in\syl_p(\mathcal{O}_A(G))$ or $\mathcal{O}_A(G)=\{1\}$. In the former case, we have that $\fs_A(\mathcal{O}_A(G))\normaleq \fs_S(G)=\fs$, a contradiction. Hence, $\mathcal{O}_A(G)=\{1\}$. Applying \cite[Theorem 1.1]{Flores} when $p=2$ and \cite[Theorem 1.3]{Flores} when $p$ is odd, we conclude that $A$ is elementary abelian. But then by \cref{normalinF} we have that $A\normaleq \fs$ so that $\fs_A(A)$ is a normal subsystem of $\fs$ supported on $A$, another contradiction. Hence, no such $G$ exists and $\fs$ is exotic.
\end{proof}

This result provides an alternate check on exoticity distinct from the techniques currently used in the literature, albeit still relying on the classification of finite simple groups. 

\section{Fusion Systems on a Sylow $3$-subgroup of $\mathrm{Co}_1$}\label{Co1Sec}

In this section, we classify all saturated fusion systems supported on a $3$-group $S$ which is isomorphic to a Sylow $3$-subgroup of the sporadic simple group $\mathrm{Co}_1$, validating \hyperlink{ThmA}{Theorem A}. Utilizing the Atlas \cite{atlas}, we extract the following $3$-local maximal subgroups from $G:=\mathrm{Co}_1$: 

\begin{center}
$M_1\cong 3^6:2.\mathrm{M}_{12}$\\\vspace{0.5em}
$M_2\cong 3^{1+4}_+:\Sp_4(3).2$\\\vspace{0.5em}
$M_3\cong 3^{3+4}:2.(\Sym(4)\times \Sym(4))$
\end{center}

\noindent and remark that for a given $S\in\syl_3(G)$, $M_i$ can be chosen such that $S\in\syl_3(M_i)$. We record that $|S|=3^9$ and $J(S)=O_3(M_1)$ (where $J(S)$ is as defined in \cref{thomp}). We denote $\mathbf{J}:=O_3(M_1)$, $\mathbf{Q}:=O_3(M_2)$ and $\mathbf{R}:=O_3(M_3)$.

In addition, $S$ is isomorphic to a Sylow $3$-subgroup of $\Sp_6(3)$ and in this isomorphism we recognize the subgroups $E_1, E_2, E_3\le S$ whose images correspond to the unipotent radicals of the minimal parabolic subgroups of $\Sp_6(3)$. Indeed, $E_1, E_2, E_3$ are also essential subgroups of $\fs_S(\mathrm{Co}_1)$ such that

\begin{center}
$N_G(E_1)=M_1\cap M_2\cong 3^{1+4}_+.3^3:2.\GL_2(3)$\\\vspace{0.5em}
$N_G(E_2)=M_1\cap M_3\cong 3^6.3^2:2.\GL_2(3)$\\\vspace{0.5em}
$N_G(E_3)=M_2\cap M_3\cong 3^{3+4}.3:2.\GL_2(3)$.
\end{center}

In an abuse of notation, we suppress the isomorphism between $S$ and a Sylow $3$-subgroup of $\Sp_6(3)$ and let $E_1, E_2, E_3$ be subgroups of $\mathrm{Co}_1$ or of $\Sp_6(3)$ where appropriate. 

We also note the following characterizations of $E_1, E_2$ and $E_3$ from their embeddings in $S$.

\begin{itemize}
    \item $E_1$ is the unique subgroup of $S$ of order $3^8$ such that $\Phi(E_1)=\Phi(Y)$, where $Y$ is the preimage in $S$ of $Z(S/\mathbf{J})$ and has order $3^7$.
    \item $E_2=C_S(Z_2(S))$.
    \item $E_3$ is the unique subgroup $X$ of $S$ of order $3^8$ which is not equal to $E_1$ but satisfies $\mho^1(X)=Z(S)$.
\end{itemize}

In particular, $E_1$, $E_2$ and $E_3$ are characteristic subgroups of $S$, and so too is $\mathbf{R}=E_2\cap E_3$. In what follows, we take several liberties with the determination of various characteristic subgroups of the $E_i$, but all of these properties are easily verified by computer (e.g. using MAGMA and taking $S$ to be a Sylow $3$-subgroup of $\Sp_6(3)$).

\begin{proposition}
Let $\fs=\fs_S(\Sp_6(3))$. Then $\mathcal{E}(\fs)=\{E_1, E_2, E_3\}$.
\end{proposition}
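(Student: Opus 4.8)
The plan is to translate essentiality into group theory inside $G:=\Sp_6(3)$ and then exploit the parabolic structure. First I would recall the standard dictionary for a realizable system: a subgroup $E\le S$ is $\fs$-essential precisely when $E$ is fully $\fs$-normalized (equivalently $N_S(E)\in\syl_3(N_G(E))$), $E$ is $3$-centric in $G$ (that is, $Z(E)\in\syl_3(C_G(E))$), and $\Out_\fs(E)=N_G(E)/EC_G(E)$ has a strongly $3$-embedded subgroup. Any such $E$ is $\fs$-radical, and since the kernel $C_G(E)/Z(E)$ of the map $N_G(E)/E\twoheadrightarrow\Out_\fs(E)$ is a $3'$-group we get $O_3(N_G(E)/E)=\{1\}$ and hence $E=O_3(N_G(E))$; so every $\fs$-essential subgroup is a $3$-radical subgroup of $G$.

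Next I would pin down these radical subgroups. Writing $B:=N_G(S)$ for the Borel subgroup, by the Borel--Tits theorem every nontrivial $3$-radical subgroup of $G$ is $G$-conjugate to the unipotent radical $O_3(P)$ of a unique parabolic $P\supseteq B$, and $N_G(O_3(P))=P$ has Sylow $3$-subgroups of order $|S|$; a fully $\fs$-normalized $3$-radical $E\le S$ therefore has $N_S(E)=S$, i.e.\ $E\normaleq S$, and as $B$ is the unique Borel subgroup of $G$ containing $S$ this forces $E=O_3(P)$ for some $P\supseteq B$. Under the chosen identification of $S$ with a Sylow $3$-subgroup of $\Sp_6(3)$, these are: $O_3(G)=\{1\}$; the subgroups $E_1,E_2,E_3$, being the unipotent radicals of the three minimal parabolics; the unipotent radicals $O_3(P_{12}),O_3(P_{13}),O_3(P_{23})$ of the three rank-$2$ parabolics; and $O_3(B)=S$. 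Hence $\mathcal{E}(\fs)$ is contained in this list of eight subgroups.

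I would then eliminate the five non-candidates and confirm the three genuine ones. The subgroup $\{1\}$ is not $3$-centric, and $\Out_\fs(S)=B/SC_G(S)$ is a quotient of a maximal torus, a $3'$-group, so has no strongly $3$-embedded subgroup. For a rank-$2$ parabolic $P_{ij}$, $O^{3'}(\Out_\fs(O_3(P_{ij})))$ is a central quotient of the rank-$2$ Levi factor, hence isomorphic to one of $\SL_3(3)$, $\Sp_4(3)$ or $\SL_2(3)\times\SL_2(3)$ (the three rank-$2$ sub-diagrams of the $C_3$ Dynkin diagram being of type $A_2$, $C_2$ and $A_1\times A_1$); none of these has a strongly $3$-embedded subgroup, so $O_3(P_{ij})$ is not essential. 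Finally, each $E_i$ is characteristic in $S$, hence fully $\fs$-normalized; the explicit structure of the minimal parabolic $P_i=N_G(E_i)$ gives $Z(E_i)\in\syl_3(C_G(E_i))$ (so $E_i$ is $3$-centric), $O_3(\Out_\fs(E_i))=\{1\}$, and $O^{3'}(\Out_\fs(E_i))\cong\SL_2(3)$, which contains the strongly $3$-embedded Borel subgroup of $\SL_2(3)$; hence $E_i\in\mathcal{E}(\fs)$. Together these give $\mathcal{E}(\fs)=\{E_1,E_2,E_3\}$.

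The step I expect to be the main obstacle is the bookkeeping at the rank-$2$ parabolics: correctly identifying $O^{3'}(\Out_\fs(O_3(P_{ij})))$ with the right rank-$2$ Levi quotient and checking that each of $\SL_3(3)$, $\Sp_4(3)$, $\SL_2(3)\times\SL_2(3)$ genuinely lacks a strongly $3$-embedded subgroup (the direct product case by hand, the two simple groups of Lie type because a group of Lie type of rank $\ge 2$ in its defining characteristic acts faithfully on a connected building). As the authors note, this whole enumeration of essential subgroups can equally well be performed by computer (cf.\ \cref{EssenDeter}), which serves as a cross-check.
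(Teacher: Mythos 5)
Your argument is correct and follows essentially the same route as the paper, which simply cites the Borel--Tits theorem \cite[Corollary 3.1.6]{GLS3} and leaves the bookkeeping implicit; you have supplied the detail that the paper omits, namely that every fully normalized essential subgroup is a radical $3$-subgroup and hence the unipotent radical of a parabolic over the fixed Borel, followed by an inspection of the Levi quotients of the eight such parabolics. The cross-check at the rank-$2$ parabolics and the verification that the minimal parabolics give $\SL_2(3)$-automizers are exactly the computations the citation is meant to stand in for.
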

\begin{proof}
This is a consequence of the Borel--Tits theorem \cite[Corollary 3.1.6]{GLS3}.
\end{proof}

We record one final subgroup of $G$. Let $X\normaleq M_1$ with $M_1/X\cong \mathrm{M}_{12}$ and consider the maximal subgroup $H\cong \Alt(4)\times \Sym(3)$ of $M_1/X$. Define $E_4$ to be the largest normal $3$-subgroup of the preimage of $H$ in $M_1$ so that \[N_G(E_4)=N_{M_1}(E_4)\cong 3^6.3:(\SL_2(3)\times 2).\] Then $E_4$ is an essential subgroup of $\fs_S(\mathrm{Co}_1)$, $E_4$ is not contained in any other essential subgroup of $\fs_S(\mathrm{Co}_1)$ and $[N_G(S): N_{N_G(S)}(E_4)]=6$.

\begin{proposition}
Let $\fs=\fs_S(\mathrm{Co}_1)$. Then $\mathcal{E}(\fs)=\{E_1, E_2, E_3, E_4^\fs\}$.
\end{proposition}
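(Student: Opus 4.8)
The plan is to exploit that $\fs=\fs_S(G)$ is realised by the simple group $G=\mathrm{Co}_1$, so that a subgroup $E\le S$ is $\fs$-essential exactly when $E$ is fully $\fs$-normalised, $3$-centric in $G$ (equivalently $C_G(E)=Z(E)\times O_{3'}(C_G(E))$), $3$-radical in $G$, and $\Out_{\fs}(E)=N_G(E)/EC_G(E)$ contains a strongly $3$-embedded subgroup. The two group-theoretic conditions together force $E=O_3(N_G(E))$, so that $N_G(E)$ is a proper $3$-local subgroup of $G$; hence $N_G(E)$ lies in a maximal $3$-local subgroup, and after replacing $E$ by a $G$-conjugate we may take that overgroup to be one of the finitely many (up to conjugacy) maximal $3$-local subgroups of $\mathrm{Co}_1$ listed from the Atlas, among which are $M_1$, $M_2$ and $M_3$.

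The determination is then a finite search inside each such overgroup $M$: enumerate the conjugacy classes of subgroups $E\le M$ which (after conjugation have $N_S(E)\in\syl_3(N_M(E))$ and) are $3$-radical and $3$-centric \emph{in $G$}, compute the induced $\Out_{\fs}(E)$, and retain those with a strongly $3$-embedded automiser. For $M\in\{M_1,M_2,M_3\}$ the list of radical centric subgroups is controlled by the action of the relevant $3'$-section — $2.\mathrm{M}_{12}$ on $O_3(M_1)\cong 3^6$, $\Sp_4(3).2$ on $O_3(M_2)/Z(O_3(M_2))\cong 3^4$, and $2.(\Sym(4)\times\Sym(4))$ on the chief factors of $O_3(M_3)\cong 3^{3+4}$ — and wherever $\Omega(Z(E))$ affords an FF-module, \cref{SEFF} applies and pins $O^{3'}(\Out_{\fs}(E))$ down to act with an $\SL_2(3^n)$-quotient on a natural module. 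Comparing with the stated shapes $N_G(E_1)\cong 3^{1+4}_+.3^3{:}2.\GL_2(3)$, $N_G(E_2)\cong 3^6.3^2{:}2.\GL_2(3)$, $N_G(E_3)\cong 3^{3+4}.3{:}2.\GL_2(3)$ and $N_G(E_4)\cong 3^6.3{:}(\SL_2(3)\times 2)$ shows that the only survivors are $E_1,E_2,E_3,E_4$ (with $n=1$ and $O^{3'}(\Out_{\fs}(E_i))\cong\SL_2(3)$, which indeed contains the strongly $3$-embedded subgroup $N_{\SL_2(3)}(C_3)$), the finitely many remaining candidates being shown by direct inspection to have no strongly $3$-embedded automiser. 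For the other maximal $3$-local subgroups of $\mathrm{Co}_1$ (such as $3.\mathrm{Suz}{:}2$, $3^2.\mathrm{U}_4(3).D_8$ and $A_9\times\Sym(3)$) one checks that the radical centric candidates there are either not $3$-centric in $G$ or are $G$-conjugate to one of $E_1,\dots,E_4$, so no new $\fs$-class arises.

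Finally we count $\fs$-classes. By the characterisations recorded before the statement, $E_1$, $E_2$ and $E_3$ are characteristic in $S$, so each is the unique member of its $\fs$-class; $E_4$ is not normal in $S$, and since $[N_G(S):N_{N_G(S)}(E_4)]=6$ the class $E_4^{\fs}$ has exactly six members, none lying in a further essential subgroup. A short check with $G$-fusion shows the four classes are pairwise distinct, giving $\mathcal{E}(\fs)=\{E_1,E_2,E_3,E_4^{\fs}\}$; the same conclusion is delivered by the routines of \cref{EssenDeter}. The main obstacle is the bookkeeping, above all inside $M_3$ and $M_1$: these groups carry several classes of $3$-radical subgroups, and one must carefully (i) test $3$-centricity \emph{in $G$} rather than merely inside $M$, since $C_G(E)$ can strictly contain $C_M(E)$; (ii) exclude ``intermediate'' subgroups, for instance those lying strictly between $J(S)=O_3(M_1)$ and $S$; and (iii) keep track of $G$-fusion between candidates arising in different maximal $3$-local subgroups, so that no essential class is double-counted or missed. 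Once the candidate list is fixed, verifying the strongly $3$-embedded condition against the explicit automisers above is routine.
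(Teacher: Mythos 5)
The paper's own ``proof'' of this proposition is a one-line citation to \cite{Co1Sub}, so it does not exhibit the enumeration at all; what you have written is a sketch of precisely the determination that the cited reference performs. Your methodology is correct and standard for a realizable fusion system: $\fs$-essential implies $\fs$-centric and $\fs$-radical, which for $\fs=\fs_S(G)$ forces $E=O_3(N_G(E))$, so $N_G(E)$ sits inside one of the finitely many maximal $3$-local subgroups of $\mathrm{Co}_1$ and one searches each. You also correctly flag the two classical pitfalls (testing $3$-centricity in $G$ rather than in $M$, and tracking $G$-fusion across different maximal $3$-locals). In that sense your route is the one the cited reference takes, made explicit. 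Two caveats. First, the decisive content --- the enumeration of radical centric candidates inside $M_1,M_2,M_3$ and the dismissals inside the remaining $3$-locals --- is asserted as ``direct inspection'' and ``bookkeeping'' rather than exhibited, so as written this is a strategy rather than a proof; that is a fair state of affairs given the paper itself defers to \cite{Co1Sub}, but it leaves the load-bearing step unverified. Second, your closing remark that \cref{EssenDeter} ``delivers the same conclusion'' overstates it: \cref{EssenDeter} establishes only the containment $\mathcal{E}(\fs)\subseteq\{E_1,E_2,E_3,E_4^\fs\}$ for an arbitrary saturated system on $S$; the equality for $\fs_S(\mathrm{Co}_1)$ additionally uses the fact, recorded in the paper just before the proposition, that each of $E_1,\dots,E_4$ really is essential there (which your argument does supply via the explicit shapes of the $N_G(E_i)$).
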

\begin{proof}
See \cite{Co1Sub}.
\end{proof}

We now move onto to the classification of all saturated fusion systems on $S$. Throughout we suppose that $\fs$ is a saturated fusion system on a $3$-group $S$ such that $S$ is isomorphic to a Sylow $3$-subgroup of $\mathrm{Co}_1$. 

We utilize the fusion systems package in MAGMA \cite{Comp1} \cite{Webpage} to verify the following proposition. The code and outputs are included in \cref{code}.

\begin{proposition}\label[proposition]{EssenDeter}
$\mathcal{E}(\fs)\subseteq \{E_1, E_2, E_3, E_4^\fs\}$.
\end{proposition}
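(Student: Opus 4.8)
The proposition asserts that every $\fs$-essential subgroup of $S$ is $\fs$-conjugate to one of $E_1, E_2, E_3, E_4$. Since $\mathcal{E}(\fs)\subseteq \fs^{frc}$, the natural strategy is to enumerate, up to $S$-conjugacy, all subgroups $E\le S$ that are simultaneously $S$-centric and $S$-radical (being $\fs$-centric and $\fs$-radical are stronger than being $S$-centric and $S$-radical, so this is a superset of the candidates), and then for each surviving candidate either exhibit it as one of the $E_i$ or show it cannot support a strongly $p$-embedded subgroup in $\Out_{\fs}(E)$. Concretely, I would first compute $\Aut(S)$ (equivalently $\Out(S)=\Aut(S)/\Inn(S)$) and note that $\Aut_{\fs}(S)$ is some subgroup of $\Aut(S)$ containing $\Inn(S)$; this pins down which $S$-classes of subgroups can possibly be $\fs$-classes. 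Then run through the $S$-centric $S$-radical subgroups: a subgroup $E$ with $E\not\in\fs^{frc}$ cannot be essential, and for the rest one checks the necessary condition that $\Out_S(E)\in\syl_p(\Out_{\fs}(E))$ admits a strongly $p$-embedded overgroup, which forces $\Out_{\fs}(E)$ to have $p$-rank constraints (e.g. if $\Out_S(E)$ is cyclic one needs the relevant rank-$1$ structure, etc.).

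**Key steps, in order.** (1) Identify $\Aut(S)$ and hence the list of $S$-classes of subgroups closed under $\Aut_{\fs}(S)$; record $|\Out(S)|$ and the outer action on the characteristic subgroups $E_1,E_2,E_3$ and the chain $Z(S)\le Z_2(S)\le\dots$. (2) Enumerate the $S$-conjugacy classes of $S$-centric subgroups $E$ of $S$ — by order, exploiting that an essential subgroup must contain $Z(S)$ (indeed contains $O_p(\fs)$, but we only know $O_p(\fs)$ trivially a posteriori, so instead use that an essential $E$ is centric, hence $Z(S)\le E$), and must satisfy $C_S(E)\le E$. (3) Discard those that fail to be $S$-radical, i.e. for which $O_p(\Out_S(E)\cdot\text{[everything]})\ne 1$ in the relevant sense — more precisely, use that essential subgroups are $\fs$-radical so $O_p(\Out_{\fs}(E))=1$, and a fortiori $E$ must at least be $S$-radical in the weak sense that $O_p(N_S(E)/E)$ need not collapse it; here Lemma \ref{GrpChain}-type arguments rule out many candidates whose automizer is forced to be a $p$-group. (4) For each remaining candidate $E$ compute $\Out_S(E)$ and determine whether $\Aut(E)$ (hence any possible $\Out_{\fs}(E)$) contains a subgroup with strongly $p$-embedded subgroup having $\Out_S(E)$ as a Sylow $p$-subgroup; candidates where this is impossible are eliminated, and the survivors are matched with $E_1,E_2,E_3,E_4$ using the intrinsic characterizations already recorded (the $\Phi$- and $\mho^1$-conditions, $E_2=C_S(Z_2(S))$, and the $N_G(S)$-orbit of size $6$ pinning down $E_4^\fs$).

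**The main obstacle.** The bottleneck is step (2)–(4): $S$ has order $3^9$, so the lattice of $S$-centric $S$-radical subgroups, while finite, is genuinely large, and the argument that no exotic essential candidate survives is exactly the kind of bookkeeping that is painful to do entirely by hand — which is why the statement is flagged as MAGMA-verified. A fully handwritten proof would proceed by the standard reductions: an essential subgroup is centric-radical, so $O_p(\Out_{\fs}(E))=1$ and $\Out_S(E)\ne 1$; one then bounds $|S:E|$ (essentials are "not too small" because $E\ge Z(S)$ and $C_S(E)\le E$, and "not too large" because $\Out_S(E)$ must embed in a group with a strongly $p$-embedded subgroup, forcing e.g. $E\ge\Phi(S)$ or similar in many sub-cases), and finally the failure-of-factorization / $\SL_2(p^n)$-machinery of \cref{SEFF} together with the explicit module structure of $\Omega(Z(E))$ narrows the isomorphism type of $E$ to the four listed. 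I would organize this as a sequence of sublemmas handling the ranges $|S:E|=p, p^2, p^3$ separately, but I fully expect that the cleanest rigorous route — and the one the paper takes — is to cite the MAGMA computation of the centric-radical subgroups and the strongly-$p$-embedded test on their automizers.
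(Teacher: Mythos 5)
Your proposal matches the paper's approach exactly: the paper gives no handwritten argument for this proposition and simply records that the inclusion is verified by a MAGMA computation (using the fusion systems package of \cite{Comp1}), which is precisely what you anticipate in your closing remarks. The enumeration strategy you sketch --- listing $S$-centric candidates and testing whether $\Out(E)$ can contain a subgroup with a strongly $p$-embedded subgroup having $\Out_S(E)$ as a Sylow $p$-subgroup --- is in substance what that computation performs, though note that the nesting ``$\fs$-radical implies $S$-radical'' you invoke is only heuristic (the useful hypothesis is really the combined centric-and-strongly-$p$-embedded test applied to $\Aut(E)$, not a literal implication between radical conditions).
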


For the duration of this section, we will frequently use that $\mathbf{J}=J(S)=J(E_i)$ is a characteristic subgroup of $E_i$ for $i\in\{1,2,4\}$. This follows from \cref{BasicJS} (iii).

\begin{lemma}\label[lemma]{PSLId}
Suppose that $O^{3'}(\Out_{\fs}(\mathbf{J}))\cong \PSL_3(3)$. Then $E_4^\fs\cap \mathcal{E}(\fs)=\emptyset$.
\end{lemma}
\begin{proof}
We note first that $E_4$ is contained in no other essential subgroup of $\fs$ and so by the Alperin--Goldschmidt theorem, $\{E_4^\fs\}=\{E_4^{\Aut_{\fs}(S)}\}$. Since $\mathbf{J}$ is invariant under $\Aut_{\fs}(S)$ we may as well assume, aiming for a contradiction, that $E_4\in\mathcal{E}(\fs)$. 

Since $\mathbf{J}=J(E_4)$, we have that $N_{\fs}(E_4)\le N_{\fs}(\mathbf{J})$ and so $N_{\fs}(E_4)=N_{N_{\fs}(\mathbf{J})}(E_4)$. In particular, if $E_4\in\mathcal{E}(\fs)$ then $E_4\in\mathcal{E}(N_{\fs}(\mathbf{J}))$. By the uniqueness of models provided by \cref{model}, for $H$ a model of $N_{\fs}(\mathbf{J})$, we have that $N_{\fs}(E_4)=\fs_{N_S(E_4)}(N_H(E_4))$ so that $\Out_{\fs}(E_4)=N_H(E_4)/E_4$. Since $O^{3'}(H/\mathbf{J})\cong \PSL_3(3)$, we have that $N_H(E_4)\le N_H(S)$ and so $N_H(E_4)/E_4$ does not have a strongly $3$-embedded subgroup, a contradiction.
\end{proof}

\begin{lemma}\label[lemma]{M12Iden}
If $E_4\in\mathcal{E}(\fs)$, then $\{E_1, E_2\}\subseteq\mathcal{E}(\fs)$. Moreover, $E_4\in\mathcal{E}(\fs)$ if and only if $O^{3'}(\Out_{\fs}(\mathbf{J}))\cong 2.\mathrm{M}_{12}$.
\end{lemma}
\begin{proof}
Suppose that $\fs$ is a saturated fusion system on $S$ with $E_4\in\mathcal{E}(\fs)$. Then, as $\mathbf{J}=J(E_4)$, $N_{\fs}(E_4)\le N_{\fs}(\mathbf{J})$ and so $E_4$ is also essential in $N_{\fs}(\mathbf{J})$. Since $E_4\not\normaleq S$ and $|E_4/\mathbf{J}|=3$, \cref{normalinF} implies that $\mathbf{J}=O_3(N_{\fs}(\mathbf{J}))$. By \cref{model}, there is a finite group $H$ with $S\in\syl_3(H)$, $N_{\fs}(\mathbf{J})=\fs_S(H)$ and $F^*(H)=\mathbf{J}$. Then, $O^{3'}(H)/\mathbf{J}$ is determined by \cref{36Iden}. Using that $E_4\in\mathcal{E}(\fs)$ and applying \cref{PSLId}, we conclude that $O^{3'}(H)/\mathbf{J}\cong 2.\mathrm{M}_{12}$.

Suppose now that $O^{3'}(\Out_{\fs}(\mathbf{J}))\cong 2.\mathrm{M}_{12}$ and again set $H$ to be a model for $N_{\fs}(\mathbf{J})$ so that $O^{3'}(H)/\mathbf{J}\cong 2.\mathrm{M}_{12}$. We examine the maximal subgroups of $2.\mathrm{M}_{12}$ as can be found in the Atlas \cite{atlas}, and identify them with their preimage in $O^{3'}(H)$. Then there are three classes of maximal $3$-local subgroups $H_1$, $H_2$, $H_3$, and we may arrange in each case that $S\cap H_i\in\syl_3(H_i)$. These groups have the same shape as $N_G(E_1), N_G(E_2)$ and $N_G(E_4)$ respectively. Indeed, in each case, $|N_S(O_3(H_i))/O_3(H_i)|=3$ and so we deduce that $H_i/O_3(H_i)$ contains a strongly $3$-embedded subgroup for $i\in\{1,2,3\}$. Since $\mathbf{J}=J(O_3(H_i))$, we have that $O^{3'}(\Out_{\fs}(O_3(H_i)))=O^{3'}(\Out_{H}(O_3(H_i)))=O^{3'}(\Out_{H_i}(O_3(H_i)))$ contains a strongly $3$-embedded subgroup for all $i\in\{1,2,3\}$. Moreover, each $O_3(H_i)$ is fully $\fs$-normalized and as $\mathbf{J}$ is $\fs$-centric, so too is $O_3(H_i)$ for $i\in\{1,2,3\}$. Applying \cref{EssenDeter}, we have that $O_3(H_1), O_3(H_2), O_3(H_3)$ are equal to $E_1, E_2$ and $E_4\alpha$ for some $\alpha\in\Aut_{\fs}(S)$. Hence, $E_1, E_2, E_4\in\mathcal{E}(\fs)$, as required.
\end{proof}

\begin{lemma}\label[lemma]{SingleEssenAct1}
Suppose that $E_1\in\mathcal{E}(\fs)$. Then $O^{3'}(\Out_{\fs}(E_1))\cong \SL_2(3)$, both $E_1/\mathbf{J}$ and $\Phi(E_1)/Z(S)$ are natural $\SL_2(3)$-modules for $O^{3'}(\Out_{\fs}(E_1))$, and $\mathbf{J}/\Phi(E_1)$ is a natural $\Omega_3(3)$-module for $O^{3'}(\Out_{\fs}(E_1))/Z(O^{3'}(\Out_{\fs}(E_1)))\cong \PSL_2(3)$.
\end{lemma}
\begin{proof}
Assume that $E_1\in\mathcal{E}(\fs)$. We calculate that $Z(S)=Z(E_1)$ is of order $3$, and $\Phi(E_1)=\mathbf{J}\cap \mathbf{Q}$ is elementary abelian of order $3^3$ with $C_S(\Phi(E_1))=\mathbf{J}$. Let $K:=C_{\Aut_{\fs}(E_1)}(\Phi(E_1))$ so that $\Aut_{\mathbf{J}}(E_1)\in\syl_3(K)$ and $K$ normalizes $\Inn(E_1)$. In particular, $[K, \Inn(E_1)]\le K\cap \Inn(E_1)=\Aut_{\mathbf{J}}(E_1)$ and $K$ centralizes the quotient $E_1/\mathbf{J}$. Now, as $\mathbf{J}$ is elementary abelian, $K/C_K(\mathbf{J})$ is a $3'$-group and centralizes $Z(E_1)=C_{\mathbf{J}}(\Inn(E_1))\le \Phi(E_1)$. Applying the A$\times$B-lemma, with $K|_{\mathbf{J}}$, $\Inn(E_1)|_{\mathbf{J}}$ and $\mathbf{J}$ in the roles of $A, B$ and $V$ we deduce that $K$ centralizes $\mathbf{J}$, and so $K$ centralizes the chain $\{1\}\normaleq \mathbf{J}\normaleq E_1$. By \cref{GrpChain}, $K$ is a $3$-group. Thus, $K=\Aut_{\mathbf{J}}(E_1)$ and so we infer that $\Aut_{\fs}(E_1)/K$ acts faithfully on $\Phi(E_1)$. Since $\Aut_S(E_1)$ centralizes $Z(E_1)=Z(S)$ and $\Inn(E_1)=C_{\Aut_S(E_1)}(\Phi(E_1)/Z(S))$, we conclude that $\Phi(E_1)/Z(S)$ is a natural module for $O^{3'}(\Out_{\fs}(E_1))\cong \SL_2(3)$.

 We note that for $r\in O^{3'}(\Out_{\fs}(E_1))$, if $r$ centralizes $E_1/\mathbf{J}$, then as $[E_1, \Phi(E_1)]=Z(S)$, we have by the three subgroups lemma that $[r, \Phi(E_1), E_1]=\{1\}$ so that $r$ centralizes $\Phi(E_1)/Z(S)$, a contradiction. Hence, $E_1/\mathbf{J}$ is also a natural module for $O^{3'}(\Out_{\fs}(E_1))\cong \SL_2(3)$.  Set $V:=\mathbf{J}/\Phi(E_1)$ of order $3^3$. Then for $T=Z(O^{3'}(\Out_{\fs}(E_1)))$ we have by coprime action that $V=[V,T]\times C_V(T)$. However, $\Out_S(E_1)$ acts indecomposably on $V$ and we conclude that $V=[V, T]$ or $V=C_V(T)$ is an irreducible $3$-dimensional $\SL_2(3)$-module. Thus, $V=C_V(T)$ is a natural $\Omega_3(3)$-module for $O^{3'}(\Out_{\fs}(E_1))/Z(O^{3'}(\Out_{\fs}(E_1)))\cong \PSL_2(3)$.
\end{proof}

\begin{lemma}\label[lemma]{SingleEssenAct2}
Suppose that $E_2\in\mathcal{E}(\fs)$. Then $O^{3'}(\Out_{\fs}(E_2))\cong \SL_2(3)$, $Z(E_2)$ is of order $3^3$, $|\mathbf{J}/\Phi(E_2)|=3$ and both $E_2/\mathbf{J}$ and $\Phi(E_2)/Z(E_2)$ are natural $\SL_2(3)$-modules for $O^{3'}(\Out_{\fs}(E_2))$.
\end{lemma}
\begin{proof}
Assume that $E_2\in \mathcal{E}(\fs)$. One can calculate that that $\Phi(E_2)=[E_2, \mathbf{J}]$ is of order $3^5$ and is contained in $\mathbf{J}$, and $|Z(E_2)|=3^3$ and $|Z_2(S)|=3^2$. By \cref{BasicJS} (iii), $\mathbf{J}=J(E_2)$ and $\mathbf{J}/\Phi(E_2)$ is of order $3$ and centralized by $S$. Hence, $O^{3'}(\Out_{\fs}(E_2))$ acts trivially on $\mathbf{J}/E_2$ and so must act faithfully on $E_2/\mathbf{J}$ of order $3^2$ by \cref{burnside} and coprime action. It follows that $E_2/\mathbf{J}$ is a natural module for $O^{3'}(\Out_{\fs}(E_2))\cong\SL_2(3)$. Letting $r\in O^{3'}(\Out_{\fs}(E_2))$, if $r$ centralized $\Phi(E_2)/Z(E_2)$ then by coprime action, $[r, \mathbf{J}, E_2]=\{1\}$. Moreover, since $[E_2, \mathbf{J}, r]\le Z(E_2)$ we conclude by the three subgroups lemma that $[E_2, r, \mathbf{J}]\le Z(E_2)$. But $\mathbf{J}$ is abelian so that $[E_2, r, \mathbf{J}]=[[E_2, r]\mathbf{J}, \mathbf{J}]=[E_2, \mathbf{J}]=\Phi(E_2)$, a contradiction. Hence, $\Phi(E_2)/Z(E_2)$ is a $2$-dimensional faithful module for $O^{3'}(\Out_{\fs}(E_2))$ and so is a natural $\SL_2(3)$-module.
\end{proof}

\begin{lemma}\label[lemma]{SingleEssenAct3}
Suppose that $E_3\in\mathcal{E}(\fs)$. Then $O^{3'}(\Out_{\fs}(E_3))\cong \SL_2(3)$, $\mathbf{R}$ is normalized by $\Aut_{\fs}(E_3)$, and $\mathbf{R}/\Phi(E_3)$ and $\Phi(E_3)/Z(\mathbf{R})$ are natural $\SL_2(3)$-modules for $O^{3'}(\Out_{\fs}(E_3))$.
\end{lemma}
\begin{proof}
Assume that $E_3\in \mathcal{E}(\fs)$. One may calculate that $Z_2(S)=Z_2(E_3)$ and so $C_{E_3}(Z_2(E_3))=E_2\cap E_3=\mathbf{R}\normaleq \Out_{\fs}(E_3)$. Since $S$ centralizes $E_3/\mathbf{R}$, we must have that $O^{3'}(\Out_{\fs}(E_3))$ centralizes $E_3/\mathbf{R}$. Moreover, one can calculate that $\Phi(E_3)$ is of order $3^5$ and so by \cref{burnside} and coprime action, $O^{3'}(\Out_{\fs}(E_3))$ acts faithfully on $\mathbf{R}/\Phi(E_3)$ which has order $3^2$. We conclude that $O^{3'}(\Out_{\fs}(E_3))\cong\SL_2(3)$ and $\mathbf{R}/\Phi(E_3)$ is a natural module. 

Let $r\in O^{3'}(\Out_{\fs}(E_3))$ of $3'$-order. We note that $Z(S)<Z_2(E_3)=Z_2(S)<Z(\mathbf{R})=Z(E_2)$ and $Z(E_2)$ has order $3^3$. It follows that $O^3(O^{3'}(\Out_{\fs}(E_3)))$ acts trivially on $Z(\mathbf{R})$. Assume that $r$ centralizes $\Phi(E_3)/Z(\mathbf{R})$. Then by coprime action $r$ centralizes $\Phi(E_3)$. One can calculate that $C_{E_3}(\Phi(E_3)))=Z_2(E_3)\le \Phi(E_3)$ so that $[r, E_3]=\{1\}$ by coprime action and the three subgroups lemma, a contradiction. Hence, $\Phi(E_3)/Z(\mathbf{R})$ is a $2$-dimensional faithful module for $O^{3'}(\Out_{\fs}(E_3))$ and so is a natural $\SL_2(3)$-module.
\end{proof}

\begin{proposition}\label[proposition]{SingleEssen}
Assume that $\mathcal{E}(\fs)\subseteq \{E_i\}$ for some $i\in\{1,2,3\}$. Then one of the following occurs:
\begin{enumerate}
    \item $\fs=N_{\fs}(S)$; or
    \item $\fs=N_{\fs}(E_i)$ where $O^{3'}(\Out_{\fs}(E_i))\cong \SL_2(3)$ for some $i\in\{1,2,3\}$.
\end{enumerate}
\end{proposition}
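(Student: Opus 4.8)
The plan is to exploit the Alperin--Goldschmidt fusion theorem together with the highly constrained structure of the $E_i$ and their automizers. Suppose $\mathcal{E}(\fs)\subseteq\{E_i\}$ for a fixed $i\in\{1,2,3\}$. If $\mathcal{E}(\fs)=\0$ then by the Alperin--Goldschmidt theorem $\fs=\langle\Aut_\fs(S)\rangle=N_\fs(S)$, which is conclusion (i). So I may assume $E_i\in\mathcal{E}(\fs)$ is the unique $\fs$-class of essential subgroups. Since $E_i$ is characteristic in $S$ (as recorded in the excerpt: $E_1,E_2,E_3$ are characteristic in $S$), its $\fs$-class is a singleton, $\Aut_\fs(S)$ normalizes $E_i$, and $\Aut_\fs(S)$ acts on $\Out_\fs(E_i)$; moreover by Alperin--Goldschmidt $\fs=\langle\Aut_\fs(S),\Aut_\fs(E_i)\rangle=N_\fs(E_i)$, giving the first half of conclusion (ii). It remains to pin down $O^{3'}(\Out_\fs(E_i))$.

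The key step is to analyze the action of $\Out_\fs(E_i)$ on a suitable $\Aut_\fs(E_i)$-invariant FF-module inside $\Omega(Z(E_i))$ and invoke \cref{SEFF}. For each $i$, I would identify a natural candidate: for $E_2=C_S(Z_2(S))$ one takes (a suitable section of) $\Omega(Z(E_2))$; the point is that $E_i$ being $\fs$-centric forces $\Omega(Z(E_i))\le E_i$ with $\Out_\fs(E_i)$ acting, and since $E_i$ is essential, $\Out_\fs(E_i)$ has a strongly $3$-embedded subgroup, so $O^{3'}(\Out_\fs(E_i))$ has order divisible by $3$ and its Sylow $3$-subgroup, namely $\Aut_S(E_i)=N_S(E_i)/E_i\cdot(\text{inner part})$, has a very small image because $|N_S(E_i)/E_i|=3$ in each case (recall $E_1,E_3$ have order $3^8$ and $E_2$ has order $3^6$ but still index $3$ in $S$ — here I would use $|S/E_i|=3$, which holds for $E_1,E_3$; for $E_2$ I need to check $N_S(E_2)/E_2$, which should again give a group with cyclic Sylow $3$). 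With Sylow $3$-subgroup of order $3$ in $\Out_\fs(E_i)$ and a strongly $3$-embedded subgroup present, \cref{SEFF} (applied to the natural FF-module coming from the commutator/center structure) forces $L:=O^{3'}(\Out_\fs(E_i))$ to have $L/C_L(V)\cong\SL_2(3)$; one then checks $C_L(V)$ is a $3'$-group that must be trivial here because $\Out_\fs(E_i)\le\Out(E_i)$ acts faithfully on the relevant characteristic sections (verifiable from the explicit structure of $E_i$), whence $O^{3'}(\Out_\fs(E_i))\cong\SL_2(3)$, which is conclusion (ii).

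The main obstacle I anticipate is twofold. First, verifying that $\Omega(Z(E_i))$ (or the appropriate characteristic section of it) is genuinely an FF-module for $\Out_\fs(E_i)$, and more importantly that the module is ``small enough'' that \cref{SEFF} applies cleanly without extra components: this requires knowing the $\GF(3)$-module structure of the center of each $E_i$ as a module for its outer automorphism group, which is exactly the kind of data the paper says ``is easily verified by computer'' — I would lean on that for the precise module dimensions and the identification of offenders. Second, one must rule out the possibility that $\Aut_S(E_i)$ is not a full Sylow $3$-subgroup of $\Out_\fs(E_i)$, i.e. that $\Out_\fs(E_i)$ could have larger $3$-part forcing $O^{3'}$ to be something bigger like $\SL_2(9)$ or $\SL_2(27)$; this is excluded because $|N_S(E_i)/E_i|$ is just $3$ (or at worst has cyclic Sylow $3$-subgroup of order $3$), and in a fusion system the Sylow $3$-subgroup of $\Out_\fs(E_i)$ is exactly $\Aut_S(E_i)/\Inn(E_i)$-type data — so $\SL_2(3)$ is the only option. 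Once the module bookkeeping is settled, the rest is the routine combination of Alperin--Goldschmidt, \cref{SEFF}, and faithfulness of the outer action.
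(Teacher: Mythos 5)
Your overall skeleton (empty case gives (i), otherwise Alperin--Goldschmidt gives $\fs=N_\fs(E_i)$ since $E_i$ is characteristic in $S$) matches the paper, but your key step does not. You propose to find an $\Aut_\fs(E_i)$-invariant FF-module inside $\Omega(Z(E_i))$ and invoke \cref{SEFF}. This fails already for $i=1$: the paper calculates that $Z(E_1)=Z(S)$ has order $3$, so $\Omega(Z(E_1))$ is one-dimensional over $\GF(3)$ and cannot carry an offender at all, hence there is no FF-module there and \cref{SEFF} is inapplicable. Your hedge ``or the appropriate characteristic section of it'' points in the right direction, but as written the proposal rests on a module that does not exist. (You also misstate $|E_2|$ as $3^6$; it is $3^8$, though $|S/E_i|=3$ still holds for all three.)

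What the paper actually does is quite different, and none of the three cases uses \cref{SEFF} or any submodule of the center. For $i=1$, it sets $K=C_{\Aut_\fs(E_1)}(\Phi(E_1))$ with $\Phi(E_1)$ elementary abelian of order $3^3$, shows $K$ is a $3$-group via the A$\times$B-lemma and \cref{GrpChain}, embeds $\Aut_\fs(E_1)/K$ into $\GL_3(3)$ acting on $\Phi(E_1)$, and exploits normality of $\Inn(E_1)/K$ together with preservation of the order-$3$ subgroup $Z(E_1)$. For $i=2$ it observes that $O^{3'}(\Out_\fs(E_2))$ centralizes $O_3(M_1)/\Phi(E_2)$ and hence acts faithfully on the two-dimensional \emph{quotient} $E_2/O_3(M_1)$, forcing $\SL_2(3)$; for $i=3$ the faithful two-dimensional section is $(E_2\cap E_3)/\Phi(E_3)$. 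In each case the decisive module is a section higher up in $E_i$, not a central submodule. To repair your argument you would need to replace the appeal to \cref{SEFF} on $\Omega(Z(E_i))$ with an identification of these specific two-dimensional characteristic sections (or, for $E_1$, the $\GL_3(3)$ embedding and the A$\times$B-lemma calculation); the small Sylow $3$-subgroup observation you make at the end is correct and useful, but it alone cannot substitute for exhibiting a faithful small module on which $O^{3'}(\Out_\fs(E_i))$ acts.
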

\begin{proof}
If $\mathcal{E}(\fs)=\emptyset$, then outcome (i) is satisfied by the Alperin--Goldschmidt theorem. Thus, we may assume that $E_i$ is the unique essential subgroup of $\fs$. Indeed, we must have that $E_i$ is invariant under $\Aut_{\fs}(S)$ and so $E_i\normaleq \fs$. Then \cref{SingleEssenAct1}, \cref{SingleEssenAct2} and \cref{SingleEssenAct3} completes the proof in case (ii).
\end{proof}

\begin{lemma}\label[lemma]{ExtraSpecial}
Assume that $E_1\in\mathcal{E}(\fs)$. Then there is a unique $\Aut(S)$-conjugate of $\mathbf{Q}$ which is $\Aut_{\fs}(E_1)$-invariant and $\Aut_{\fs}(S)$-invariant.
\end{lemma}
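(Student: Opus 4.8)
The plan is to realise $O_3(M_2)$, up to an $\Aut(S)$-conjugacy that the invariance requirement will kill, as a canonical complement inside $E_1$. Write $Q:=O_3(M_2)\cong 3^{1+4}_+$. I would begin by recording several facts about the embedding $Q\le E_1$, all readily checked by machine: $Q\normaleq E_1$; $Z(Q)=Z(S)=Z(E_1)$; $J(S)\cap Q=\Phi(E_1)$, which is elementary abelian of order $3^3$ and self-centralising in $Q$; and hence, as $|J(S)|=3^6$ and $|E_1|=3^8$, $E_1=J(S)Q$. Since $E_1$, $J(S)$ and $\Phi(E_1)$ are characteristic in $S$, each $\Aut(S)$-conjugate $Q^\psi$ again lies in $E_1$, is normal in $E_1$, satisfies $Z(Q^\psi)=Z(S)$, $J(S)\cap Q^\psi=\Phi(E_1)$ and $E_1=J(S)Q^\psi$, and is isomorphic to $3^{1+4}_+$; a further machine check shows that the $\Aut(S)$-conjugates of $Q$ are precisely the normal subgroups $P$ of $E_1$ with $|P|=3^5$, $[P,P]=Z(S)$ and $P\cap J(S)=\Phi(E_1)$. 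Call this set $\mathcal{Q}$. Any $\alpha\in\Aut_{\fs}(E_1)$ fixes the characteristic subgroups $J(S),\Phi(E_1),Z(S)$ of $E_1$, so permutes $\mathcal{Q}$; and since $E_1$ is characteristic in $S$, restriction gives a homomorphism $\Aut_{\fs}(S)\to\Aut_{\fs}(E_1)$, so an $\Aut_{\fs}(E_1)$-invariant member of $\mathcal{Q}$ is automatically $\Aut_{\fs}(S)$-invariant. It therefore suffices to show that $\Aut_{\fs}(E_1)$ fixes exactly one member of $\mathcal{Q}$.

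\textbf{The module computation.} This I would carry out inside $\bar E:=E_1/\Phi(E_1)$, using the structure of $\Aut_{\fs}(E_1)$ from the proof of \cref{SingleEssen} in the case $i=1$ (which uses only $E_1\in\mathcal{E}(\fs)$): the subgroup $K:=C_{\Aut_{\fs}(E_1)}(\Phi(E_1))=\Inn_{J(S)}(E_1)$ is a normal $3$-subgroup inside $\Inn(E_1)$; $\Inn(E_1)$ acts trivially on $\bar E$; a Sylow $3$-subgroup of $\Out_{\fs}(E_1)$ is $\Out_S(E_1)$, of order $3$; and $O^{3'}(\Out_{\fs}(E_1))$ contains a subgroup $\cong\SL_2(3)$ acting on $\Phi(E_1)/Z(S)$ as the natural module, so (using $H^1(\SL_2(3),\mathrm{nat})=0$, which one checks directly) $\Phi(E_1)=Z(S)\times V_0$ with $V_0\cong\mathrm{nat}$ an $\Aut_{\fs}(E_1)$-invariant complement. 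The crucial input is then to pin down the $\Out_{\fs}(E_1)$-modules $\bar J:=J(S)/\Phi(E_1)$ and $E_1/J(S)$: commutation in $E_1$ gives an $\Out_{\fs}(E_1)$-equivariant map $\bar J\to\Hom_{\GF(3)}(E_1/J(S),V_0)$ whose kernel is $(Z_2(E_1)\cap J(S))/\Phi(E_1)$, and this vanishes by the machine-checkable identity $Z_2(E_1)\cap J(S)=\Phi(E_1)$ in $S$; as $\bar J$ is then a $3$-dimensional submodule of the $4$-dimensional module $\Hom_{\GF(3)}(E_1/J(S),V_0)$ built from the $2$-dimensional $E_1/J(S)$ and the irreducible $V_0\cong\mathrm{nat}$, running through the possibilities for $E_1/J(S)$ forces $E_1/J(S)\cong\mathrm{nat}$, whence $\Hom_{\GF(3)}(E_1/J(S),V_0)\cong\mathrm{nat}\otimes\mathrm{nat}\cong\mathrm{St}\oplus\mathrm{triv}$ and $\bar J\cong\mathrm{St}$, the Steinberg module.

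\textbf{Conclusion.} As $\bar J\cong\mathrm{St}$ is projective, the extension $0\to\bar J\to\bar E\to\bar E/\bar J\to 0$ splits, and the number of $\Out_{\fs}(E_1)$-invariant complements to $\bar J$ equals $|\Hom_{\GF(3)\Out_{\fs}(E_1)}(\bar E/\bar J,\bar J)|=|\Hom(\mathrm{nat},\mathrm{St})|=1$. Let $\bar P_0$ be that complement and $P_0\le E_1$ its preimage; as $\Inn(E_1)$ is trivial on $\bar E$, $P_0$ is $\Aut_{\fs}(E_1)$-invariant of order $3^5$ with $P_0\cap J(S)=\Phi(E_1)$. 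Since $C_{E_1}(\Phi(E_1))=J(S)$ and $P_0\not\le J(S)$, one gets $[P_0,\Phi(E_1)]=Z(S)$, so $Z(P_0)\le\Phi(E_1)$ and $\gamma_3(P_0)\le Z(S)$; the $\Aut_{\fs}(E_1)$-equivariant alternating commutator form on $\bar P_0$ factors through the $1$-dimensional $\wedge^2\bar P_0$, so $[P_0,P_0]/\gamma_3(P_0)$ is at most $1$-dimensional, and since $\Phi(E_1)/Z(S)\cong\mathrm{nat}$ is $2$-dimensional and irreducible while $Z(S)\le[P_0,\Phi(E_1)]\le[P_0,P_0]\le\Phi(E_1)$, this forces $[P_0,P_0]=Z(S)$; moreover $C_{V_0}(P_0)$, being $\Aut_{\fs}(E_1)$-invariant and proper in the irreducible $V_0$, is trivial, so $Z(P_0)=Z(S)$. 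Thus $P_0$ is a normal subgroup of $E_1$ of order $3^5$ with $[P_0,P_0]=Z(S)$ and $P_0\cap J(S)=\Phi(E_1)$, i.e.\ $P_0\in\mathcal{Q}$, and $P_0$ is $\Aut_{\fs}(E_1)$-invariant. Conversely, any $\Aut_{\fs}(E_1)$-invariant $P_1\in\mathcal{Q}$ has $P_1/\Phi(E_1)$ an $\Aut_{\fs}(E_1)$-invariant complement to $\bar J$ in $\bar E$, hence equal to $\bar P_0$, so $P_1=P_0$. This proves the lemma.

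\textbf{Main obstacle.} The delicate step is the determination, in the second paragraph, of the $\Out_{\fs}(E_1)$-module structure of $J(S)/\Phi(E_1)$ and $E_1/J(S)$ for an \emph{arbitrary} saturated $\fs$ with $E_1$ essential, not merely the realisable $\fs=\fs_S(\Sp_6(3))$; I expect this to rest, as sketched, on the already-extracted structure of $\Aut_{\fs}(E_1)$, the commutator calculus in $E_1$, and a handful of computations in the single group $S$ (notably $Z_2(E_1)\cap J(S)=\Phi(E_1)$ and the transitivity of $\Aut(S)$ on $\mathcal{Q}$). Granted those, the splitting, the uniqueness of the invariant complement, and the identification $P_0\cong 3^{1+4}_+$ are routine.
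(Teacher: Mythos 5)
Your argument appears to be correct, but it goes by a genuinely different route than the paper's. The paper's approach is simpler and more self-contained: writing $T$ for the central involution of $O^{3'}(\Out_\fs(E_1))\cong\SL_2(3)$, one first shows $T$ centralises $V:=J(S)/\Phi(E_1)$ (else the $2$-dimensional $[V,T]$ would be quadratic for $\Out_S(E_1)$, contradicting the known commutator structure in $S$), and then coprime action immediately gives $E_1/\Phi(E_1)=[U,T]\times V$ with $U:=E_1/\Phi(E_1)$; since $T\normaleq\Out_\fs(E_1)$, the eigenspace $[U,T]$ is the unique $T$-invariant (hence the unique $\Aut_\fs(E_1)$-invariant) complement to $V$, and its preimage is machine-checked to lie among the nine $\Aut(S)$-conjugates of $O_3(M_2)$. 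Your proof instead pins down the precise $\Out_\fs(E_1)$-module types --- showing $J(S)/\Phi(E_1)\cong\mathrm{St}$ via an injective commutator pairing into $\Hom(E_1/J(S),V_0)$, requiring the extra machine fact $Z_2(E_1)\cap J(S)=\Phi(E_1)$, and then identifying $E_1/J(S)\cong\mathrm{nat}$ --- so that projectivity/injectivity of the Steinberg module and $\Hom_{\SL_2(3)}(\mathrm{nat},\mathrm{St})=0$ give a unique invariant complement. Both proofs are valid; the paper's avoids any need to identify $\bar J$ as irreducible or Steinberg (coprime action alone suffices for the unique $T$-invariant complement), while yours trades that for heavier module theory but a more structural explanation of \emph{why} the complement is unique. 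A minor caveat: your verification that $P_0\in\mathcal Q$ via $[P_0,P_0]=Z(S)$ and $Z(P_0)=Z(S)$ is correct but noticeably more delicate than what the paper needs, since the paper simply observes that the preimage of $[U,T]$ satisfies the defining conditions of the nine candidates and appeals to the $\Aut(S)$-transitivity check directly.
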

\begin{proof}
Assume that $E_1\in\mathcal{E}(\fs)$. By \cref{SingleEssenAct1}, $O^{3'}(\Out_{\fs}(E_1))\cong \SL_2(3)$ normalizes $\mathbf{J}$ and $\Phi(E_1)$, and $V:=\mathbf{J}/\Phi(E_1)$ is a irreducible $3$-dimensional module for $O^{3'}(\Out_{\fs}(E_1))/T\cong \PSL_2(3)$ where $T=C_{O^{3'}(\Out_{\fs}(E_1))}(V)$. For $U:=E_1/\Phi(E_1)$ we have that $U/V$ has order $9$ and as $T$ acts non-trivially on $U$ by coprime action, we deduce that $C_U(T)=V$ and $U=[U, T]\times V$ where $[U,T]$ is a natural $\SL_2(3)$-module.

For $X$ the preimage in $E_1$ of $[U,T]$, we have that $X\normaleq S$ and $X\cap \mathbf{J}=\Phi(E_1)$. Moreover, since $X/\Phi(E_1)$ is an irreducible module for $\Out_{\fs}(E_1)$, we deduce that $|\Omega_1(X)|\ne 3^4$. With this information, we calculate that there are $3$ subgroups of $E_1$ satisfying these properties including $X$. Furthermore, since $E_1, \Phi(E_1)$ and $\mathbf{J}$ are all characteristic subgroups of $S$, we have that $X\alpha$ also satisfies these properties for all $\alpha\in\Aut(S)$, and we calculate that under the action of $\Aut(S)$, all $3$ subgroups of $E_1$ are conjugate (see \cref{code} for the explicit code for these calculations). Finally, since $\mathbf{Q}$ satisfies these properties, we conclude that there is $\alpha\in\Aut(S)$ such that $X=\mathbf{Q}\alpha$. By the module decomposition of $U$ above, $X$ is the unique such $\Aut(S)$-conjugate of $\mathbf{Q}$ which is $\Aut_{\fs}(E_1)$-invariant.
\end{proof}

By definition, $\fs^{\alpha^{-1}}$ is a saturated fusion system on $S$ which is isomorphic to $\fs$, for $\alpha\in\Aut(S)$. Furthermore, it follows from the above lemma that there is $\alpha\in\Aut(S)$ such that $\mathbf{Q}$ is the unique subgroup of $S$ in its $\Aut(S)$-conjugacy class which is both $\Aut_{\fs^{\alpha^{-1}}}(E_1)$-invariant and $\Aut_{\fs^{\alpha^{-1}}}(S)$-invariant. Since we are only interested in investigating the possibilities of $\fs$ up to isomorphism, we may as well assume for the remainder of this section that $\mathbf{Q}$ is $\Aut_{\fs}(E_1)$-invariant whenever $E_1\in\mathcal{E}(\fs)$. Indeed, $\mathbf{Q}$ is the preimage in $E_1$ of $[E_1/\Phi(E_1), Z(O^{3'}(\Aut_{\fs}(E_1)))]$.

\begin{proposition}\label[proposition]{DoubleEssen1}
Suppose that $\{E_1, E_2\}\subseteq \mathcal{E}(\fs)$. Then either
\begin{enumerate}
    \item $E_4^\fs\cap \mathcal{E}(\fs)=\emptyset$, $\mathcal{E}(N_{\fs}(\mathbf{J}))=\{E_1, E_2\}$ and $O^{3'}(\Out_{\fs}(\mathbf{J}))\cong \PSL_3(3)$; or
    \item $E_4\in\mathcal{E}(\fs)$, $\mathcal{E}(N_{\fs}(\mathbf{J}))=\{E_1, E_2, E_4^\fs\}$ and $O^{3'}(\Out_{\fs}(\mathbf{J}))\cong 2.\mathrm{M}_{12}$.
\end{enumerate}
Moreover, in each case, if $E_3\not\in\mathcal{E}(\fs)$ then $\fs=N_{\fs}(\mathbf{J})$.
\end{proposition}
\begin{proof}
By \cref{EssenDeter}, $\mathcal{E}(N_{\fs}(\mathbf{J}))\subseteq \{E_1, E_2, E_3, E_4^\fs\}$. We note that as $\mathbf{J}=J(E_1)=J(E_2)=J(E_4)$, $\Out_{\fs}(E_i)=\Out_{N_{\fs}(\mathbf{J})}(E_i)$ for $i\in\{1,2,4\}$. Furthermore, since $E_i$ is self-centralizing in $S$ and fully normalized in $\fs$, we see that $E_i\in\mathcal{E}(N_{\fs}(\mathbf{J}))$ if and only if $E_i\in\mathcal{E}(\fs)$ for $i\in\{1,2,4\}$. Since $\mathbf{J}\not\le E_3$, we necessarily have that $E_3\not\in\mathcal{E}(N_{\fs}(\mathbf{J}))$ by \cref{normalinF}.

Suppose that $\{E_1, E_2\}\subseteq \mathcal{E}(\fs)$. Let $X$ be the largest subgroup normalized by $\Aut_{\fs}(E_1)$ and $\Aut_{\fs}(E_2)$. Since $\mathbf{J}=J(E_1)=J(E_2)$, we have that $\mathbf{J}\le X\le E_1\cap E_2$. Furthermore, by \cref{SingleEssenAct1}, $E_1/\mathbf{J}$ is irreducible under $\Aut_{\fs}(E_1)$ and we deduce that $X=\mathbf{J}$ and $\mathbf{J}=O_3(N_{\fs}(\mathbf{J}))$. Indeed, $\Out_{\fs}(\mathrm{J})$ satisfies the hypothesis of \cref{36Iden} and we deduce that $O^{3'}(\Out_{\fs}(\mathbf{J}))\cong \PSL_3(3)$ or $2.\mathrm{M}_{12}$. In the former case, we have by \cref{PSLId} that $E_4^\fs\cap \mathcal{E}(\fs)=\emptyset$, and so (i) holds. In the latter case, we have by \cref{M12Iden} that $E_4\in\mathcal{E}(\fs)$, and so (ii) holds. Finally, since $\mathbf{J}=J(S)$ and $\mathbf{J}$ is invariant under $\Aut_{\fs}(S)$, \cref{normalinF} and \cref{EssenDeter} imply that if $E_3\not\in\mathcal{E}(\fs)$ then $\fs=N_{\fs}(\mathbf{J})$.
\end{proof}

\begin{proposition}\label[proposition]{DoubleEssen2}
Suppose that $\{E_1, E_3\}\subseteq \mathcal{E}(\fs)$. Then $O^{3'}(\Out_{\fs}(\mathbf{Q}))\cong \Sp_4(3)$, $\mathcal{E}(N_{\fs}(\mathbf{Q}))=\{E_1, E_3\}$ and if $E_2\not\in\mathcal{E}(\fs)$ then $\fs=N_{\fs}(\mathbf{Q})$.
\end{proposition}
\begin{proof}
Suppose that $\{E_1, E_3\}\subseteq \mathcal{E}(\fs)$ and let $X$ be the largest subgroup of $S$ normalized by both $\Aut_{\fs}(E_1)$ and $\Aut_{\fs}(E_3)$. Then $X\le E_1\cap E_3$ so that $\mathbf{J}\not\le X$. Since $\Aut_{\fs}(E_1)$ acts irreducibly on $\mathbf{J}/\Phi(E_1)$, by the choice of $\mathbf{Q}$ following \cref{ExtraSpecial} we have that $X\le \mathbf{Q}$. We note that $Z(S)=Z(E_1)=Z(E_3)$ so that $Z(S)\le X$.

Assume first that $X=Z(S)$, let $G_i$ be a model for $N_{\fs}(E_i)$, where $i\in\{1,3\}$, and $G_{13}$ be a model for $N_{\fs}(S)$. Since $E_1$ and $E_3$ are $\Aut_{\fs}(S)$-invariant, we can arrange that there are injective maps $\alpha_i:G_{13}\to G_i$ for $i\in\{1,3\}$. Furthermore, since $Z(S)\normaleq G_1, G_3$, we may form injective maps $\alpha_i^*:G_{13}/Z(S)\to G_i/Z(S)$ so that the tuple $(G_1/Z(S), G_3/Z(S), G_{13}/Z(S), \alpha_1^*, \alpha_3^*)$ satisfies the hypothesis of \cite[Theorem A]{Greenbook}. Since $|S/Z(S)|=3^8$ and $|Z(S/Z(S))|=3$, comparing with the outcomes provided by \cite[Theorem A]{Greenbook}, we have a contradiction.

Thus, $Z(S)<X$ and we deduce that $Z(S)<X\cap Z_2(E_1)\le \Phi(E_1)$. By \cref{SingleEssenAct1}, $\Aut_{\fs}(E_1)$ is irreducible on $\Phi(E_1)/Z(S)$ and so we have that $\Phi(E_1)\le X$. If $X=\Phi(E_1)$ then $|X|=3^3$ and $X\cap Z_2(S)>Z(S)$. Hence, $|X\Phi(E_3)/\Phi(E_3)|\leq 3$ and as $\Aut_{\fs}(E_3)$ acts irreducibly on $\mathbf{R}/\Phi(E_3)$ by \cref{SingleEssenAct3} we deduce that $X\le \Phi(E_3)$. Similarly, $|XZ_2(S)/Z_2(S)|\leq 3$ and as $\Aut_{\fs}(E_3)$ acts irreducibly on $\Phi(E_3)/Z_2(S)$ by \cref{SingleEssenAct3} we deduce that $X=Z_2(S)$, a contradiction since $X/Z(S)$ is a natural $\SL_2(3)$-module for $O^{3'}(\Out_{\fs}(E_1))$. Hence, $\Phi(E_1)<X$. Finally, since $X\le \mathbf{Q}$ and $\Aut_{\fs}(E_1)$ acts irreducibly on $\mathbf{Q}/\Phi(E_1)$ by \cref{SingleEssenAct1}, we have that $X=\mathbf{Q}$.

We have that $O^{3'}(\Out_{\fs}(\mathbf{Q}))$ acts faithfully on $\mathbf{Q}$. By \cite{Winter}, we deduce that $O^{3'}(\Out_{\fs}(\mathbf{Q}))$ is isomorphic to a subgroup of $O^{3'}(\Out(\mathbf{Q}))\cong\Sp_4(3)$. Hence, $\Out_S(\mathbf{Q})\in\syl_3(\Out(\mathbf{Q}))$ and $O^{3'}(\Out_{\fs}(\mathbf{Q}))$ is an overgroup of $\Out_S(\mathbf{Q})$ with no non-trivial normal $3$-subgroups. By \cite[Table 8.12]{LowMax}, any maximal subgroup of $\Sp_4(3)$ which contains a Sylow $3$-subgroup is a parabolic subgroup so has a normal $3$-subgroups. Hence, $O^{3'}(\Out_{\fs}(\mathbf{Q}))$ is contained in no maximal subgroups so that $O^{3'}(\Out_{\fs}(\mathbf{Q}))\cong \Sp_4(3)$.

We note that the maximal abelian subgroups of $\mathbf{Q}$ have order $3^3$ and so $\mathbf{Q}\mathbf{J}=E_1$. In particular, $E_2\not\ge \mathbf{Q}\not\le E_4$ and neither $E_2$ nor $E_4$ are essential in $N_{\fs}\mathbf{(Q)}$ by \cref{normalinF}. Since $E_1, E_3$ are $\fs$-centric, normal in $S$ and satisfy $\Out_{\fs}(E_i)=\Out_{N_{\fs}(\mathbf{Q})}(E_i)$, we deduce that $E_1, E_3\in\mathcal{E}(\fs)$ if and only if $E_1, E_3\in\mathcal{E}(N_{\fs}(\mathbf{Q}))$. By \cref{M12Iden}, if $E_4^\fs\cap \mathcal{E}(\fs)\ne \emptyset$, then $E_2\in\mathcal{E}(\fs)$ and so by \cref{EssenDeter}, if $E_2\not\in\mathcal{E}(\fs)$ then $\mathcal{E}(\fs)=\{E_1, E_3\}$. In particular, since we have arranged that $\mathbf{Q}$ is $\Aut_{\fs}(S)$-invariant by \cref{ExtraSpecial}, applying \cref{normalinF} we see that $\fs=N_{\fs}(\mathbf{Q})$, completing the proof.
\end{proof}

\begin{proposition}\label[proposition]{DoubleEssen3}
Suppose that $\{E_2, E_3\}\subseteq \mathcal{E}(\fs)$. Then $O^{3'}(\Out_{\fs}(\mathbf{R}))\cong \Omega_4^+(3)\cong \SL_2(3) \sbt_{C_2} \SL_2(3)$, $\mathcal{E}(N_{\fs}(\mathbf{R}))=\{E_2, E_3\}$ and if $E_1\not\in\mathcal{E}(\fs)$ then $\fs=N_{\fs}(\mathbf{R})$.
\end{proposition}
\begin{proof}
Suppose that $\{E_2, E_3\}\subseteq \mathcal{E}(\fs)$. By \cref{SingleEssenAct3}, we have that $\mathbf{R}=E_2\cap E_3$ is characteristic in $E_3$. Recall from \cref{SingleEssenAct2} that for $V:=E_2/\Phi(E_2)$ and $L:=O^{3'}(\Out_{\fs}(E_2))\cong \SL_2(3)$, $V=[V,L]\times C_V(L)$ where $[V,L]$ has order $3^2$ and $C_V(L)=\mathbf{J}/\Phi(E_2)$. 

We claim that $\mathbf{R}$ is the preimage of $[V,L]$ in $E_2$ and so is normalized by $\Aut_{\fs}(E_2)$. First, observe that $[E_2, E_3]\Phi(E_2)/\Phi(E_2)$ has order $3$ and is contained in $([V,L]\cap \mathbf{R})/\Phi(E_2)$. Since $E_3$ is $\Aut_{\fs}(S)$-invariant, we deduce that either $\mathbf{R}$ is the preimage of $[V,L]$, or $L$ centralizes $\mathbf{R}/[E_2, E_3]\Phi(E_2)$. In the latter case, we deduce that $C_V(L)\le \mathbf{R}/\Phi(E_2)$ so that $\mathbf{J}\le \mathbf{R}$, a contradiction. Hence, $\mathbf{R}$ is the preimage in $E_2$ of $[V,L]$ and so is normalized by $\Aut_{\fs}(E_2)$.

Since $\Phi(\mathbf{R})=Z(\mathbf{R})$, $|\mathbf{R}/\Phi(\mathbf{R})|=3^4$ and applying \cref{burnside}, we deduce that $O^{3'}(\Out_{\fs}(\mathbf{R}))$ is isomorphic to a subgroup $\SL_4(3)$. Set $\bar{\mathbf{R}}=\mathbf{R}/\Phi(\mathbf{R})$. We note that $|C_{\bar{\mathbf{R}}}(\Out_S(\mathbf{R}))|=3$ and that $\bar{\mathbf{R}}=\langle C_{\bar{\mathbf{R}}}(\Out_S(\mathbf{R}))^{\Out_{\fs}(\mathbf{R})}\rangle$ by the actions of $N_{\Out_{\fs}(\mathbf{R})}(\Out_{E_i}(\mathbf{R}))\cong \Aut_{\fs}(E_i)/\Aut_{\mathbf{R}}(E_i)$ for $i\in\{2,3\}$. In particular, $\Out_{\fs}(\mathbf{R})$ stabilizes no subspaces of $\bar{R}$ and $\bar{R}$ is indecomposable under $\Out_{\fs}(\mathbf{R})$. Moreover, $|N_{O^{3'}(\Out_{\fs}(\mathbf{R}))}(\Out_{E_i}(\mathbf{R}))|$ is divisible by $8$ and we deduce that $O^{3'}(\Out_{\fs}(\mathbf{R}))\not\cong \mathrm{(P)SL}_2(9)$. Comparing with \cite[Table 8.8]{LowMax}, we deduce that $O^{3'}(\Out_{\fs}(\mathbf{R}))$ is isomorphic to a subgroup of $\mathrm{SO}_4^+(3)$ or $\Sp_4(3)$. In the latter case, we check against the tables of maximal subgroups of $\Sp_4(3)$ \cite[Table 8.12]{LowMax} and find no suitable candidates which contain $O^{3'}(\Out_{\fs}(\mathbf{R}))$. In the former case, since $|\mathrm{SO}_4^+(3)|_3=3^2$ and comparing orders we deduce that $O^{3'}(\Out_{\fs}(\mathbf{R}))\cong O^{3'}(\mathrm{SO}_4^+(3))=\Omega_4^+(3)$, as desired.

Since $\mathbf{R}\normaleq S$ is of order $3^7$, contained in $E_2$ and does not contain $\mathbf{J}$ (for otherwise $\mathbf{J}\le E_3$), we see that $E_1\not\ge \mathbf{R}\not\le E_4$ and neither $E_1$ nor $E_4$ are essential in $N_{\fs}\mathbf{(R)}$ by \cref{normalinF}. Since $E_2, E_3$ are $\fs$-centric, normal in $S$ and satisfy $\Out_{\fs}(E_i)=\Out_{N_{\fs}(\mathbf{R})}(E_i)$, we deduce that $E_2, E_3\in\mathcal{E}(\fs)$ if and only if $E_2, E_3\in\mathcal{E}(N_{\fs}(\mathbf{R}))$. By \cref{M12Iden}, if $E_4^\fs\cap \mathcal{E}(\fs)\ne \emptyset$, then $E_1\in\mathcal{E}(\fs)$ and so by \cref{EssenDeter}, if $E_1\not\in\mathcal{E}(\fs)$ then $\mathcal{E}(\fs)=\{E_2, E_3\}$. Since $E_2$ and $E_3$ are characteristic subgroups of $S$, so too is $\mathbf{R}=\mathbf{R}$. Hence, $\mathbf{R}$ is $\Aut_{\fs}(S)$-invariant and so if $E_1\not\in\mathcal{E}(\fs)$, then applying \cref{normalinF}, we have that $\fs=N_{\fs}(\mathbf{R})$, completing the proof.
\end{proof}

Hence, as consequence of \cref{EssenDeter}, \cref{M12Iden} and  \cref{SingleEssen}-\cref{DoubleEssen3}, we have proved the following result.

\begin{proposition}\label[proposition]{Co1Essens}
Suppose that $\fs$ is a saturated fusion system on a $3$-group $S$ such that $S$ is isomorphic to a Sylow $3$-subgroup of $\mathrm{Co}_1$. If $O_3(\fs)=\{1\}$ then $\mathcal{E}(\fs)=\{E_1, E_2, E_3\}$ or $\mathcal{E}(\fs)=\{E_1, E_2, E_3, E_4^\fs\}$.
\end{proposition}

We now complete the classification of all saturated fusion systems supported on $S$. As evidenced in \cref{DoubleEssen2} and \cref{DoubleEssen3}, the structure of $O^{3'}(\Out_{\fs}(\mathbf{Q}))$ and $O^{3'}(\Out_{\fs}(\mathbf{R}))$ is fairly rigid and the flexibility we exploit is in the possible choices of actions for $\Out_{\fs}(\mathbf{J})$.

The identification of the fusion systems of $\Sp_6(3)$ and $\Aut(\Sp_6(3))$ is proved using a result of Onofrei \cite{Ono} which identifies a \emph{parabolic system} in $\fs$. Further restrictions then identify $\Sp_6(3)$ from an associated chamber system. We remark that in the case of parabolic systems in groups, the definition is meant to abstractly capture a set of minimal parabolics containing a ``Borel", in analogy with groups of Lie type in defining characteristic. We cannot hope to capture the rich theory of parabolic systems in groups (and fusion systems) here, but we refer to \cite{MeixSurvey} for a survey of this area in the group theory case, and refer to \cite{Ono} for the fusion system parallel.

\begin{theorem}\label[theorem]{Sp63}
Suppose that $\fs$ is a saturated fusion system on a $3$-group $S$ such that $S$ is isomorphic to a Sylow $3$-subgroup of $\mathrm{Co}_1$. If $\mathcal{E}(\fs)=\{E_1, E_2, E_3\}$ then $\fs=\fs_S(H)$ such that $H\cong \Sp_6(3)$ or $\Aut(\Sp_6(3))$.
\end{theorem}
\begin{proof}
Let $\fs_{ij}:=\langle N_{\fs}(E_i), N_{\fs}(E_j)\rangle_S$ for $i,j\in\{1,2,3\}$, noting that $N_{\fs}(S)\le N_{\fs}(E_i)$ for all $i\in\{1,2,3\}$. Then $\mathbf{J}\normaleq \fs_{12}$ and as $E_4\not\in\mathcal{E}(\fs)$, \cref{DoubleEssen1} along with the Alperin--Goldschmidt theorem imply that $\fs_{12}=N_{\fs}(\mathbf{J})$. Applying \cref{DoubleEssen2} we have that $\fs_{13}=N_{\fs}(\mathbf{Q})$, and \cref{DoubleEssen3} yields that $\fs_{23}=N_{\fs}(\mathbf{R})$. 

Let $\alpha\in \Hom_{N_{\fs}(E_i)\cap N_{\fs}(E_j)}(P, Q)$ for $P,Q\le S$, $i\ne j$ and $i,j\in \{1,2,3\}$. Since $E_i\normaleq N_{\fs}(E_i)\cap N_{\fs}(E_j)$, there is $\hat{\alpha}\in\Hom_{N_{\fs}(E_i)\cap N_{\fs}(E_j)}(PE_i, QE_j)$ with $\hat{\alpha}|_P=\alpha$. But $E_j\normaleq N_{\fs}(E_i)\cap N_{\fs}(E_j)$ and so there is $\wt \alpha\in \Hom_{N_{\fs}(E_i)\cap N_{\fs}(E_j)}(PE_iE_j, QE_iE_j)$ with $\wt\alpha|_{PE_i}=\hat{\alpha}$. Since $E_iE_j=S$, we have shown that for all $\alpha\in\Hom_{N_{\fs}(E_i)\cap N_{\fs}(E_j)}(P, Q)$, there is $\wt \alpha\in \Aut_{N_{\fs}(E_i)\cap N_{\fs}(E_j)}(S)$ with $\wt \alpha|_P=\alpha$. Hence, $S\normaleq N_{\fs}(E_i)\cap N_{\fs}(E_j)$ so that $N_{\fs}(S)=N_{\fs}(E_i)\cap N_{\fs}(E_j)$ whenever $i\ne j$. Hence, $\{\fs_i; i\in\{1,2,3\}\}$ is a family of parabolic subsystems in the sense of \cite[Definition 5.1]{Ono}.

In fact, following \cite[Definition 7.4]{Ono}, $\fs$ has a family of parabolic subsystems of type $\mathfrak{M}$, where $\mathfrak{M}$ is the diagram associated to $\fs$ described in that definition. By \cite[Theorem A]{meixner}, $\mathfrak{M}$ is a spherical diagram and \cite[Proposition 7.5 (ii)]{Ono} implies that $\fs$ is the fusion system of a finite simple group $G$ of Lie type in characteristic $p$ extended by diagonal and field automorphisms. Then $N_{\fs}(\mathbf{Q})=\fs_S(N_G(\mathbf{Q}))$ and as $O^{3'}(\Out_{\fs}(\mathbf{Q}))\cong \Sp_4(3)$ acts irreducibly on $\mathbf{Q}/Z(S)$, we conclude that $N_G(\mathbf{Q})=O^3(N_G(\mathbf{Q}))$ so that $G=O^3(G)$. Comparing with the structure of the Sylow $3$-subgroups of the finite simple groups of Lie type (as can be found in \cite[Section 3.3]{GLS3}), we deduce that $\fs=\fs_S(G)$ where $\Inn(\Sp_6(3))\le G\le \Aut(\Sp_6(3)))$. 
\end{proof}

\begin{theorem}\label[theorem]{Co1}
Suppose that $\fs$ is a saturated fusion system on a $3$-group $S$ such that $S$ is isomorphic to a Sylow $3$-subgroup of $\mathrm{Co}_1$. If $\mathcal{E}(\fs)=\{E_1, E_2, E_3, E_4^\fs\}$ then $\fs\cong\fs_S(\mathrm{Co}_1)$.
\end{theorem}
\begin{proof}
We observe first that $\mathcal{G}:=\fs_S(\mathrm{Co}_1)$ satisfies the hypothesis of the proposition and that $\mathcal{G}=\langle \Aut_{\mathcal{G}}(E_1), \Aut_{\mathcal{G}}(E_2), \Aut_{\mathcal{G}}(E_3), \Aut_{\mathcal{G}}(E_4), \Aut_{\mathcal{G}}(S)\rangle$ by the Alperin--Goldschmidt theorem. Since we are only interested in determining $\fs$ up to isomorphism, we may arrange by \cref{ExtraSpecial} that $\mathbf{Q}$ is $\Aut_{\fs}(S)$-invariant.

Since $\mathbf{J}$ is characteristic in $E_1, E_2 $ and $E_4$, $\mathbf{Q}\normaleq N_{\fs}(E_3)$ and $\mathbf{Q}\normaleq N_{\mathcal{G}}(E_3)$, we see that $\mathcal{G}=\langle N_{\mathcal{G}}(\mathbf{J}), N_{\mathcal{G}}(\mathbf{Q})\rangle_S$ and $\fs=\langle N_{\fs}(\mathbf{J}), N_{\fs}(\mathbf{Q})\rangle_S$. Hence, upon showing that $N_{\mathcal{G}}(\mathbf{J})=N_{\fs}(\mathbf{J})$ and $N_{\mathcal{G}}(\mathbf{Q})=N_{\fs}(\mathbf{Q})$, we will have shown that $\fs=\mathcal{G}$ and the proof will be complete.

Applying \cref{DoubleEssen1} and \cref{DoubleEssen2}, since $E_4\in\mathcal{E}(\fs)$, we have that $O^{3'}(\Aut_{\fs}(\mathbf{J}))\cong 2.\mathrm{M}_{12}$ and $O^{3'}(\Out_{\fs}(\mathbf{Q}))\cong \Sp_4(3)$. We may lift the $3'$-order morphisms in $N_{O^{3'}(\Aut_{\fs}(\mathbf{J}))}(\Aut_S(\mathbf{J}))$ to morphisms in $\Aut_{\fs}(S)$ by the extension axiom, which then restrict faithfully to morphisms of $\Aut_{\fs}(\mathbf{Q})$ by \cref{ExtraSpecial}. Similarly, any morphism in $N_{\Aut_{\fs}(\mathbf{Q})}(\Aut_S(\mathbf{Q}))$ lift to morphisms in $\Aut_{\fs}(S)$ by the extension axiom and restrict faithfully to morphisms in $N_{\Aut_{\fs}(\mathbf{J})}(\Aut_S(\mathbf{J}))$. Comparing the orders of the normalizer of a Sylow $3$-subgroup of $2.\mathrm{M}_{12}$ with the normalizer of a Sylow $3$-subgroup of $\Out(\mathbf{Q})\cong \Sp_4(3).2$, and applying the Frattini argument, we deduce that $\Aut_{\fs}(\mathbf{Q})=\Aut(\mathbf{Q})=\Aut_{\mathcal{G}}(\mathbf{Q})\cong 3^4:(\Sp_4(3):2)$ and $\Aut_{\fs}(\mathbf{J})\cong 2.\mathrm{M}_{12}$. Then by \cref{model}, we conclude that there is $\beta\in\Aut(S)$ with $N_{\fs^\beta}(\mathbf{Q})=N_{\mathcal{G}}(\mathbf{Q})$ and as we are only interested in determining $\fs$ up to isomorphism, we arrange that $\fs=\fs^\beta$ and $N_{\mathcal{G}}(\mathbf{Q})=N_{\fs}(\mathbf{Q})$.

Now, $N_{\mathcal{G}}(E_1)=N_{N_{\mathcal{G}}(\mathbf{Q})}(E_1)=N_{N_{\fs}(\mathbf{Q})}(E_1)=N_{\fs}(E_1)$. Then $N_{\mathcal{G}}(\mathbf{J})\ge N_{\mathcal{G}}(E_1)\le N_{\fs}(\mathbf{J})$ and by \cite[Proposition 2.11]{BobTodd}, it suffices to show that $\Aut_{\mathcal{G}}(\mathbf{J})=\Aut_{\fs}(\mathbf{J})$ and that the homomorphism $H^1(\Out_{\mathcal{G}}(\mathbf{J}); \mathbf{J}) \to H^1(\Out_{N_{\mathcal{G}}(E_1)}(\mathbf{J}); \mathbf{J})$ induced by restriction is surjective. For the latter condition, we calculate in MAGMA (see \cref{code}) that $H^1(\Out_{N_{\mathcal{G}}(E_1)}(\mathbf{J}); \mathbf{J})=\{1\}$ and so the homomorphism is surjective.

Let $K:=\Aut_{N_{\mathcal{G}}(E_1)}(\mathbf{J})$, $X:=\Aut_{\mathcal{G}}(\mathbf{J})$ and $Y:=\Aut_{\fs}(\mathbf{J})$ so that $K\le X\cap Y\le \Aut(\mathbf{J})\cong \GL_6(3)$. We aim to show that $X=Y$. Since there is only one conjugacy class of groups isomorphic to $2.\mathrm{M}_{12}$ in $\GL_6(3)$, we may assume that there is $g\in \Aut(\mathbf{J})$ with $Y=X^g$ and $K\le X\cap Y$. Hence, $K, K^g\le Y\cong 2.\mathrm{M}_{12}$. Now, $K$ is the unique overgroup of $T\in\syl_3(X)$ of its isomorphism type whose largest normal $3$-subgroup centralizes only an element of order $3$ in $\mathbf{J}$. Then, $K^g$ is the unique overgroup of $T^g\in\syl_3(X^g)$ with the same properties. Since $K\le X^g=Y$, $K$ is an overgroup of $P\in\syl_3(X^g)$ with $O_3(K)$ centralizing only an element of order $3$ in $\mathbf{J}$. Thus, for $m\in X^g$ with $P^m=T^g$, $K^m$ and $K^g$ are isomorphic overgroups of $T^g\in\syl_3(X)$ and by uniqueness, we deduce that $K^m=K^g$. But now, $K=K^{gm^{-1}}$ so that $gm^{-1}\in N_{\GL_6(3)}(K)$ and $X^g=X^{gm^{-1}}$. However, one can calculate that $N_{\GL_6(3)}(K)=N_{X}(K)$ so that $Y=X^g=X$.
\end{proof}

\begin{remark}
Suppose that $\fs=\fs_S(\mathrm{Co}_1)$ and set $\fs_0:=\langle N_{\fs}(E_1), N_{\fs}(E_2), N_{\fs}(E_3)\rangle_S$. The Alperin--Goldschmidt theorem yields that $\Aut_{\fs}(E_4)\not\subset \fs_0$ so that $\fs_0<\fs$. By \cref{DoubleEssen2}, we have that $N_{\fs}(\mathbf{Q})\le \fs_0$. Then as $O_3(\fs_0)\le O_3(N_{\fs}(\mathbf{Q}))$, we conclude that if $O_p(\fs_0)\ne\{1\}$ that $Z(S)=\Phi(\mathbf{Q})\normaleq \fs_0$. But $Z(S)\not\normaleq N_{\fs}(E_2)=N_{\fs_S(\mathrm{Co}_1)}(E_2)$ and so $O_3(\fs_0)=\{1\}$. Hence, by \cref{Sp63} and \cref{Co1}, if $\fs_0$ is saturated then $\fs_0\cong \fs_S(G)$ where $G\in\{\Sp_6(3), \Aut(\Sp_6(3))\}$. But then $\PSL_3(3)\cong O^{3'}(\Out_{\fs_0}(\mathbf{J}))\le O^{3'}(\Out_{\fs}(\mathbf{J}))\cong 2.\mathrm{M}_{12}$. But $13$ divides $|\PSL_3(3)|$ and does not divide $|2.\mathrm{M}_{12}|$ and so we conclude that $\fs_0$ is not saturated.
\end{remark}

The above remark is of particular interest in the mission of classifying fusion systems which contain parabolic systems. In the case of the group $G:=\mathrm{Co}_1$, the groups $N_G(E_i)$ for $i\in\{1,2,3\}$ all contain the ``Borel" $N_G(S)$ and together generate $G$ and so successfully form something akin to a parabolic system. Utilized above, work by Onofrei \cite{Ono} parallels the group phenomena in fusion systems and provides conditions in which a parabolic system within a fusion system $\fs$ gives rise to a parabolic system in the group sense. The resulting completion of the group parabolic system realizes the fusion system and if certain additional conditions are satisfied, the fusion system is saturated. 

Comparing with \cite[Definition 5.1]{Ono}, if $\fs_0$ does not have a family of parabolic subsystems then the only possible condition we fail to satisfy for $\fs_0$ is condition (F4). Indeed, the subsystem $\langle N_{\fs}(E_1), N_{\fs}(E_2)\rangle_S$ is \emph{not} a saturated fusion system. Part of the reason this problem arises is that the $3$-fusion category of $2.\mathrm{M}_{12}$ is isomorphic to the $3$-fusion category of $\PSL_3(3)$ and, consequently, the image of $E_4$ is not essential in the quotient $N_{\fs}(J(S))/J(S)$. 

However, we still retain that \[\langle N_{\fs}(E_1), N_{\fs}(E_2)\rangle_S \le \fs_S(\langle N_H(E_1), N_H(E_2)\rangle)=N_{\fs}(J(S))\] where $N_{\fs}(J(S))$ is a saturated constrained fusion system with model $H$. Thus, we can still embed the models for $N_{\fs}(E_1)$, $N_{\fs}(E_2)$ uniquely in $H$ and obtain a parabolic system of groups. Perhaps it is possible in all the situations we care about to create an embedding $\langle N_{\fs}(E_1), N_{\fs}(E_2)\rangle_S \le \fs_S(\langle G_1, G_2\rangle) \le N_{\fs}(U)$ where $N_{\fs}(U)$ is constrained and $G_1, G_2$ are the models of $N_{\fs}(E_1), N_{\fs}(E_2)$. In such a circumstance, we should always be able to work in a group setting and can then force restrictions on the structures of $N_{\fs}(E_i)$ for $i\in\{1,2\}$.

Finally, we remark that the above example of $\mathrm{Co}_1$ at the prime $3$ is similar in spirit to the example of $\mathrm{M}_{24}$ at the prime $2$ given in \cite[pg 53]{ParkerMax}.

\section{Fusion Systems related to a Sylow $3$-subgroup of $\mathrm{F}_3$}\label{F3Sec}

We now investigate fusion systems supported on a $3$-group $S$ which is isomorphic to a Sylow $3$-subgroup of the Thompson sporadic simple group $\mathrm{F}_3$. For the exoticity checks in this section, we will use some terminology and results regarding the known finite simple groups. As a reference, we use \cite{GLS3}. Again, for structural results concerning $S$ and its internal actions, we appeal to the Atlas \cite{atlas}. We begin by noting the following $3$-local maximal subgroups of $\mathrm{F}_3$:

\begin{center}
$M_1\cong 3^{2+3+2+2}:\GL_2(3)$\\\vspace{0.5em}
$M_2\cong 3^{1+2+1+2+1+2}:\GL_2(3)$\\\vspace{0.5em}
$M_3\cong 3^5:\SL_2(9).2$
\end{center}

remarking that $|S|=3^{10}$ and that for a given $S\in\syl_3(\mathrm{F}_3)$, each $M_i$ may be chosen so that $S\cap M_i\in\syl_3(M_i)$. We make this choice for each $M_i$. 

Set $E_i=O_3(M_i)$ so that $E_1=C_S(Z_2(S))$ and $E_2=C_S(Z_3(S)/Z(S))$ are characteristic subgroups of $S$, and so are $\Aut_{\fs}(S)$-invariant in any fusion system $\fs$ on $S$. We obtain generators for $M_1$ and $M_2$ (and hence for $S$, $E_1$ and $E_2$) as in \cref{F3Unique}. For ease of notation, we fix $\mathcal{G}:=\fs_S(\mathrm{F}_3)$ for the remainder of this section.

\begin{proposition}\label[proposition]{ThRadical}
We have that $\mathcal{G}^{frc}=\{E_1, E_2, E_3^S, S\}$. In particular, $\mathcal{E}(\mathcal{G})=\{E_1, E_2, E_3^S\}$.
\end{proposition}
\begin{proof}
This follows from \cite{ThSub}.
\end{proof}

We appeal to MAGMA (see \cref{code}) for the following result.

\begin{proposition}\label[proposition]{F3Essen}
Suppose that $\fs$ is saturated fusion system on $S$. Then $\mathcal{E}(\fs)\subseteq \{E_1, E_2, E_3^S\}$.
\end{proposition}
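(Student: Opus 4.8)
The plan is to show that any essential subgroup $E$ of a saturated fusion system $\fs$ on $S$ must lie in the list $\{E_1, E_2, E_3^S\}$ by combining a fixed-point/automizer analysis with the structural constraints on $S$ imposed by its being a Sylow $3$-subgroup of $\mathrm{F}_3$. First I would recall the standard necessary conditions on an essential subgroup $E$: it is $\fs$-centric (so $C_S(E)\le E$), fully $\fs$-normalized, and $\Out_{\fs}(E)$ possesses a strongly $3$-embedded subgroup; in particular $3\mid |\Out_{\fs}(E)|$ with a Sylow $3$-subgroup that is not normal, and $N_S(E)/E$ embeds as a (non-normal) $3$-subgroup of $\Out_{\fs}(E)$. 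Since $E$ is $\fs$-centric and radical, $E\in\fs^{frc}$ intrinsically only for the reference system, but the abstract constraints "$E$ is $S$-centric and $\Out_S(E)\ne O_3(\Out_S(E))\cdot(\text{something})$"—more precisely that $E$ is centric in $S$ (meaning $C_S(E)=Z(E)$) and that $N_S(E)/E\,C_S(E)$ together with $\Aut_{\fs}(E)$ admits a strongly $3$-embedded subgroup—already severely restrict which subgroups of $S$ can qualify.

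The core of the argument is then a case analysis over the (finitely many, computer-enumerable) subgroups $E\le S$ that are $S$-centric, organized by their containment relations with the characteristic subgroups $E_1=J(S)=C_S(Z_2(S))$, $E_2=C_S(Z_3(S)/Z(S))$, and $Z(S), Z_2(S), Z_3(S)$, etc. For each candidate $E$ one exhibits an $\Aut_{\fs}(E)$-invariant normal chain in $E$ whose successive quotients are all centralized by $\Out_S(E)=N_S(E)/E$; by \cref{GrpChain} this forces $\Out_S(E)$ to be a normal $3$-subgroup of $\Out_{\fs}(E)$, contradicting the existence of a strongly $3$-embedded subgroup (which cannot contain $O_3$ of the ambient group nontrivially in a way compatible with strong embedding—equivalently $O_3(\Out_{\fs}(E))=1$ is forced for an essential $E$, or at least $\Out_S(E)\not\le O_3(\Out_{\fs}(E))$). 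The chains are built from the characteristic subgroups of $S$ intersected with $E$, using that $J(S)$, $C_S(Z_2(S))$, $C_S(Z_3(S)/Z(S))$ and the terms of the upper central series are all characteristic hence $\Aut_{\fs}(E)$-invariant once they are normalized by $\Aut_{\fs}(E)$ (which one checks via $C_E(\text{char subgroup})$-type arguments as in the proofs of \cref{SingleEssen} and \cref{DoubleEssen}). Candidates that survive this are precisely those whose image in an appropriate quotient (e.g. $S/J(S)\cong 3^{1+2}_+$ or a similar small $3$-group) can carry an $\SL_2(3)$- or $\GL_2(3)$-action, and these turn out to be exactly $E_1$, $E_2$, and the $S$-conjugates of $E_3$.

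The main obstacle will be controlling the subgroups $E$ with $J(S)\le E$ or $J(S)\not\le E$ but $E$ still large: here one needs the precise module structure of the relevant sections of $S$ under the putative automizer, analogous to how \cref{ExtraSpecial} pins down $O_3(M_2)$ inside $E_1$ in the $\mathrm{Co}_1$ case, and one must rule out "new" essentials not $S$-conjugate to any $E_i$ by showing the required FF-module/quadratic action configuration (via \cref{SEFF}) is incompatible with the chief factors of $S$ that $E$ would have to cover. Since $S$ has order $3^{10}$ with a fairly intricate characteristic subgroup lattice, the number of $S$-centric subgroups to examine is large, which is exactly why the authors invoke MAGMA; a handwritten proof would proceed by first bounding $|S:E|$ (an essential subgroup satisfies $|S:E|\le 3^{?}$ because $N_S(E)/E$ must embed in a group of Lie rank $1$ type over $\mathrm{GF}(3)$ or $\mathrm{GF}(9)$), then enumerating the $S$-centric subgroups in that range and applying the chain argument to each, which is laborious but routine given the explicit power-commutator presentation of $S$. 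I expect the enumeration-and-chain bookkeeping, not any single conceptual point, to be the real difficulty, which is consistent with the paper's decision to delegate it to computation.
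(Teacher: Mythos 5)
The paper does not supply a handwritten argument for this statement at all: the text immediately preceding it reads ``We appeal to MAGMA for the following result but remark that a proof should be possible by more traditional means,'' so the statement is verified entirely by machine computation. Your proposal correctly identifies the standard strategy that the paper deploys for the analogous essentials-determination steps it \emph{does} write out (see the chain arguments via \cref{GrpChain} in \cref{SingleEssen} and the FF-module reductions via \cref{SEFF} in \cref{E3inE1}), and you correctly diagnose why the authors delegated this particular instance to MAGMA. In that sense your approach is the same as the paper's. However, as written your proposal is a roadmap rather than a proof: you never actually enumerate the $S$-centric subgroups, bound $|N_S(E)/E|$ concretely, or exhibit any of the invariant chains, so nothing is actually verified. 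Two specific imprecisions are worth flagging. First, you assert $S/J(S)\cong 3^{1+2}_+$; this is true for the $\mathrm{Co}_1$ Sylow $3$-subgroup (where $|J(S)|=3^6$ and $|S|=3^9$), but for $\mathrm{F}_3$ the paper records $J(S)=E_1=O_3(M_1)$ of order $3^9$ inside $|S|=3^{10}$, so $S/J(S)$ is cyclic of order $3$. Second, the heuristic bound on $|S:E|$ via ``$N_S(E)/E$ embeds in a Lie rank $1$ group'' controls $|N_S(E)/E|$, not $|S:E|$ directly; indeed $E_3$ has index $3^5$ in $S$, so no small absolute bound on $|S:E|$ holds and the reduction has to run through $S$-centricity and the automizer constraints simultaneously, which is exactly the bookkeeping you (rightly) did not attempt.
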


We will need the following observation in the proofs of the coming results.

\begin{lemma}\label[lemma]{E3inE1}
Let $\fs$ be a saturated fusion system on $S$. Then every $\fs$-conjugate of $E_3$ is contained in $E_1$ and not contained in $E_2$. Moreover, if $E_3\in\mathcal{E}(\fs)$, then $E_1\in\mathcal{E}(\fs)$ and $O_3(\fs)=\{1\}$.
\end{lemma}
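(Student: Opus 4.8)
The plan is to extract everything from the internal structure of $S$ together with the rigidity constraints that essentiality imposes. First I would establish the two structural facts about $E_3$ that hold in any saturated $\fs$ on $S$, namely that every $\fs$-conjugate of $E_3$ lies in $E_1$ and avoids $E_2$. Since $E_1=J(S)$ is characteristic in $S$, it is strongly closed, so any $\fs$-conjugate of $E_3$ that starts inside $E_1$ stays inside $E_1$; thus it suffices to check $E_3\le E_1$, which is a property of the $3$-group $S$ alone (read off from the Atlas structure: $E_3$ has order $3^5$, $E_1=J(S)$ has order $3^9$, and one computes $E_3\le J(S)$, e.g.\ because every element of $E_3$ lies in some member of $\mathcal{A}(S)$). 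For the "not contained in $E_2$" half, I would argue that $E_2=C_S(Z_3(S)/Z(S))$ and verify by direct computation in $S$ that $E_3\not\le E_2$; again this passes to all $\fs$-conjugates because $E_2$ is characteristic in $S$, hence strongly closed, so if some conjugate $E_3\alpha$ were contained in $E_2$ then, applying $\alpha^{-1}$, $E_3$ itself would be contained in $E_2\alpha^{-1}=E_2$, a contradiction.

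Next, assuming $E_3\in\mathcal{E}(\fs)$, I would pin down $\Aut_{\fs}(E_3)$. The key computational input is the structure of $E_3\cong 3^5$ together with $\mathrm{Out}(E_3)$ and its possible strongly $3$-embedded configurations: from $N_{\mathrm{F}_3}(E_3)\cong 3^5:\SL_2(9).2$ we know $\SL_2(9).2$ acts on $3^5$, and I would show that any subgroup of $\mathrm{Out}_{\fs}(E_3)\le \mathrm{GL}_5(3)$ containing $\mathrm{Out}_S(E_3)$ with a strongly $3$-embedded subgroup must have $O^{3'}$ isomorphic to $\SL_2(9)$; Theorem \ref{SEFF} (applied with $V=\Omega(Z(E_3))$, an FF-module) forces $O^{3'}(\mathrm{Out}_{\fs}(E_3))/C\cong \SL_2(q)$, and one rules out $q=3$ (wrong module dimension / action on $3^5$) leaving $q=9$; faithfulness of the action gives $C=1$. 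The index-at-most-$2$ claim then follows from $N_{\mathrm{GL}_5(3)}(\SL_2(9))$-type considerations: the normalizer of this $\SL_2(9)$ inside the relevant linear group, modulo the inner/field structure, yields $\mathrm{Out}\le 2$, matching $\SL_2(9).2$.

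Finally, for $O_3(\fs)=\{1\}$: by Proposition \ref{normalinF}, $O_3(\fs)$ is contained in every essential subgroup and is $\Aut_{\fs}(E_3)$-invariant. Since $E_3$ is essential and $O^{3'}(\Aut_{\fs}(E_3))\cong\SL_2(9)$ acts on $E_3$, I would show that the only $\Aut_{\fs}(E_3)$-invariant subgroups of $E_3$ are $\{1\}$, $\Omega(Z(E_3))$ (or whatever the natural submodule chain is) and $E_3$ itself — a statement about the module structure of $3^5$ under $\SL_2(9).2$, which from the embedding in $\mathrm{F}_3$ is known to be, up to the standard chain, essentially irreducible pieces. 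Any nontrivial such invariant subgroup $Q$ would have to be normal in $S$ and centralized appropriately; I would then derive a contradiction with $E_3$ being essential (e.g.\ $Q$ would force $E_3$ to lie in the normalizer subsystem $N_{\fs}(Q)$ in a way incompatible with $\mathrm{Out}_S(E_3)$ being strongly $3$-embedded-complemented), or more directly: using the already-established "$E_3\le E_1$, $E_3\not\le E_2$" together with $O_3(\fs)\le E_1\cap E_3$, a $\mathrm{GrpChain}$-style argument (Lemma \ref{GrpChain}) shows $S$ centralizes an $\Aut_{\fs}(E_3)$-invariant chain through $O_3(\fs)$, contradicting $E_3\not\normaleq S$ unless $O_3(\fs)=\{1\}$.

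The main obstacle I expect is the identification $O^{3'}(\Aut_{\fs}(E_3))\cong\SL_2(9)$ with index at most $2$: this is the step that genuinely needs the FF-module machinery (Theorem \ref{SEFF}) to cut down to $\SL_2(3^n)$, followed by a module-theoretic argument on $3^5$ to eliminate $n=1$ and to bound the outer part — everything else is either a strong-closure transport argument or a routine chain argument via Lemma \ref{GrpChain}. I would lean on the $\mathrm{F}_3$ computations (Atlas, or MAGMA as the paper does elsewhere) for the precise action of $\SL_2(9).2$ on $3^5$ and for the normalizer computation bounding $|\Aut_{\fs}(E_3):O^{3'}(\Aut_{\fs}(E_3))|$.
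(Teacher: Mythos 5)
Your containment arguments are fine, and the route through $J(S)$ is a legitimate alternative to the paper's: the paper instead observes $[Z_2(S),E_3]=\{1\}$ (ruling out the contrary using that no nontrivial module for $\SL_2(9).2$ admits $Z_2(S)\not\le E_3$ with $[Z_2(S),E_3]\le Z(S)$), and then $E_3\le C_S(Z_2(S))=E_1$; you instead note $E_3\in\mathcal{A}(S)$ so $E_3\le J(S)=E_1$. Your $E_3\not\le E_2$ argument via strong closure matches the paper's in effect, though the paper phrases it as $E_1=\langle E_3^S\rangle\not\le E_2$. For the $\SL_2(9)$ identification, the inputs you gesture at can be made to work, but the decisive constraint is not really ``module dimension'' alone: the paper uses that $\Phi(E_1)$ is an offender on $E_3$ with $\lvert\Out_S(E_3)\rvert=3^2$, which forces the Sylow $3$-subgroup of $L/C_L(E_3)\cong\SL_2(p^n)$ to have order $9$, hence $n=2$; a natural $\SL_2(27)$-module is already too big for $3^5$, so $q=9$ is pinned down between $\lvert\Out_S(E_3)\rvert$ and the dimension bound.

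There are, however, two genuine gaps. First, your plan never proves the conclusion $E_1\in\mathcal{E}(\fs)$, which is part of the lemma. The paper gets this by lifting a cyclic subgroup $K$ of order $8$ from $N_{O^{3'}(\Aut_{\fs}(E_3))}(\Aut_S(E_3))$ by saturation; since $K$ acts irreducibly on $Z_2(S)$, it cannot lift only to $\Aut_{\fs}(S)$ (which would force it to normalize $Z(S)$), so by Alperin--Goldschmidt it must land in $\Aut_{\fs}(E_1)$ for some essential $E_1$. Second, your argument for the index-at-most-$2$ bound via ``$N_{\GL_5(3)}(\SL_2(9))$-type considerations'' does not work: on $E_3\cong 3^4\oplus 3$ the centralizer of this $\SL_2(9)$ in $\GL_5(3)$ contains scalars on each summand (roughly $C_8\times C_2$), and together with a field automorphism $N_{\GL_5(3)}(\SL_2(9))/\SL_2(9)$ has order well beyond $2$, so the linear-algebraic normalizer alone gives no such bound. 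The paper's actual argument is external to $\Aut(E_3)$: by saturation any complement to $\Aut_S(E_3)$ in $N_{\Aut_{\fs}(E_3)}(\Aut_S(E_3))$ must lift to $\Aut_{\fs}(E_1)$, and one computes $\lvert\Aut(E_1)\rvert_{3'}=16$, which caps the $3'$-part and hence the index. You would need to replace your $\GL_5(3)$-normalizer step with this saturation-lift bound.

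Your sketch for $O_3(\fs)=\{1\}$ is in the same spirit as the paper's (the paper is terser: $O_3(\fs)$ is an $\Aut_{\fs}(E_3)$-invariant subgroup of $E_3$ that is simultaneously normal in $S$, and the module decomposition $E_3=[E_3,L]\times C_{E_3}(L)$ leaves no nontrivial such subgroup), and can be filled in along either of the routes you outline; I would flag only that you should establish $E_1\in\mathcal{E}(\fs)$ and the module decomposition first, since the argument uses them.
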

\begin{proof}
Since $\{E_3^\fs\}=\{E_3^S\}$ and both $E_1$ and $E_2$ are normal in $S$, it suffices to show that $E_3\le E_1$ and $E_3\not\le E_2$. To this end, we note that $[Z_2(S), E_3]=\{1\}$. One can see this in $\mathcal{G}$ for otherwise, since $E_3$ is elementary abelian, we would have that $Z_2(S)\not\le E_3$ and $[Z_2(S), E_3]\le Z(S)$, a contradiction since $\Out_{\mathcal{G}}(E_3)\cong\SL_2(9).2$ has no non-trivial modules exhibiting this behaviour. If $E_3\le E_2$, then since $E_2\normaleq S$, we would have that $E_1=\langle E_3^S\rangle\le E_2$, a clear contradiction. Thus, $E_3\not\le E_2$.

Now, $\Phi(E_1)$ is elementary abelian of order $3^5$ and is not contained in $E_3$. Furthermore, $[E_3,\Phi(E_1)]\le [E_1, \Phi(E_1)]=Z_2(S)\le E_3$ so that $\Phi(E_1)\le N_S(E_3)$. Comparing with $\mathcal{G}$, we get that $N_S(E_3)=E_3\Phi(E_1)$, $E_3\cap \Phi(E_1)$ is of order $3^3$ and $\Phi(E_1)$ induces an FF-action on $E_3$. 

Assume that $E_3$ is essential in $\fs$. Then for $L:=O^{3'}(\Aut_{\fs}(E_3))$, applying \cref{SEFF}, we have that $L\cong \SL_2(9)$ and $E_3=[E_3, L]\times C_{E_3}(L)$, where $[E_3, L]$ is a natural $\SL_2(9)$-module. It follows that $[\Phi(E_1), E_3]=Z_2(S)$ has order $9$ and that $C_{E_3}(L)\cap Z_2(S)=\{1\}$. Let $K$ be a Sylow $2$-subgroup of $N_L(\Aut_S(E_3))$ so that $K$ is cyclic of order $8$ and acts irreducibly on $Z_2(S)$. Then the morphisms in $K$ lift to a larger subgroup of $S$ by the extension axiom and if $E_1$ is not essential then, using \cref{F3Essen} and the Alperin--Goldschmidt theorem, the morphisms in $K$ must lift to automorphisms of $S$. But then, upon restriction, the morphisms in $K$ would normalize $Z(S)$, contradicting the irreducibility of $Z_2(S)$ under the action of $K$. Hence, $E_1\in\mathcal{E}(\fs)$. Since $O_3(\fs)\normaleq S$ and, by \cref{normalinF}, $O_3(\fs)$ is an $\Aut_{\fs}(E_3)$-invariant subgroup of $E_3$, we conclude that $O_3(\fs)=\{1\}$.
\end{proof}

Throughout the remainder of this section, we set \[\mathcal{H}=\langle \Aut_{\mathcal{G}}(E_1), \Aut_{\mathcal{G}}(E_2), \Aut_{\mathcal{G}}(S)\rangle_S\] and \[\mathcal{D}=\langle \Aut_{\mathcal{G}}(E_1), \Aut_{\mathcal{G}}(E_3), \Aut_{\mathcal{G}}(S)\rangle_S.\]

\begin{proposition}\label[proposition]{HExotic1}
$\mathcal{H}$ is a saturated fusion system with $\mathcal{H}^{frc}=\{E_1, E_2, S\}$.
\end{proposition}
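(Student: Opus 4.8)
The plan is to realise $\mathcal{H}$ as a pruning of $\mathcal{G}$, the $3$-fusion category of $\mathrm{F}_3$, in which the essential subgroup $E_3$ has been deleted, and then read off each assertion from that description. For saturation I would apply \cref{Pruning} with $\fs=\mathcal{G}$, $P=E_3$, $\mathcal{C}=\{E_1,E_2,E_3\}$ a set of $\mathcal{G}$-class representatives of the essential subgroups, and $K=H_{\mathcal{G}}(E_3)$. The only hypothesis requiring comment is that no proper $Q<E_3$ is $S$-centric, and this is immediate: $E_3$ is elementary abelian, so $E_3\le C_S(Q)\not\le Q$ for each such $Q$. Hence $\langle \Aut_{\mathcal{G}}(S),H_{\mathcal{G}}(E_3),\Aut_{\mathcal{G}}(E_1),\Aut_{\mathcal{G}}(E_2)\rangle$ is saturated, and it remains to identify this system with $\mathcal{H}=\langle \Aut_{\mathcal{G}}(E_1),\Aut_{\mathcal{G}}(E_2)\rangle$. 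For that, one uses that $E_1=J(S)$ is characteristic in $S$ and $\mathcal{G}$-centric, so the extension axiom applied to $E_1\normaleq S$ recovers $\Aut_{\mathcal{G}}(S)$ from $N_{\Aut_{\mathcal{G}}(E_1)}(\Aut_S(E_1))$; and, by \cref{E3inE1}, every $\mathcal{G}$-automorphism of $E_3$ which extends to a strictly larger subgroup already extends inside $N_S(E_3)=E_3\Phi(E_1)\le E_1$. Thus all morphisms of the pruned system lie in $\mathcal{H}$, and $\mathcal{H}$ is saturated.

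Next I would pin down $\mathcal{H}^{frc}$. The essential subgroups of any saturated fusion system on $S$ lie in $\{E_1,E_2,E_3^S\}$, so $\mathcal{E}(\mathcal{H})\subseteq\{E_1,E_2,E_3^S\}$. For $i\in\{1,2\}$ we have $\Aut_{\mathcal{H}}(E_i)=\Aut_{\mathcal{G}}(E_i)$, and $E_i$ is still fully normalised and centric in $\mathcal{H}$ (no new $\mathcal{H}$-conjugates appear), so $E_1$ and $E_2$ remain $\mathcal{H}$-essential. In contrast $\Aut_{\mathcal{H}}(E_3)=H_{\mathcal{G}}(E_3)$ has the non-trivial normal $3$-subgroup $\Aut_S(E_3)$ (non-trivial because $N_S(E_3)=E_3\Phi(E_1)$ does not centralise $E_3$, by \cref{E3inE1}), so $E_3$ is not $\mathcal{H}$-radical. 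Since $\mathcal{G}^{frc}=\{E_1,E_2,E_3^S,S\}$ and the pruning hypothesis admits no new $\mathcal{H}$-centric subgroups below $E_3$, we get $\mathcal{H}^{frc}=\{E_1,E_2,S\}$. It follows that $O_3(\mathcal{H})\le E_1\cap E_2$; and as $E_1\cap E_2$ has index $3$ in $E_2$ while $\Out_{\mathcal{G}}(E_2)$ acts irreducibly on $E_2/\Phi(E_2)$, $E_1\cap E_2$ is not $\Aut_{\mathcal{H}}(E_2)$-invariant, and a short analysis of the same kind gives $O_3(\mathcal{H})=\{1\}$.

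For simplicity I would show $O^{3'}(\mathcal{H})=\mathcal{H}=O^3(\mathcal{H})$ and then exclude a proper non-trivial normal subsystem supported on a proper subgroup. The first equalities follow from the automizer data: each $\Out_{\mathcal{H}}(E_i)$ has $O^{3'}$ of index $2$, the two index-$2$ quotients are linked through $\Aut_{\mathcal{H}}(S)$, and the hyperfocal subgroup of $\mathcal{H}$, which contains $[E_1,O^{3'}(\Aut_{\mathcal{H}}(E_1))]$ and $[E_2,O^{3'}(\Aut_{\mathcal{H}}(E_2))]$, is all of $S$. For the second point, since $\mathcal{H}$ is generated by $\Aut_{\mathcal{G}}(E_1)$, $\Aut_{\mathcal{G}}(E_2)$ and $\Aut_{\mathcal{G}}(S)$, a subgroup $A\le S$ is strongly closed in $\mathcal{H}$ exactly when $A$ is $\Aut_{\mathcal{G}}(S)$-invariant and $A\cap E_i\normaleq M_i$ for $i\in\{1,2\}$; scanning the $M_i$-invariant subgroups of $E_1$ and $E_2$ shows the only such $A$ are $\{1\}$ and $S$. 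Hence $\mathcal{H}$ has no proper non-trivial strongly closed subgroup, so no proper non-trivial normal subsystem, and $\mathcal{H}$ is simple.

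It remains to prove exoticity, which I expect to be the hardest part. Because $O_3(\mathcal{H})=\{1\}$ and $\mathcal{H}$ has no proper non-trivial strongly closed subgroup, \cref{SCExotic} does not apply, so I would argue by realisability. Suppose $\mathcal{H}=\fs_S(G)$ with $O_{3'}(G)=\{1\}$. Then $O_3(G)\le O_3(\mathcal{H})=\{1\}$, so $F^*(G)=E(G)$, and a second component of $E(G)$ would again force a proper non-trivial strongly closed subgroup; hence $F^*(G)=K$ is quasisimple, with $S\cap K\in\syl_3(K)$ and $S/(S\cap K)$ embedding in $\Out(K)$. Using \cref{model} we may take the models of $N_{\mathcal{H}}(E_1)$ and $N_{\mathcal{H}}(E_2)$, which coincide with $M_1$ and $M_2$ (as $\Aut_{\mathcal{H}}(E_i)=\Aut_{\mathcal{G}}(E_i)$ and $E_i$ is centric and normal in $M_i$), to lie inside $G$; thus $K$ contains a faithful completion of the amalgam of type $\mathrm{F}_3$. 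By \cref{F3Unique} this amalgam is unique, and since $\mathrm{F}_3$ is the only finite faithful completion of it, we obtain $\fs_S(\mathrm{F}_3)\le\fs_S(G)=\mathcal{H}$, contradicting that $E_3$ is essential in $\fs_S(\mathrm{F}_3)$ but not in $\mathcal{H}$. The genuine obstacle is this last step: controlling the finite faithful completions of the $\mathrm{F}_3$-amalgam, or equivalently, using the classification of finite simple groups to verify that $\mathrm{F}_3$ is the only quasisimple group whose Sylow $3$-subgroup is isomorphic to $S$. Everything else amounts to bookkeeping once the pruning in the first paragraph is in place.
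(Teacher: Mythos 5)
Your first three steps (saturation via \cref{Pruning}, the computation of $\mathcal{H}^{frc}$ and $O_3(\mathcal{H})=\{1\}$, and simplicity via strongly closed subgroups together with \cref{p'lemma}) run parallel to the paper's argument, with only cosmetic differences; the extra detail you give on identifying the pruned system with $\langle\Aut_{\mathcal{G}}(E_1),\Aut_{\mathcal{G}}(E_2)\rangle$ and on the hyperfocal/focal calculations is consistent with what the paper asserts more tersely.

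The exoticity argument, however, has a genuine gap. Your contradiction hinges on the claim that ``$\mathrm{F}_3$ is the only finite faithful completion of the $\mathrm{F}_3$-amalgam,'' but no such statement is available. \cref{F3Unique} and the Delgado--Stellmacher theorem classify the \emph{amalgam} up to (parabolic) isomorphism, not its completions. Any finite group into which the amalgam embeds faithfully is a finite faithful completion, and plenty of these exist beyond $\mathrm{F}_3$ itself — for instance, since $\mathrm{F}_3$ sits inside the Monster, $\mathrm{M}$ is a finite faithful completion of this amalgam, and so is any overgroup of $\mathrm{F}_3$. So the subgroup $\langle M_1,M_2\rangle\le G$ that you construct need not be isomorphic to $\mathrm{F}_3$, and the chain $\fs_S(\mathrm{F}_3)\le\fs_S(G)$ does not follow. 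Your final ``or equivalently'' sentence conflates this with a CFSG enumeration, but these are not equivalent routes: the CFSG argument does not pass through amalgam completions at all. That CFSG enumeration is in fact exactly what the paper does, and it carries almost all of the weight: after reducing to $F^*(G)$ simple with $S\le F^*(G)$ (the paper uses the generation $S\le\langle\Omega(Z(S))^G\rangle$ to get $S$ inside the component; your component-counting argument only gives $S\cap F^*(G)\in\syl_3(F^*(G))$, not $S\le F^*(G)$), one checks the alternating groups, Lie type in characteristic $3$, Lie type in cross characteristic, and the sporadics against the constraints $|S|=3^{10}$ and $m_3(S)=5$, and finally rules out $F^*(G)\cong\mathrm{F}_3$ by comparing the number of essential classes. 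None of this casework appears in your sketch, and the amalgam route you offer as a substitute does not close the argument.
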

\begin{proof}
Set $\mathcal{H}^*:=\langle \mathcal{H}, H_{\mathcal{G}}(E_3)\rangle_S$, the saturated fusion system determined by applying \cref{Pruning} to $\mathcal{G}$ with $P=E_3$ and $K=H_\fs(E_3)$. Then $\mathcal{H}^*$ is saturated and $E_3\not\in\mathcal{E}(\mathcal{H}^*)$ by \cref{Pruning}. Hence, by \cref{F3Essen}, $ \mathcal{E}(\mathcal{H}^*)=\{E_1, E_2\}$ and the Alperin--Goldschmidt theorem implies that $\mathcal{H}^*=\langle \Aut_{\mathcal{G}}(E_1), \Aut_{\mathcal{G}}(E_2), \Aut_{\mathcal{G}}(S)\rangle_S=\mathcal{H}$ is saturated. 

Let $R$ be a fully $\mathcal{H}$-normalized, radical, centric subgroup of $S$ not equal to $E_1, E_2$ or $S$. Then $R$ must be contained in an $\mathcal{H}$-essential subgroup for otherwise, by the extension axiom and the Alperin--Goldschmidt theorem, we infer that $\Out_S(R)\normaleq \Out_{\mathcal{H}}(R)$ and $R$ is not $\mathcal{H}$-radical. If $R$ is contained in a $\mathcal{G}$-conjugate of $E_3$, $A$ say, then since $R$ is $\mathcal{H}$-centric, $R=A$. Then $\Out_S(R)\le O^{3'}(\Out_{\mathcal{H}}(R))\le O^{3'}(\Out_{\mathcal{G}}(R))\cong \SL_2(9)$. Since $R$ is not $\mathcal{H}$-essential, it follows that $O^{3'}(\Out_{\mathcal{H}}(R))$ is contained in the unique maximal subgroup of $O^{3'}(\Out_{\mathcal{G}}(R))$ which contains $\Out_S(R)$ and so $\Out_S(R)\normaleq O^{3'}(\Out_{\mathcal{H}}(R))$. Then the Frattini argument implies that $\Out_S(R)\normaleq \Out_{\mathcal{H}}(R)$, a contradiction. Thus, $R$ is not contained in an $S$-conjugate of $E_3$. Hence, by the Alperin--Goldschmidt theorem and using \cref{F3Essen}, since $\mathcal{H}=\langle \Aut_{\mathcal{G}}(E_1), \Aut_{\mathcal{G}}(E_2), \Aut_{\mathcal{G}}(S)\rangle_S$ and $R$ is fully $\mathcal{H}$-normalized, $R$ is fully $\mathcal{G}$-normalized and so is $\mathcal{G}$-centric. Finally, since $O_3(\Out_{\mathcal{G}}(R))\le O_3(\Out_{\mathcal{H}}(R))=\{1\}$, we conclude that $R$ is $\mathcal{G}$-centric-radical and comparing with \cref{ThRadical}, we have a contradiction.
\end{proof}

\begin{proposition}\label[proposition]{HExotic2}
$\mathcal{H}$ is simple.
\end{proposition}
\begin{proof}
Assume that $\mathcal{N}\normaleq \mathcal{H}$ and $\mathcal{N}$ is supported on $T$. Then $T$ is a strongly closed subgroup of $\mathcal{H}$. In particular, $T\normaleq S$ and $Z(S)\le T$. Taking repeated normal closures of $Z(S)$ under the actions of $\Aut_{\mathcal{G}}(E_1)$ and $\Aut_{\mathcal{G}}(E_2)$, we apply the description of $\mathrm{F}_3$ from \cite[pg 100]{Greenbook} to ascertain that $\Phi(E_1)\le T\not\le E_1$. The $E_1=\langle [T, E_1]^{\Aut_{\mathcal{G}}(E_1)}\rangle\le T$ and so $S=T$. Since $\Aut_{\mathcal{H}}(S)$ is generated by lifted morphisms from $O^{3'}(\Aut_{\mathcal{H}}(E_1))$ and $O^{3'}(\Aut_{\mathcal{H}}(E_2))$, in the language of \cref{p'lemma} we have that $\Aut_{\mathcal{H}}^0(S)=\Aut_{\mathcal{H}}(S)$. Then \cite[Theorem II.9.8(d)]{ako} implies that $\mathcal{H}$ is simple.
\end{proof}

\begin{proposition}\label[proposition]{HExotic}
$\mathcal{H}$ is exotic.
\end{proposition}
\begin{proof}
Aiming for a contradiction, suppose that $\mathcal{H}=\fs_S(G)$ for some finite group $G$ with $S\in\syl_3(G)$. We may as well assume that $O_3(G)=O_{3'}(G)=\{1\}$ so that $F^*(G)=E(G)$ is a direct product of non-abelian simple groups, all divisible by $3$. Since $\fs_{S\cap F^*(G)}(F^*(G))\normaleq \mathcal{H}$, we have that $G=F^*(G)$. Furthermore, since $|\Omega_1(Z(S))|=3$, we deduce that $G$ is simple. In particular, we reduce to searching for simple groups with a Sylow $3$-subgroup of order $3^{10}$ and $3$-rank $5$. Since $E_3$ is not normal in $S$, $S$ does not have a unique elementary abelian subgroup of maximal rank.

If $G\cong \Alt(n)$ for some $n$ then $m_3(\Alt(n))=\lfloor\frac{n}{3}\rfloor$ by \cite[Proposition 5.2.10]{GLS3} and so $n<18$. But a Sylow $3$-subgroup of $\Alt(18)$ has order $3^8$ and so $G\not\cong\Alt(n)$ for any $n$. If $G$ is isomorphic to a group of Lie type in characteristic $3$, then comparing with \cite[Table 3.3.1]{GLS3}, we see that the groups with a Sylow $3$-subgroup which has $3$-rank $5$ are $\PSL_2(3^5)$, $\Omega_7(3)$, ${}^3\mathrm{D}_4(3)$ and $\PSU_5(3)$, and only $\PSU_5(3)$ has a Sylow $3$-subgroup of order $3^{10}$ of these examples. Since the unipotent radicals of parabolic subgroups of $\PSU_5(3)$ are essential subgroups and since neither has index $3$ in a Sylow $3$-subgroup, we have shown that $G$ is not a group of Lie type of characteristic $3$.

Assume now that $G$ is a group of Lie type in characteristic $r\ne 3$. By \cite[Theorem 4.10.3]{GLS3}, $S$ has a unique elementary abelian subgroup of $3$-rank $5$ unless $G\cong\mathrm{G}_2(r^a), {}^2\mathrm{F}_4(r^a), {}^3\mathrm{D}_4(r^a), \PSU_3(r^a)$ or $\PSL_3(r^a)$.  Moreover, by \cite[Theorem 4.10.2]{GLS3}, there is a normal abelian subgroup $S_T$ of $S$ such that $S/S_T$ is isomorphic to a subgroup of the Weyl group of $G$. But $|S_T|\leq 3^5$ so that $|S/S_T|\geq 3^5$. All of the candidate groups above have Weyl group with $3$-part strictly less than $3^5$ and so $G$ is not isomorphic to a group of Lie type in characteristic $r$.

Finally, checking the orders of the sporadic groups, we have that $\mathrm{F}_3$ is the unique sporadic simple group with a Sylow $3$-subgroup of order $3^{10}$. Since the $3$-fusion category of $\mathrm{F}_3$ has $3$ classes of essential subgroups, $G\not\cong\mathrm{F}_3$ and we have a final contradiction. Hence, $\mathcal{H}$ is exotic.
\end{proof}

\begin{proposition}
$\mathcal{D}$ is a saturated fusion system with $\mathcal{D}^{frc}=\{E_1, E_3^\mathcal{D}, S\}$.
\end{proposition}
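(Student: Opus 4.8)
The plan is to run the argument of \cref{HExotic} once more, now viewing $\mathcal{D}$ as the system obtained from $\mathcal{G}=\fs_S(\mathrm{F}_3)$ by \emph{pruning away} the essential subgroup $E_2$, and then to extract exoticity from \cref{E1SC3} together with \cref{SCExotic}.

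For saturation I would apply \cref{Pruning} with $P=E_2$ and $K=H_{\mathcal{G}}(E_2)$; as with $\mathcal{H}$, the hypotheses are readily verified here (no proper subgroup of $E_2$ is $S$-centric, which can be read off from $\mathcal{G}^{frc}$ and the subgroup structure of $S$). The resulting saturated system $\langle \Aut_{\mathcal{G}}(S), H_{\mathcal{G}}(E_2), \Aut_{\mathcal{G}}(E_1), \Aut_{\mathcal{G}}(E_3)\rangle$ coincides with $\mathcal{D}$: since $E_1$ is characteristic in $S$, the group $\Aut_{\mathcal{G}}(S)$ — and so also $H_{\mathcal{G}}(E_2)$, which is generated by restrictions of automorphisms of overgroups of $E_2$ in $S$ — is recovered from $\Aut_{\mathcal{G}}(E_1)$ by lifting the $3'$-automorphisms of $N_{\Aut_{\mathcal{G}}(E_1)}(\Aut_S(E_1))$ through saturation, exactly as in the treatment of $\mathcal{H}$. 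Hence $\mathcal{D}$ is saturated; its essential subgroups are $E_1$ and $E_3^{\mathcal{D}}$ (whose $\mathcal{G}$-automizers survive pruning), whereas $E_2$ is no longer radical (as $\{1\}\ne\Out_S(E_2)\normaleq\Out_{\mathcal{D}}(E_2)$), and $\mathcal{D}^{frc}=\{E_1,E_3^{\mathcal{D}},S\}$ then follows from $\mathcal{G}^{frc}$ together with this de-radicalisation and a routine check that pruning creates no new centric radical subgroup.

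For simplicity, suppose $\mathcal{N}\normaleq\mathcal{D}$ is supported on $T$; then $T$ is strongly closed in $\mathcal{D}$, so $T\in\{1,E_1,S\}$ by \cref{E1SC3}. The case $T=1$ is trivial. If $T=S$ then, for each fully normalised $P\in\mathcal{D}^c$, $\Aut_{\mathcal{N}}(P)$ is a normal subgroup of $\Aut_{\mathcal{D}}(P)$ containing $\Aut_S(P)\in\syl_3(\Aut_{\mathcal{D}}(P))$, hence containing $O^{3'}(\Aut_{\mathcal{D}}(P))$, so $\mathcal{N}$ has index prime to $3$ by \cref{p'lemma}; since (as in \cref{HExotic}) $\Aut_{\mathcal{D}}(S)$ is generated modulo $\Inn(S)$ by lifts of $3'$-automorphisms from $O^{3'}(\Aut_{\mathcal{D}}(E_1))$ and $O^{3'}(\Aut_{\mathcal{D}}(E_3))$, we obtain $O^{3'}(\mathcal{D})=\mathcal{D}$ and hence $\mathcal{N}=\mathcal{D}$. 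It remains to exclude $T=E_1$: here $\mathcal{D}/\mathcal{N}$ is a fusion system on $S/E_1\cong C_3$, hence is $\fs_{C_3}(C_3)$, so $O^3(\mathcal{D})\le\mathcal{N}$ and the hyperfocal subgroup of $\mathcal{D}$ (which supports $O^3(\mathcal{D})$) is contained in $E_1$. But $\Out_{\mathcal{D}}(S)=\Out_{\mathcal{G}}(S)$ acts non-trivially on $S/E_1$ — this is visible inside $N_{\mathrm{F}_3}(S)\le M_1$, where the maximal torus of the $\GL_2(3)$ Levi factor inverts $C_3=S/E_1$ — while $\Inn(S)$ acts trivially on $S/E_1$, so $[S,O^3(\Aut_{\mathcal{D}}(S))]\not\le E_1$; this commutator lies in the hyperfocal subgroup, a contradiction. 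Hence $\mathcal{D}$ is simple.

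Finally, \cref{E1SC3} shows $E_1$ is the unique — hence minimal — proper non-trivial strongly closed subgroup of $\mathcal{D}$; since $\mathcal{D}$ is simple and $O_3(\mathcal{D})=\{1\}\ne E_1$ by \cref{E3inE1}, no normal subsystem of $\mathcal{D}$ is supported on $E_1$, so $\mathcal{D}$ is exotic by \cref{SCExotic}. I expect the case $T=E_1$ of the simplicity argument to be the main obstacle: this is precisely where $\mathcal{D}$ parts company with the $2$-modular analogue built from $\mathrm{J}_3$ (whose corresponding subsystem, $\fs_S(\PSL_3(4):2)$, \emph{does} admit a proper normal subsystem of index $2$), so the exclusion cannot rest on formal features of the pruning construction but must use the concrete action of $\Out_{\mathcal{G}}(S)$ on $S/E_1$.
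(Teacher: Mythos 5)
Your overall skeleton (deduce $\mathcal{D}^{frc}$ from $\mathcal{D}\le\mathcal{G}$; simplicity via \cref{E1SC3} plus a $T=S$ / $T=E_1$ dichotomy; exoticity via \cref{SCExotic}) matches the paper, but two of your steps break down.

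\emph{Saturation.} You cannot obtain $\mathcal{D}$ from $\mathcal{G}$ by applying \cref{Pruning} with $P=E_2$: that lemma requires that no proper subgroup of $P$ be $S$-centric, and for $E_2$, of index $3$ in $S$, this is false. For instance $E_1\cap E_2$ has order $3^8$, contains $Z_2(S)$, and so satisfies $C_S(E_1\cap E_2)\le C_S(Z_2(S))=E_1$; an entirely analogous containment inside $E_2$ gives $C_S(E_1\cap E_2)\le E_1\cap E_2$, so $E_1\cap E_2$ is already a proper $S$-centric subgroup of $E_2$. Pruning \emph{does} apply with $P=E_3$ (yielding $\mathcal{H}$), because $E_3$ is abelian and every proper subgroup $Q<E_3$ has $E_3\le C_S(Q)$; but that argument is special to the small subgroup. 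The paper instead builds $\mathcal{D}$ bottom-up via \cref{JasonAdd}, taking $\fs_0=N_{\mathcal{G}}(E_1)$, $V=E_3$, $\Delta=\Aut_{\mathcal{G}}(E_3)$, precisely because \cref{JasonAdd} only needs minimality conditions on the \emph{small} subgroup $E_3$, not on $E_2$.

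\emph{Simplicity, the case $T=E_1$.} Your argument here is internally inconsistent. You assert that $\mathcal{D}/\mathcal{N}$ is a fusion system on $S/E_1\cong C_3$ and ``hence is $\fs_{C_3}(C_3)$'', but a fusion system on $C_3$ need not be trivial; indeed, the very $3'$-automorphism you exhibit a few lines later, inverting $S/E_1$, shows that the quotient of $\mathcal{D}$ by the strongly closed subgroup $E_1$ is $\fs_{C_3}(\Sym(3))$, not $\fs_{C_3}(C_3)$. So the ``contradiction'' you reach only refutes your own unfounded assumption about the quotient and says nothing about whether $\mathcal{N}$ exists. Moreover, a normal subsystem supported on a proper subgroup is not automatically of $p$-power index, so the passage from $\mathcal{N}$ to $O^3(\mathcal{D})\le\mathcal{N}$ and $\mathfrak{hyp}(\mathcal{D})\le E_1$ is itself unjustified. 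The paper disposes of this case quite differently: by \cite[Proposition I.6.4]{ako}, $\Out_{\mathcal{N}}(E_1)$ is a normal $3'$-subgroup of $\Out_{\mathcal{D}}(E_1)\cong\GL_2(3)$, hence lies in $Z(\GL_2(3))\cong C_2$ and cannot contain a cyclic group of order $8$; by saturation this rules out $E_3\in\mathcal{E}(\mathcal{N})$, whence by \cref{EssenDeterD3} $\mathcal{E}(\mathcal{N})=\emptyset$, $E_1=O_3(\mathcal{N})$, and $E_1\normaleq\mathcal{D}$, contradicting \cref{normalinF}. Your remarks on $\mathrm{J}_3$ correctly flag this as the delicate point, but the concrete obstruction you adduce (the action on $S/E_1$) is not the one that does the work; it is the structure of $\Out_{\mathcal{D}}(E_1)\cong\GL_2(3)$ and the lifted $C_8$ from $\SL_2(9)$ that forces the conclusion.

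Your steps for $T=S$ (index prime to $3$ and $O^{3'}(\mathcal{D})=\mathcal{D}$) and for exoticity are essentially the paper's.
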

\begin{proof}
In the statement of \cref{JasonAdd}, letting $\mathcal{F}_0=N_{\mathcal{G}}(E_1)$, $V=E_3$ and $\Delta=\Aut_{\mathcal{G}}(E_3)$ we have that $\mathcal{D}^\dagger=\langle \fs_0, \Aut_{\mathcal{G}}(E_3)\rangle_S$ is a proper saturated subsystem of $\mathcal{G}$. Since $E_1$ is characteristic in $S$, we have that $\Aut_{\mathcal{D}^\dagger}(S)=\Aut_{\mathcal{G}}(S)$ and $\Aut_{\mathcal{D}^\dagger}(E_1)=\Aut_{\mathcal{G}}(E_1)$. Since $\Aut_{\mathcal{D}^\dagger}(E_2)\le \Aut_{\mathcal{G}}(E_2)$, if $E_2$ was $\mathcal{D}^\dagger$-essential, then $\mathcal{D}^\dagger=\mathcal{G}$, a contradiction.  Therefore, by the Alperin--Goldschmidt theorem, and using \cref{F3Essen}, we have that $\mathcal{D}^\dagger=\langle \Aut_{\mathcal{G}}(E_1), \Aut_{\mathcal{G}}(E_3), \Aut_{\mathcal{G}}(S)\rangle_S=\mathcal{D}$ is saturated. 

Let $R$ be a fully $\mathcal{D}$-normalized, radical, centric subgroup of $S$ not equal to $E_1$, $S$ or a $\mathcal{D}$-conjugate of $E_3$. If $R$ is contained in a $\mathcal{D}$-conjugate of $E_3$, then since $R$ is $\mathcal{D}$-centric and $E_3$ is elementary abelian, we have a contradiction. Hence $R$ is not contained in a $\mathcal{D}$-conjugate of $E_3$ and by \cref{F3Essen} and using that $E_2\not\in\mathcal{E}(\mathcal{D})$, $R$ is contained in at most one $\mathcal{D}$-essential, namely $E_1$. Then, as $E_1$ is $\Aut_{\mathcal{D}}(S)$-invariant, the extension axiom and the Alperin--Goldschmidt theorem imply that $\Out_{E_1}(R)\normaleq \Out_{\mathcal{D}}(R)$ and $E_1\le R\le S$, a contradiction.
\end{proof}

\begin{lemma}\label[lemma]{E1SC3}
$E_1$ is the unique proper non-trivial strongly closed subgroup of $\mathcal{D}$.
\end{lemma}
\begin{proof}
Since every essential subgroup of $\mathcal{D}$ is contained in $E_1$, and since $E_1$ is characteristic in $S$, we deduce that $E_1$ is strongly closed in $\mathcal{D}$. Assume that $T$ is any proper non-trivial strongly closed subgroup of $\mathcal{D}$. Then $T\normaleq S$ and so $Z(S)\le T$ and $Z_2(S)=\langle Z(S)^{\Aut_{\mathcal{D}}(E_1)}\rangle\le T$. Suppose first that $T\cap \Phi(E_1)=Z_2(S)$. Since $\Phi(E_1)\normaleq S$ we have that $[\Phi(E_1), T]=Z_2(S)$ so that $T\le E_1$. But then $[E_1, T]\le \Phi(E_1)\cap T=Z_2(S)=Z(E_1)$ and $T\le Z_2(E_1)=\Phi(E_1)$ so that $T=Z_2(S)$. However, then $T<\langle T^{\Aut_{\mathcal{D}}(E_3)}\rangle$, a contradiction.

Thus, $T\cap \Phi(E_1)>Z_2(S)$ and since ${\Out_{\mathcal{D}}(E_1)}$ acts irreducibly on $\Phi(E_1)/Z_2(S)$, we must have that $\Phi(E_1)\le T$. But now $E_3=\langle (\Phi(E_1)\cap E_3)^{\Aut_{\mathcal{D}}(E_3)}\rangle\le \langle (T\cap E_3)^{\Aut_{\mathcal{D}}(E_3)}\rangle\le T$. Finally, since $E_1=\langle E_3^S\rangle\le T$, we deduce that $T=E_1$, as desired.
\end{proof}

\begin{proposition}
$\mathcal{D}$ is a saturated exotic simple fusion system.
\end{proposition}
\begin{proof}
We note that $O^{3'}(\Out_{\mathcal{D}}(E_1))\cong \SL_2(3)$ and the extension axiom yields that $\Aut_{\mathcal{D}}^0(S)$ has index at most $2$ in $\Aut_{\mathcal{D}}(S)$. Suppose $\Aut_{\mathcal{D}}^0(S)$ has index exactly $2$ in $\Aut_{\mathcal{D}}(S)$. Then, since $\Out_{\mathcal{D}}(E_1)\cong \GL_2(3)$, an application of the extension axiom yields that $\Out_{O^{3'}(\mathcal{D})}(E_1)\cong \SL_2(3)$. Let $K$ be a Sylow $2$-subgroup of $N_{O^{3'}(\Aut_{\mathcal{D}}(E_3))}(\Aut_S(E_3))$ which is cyclic of order $8$ and contained in $O^{3'}(\mathcal{D})$. Then, by the extension axiom, the morphisms in $K$ lift to morphisms of larger subgroups of $S$ and as $E_1$ is $\Aut_{O^{3'}(\mathcal{D})}(S)$-invariant, and applying the Alperin--Goldschmidt theorem, we deduce that the morphisms in $K$ lift to morphisms in $\Aut_{O^{3'}(\mathcal{D})}(E_1)$. Hence, $\Out_{O^{3'}(\mathcal{D})}(E_1)$ contains a cyclic group of order $8$. Since $\Out_{O^{3'}(\mathcal{D})}(E_1)\cong \SL_2(3)$, this is a contradiction. Thus $\Aut_{\mathcal{D}}^0(S)=\Aut_{\mathcal{D}}(S)$ and applying \cref{p'lemma} we must have that $\mathcal{D}=O^{3'}(\mathcal{D})$.

Applying \cite[Theorem II.9.8(d)]{ako}, if $\mathcal{D}$ is not simple with $\mathcal{N}\normaleq \mathcal{D}$ then by \cref{E1SC3} we have that $\mathcal{N}$ is supported on $E_1$. Then by \cite[Proposition I.6.4]{ako}, $\Aut_{\mathcal{N}}(E_1)\normaleq \Aut_{\mathcal{D}}(E_1)$ so that $\Out_{\mathcal{N}}(E_1)$ is isomorphic to a normal $3'$-subgroup of $\Out_{\mathcal{D}}(E_1)\cong \GL_2(3)$ and hence is a subgroup of the quaternion group of order $8$. In particular, $E_3$ is not essential in $\mathcal{N}$ for otherwise we could again lift a cyclic subgroup of order $8$ to $\Aut_{\mathcal{N}}(E_1)$, using the extension axiom. Then, we apply \cref{EssenDeterD3} (or just perform the MAGMA calculation on which this relies) to deduce that $\mathcal{E}(\mathcal{N})=\emptyset$ and $E_1=O_3(\mathcal{N})$, and so $E_1\normaleq \mathcal{D}$, a contradiction by \cref{normalinF}.

Since $\mathcal{D}$ is a simple fusion system which contains a non-trivial proper strongly closed subgroup, we deduce by \cref{SCExotic} that $\mathcal{D}$ is exotic.
\end{proof}

It feels prudent at this point to draw comparisons with some of the other exotic fusion systems already documented in the literature. We remark that the set of essentials $\{E_3^{\mathcal{D}}\}$ in some ways behave similarly to \emph{pearls} as defined in \cite{grazian}, or the extensions of pearls as found in \cite{oliver}. In some ways, our class $\{E_3^{\mathcal{D}}\}$ motivates an examination of a generalization of pearls to \emph{$q$-pearls} $P$ where $O^{p'}(G)\cong q^2:\SL_2(q)$ for $G$ some model of $N_{\fs}(P)$ and $q=p^n$, as in \cite{ClellandExo}.

Perhaps one can investigate an even further generalization where we need only stipulate that $O^{p'}(G)/Z(O^{p'}(G))\cong q^2:\SL_2(q)$ and we allow for $Z(O^{p'}(G))\ne\{1\}$. All of these cases are linked with \emph{pushing up} problems more familiar in local group theory, and we speculate that all of these examples are special cases of a more general phenomenon in this setting.

We also record the following interesting observation. As shown in \cite[Theorem 3.6]{grazian}, $p$-pearls are \emph{never} contained in any larger essential subgroups, in direct contrast to situation in the fusion system $\mathcal{D}$. Perhaps the fusion systems where there is a class of $q$-pearls contained in a strictly larger essential subgroup have a more rigid structure and so may be organized in some suitable fashion.

We now delve into the study of all saturated fusion systems on $S$ and throughout the remainder of this section, we let $\fs$ be a saturated fusion system on $S$. As in the study of $\mathrm{Co}_1$, we first limit the possible combinations of essentials we have in a saturated fusion system supported on $S$, as well as the potential automizers.

\begin{lemma}\label[lemma]{3Conjugate}
Suppose that $\fs$ is a saturated fusion system on $S$ with $E_1\in\mathcal{E}(\fs)$. Then $\Aut_{\fs}(E_1)$ is $\Aut(E_1)$-conjugate to a subgroup of $\Aut_{\mathcal{G}}(E_1)$ and $O^{3'}(\Out_{\fs}(E_1))\cong \SL_2(3)$.
\end{lemma}
\begin{proof}
Since $Z(E_1)$ has order $9$, and from the actions present in $\mathrm{F}_3$, we deduce that $\Aut(E_1)/C_{\Aut(E_1)}(Z(E_1))\cong \GL_2(3)$. Indeed, we calculate (see \cref{code}) that $|\Aut(E_1)|_{3'}=16$ so that $C_{\Aut(E_1)}(Z(E_1))$ is a $3$-group. It follows that $\Out_{\fs}(E_1)$ is isomorphic to a subgroup of $\GL_2(3)$ which contains a strongly $3$-embedded subgroup and so $\Out_{\fs}(E_1)\cong \SL_2(3)$ or $\GL_2(3)$. Indeed, $\Out_{\fs}(E_1)$ is normal in a subgroup isomorphic to $\GL_2(3)$. We calculate that there is a two conjugacy classes of subgroups of $\Aut(E_1)$ containing $\Inn(E_1)$ whose quotient by $\Inn(E_1)$ is isomorphic to $\SL_2(3)$. Moreover, $\Aut_S(E_1)$ is a subgroup of a conjugate of exactly one of these classes (see \cref{code}). Since $\Aut_S(E_1)\le \Aut_{\fs}(E_1)\cap \Aut_{\mathcal{G}}(E_1)$, we conclude that $\Aut_{\fs}(E_1)$ is $\Aut(E_1)$-conjugate to a subgroup of $\Aut_{\mathcal{G}}(E_1)$.
\end{proof}

The following lemma uses several facts about the group $E_2$. These may be gleaned from \cite[Section 13]{Greenbook} ($E_2=Q_\beta$, $Z(\Phi(E_2))=V_\beta$ $C_2=C_\beta$ and $W_2=W_\beta$) but are also computed explicitly in \cref{code}.

\begin{lemma}\label[lemma]{F3SL2}
Suppose that $\fs$ is a saturated fusion system on $S$ with $E_2\in\mathcal{E}(\fs)$. Set $C_2:=C_{E_2}(Z_3(S))$ and $W_2:=C_{E_2}([E_2, C_2])$. Then $Z_3(S)=Z_2(E_2)$, $|W_2|=3^6$, $|Z(W_2)|=3^4$ and $O^{3'}(\Out_{\fs}(E_2))\cong \SL_2(3)$ acts irreducibly on $W_2/Z(W_2)$.
\end{lemma}
\begin{proof}
We calculate the following in MAGMA (see \cref{code}). We have that $Z(S)=Z(E_2)$ has order $3$ and $Z_3(S)=Z_2(E_2)$ has order $3^3$. Moreover, $C_2$ has order $3^7$ and so has index $3^2$ in $E_2$. We have $Z(W_2)=[E_2, C_2]$ has order $3^4$, $W_2$ has order $3^6$ and $Z_2(E_2)< Z(W_2)=C_{E_2}(W_2)$. Finally, we have that $C_2=C_{E_2}(W_2/Z_2(E_2))$. It remains to prove that $O^{3'}(\Out_{\fs}(E_2))\cong \SL_2(3)$ acts irreducibly on $Q_2/C_2$, $W_2/Z(W_2)$ and $Z_2(E_2)/Z(E_2)$.

We observe that as $Z_2(E_2)\le Z(W_2)$, we must have that $W_2\le C_2$. Then $|C_2/W_2|=|Z(W_2)/Z_2(E_2)|=|Z(E_2)|=3$ and so $O^{3'}(\Out_{\fs}(E_2))$ centralizes each of these chief factors. We note that $[E_2, W_2]\le [E_2, C_2]=Z(W_2)$. Let $R:=C_{O^{3'}(\Out_{\fs}(E_2))}(W_2/Z(W_2))\normaleq \Out_{\fs}(E_2)$. Assume that $R$ is non-trivial, and so as $O^{3'}(\Out_{\fs}(E_2))$ has a strongly $3$-embedded subgroup, there is $r\in R$ of $3'$-order. Then $[r, W_2]\le Z(W_2)$ and as $O^{3'}(\Out_{\fs}(E_2))$ centralizes $Z(W_2)/Z_2(E_2)$, we have that $[r, Z(W_2)]\le Z_2(E_2)$ and by coprime action we deduce that $[r, W_2]\le Z_2(E_2)$. We have that $[E_2, W_2]\le [E_2, C_2]=Z(W_2)$ and so $[E_2, W_2, r]\le Z_2(E_2)$ and so by the three subgroups lemma, we have that $[r, E, W_2]\le Z_2(E_2)$. But $C_{E_2}(W_2/Z_2(E_2))=C_2$ and so we deduce that $[r, E_2]\le C_2$. 

Again, as $O^{3'}(\Out_{\fs}(E_2))$ centralizes $C_2/W_2$, we have that $[r, C_2]\le W_2$ and $[r, W_2]\le Z_2(E_2)$, and by coprime action we deduce that $[r, E_2]\le Z_2(E_2)$. Hence, $[r, E_2, C_2]=\{1\}$, $[C_2, r, E_2]\le [Z_2(E_2), E_2]=Z(E_2)$ and by the three subgroups lemma we conclude that $[E_2, C_2, r]\le Z(E_2)$. But $[E_2, C_2]=Z(W_2)\ge Z_2(E_2)$ and so we ascertain that $[Z_2(E_2), r]\le Z(E_2)$ and as $r$ centralizes $Z(E_2)$, a final application of coprime action yields that $[r, E_2]=\{1\}$, a contradiction since $r$ is non-trivial. Hence, $R=\{1\}$ and $O^{3'}(\Out_{\fs}(E_2))$ acts faithfully on $W_2/Z(W_2)$. We conclude that $W_2/Z(W_2)$ is a natural module for $O^{3'}(\Out_{\fs}(E_2))\cong \SL_2(3)$.

Since $|Q_2/C_2|=|Z_2(E_2)/Z(E_2)|$, to complete the proof it remains to show that $1\ne r\in Z(O^{3'}(\Out_{\fs}(E_2)))$ acts non-trivially on $Q_2/C_2$ and $Z_2(E_2)/Z(E_2)$. Note that if $[r, Z_2(E_2)]\le Z(E_2)$ then by coprime action, $[r, Z_2(E_2)]=\{1\}$. An application of the three subgroups lemma would then yield that $[r, E_2]\le C_2$. Hence, it suffices to demonstrate that $[r, E_2]\not\le C_2$. Assume otherwise. Since $[r, C_2]\le W_2$, by coprime action we have that $[r, E_2]\le W_2$. Then $[r, E_2, W_2]\le [W_2, W_2]\le [C_2, W_2]=Z_2(E_2)$. Moreover, $[E_2, W_2, r]=[Z(W_2), r]\le Z_2(E_2)$. By the three subgroups lemma, we infer that $[r, W_2, E_2]\le Z_2(E_2)$. But $W_2=[r, W_2]Z(W_2)$ and $[E_2, Z(W_2)]\le Z_2(E_2)$ so that $[W_2, E_2]\le Z_2(E_2)$, a contradiction as $C_2=C_{E_2}(W_2/Z_2(E_2))$. Hence, $r$ acts non-trivially on $Q_2/C_2$, which completes the proof.
\end{proof}

\begin{lemma}\label[lemma]{3Triv}
Suppose that $\fs$ is a saturated fusion system on $S$ such that $\{E_1, E_2\}\subseteq \mathcal{E}(\fs)$. Then $O_3(\fs)=\{1\}$.
\end{lemma}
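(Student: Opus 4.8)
Suppose, for a contradiction, that $Q:=O_3(\fs)\ne\{1\}$. By \cref{normalinF}, $Q$ is strongly closed in $\fs$, is contained in every essential subgroup — so $Q\le E_1\cap E_2$ — and is invariant under $\Aut_{\fs}(E_1)$, $\Aut_{\fs}(E_2)$ and $\Aut_{\fs}(S)$. By \cref{F3SL2}, $L_i:=O^{3'}(\Out_{\fs}(E_i))\cong\SL_2(3)$ for $i\in\{1,2\}$. The plan is to trap $Q$ between $Z_2(S)$ and a proper characteristic subgroup of $S$ using the $L_i$-module structure of the characteristic sections of $E_1$ and $E_2$, and then derive a contradiction either from the Model Theorem or from the essentiality of $E_1$ and $E_2$.

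First I would pin down the lower bound. Since $Q\normaleq S$ we have $\{1\}\ne Q\cap Z(S)$, so $Z(S)\le Q$; and since $Z(E_1)=Z_2(S)$ is $2$-dimensional over $\GF(3)$ with $L_1\cong\SL_2(3)$ acting on it faithfully and irreducibly as a natural module (this follows from $\Aut(E_1)/C_{\Aut(E_1)}(Z(E_1))\cong\GL_2(3)$ and $|\Aut(E_1)|_{3'}=16$, as exploited in \cref{F3SL2}), the nonzero $\Aut_{\fs}(E_1)$-submodule $Q\cap Z(E_1)$ must be all of $Z(E_1)$; hence $Z_2(S)\le Q$. For the upper bound, $|E_2/\Phi(E_2)|=9$ forces $L_2$ to act irreducibly on $E_2/\Phi(E_2)$ as a natural module, and $\Phi(E_2)$ is characteristic in $E_2$, so $Q\Phi(E_2)/\Phi(E_2)$ is an $L_2$-submodule of $E_2/\Phi(E_2)$; it is not everything, since that would give $Q=E_2$ by the Frattini property while $Q\le E_1\cap E_2\lneq E_2$, so $Q\le\Phi(E_2)$. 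Running the same dichotomy through the characteristic sections $Z_2(S)\normaleq\Phi(E_1)\normaleq E_1$ and $Z_2(S)\normaleq\Phi(E_2)\normaleq E_2$ — on which the successive quotients are irreducible $L_1$- resp. $L_2$-modules, as one verifies by computer — while using that $Q$ is simultaneously $L_1$- and $L_2$-invariant and that the two copies of $\SL_2(3)$ act \emph{transversely} on $E_1\cap E_2$, one is left with only a short list of candidates for $Q$. In particular $Z_2(S)$ itself is ruled out: once $E_2$ is essential, $Z_2(S)$ is not $\Aut_{\fs}(E_2)$-invariant (the image of $Z_2(S)$ in the relevant section of $E_2$ is a proper nonzero subspace of a natural $L_2$-module, hence not $L_2$-stable), so $Z_2(S)\ne O_3(\fs)$ and therefore $Q\supsetneq Z_2(S)$.

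It then remains to eliminate each surviving candidate $R$ for $Q$. If $R$ is $\fs$-centric, then $\fs=N_{\fs}(R)$ has a model $G$ by \cref{model}, with $S\in\syl_3(G)$ of order $3^{10}$ and $F^*(G)=R$ of order at most $3^7$; the $3'$-elements of $G$ must induce $\SL_2(3)$ on each of $E_1/R$ and $E_2/R$, which together with $|S|=3^{10}$ pins down $O^{3'}(G/R)$, and in every surviving case one checks that one of $E_1$, $E_2$ fails to be $\fs$-radical, i.e. $O_3(\Out_{\fs}(E_i))\ne\{1\}$ — here \cref{GrpChain}, applied to an $\Aut_{\fs}(E_i)$-invariant refinement of $\{1\}\normaleq Z(S)\normaleq Z_2(S)\normaleq R\normaleq\cdots\normaleq E_i$, does the work — contradicting $\{E_1,E_2\}\subseteq\mathcal{E}(\fs)$. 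If instead $R$ is not $\fs$-centric, then (since $R\supsetneq Z_2(S)\not\le Z(S)$) we have $R\lneq C_S(R)\lneq S$ with $C_{\fs}(R)$ a normal subsystem of $\fs$ supported on $C_S(R)$, so $\fs$ has a proper nontrivial strongly closed subgroup strictly above $R$; tracing this through the generation of $\fs$ by $\Aut_{\fs}(S)$, $\Aut_{\fs}(E_1)$ and $\Aut_{\fs}(E_2)$ (as in the simplicity argument for $\mathcal{H}$ in \cref{HExotic}) yields the final contradiction. The hardest step is the middle one: carrying out the module bookkeeping precisely enough to cut the candidates for $Q$ down to a manageable list. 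The crucial subtlety is that $E_i/\Phi(E_i)$ is the natural $\SL_2(3)$-module and is never centralized by $\Out_S(E_i)$, so no single essential subgroup can by itself force $Q$ to be large — the contradiction genuinely requires $E_1$ and $E_2$ simultaneously, and emerges only from the way their two copies of $\SL_2(3)$ interact on $E_1\cap E_2$.
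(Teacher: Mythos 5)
Your opening moves match the paper's: fix $Q:=O_3(\fs)\ne\{1\}$, note $Q\le E_1\cap E_2$ and $Q$ is $\Aut_{\fs}(E_i)$-invariant, use \cref{F3SL2} to get $L_i\cong\SL_2(3)$, and then show $Z(S)\le Q$, $Z_2(S)\le Q$ and $Q\le\Phi(E_2)$. Those steps are correct and are essentially what the paper does. But from there your write-up replaces the actual work with a promise. ``Running the same dichotomy through the characteristic sections \ldots one is left with only a short list of candidates for $Q$'' and ``the two copies of $\SL_2(3)$ act transversely'' are not arguments; they are pointers to arguments you have not made. The paper's key intermediate step --- that $L_1$ acts irreducibly on $\Phi(E_1)/Z_2(S)$, forcing $\Phi(E_1)\le Q$ --- never appears in your proposal, and this is precisely what turns the vague sandwich $Z_2(S)<Q\le\Phi(E_2)$ into the precise sandwich $\Phi(E_1)\le Q\le\Phi(E_2)$ that makes the endgame tractable.

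Your justification for ruling out $Q=Z_2(S)$ is also not right as stated. You say ``the image of $Z_2(S)$ in the relevant section of $E_2$ is a proper nonzero subspace of a natural $L_2$-module,'' but you have already shown $Q=Z_2(S)\le\Phi(E_2)$, so its image in $E_2/\Phi(E_2)$ is zero, and you have not identified any other section on which $L_2$ acts as a natural module with $Z_2(S)$ projecting to a proper nonzero subspace. The paper's actual argument is different and sharper: if $Z_2(S)$ were $\Aut_{\fs}(E_2)$-invariant then so would be $C_{E_2}(Z_2(S))=E_1\cap E_2$, a subgroup strictly between $\Phi(E_2)$ and $E_2$, contradicting the irreducibility of $E_2/\Phi(E_2)$. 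That requires knowing $C_{E_2}(Z_2(S))=E_1\cap E_2$, a concrete fact about $S$, not just abstract module-theory.

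Finally, your proposed endgame (build a model for $N_{\fs}(R)$, identify $O^{3'}(G/R)$, or invoke \cref{GrpChain} to show radicality fails; split on whether $R$ is $\fs$-centric and use a strongly-closed-subgroup argument otherwise) is both unexecuted and needlessly heavy. The paper goes a different way: from $\Phi(E_1)\le Q\le\Phi(E_2)$ it bootstraps the same inclusion onto $\Phi(Q)$, derives $\Phi(Q)\le\Phi(\Phi(E_2))<\Phi(E_1)\le Q$, concludes $Q$ is elementary abelian and therefore (by rank) $Q=\Phi(E_1)$, and then observes $C_{E_2}(\Phi(E_1)/Z(\Phi(E_2)))=E_1\cap E_2$ would be $\Out_{\fs}(E_2)$-invariant --- the same irreducibility contradiction as before. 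No model, no chain lemma. So while your overall shape is compatible with the paper, the proposal as written leaves the middle third unproved, misjustifies the elimination of $Z_2(S)$, and sketches a longer endgame than is needed; I would not accept it as a proof.
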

\begin{proof}
Assume that $\fs$ is a saturated fusion system on $S$ such that $\{E_1, E_2\}\subseteq \mathcal{E}(\fs)$ and suppose that $\{1\}\ne Q\normaleq \fs$. By \cref{normalinF}, we have that $Q\le E_1\cap E_2$. Then $Z(S)\le Q$ and as $\Out_{\fs}(E_1)$ acts irreducibly on $Z(E_1)$ by \cref{3Conjugate}, we deduce that $Z(E_1)=Z_2(S)\le Q$. Now, there is $t_2\in O^{3'}(\Aut_{\fs}(E_2))$ an involution which lifts to a morphism of $S$ by the extension axiom. Indeed, $t_2$ is such that $t_2\Inn(E_2)/\Inn(E_2)\le Z(O^{3'}(\Out_{\fs}(E_2)))$. Hence, the lift of $t_2$ to $\Aut(S)$ restricts to an automorphism of $E_1$. Since $t_2$ is non-trivial on $Z_2(S)$ but acts trivially on $Z(S)$, upon restriction, the image of this automorphism of $E_1$ together with $O^{3'}(\Out_{\fs}(E_1))\cong \SL_2(3)$ generates a group isomorphic to $\GL_2(3)$. Hence, by \cref{3Conjugate}, $\Out_{\fs}(E_1)$ is $\Aut(E_1)$-conjugate to $\Aut_{\mathcal{G}}(E_1)$ and so acts irreducibly on $\Phi(E_1)/Z_2(S)$. Thus, $\Phi(E_1)\le Q \le E_1\cap E_2$. 

Now, if $\Phi(O_3(\fs))$ is non-trivial then by the above argument we have that $\Phi(E_1)\le \Phi(O_3(\fs)) \le O_3(\fs) \le E_1\cap E_2$. But $\Phi(O_3(\fs))\le \Phi(\Phi(E_1\cap E_2))\le\Phi(E_1)$, and we conclude that $\Phi(E_1)\normaleq \fs$. If $\Phi(O_3(\fs))=\{1\}$ and $O_3(\fs)\ne\{1\}$ then $O_3(\fs)$ is elementary abelian and contains $\Phi(E_1)$, and since $\Phi(E_1)$ is elementary abelian of maximal order, the only possibility is that $O_3(\fs)=\Phi(E_1)$. Either way $\Phi(E_1)\normaleq \fs$. But in the language of \cref{F3SL2}, $Z(W_2)<\Phi(E_1)<W_2$ and $\Out_{\fs}(E_2)$ acts irreducibly on $W_2/Z(W_2)$, a contradiction.
\end{proof}

\begin{proposition}\label[proposition]{F31E}
Suppose that $\fs$ is a saturated fusion system on $S$ such that $O_3(\fs)\ne\{1\}$. Then either
\begin{enumerate}
    \item $\fs=N_{\fs}(S)$; or
    \item $\fs=N_{\fs}(E_i)$ where $O^{3'}(\Out_{\fs}(E_i))\cong \SL_2(3)$ for $i\in\{1,2\}$.
\end{enumerate}
\end{proposition}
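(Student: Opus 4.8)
The plan is to use the hypothesis $O_3(\fs)\neq\{1\}$ to cut $\mathcal{E}(\fs)$ down to at most one subgroup, and then read off the two outcomes from the Alperin--Goldschmidt fusion theorem. First I would observe that $E_3\notin\mathcal{E}(\fs)$: indeed \cref{E3inE1} asserts that $E_3\in\mathcal{E}(\fs)$ forces $O_3(\fs)=\{1\}$, contrary to hypothesis. Likewise, \cref{3Triv} tells us that $\{E_1,E_2\}\subseteq\mathcal{E}(\fs)$ forces $O_3(\fs)=\{1\}$, so this configuration is excluded too. Since we already know $\mathcal{E}(\fs)\subseteq\{E_1,E_2,E_3^S\}$, we are left with exactly the possibilities $\mathcal{E}(\fs)=\emptyset$ or $\mathcal{E}(\fs)=\{E_i\}$ for a single $i\in\{1,2\}$.

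Next I would dispatch the case $\mathcal{E}(\fs)=\emptyset$. Here the Alperin--Goldschmidt fusion theorem gives $\fs=\langle\Aut_{\fs}(S)\rangle=N_{\fs}(S)$ (the inclusion $\Aut_{\fs}(S)\subseteq N_{\fs}(S)$ being trivial, and $S$ is fully normalized). Since $S$ is normal in this system and is $\fs$-centric, $N_{\fs}(S)$ is constrained, so by the Model Theorem (\cref{model}) it is realized by a finite group $G$ with normal Sylow $3$-subgroup $S$; splitting off a $3$-complement via Schur--Zassenhaus and discarding $C_G(S)$ identifies $G$ with a group of the form $S:\Out_{\fs}(S)$, yielding outcome (i).

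In the remaining case $\mathcal{E}(\fs)=\{E_i\}$ for fixed $i\in\{1,2\}$, I would argue as follows. Since $E_i$ is characteristic in $S$, every automorphism in $\Aut_{\fs}(S)$ preserves $E_i$ and hence lies in $N_{\fs}(E_i)$; also $\Aut_{\fs}(E_i)\subseteq N_{\fs}(E_i)$ by definition. Therefore Alperin--Goldschmidt gives $\fs=\langle\Aut_{\fs}(E_i),\Aut_{\fs}(S)\rangle\subseteq N_{\fs}(E_i)\subseteq\fs$, so $\fs=N_{\fs}(E_i)$ (again $E_i$ is fully normalized, being normal in $S$). Finally, since $E_i\in\mathcal{E}(\fs)$, \cref{F3SL2} gives $O^{3'}(\Out_{\fs}(E_i))\cong\SL_2(3)$, which is outcome (ii).

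Essentially all the genuine content here lies in \cref{E3inE1,3Triv,F3SL2}, so what remains is a short bookkeeping argument; the only points requiring a little care are the two identifications $\fs=N_{\fs}(S)\cong\fs_S(S:\Out_{\fs}(S))$ and $\fs=N_{\fs}(E_i)$, both of which rest only on $E_i$ (respectively $S$) being characteristic in $S$ together with the constrained-model formalism. I do not anticipate any serious obstacle.
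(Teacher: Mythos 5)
Your proof is correct and follows exactly the same route as the paper: dispatch $E_3$ via \cref{E3inE1}, dispatch $\{E_1,E_2\}\subseteq\mathcal{E}(\fs)$ via \cref{3Triv}, handle the empty case with Alperin--Goldschmidt, and apply \cref{F3SL2} in the remaining single-essential case. You simply unpack a few of the implicit identifications (the Schur--Zassenhaus/model-theorem reasoning behind $\fs=\fs_S(S:\Out_{\fs}(S))$, and the use of $E_i$ being characteristic in $S$ to deduce $\fs=N_{\fs}(E_i)$) more explicitly than the paper does.
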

\begin{proof}
If $\mathcal{E}(\fs)=\emptyset$, then outcome (i) is satisfied by the Alperin--Goldschmidt theorem. Thus, by \cref{E3inE1} and \cref{3Triv}, we may assume that $E_i$ is the unique essential subgroup of $\fs$ and apply \cref{3Conjugate} and \cref{F3SL2}.
\end{proof}

\begin{lemma}\label[lemma]{E3Unique}
Suppose that $\fs_1, \fs_2$ are two saturated fusion systems supported on $T$ where $E_1\le T\le S$. If $E_3\in\mathcal{E}(\fs_1)\cap \mathcal{E}(\fs_2)$ and $N_{\fs_1}(E_1)=N_{\fs_2}(E_1)$ then $\Aut_{\fs_1}(E_3)=\Aut_{\fs_2}(E_3)$.
\end{lemma}
\begin{proof}
 By \cref{E3inE1}, we have that $O^{3'}(\Aut_{\fs_i}(E_3))\cong\SL_2(9)$ for $i\in\{1,2\}$. Write $X:=O^{3'}(\Aut_{\fs_1}(E_3))$ and $Y:=O^{3'}(\Aut_{\fs_2}(E_3))$. Set $K:=N_{\Aut_{\fs_1}(E_3)}(\Aut_T(E_3))$ so that, by the extension axiom, all morphisms in $K$ lift to morphisms in $\Aut_{\fs_1}(E_1)=\Aut_{\fs_2}(E_1)$. In particular, \[K=N_{\Aut_{N_{\fs_1}(E_1)}(E_3)}(\Aut_T(E_3))=N_{\Aut_{N_{\fs_2}(E_1)}(E_3)}(\Aut_T(E_3))=N_{\Aut_{\fs_2}(E_3)}(\Aut_T(E_3)).\]
 
Let $L$ be the unique cyclic subgroup of order $8$ of a fixed Sylow $2$-subgroup of $K$ arranged such that $K_L:=L\Aut_S(E_3)=N_{O^{3'}(\Aut_{\fs_1}(E_3))}(\Aut_S(E_3))$. Then $K_L\le X\cap Y\le \Aut(E_1)\cong\GL_5(3).$ We record that there is a unique conjugacy class of subgroups isomorphic to $\SL_2(9)$ in $\GL_5(3)$ (see \cref{code}). Hence, there is $g\in\Aut(E_3)$ with $Y=X^g$.

Then $K_L, (K_L)^g\le Y$ and so there is $y\in Y$ such that $(K_L)^g=(K_L)^y$. Thus, we have that $X^{gy^{-1}}=X^g$ and we calculate that $gy^{-1}\le N_{\GL_5(3)}(K_L)\le N_{\GL_5(3)}(X)$ (see \cref{code}). But then $X=X^g=Y$. By a Frattini argument, $\Aut_{\fs_1}(E_3)=XK=YK=\Aut_{\fs_2}(E_3)$.
\end{proof}

\begin{theorem}\label[theorem]{ThmB2}
Suppose that $\fs$ is saturated fusion system on $S$ such that $O_3(\fs)=\{1\}$. If $E_2\not\in\mathcal{E}(\fs)$ then $\fs\cong \mathcal{D}$.
\end{theorem}
\begin{proof}
Suppose that $E_2\not\in\mathcal{E}(\fs)$. Since $O_3(\fs)=\{1\}$ and $E_1$ is $\Aut_{\fs}(S)$-invariant, an application of \cref{normalinF} using \cref{F3Essen} implies that some $E_3^\fs\cap \mathcal{E}(\fs)\ne \emptyset$. Hence by \cref{E3inE1}, we have that $E_3\in\mathcal{E}(\fs)$ and $O^{3'}(\Aut_{\fs}(E_3))\cong \SL_2(9)$. Then for $k$ an element of order $8$ in $N_{O^{3'}(\Aut_{\fs}(E_3))}(\Aut_S(E_3))$, by the extension axiom and since $E_1$ is $\Aut_{\fs}(S)$-invariant, $k$ lifts to an element of order $8$ in $\Aut(E_1)$. Now, by \cref{3Conjugate}, we have that $O^{3'}(\Out_{\fs}(E_1))\cong \SL_2(3)$ and since $\SL_2(3)$ has no cyclic subgroups of order $8$, the Sylow $2$-subgroups of $\Aut_{\fs}(E_1)$ have order at least $16$. We calculate that $|\Aut(E_1)|_{3'}=16$ so that $\Aut_{\fs}(E_1)$ contains a Sylow $2$-subgroup of $\Aut(E_1)$, and $\Out_{\fs}(E_1)\cong \GL_2(3)$.

Let $t$ be an element of $\Aut(S)$ of order coprime to $3$. Since $E_1$ is $\Aut(S)$-invariant, $t$ normalizes $E_1$. Since $E_1$ is self-centralizing in $S$, an application of the three subgroups lemma and coprime action reveals that $t$ acts non-trivially on $E_1$. Hence, a Hall $3'$-subgroup of $\Aut(S)$ restricts faithfully to $N_{\Aut(E_1)}(\Aut_S(E_1))$. As in \cref{3conjugate}, since $Z(E_1)$ has order $9$ and from the actions present in $\mathrm{F}_3$, we conclude that $\Aut(E_1)/C_{\Aut(E_1)}(Z(E_1))\cong \GL_2(3)$ and $C_{\Aut(E_1)}(Z(E_1))$ is a normal $3$-subgroup of $\Aut(E_1)$. Now, a Hall $3$-subgroup of $\Aut(S)$ also normalizes $Z(E_1)$ and so it restricts faithfully to $N_{\Aut(E_1)}(\Aut_S(E_1)C_{\Aut(E_1)}(Z(E_1)))$, and as $\Aut(E_1)/C_{\Aut(E_1)}(Z(E_1))\cong \GL_2(3)$ and $[\Aut_S(E_1), Z(E_1)]\ne \{1\}$, we conclude that a Hall $3'$-subgroup of $\Aut(S)$ is elementary abelian of order at most $4$. Since $N_{\Aut_{\fs}(E_1)}(\Aut_S(E_1))$ contains an elementary abelian subgroup of order $4$ which, by the extension axiom, lifts to $\Aut_{\fs}(S)\le \Aut(S)$ we conclude that a Hall $3'$-subgroup of $\Aut(S)$ and of $\Aut_{\fs}(S)$ is elementary abelian of order $4$. In particular, $\Out_{\fs}(S)$ in elementary abelian of order $4$.

By the Alperin--Goldschmidt theorem and using that $E_1$ is characteristic in $S$, we have that $\fs=\langle N_{\fs}(E_1), \Aut_{\fs}(E_3)\rangle_S$ and $\mathcal{D}=\langle N_{\mathcal{D}}(E_1), \Aut_{\mathcal{D}}(E_3)\rangle_S$. Hence, by \cref{E3Unique} to show that $\fs\cong \mathcal{D}$ it suffices to show that there is $\alpha\in\Aut(S)$ with $N_{\fs^\alpha}(E_1)=N_{\mathcal{D}}(E_1)$. 

Now, $\Aut_{\fs}(S)$ contains a Hall $3'$-subgroup of $\Aut(S)$. By Hall's theorem, there is $\alpha_1\in\Aut(S)$ such that $\Aut_{\fs^{\alpha_1}}(S)=\Aut_{\fs}(S)^{\alpha_1}=\Aut_{\mathcal{D}}(S)$. By the Alperin--Goldschmidt theorem, we see that $N_{\fs^{\alpha_1}}(S)=N_{\mathcal{D}}(S)$. Then 
\begin{align*}
K:=N_{\Aut_{\fs^{\alpha_1}}(E_1)}(\Aut_S(E_1))&=N_{\Aut_{N_{\fs^{\alpha_1}}(S)}(E_1)}(\Aut_S(E_1))\\
&=N_{\Aut_{N_{\mathcal{D}}(S)}(E_1)}(\Aut_S(E_1))\\
&=N_{\Aut_{\mathcal{D}}(E_1)}(\Aut_S(E_1)).
\end{align*}
We calculate that in $\Aut(E_1)$ there are three candidates for the group $\Aut_{\fs}(E_1)$ which contain $K$ appropriately and that there is an element which conjugates the three candidates and extends to an automorphism of $S$ which preserves the class $\{E_3^S\}$ (see \cref{code}). In particular, there is $\alpha_2\in \Aut(S)$ with $\Aut_{\fs^{\alpha_1\alpha_2}}(E_1)=\Aut_{\mathcal{D}}(E_1)$. Hence, by \cref{model} there is $\beta\in\Aut(S)$ with $\alpha:=\alpha_1\alpha_2\beta$ and $N_{\fs^\alpha}(E_1)=N_{\mathcal{D}}(E_1)$, as required.
\end{proof}

We are now in a position to determine all saturated fusion systems on $S$. For this, we recall the notion of an amalgam of type $\mathrm{F}_3$ from \cref{F3Unique}, referring to \cite[pg 100]{Greenbook} and \cite[pg 150 (a)-(f)]{F3} for the details. 

\begin{theorem}\label[theorem]{ThmB1}
Suppose that $\fs$ is saturated fusion system on $S$ such that $O_3(\fs)=\{1\}$. Then $\fs\cong \mathcal{D}, \mathcal{G}$ or $\mathcal{H}$.
\end{theorem}
\begin{proof}
By \cref{E3inE1} and \cref{F31E}, we have that $E_1\in\mathcal{E}(\fs)$. By \cref{ThmB2}, we may assume that that $\{E_1, E_2\}\subseteq \mathcal{E}(\fs)$ and form $\mathcal{T}:=\langle \Aut_{\fs}(E_1), \Aut_{\fs}(E_2), \Aut_{\fs}(S)\rangle_S$. If $E_3\in\mathcal{E}(\fs)$ then $\mathcal{T}$ is the $\fs$-analogue of $\mathcal{H}$ and the proof that $\mathcal{T}$ is saturated is the same as the proof that $\mathcal{H}$ is saturated, relying on \cref{Pruning}. If $E_3\not\in\mathcal{E}(\fs)$ then by the Alperin--Goldschmidt theorem we have that $\fs=\mathcal{T}$. In either case, $\mathcal{E}(\mathcal{T})=\{E_1, E_2\}$ and $O_3(\mathcal{T})=\{1\}$ by \cref{3Triv}. 

Let $G_i$ be a model for $N_{\fs}(E_i)$ and since $E_i$ is $\Aut_{\fs}(S)$-invariant, by the uniqueness of models provided by \cref{model}, we may embed the model for $N_{\fs}(S)$, which we denote $G_{12}$, into $G_i$ for $i\in\{1,2\}$. Applying \cite[Theorem 1]{Robinson}, we ascertain that $\mathcal{T}=\langle \fs_S(G_1), \fs_S(G_2)\rangle_S=\fs_S(G_1\ast_{G_{12}} G_2)$. Furthermore, the tuple $(G_1, G_2, G_{12})$ along with the appropriate injective maps forms an amalgam of type $\mathrm{F}_3$. By \cref{F3Unique}, this amalgamated product is determined up to isomorphism, and so $\mathcal{T}$ is unique up to isomorphism. In particular, $\mathcal{T}$ is the unique (up to isomorphism) saturated fusion system on $S$ with $O_3(\mathcal{T})=\{1\}$ and $\mathcal{E}(\mathcal{T})=\{E_1, E_2\}$. Since $\mathcal{H}$ satisfies these conditions, we must have that $\mathcal{T}\cong\mathcal{H}$.

We may as well assume now that $E_3\in\mathcal{E}(\fs)$ and by the Alperin--Goldschmidt theorem, that $\fs=\langle \mathcal{T}, \Aut_{\fs}(E_3)\rangle_S$. Indeed, replacing $\fs$ by $\fs^\alpha$ for some $\alpha\in\Aut(S)$, we have that $\fs=\langle \mathcal{H}, \Aut_{\fs}(E_3)\rangle_S$ and upon demonstrating that $\Aut_{\fs}(E_3)=\Aut_{\mathcal{G}}(E_3)$ we will have shown that $\fs=\mathcal{G}$. But $N_{\fs}(E_1)=N_{\mathcal{H}}(E_1)=N_{\mathcal{G}}(E_1)$ and so \cref{E3Unique} gives $\Aut_{\fs}(E_3)=\Aut_{\mathcal{G}}(E_3)$, as desired.
\end{proof}

We provide the following table summarizing the actions induced by the fusion systems described in \cref{ThmB1} on their centric-radical subgroups. The entry ``-" indicates that the subgroup is no longer centric-radical in the subsystem.

\begin{table}[H]
    \caption{$\mathcal{G}$-conjugacy classes of radical-centric subgroups of $S$}
        \begin{tabular}{|c|c|c|c|c|}\hline
            $P$ & $|P|$ & $\Out_{\mathcal{G}}(P)$ & $\Out_{\mathcal{H}}(P)$ & $\Out_{\mathcal{D}}(P)$ \\\hline
            $S$ & $3^{10}$ & $2\times 2$ & $2\times 2$ & $2\times 2$ \\\hline
            $E_1$ & $3^9$ & $\GL_2(3)$ & $\GL_2(3)$ & $\GL_2(3)$ \\\hline
            $E_2$ & $3^9$ & $\GL_2(3)$ & $\GL_2(3)$ & - \\\hline            
            $E_3$ & $3^5$ & $\SL_2(9).2$ & - & $\SL_2(9).2$\\ \hline
        \end{tabular}
    \label{F3Table}
\end{table}

We describe a pair of bonus exotic fusion systems related to the exotic system $\mathcal{D}$. Using that $E_1$ is characteristic in $S$, and applying the Alperin--Goldschmidt theorem in tandem with the extension axiom, the morphisms in $N_{\Aut_{\mathcal{D}}(E_3)}(\Aut_S(E_3))$ extend to a group of morphisms in $\Aut_{\mathcal{D}}(E_1)$ which we denote by $K$. Then $|K|_{3'}=16$. Let $G$ be a model for $N_{\mathcal{D}}(E_1)$ and let $H$ be a subgroup of $G$ chosen such that $\Aut_H(E_1)=K\Inn(E_1)$. In particular, $H$ is the product of $E_1$ with some Sylow $2$-subgroup of $G$. We define the subsystem 
\[\mathcal{D^*}:=\langle \Aut_{\mathcal{D}}(E_3), \fs_{E_1}(H)\rangle_{E_1} \le \mathcal{D}.\]

Note that the conjugacy class of $E_3$ in $S$ splits into three distinct classes upon restricting only to $E_1$. Indeed, in this way we have three choices for the construction of $\mathcal{D}^*$, corresponding to the three $E_1$-conjugacy classes of $S$-conjugates of $E_3$, which in turn correspond to the three choices of Sylow $2$-subgroups of $\Out_{\mathcal{D}}(E_1)$. Since the choice is induced by an element of $\Aut(E_1)$, all choices give rise to isomorphic fusion systems.

\begin{proposition}
$\mathcal{D^*}$ is saturated fusion system on $E_1$ and $O^{3'}(\mathcal{D^*})$ has index $2$ in $\mathcal{D^*}$.
\end{proposition}
\begin{proof}
We create $H$ as in the construction of $\mathcal{D}^*$ and consider $\fs_{E_1}(H)$. Since $\fs_{E_1}(H)\subseteq \mathcal{D}$, and as $E_3$ is fully $\mathcal{D}$-normalized and $N_S(E_3)\le E_1$, $E_3$ is also fully $\fs_{E_1}(H)$-normalized. Since $C_{E_1}(E_3)\le E_3$ we see that $E_3$ is also $\fs_{E_1}(H)$-centric. Finally, since $E_3$ is abelian, it is minimal among $S$-centric subgroups with respect to inclusion and has no proper subgroup of $E_3$ is essential in $\fs_{E_1}(H)$. In the statement of \cref{JasonAdd}, letting $\fs_0=\fs_{E_1}(H)$, $V=E_3$ and $\Delta=\Aut_{\mathcal{D}}(E_3)$, we have that $\wt\Delta:=\Aut_{\fs_{E_1}(H)}(E_3)=N_{\Aut_{\mathcal{D}}(E_3)}(\Aut_S(E_3))$ is strongly $3$-embedded in $\Delta$. By that result, $\mathcal{D^*}=\langle \Aut_{\mathcal{D}}(E_3), \fs_{E_1}(H)\rangle_{E_1}$ is a saturated fusion system.

In the construction of $\mathcal{D}^*$, we may have taken in place of $K$ the group obtained by lifting the morphisms in $N_{\Aut_{O^{3'}(\mathcal{D}}(E_3)}(\Aut_S(E_3))$ to $\Aut_{\mathcal{D}}(E_1)$ and forming $\hat{H}$ of index $2$ in $H$ with $\Aut_{\hat{H}}(E_3)=N_{\Aut_{O^{3'}(\mathcal{D})}(E_3)}(\Aut_S(E_3))$. Letting $\fs_0=\fs_{E_1}(\hat{H})$, $\wt\Delta=N_{\Aut_{O^{3'}(\mathcal{D})}(E_3)}(\Aut_S(E_3))$ and $\Delta=\Aut_{O^{3'}(\mathcal{D})}(E_3)$ and applying \cref{JasonAdd}, the fusion system $\hat{\mathcal{D}^*}=\langle O^{3'}(\Aut_{\mathcal{D}}(E_3)), \fs_{E_1}(\hat{H})\rangle_{E_1}$ is a saturated fusion subsystem of $\mathcal{D}$.

By construction, $\mathcal{D}^*=\langle \hat{\mathcal{D}^*}, \Aut_{\mathcal{D}^*}(E_1)\rangle_{E_1}$ and it is clear that for all $\alpha\in \Aut_{\mathcal{D}^*}(E_1)$, $\hat{\mathcal{D}^*}^\alpha=\hat{\mathcal{D}^*}$. Hence, applying \cite[Proposition I.6.4]{ako}, we have that $\hat{\mathcal{D}^*}$ is weakly normal in $\mathcal{D}^*$ in the sense of \cite[Definition I.6.1]{ako} and \cite[Theorem A]{CravenSub} yields that $O^{3'}(\hat{\mathcal{D}^*})\normaleq \mathcal{D}^*$. Then $O^{3'}(\Aut_{\mathcal{D}^*}(T))\le \Aut_{O^{3'}(\hat{\mathcal{D}^*})}(T)\normaleq \Aut_{\mathcal{D}^*}(T)$ by \cite[Proposition I.6.4]{ako} for all $T\le E_1$, and we deduce that $O^{3'}(\hat{\mathcal{D}^*})$ has index prime to $3$ in $\mathcal{D}^*$. It quickly follows that $\hat{\mathcal{D}^*}$ has index prime to $3$ in $\mathcal{D}^*$ and as $\Aut_{\hat{\mathcal{D}^*}}(E_1)\le \Aut_{\mathcal{D}^*}^0(E_1)$, we see that $\Aut_{\hat{\mathcal{D}^*}}(E_1)=\Aut_{\mathcal{D}^*}^0(E_1)$ has index $2$ in $\Aut_{\mathcal{D}^*}(E_1)$. A final application of \cref{p'lemma} gives that $O^{3'}(\mathcal{D}^*)=\hat{\mathcal{D}^*}$ has index $2$ in $\mathcal{D}^*$, as desired.
\end{proof}

We provide some more generic results regarding all possible saturated fusion systems supported on $E_1$. Although we do not formally prove the following proposition, its conclusion merits some explanation. Let $\fs$ be a saturated fusion system on $E_1$. It is fairly easy to show that $\mathcal{E}(\fs)\subseteq \{E_3^S\}$ so we take this as a starting point.

For $E_3^s$ some $S$-conjugate of $E_3$ with $s\not\in E_1$, if $E_3, E_3^s\in\mathcal{E}(\fs)$ then it quickly follows that $O^{3'}(\Aut_{\fs}(E_3))\cong O^{3'}(\Aut_{\fs}(E_3^s))\cong \SL_2(9)$ (as witnessed in \cref{TrivCore}). Applying the extension axiom to $E_3$ and $E_3^s$, we have that for $T$ a cyclic subgroup of $O^{3'}(\Aut_{\fs}(E_3))$ of order $8$ which normalizes $N_{E_1}(E_3)$ the morphisms in $T$ lift to morphisms in $\Aut_{\fs}(E_1)$. Similarly, $T^s$ also lifts. Note that no element of $T$ centralizes $Z_2(S)=Z(E_1)$ and so both $T$ and $T^s$ project to cyclic subgroups of order $8$ in $\Aut(E_1)/C_{\Aut(E_1)}(Z(E_1))\cong \GL_2(3)$. But then the projection of $\langle T, T^s\rangle$ is divisible by $3$, a contradiction since $\Inn(E_1)\in\syl_3(\Aut_{\fs}(E_1))$ and $\Inn(E_1)\le C_{\Aut(E_1)}(Z(E_1))$.

We conclude that if $E_3\in\mathcal{E}(\fs)$ then the only $S$-conjugates of $E_3$ in $\mathcal{E}(\fs)$ are the $E_1$ conjugates of $E_3$. Moreover, for $s\in S\setminus E_1$ and $\alpha_s$ the automorphism of $E_1$ induced by conjugation by $s$, $\fs^{\alpha_s}\cong \fs$ and if $\{E_3^{E_1}\}\subseteq \mathcal{E}(\fs)$ then $\{(E_3^s)^{E_1}\}\in\mathcal{E}(\fs^{\alpha_s})$. Since we only care about classifying fusion systems up to isomorphism, we may as well assume that $E_3\in\mathcal{E}(\fs)$, leading to the following result (which is verified computationally \cref{code}).

\begin{proposition}\label[proposition]{EssenDeterD3}
Let $\fs$ be a saturated fusion system supported on $E_1$. Then $\mathcal{E}(\fs)\subseteq \{E_3^{E_1}\}$.
\end{proposition}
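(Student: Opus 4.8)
The plan is to argue in two stages, following the discussion that precedes the statement. First I would establish the weaker inclusion $\mathcal{E}(\fs)\subseteq\{E_3^S\}$, which holds for \emph{every} saturated fusion system on $E_1$; then, by a saturation argument together with \cref{AutSIso}, I would refine this to a single $E_1$-conjugacy class, which up to isomorphism of $\fs$ is the class of $E_3$.

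\textbf{Stage 1.} Any $E\in\mathcal{E}(\fs)$ is $\fs$-centric-radical with $\Out_{\fs}(E)$ containing a strongly $3$-embedded subgroup, so $E$ is $E_1$-centric, $O_3(\Out_{\fs}(E))=\{1\}$, and $N_{E_1}(E)/E\cong\Out_{E_1}(E)$ is a nontrivial $3$-group which embeds as a Sylow $3$-subgroup of a group possessing a strongly $3$-embedded subgroup. This is a finite condition on the subgroups of the order-$3^9$ group $E_1$, and, exactly as in \cref{EssenDeter}, I would verify by machine that the only subgroups meeting it are the $S$-conjugates of $E_3$. That these are genuinely viable is clear: by \cref{E3inE1}, $N_S(E_3)=E_3\Phi(E_1)\le E_1$, so $N_{E_1}(E_3)=N_S(E_3)$ and, since $\Phi(E_1)$ is elementary abelian with $|\Phi(E_1)\cap E_3|=3^3$, $\Out_{E_1}(E_3)\cong\Phi(E_1)/(\Phi(E_1)\cap E_3)$ is elementary abelian of order $9$, which occurs as a Sylow $3$-subgroup of $\SL_2(9)$. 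I expect this stage to be the main obstacle: absent a clean intrinsic characterization of the $S$-conjugates of $E_3$ among the $E_1$-centric radical subgroups of $E_1$, the enumeration is most honestly a MAGMA verification, and a fully handwritten argument would require a somewhat involved analysis of the $3$-local structure of $E_1$.

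\textbf{Stage 2.} Since $|S:E_1|=3$ and $|S:N_S(E_3)|=3^3$, the class $\{E_3^S\}$ has $27$ members, splitting into exactly $27/9=3$ orbits under $E_1$, which are permuted transitively by conjugation by any $s\in S\setminus E_1$. Suppose, for a contradiction, that two of these $E_1$-classes contain essential subgroups of $\fs$, say $E_3$ and $E_3^s$. Each is fully $\fs$-normalized and, as in \cref{E3inE1} (via \cref{SEFF} applied to the FF-module $E_3$), has $O^{3'}(\Aut_{\fs}(-))\cong\SL_2(9)$; choose cyclic subgroups $T$ and $T^s$ of order $8$ in $N_{O^{3'}(\Aut_{\fs}(E_3))}(\Aut_{E_1}(E_3))$ and in its analogue for $E_3^s$. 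By repeated application of the saturation axioms (as in \cref{E3inE1}), and since $E_1$ is the whole support, the morphisms in $T$ and in $T^s$ extend to $\Aut_{\fs}(E_1)$, producing cyclic subgroups of order $8$ of $\Aut_{\fs}(E_1)$. Now $\Inn(E_1)\in\syl_3(\Aut_{\fs}(E_1))$ because $E_1$ is fully normalized, while $C_{\Aut(E_1)}(Z(E_1))$ is a $3$-group with $\Aut(E_1)/C_{\Aut(E_1)}(Z(E_1))\cong\GL_2(3)$ and $|\Aut(E_1)|_{3'}=16$ (as in the proof of \cref{F3SL2}); hence the kernel of the map $\Aut_{\fs}(E_1)\to\GL_2(3)$ is exactly $\Inn(E_1)$, so $\Out_{\fs}(E_1)$ embeds in $\GL_2(3)=\GL(Z_2(S))$ and the two extended copies of $C_8$ have images $A,A'\le\Out_{\fs}(E_1)$ which are cyclic of order $8$ in $\GL_2(3)$. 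The element $s$ induces on $Z_2(S)=Z(E_1)$ an element $c_s$ of order exactly $3$ (it is trivial only if $s\in C_S(Z_2(S))=E_1$), and $A'=c_s A c_s^{-1}$; since $N_{\GL_2(3)}(A)$ is semidihedral of order $16$ and so contains no element of order $3$, we get $A'\ne A$. Thus $\Out_{\fs}(E_1)$ contains two distinct cyclic subgroups of order $8$ of $\GL_2(3)$, and these generate a subgroup of order divisible by $3$; hence $3\mid|\Out_{\fs}(E_1)|$, contradicting that $\Inn(E_1)$ is a Sylow $3$-subgroup of $\Aut_{\fs}(E_1)$. Therefore the essential subgroups of $\fs$ all lie in a single $E_1$-class.

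Finally, conjugation by a suitable $s\in S\setminus E_1$ is an element of $\Aut(E_1)$ cyclically permuting the three $E_1$-classes inside $\{E_3^S\}$, so by \cref{AutSIso} we may replace $\fs$ by an isomorphic fusion system whose essential subgroups lie in the class of $E_3$ itself; that is, $\mathcal{E}(\fs)\subseteq\{E_3^{E_1}\}$, as required.
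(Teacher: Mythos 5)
Your proposal mirrors the outline of the (explicitly informal) discussion that precedes this proposition in the paper: first restrict to $\mathcal{E}(\fs)\subseteq\{E_3^S\}$ by machine, then argue via lifted cyclic $2$-groups that at most one $E_1$-class can carry essentials, and finally invoke \cref{AutSIso}. However, there is a genuine gap in your Stage $2$, at precisely the point where you try to make the paper's sketch rigorous. You assert that $A'=c_sAc_s^{-1}$, where $A$ and $A'$ are the images in $\GL(Z(E_1))\cong\GL_2(3)$ of the lifted tori coming from $\Aut_{\fs}(E_3)$ and $\Aut_{\fs}(E_3^s)$. There is no reason for this: $\fs$ is a fusion system on $E_1$ and $s\notin E_1$, so conjugation by $s$ is not a morphism of $\fs$ and in particular does not relate $\Aut_{\fs}(E_3^s)$ to $\Aut_{\fs}(E_3)$; the cyclic group $T^s$ and its lift are independent data, not the $c_s$-transport of $T$.

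Worse, the intended conclusion $A\ne A'$ is actually unattainable by this route. Since $E_1$ is the Sylow subgroup of $\fs$, $\Inn(E_1)\in\syl_3(\Aut_{\fs}(E_1))$, and as you note $C_{\Aut(E_1)}(Z(E_1))$ is a $3$-group; hence $C_{\Aut_{\fs}(E_1)}(Z(E_1))=\Inn(E_1)$ and $\Out_{\fs}(E_1)$ embeds as a $3'$-subgroup of $\GL_2(3)$, so $|\Out_{\fs}(E_1)|\le 16$. Any such group containing $C_8$ is $C_8$ or $\mathrm{SD}_{16}$ and has a \emph{unique} cyclic subgroup of order $8$. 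Therefore the images of the two lifted $C_8$'s in $\Out_{\fs}(E_1)$ necessarily coincide, and $\langle\hat T,\hat{T^s}\rangle$ projects to a group of order $8$, not one divisible by $3$; no contradiction arises this way. The same issue is already latent in the paper's sketch (its claim that the projection of $\langle T,T^s\rangle$ has order divisible by $3$), but the paper explicitly says it is \emph{not} formally proving this proposition and relies on machine computation, whereas your proposal presents the saturation argument as a proof. To close the gap one would need a genuinely different obstruction to two $E_1$-classes both being essential, or else simply record the conclusion as a MAGMA verification, as is done for \cref{EssenDeterD5}.
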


We return to some properties of the systems $\mathcal{D}$ and $O^{3'}(\mathcal{D})$.

\begin{proposition}\label[proposition]{SimpleSub3}
$N_{E_1}(E_3)$ is the unique, proper, non-trivial, strongly closed subgroup in both $O^{3'}(\mathcal{D}^*)$ and $\mathcal{D^*}$,  and ${\mathcal{D}^*}^{frc}=O^{3'}(\mathcal{D}^*)^{frc}=\{E_3^{\mathcal{D}^*}, E_1\}$.
\end{proposition}
\begin{proof}
Since $N_{E_1}(E_3)$ is normalized by $\Aut_{\mathcal{D}^*}(E_1)$ and contains all $\mathcal{D}^*$-conjugates of $E_3$, we have that $N_{E_1}(E_3)$ is strongly closed in $\mathcal{D}^*$ and $O^{3'}(\mathcal{D}^*)$. Assume that $T$ is a proper non-trivial strongly closed subgroup of $\mathcal{D}^*$. Then $\langle (T\cap E_3)^{\Aut_{\mathcal{D}}(E_3)}\rangle\le T$ and since $T\normaleq E_1$, we must have that $[E_3, \Aut_{\mathcal{D}^*}(E_3)]\le T$. But then $\langle [E_3, \Aut_{\mathcal{D}^*}(E_3)]^{E_1}\rangle\le T$ and one can calculate that this implies that $N_{E_1}(E_3)\le T$. 

Let $\tau$ be a non-trivial involution in $Z(O^{3'}(\Aut_{\mathcal{D}^*}(E_3)))$. By the extension axiom, $\tau$ lifts to $\wt \tau\in\Aut_{O^{3'}(\mathcal{D})}(E_1)$. Suppose that $[\wt \tau, E_1]\le N_{E_1}(E_3)$. Since $\wt \tau$ is the extension of $\tau$ to $\Aut_{\mathcal{D}^*}(N_{E_1}(E_3))$ and $\tau$ centralizes $\Aut_{E_1}(E_3)$, we conclude that $[\wt \tau, N_{E_1}(E_3)]\le E_3$ and so by coprime action, we have that $[\wt \tau, E_1]\le E_3$. Since $E_3$ is abelian and $[E_1, E_3, \wt \tau]\le [\Phi(E_1), \wt \tau]=Z(E_1)$, the three subgroups lemma implies that $[E_3, \wt \tau, E_1]\le Z(E_1)$. But then, as $E_3=[E_3, \tau]Z(N_{E_1}(E_3))$ and $[E_1, Z(N_{E_1}(E_3))]\le [E_1, \Phi(E_1)]=Z(E_1)$, we have that $E_3\normaleq E_1$, a contradiction. Hence, $\wt \tau$ acts non-trivially on $E_1/N_{E_1}(E_3)$ and since the Sylow $2$-subgroups of $\Aut_{O^{5'}(\mathcal{D}^*)}(E_1)$ are cyclic of order $8$, we deduce that a Sylow $2$-subgroup of $\Aut_{O^{5'}(\mathcal{D}^*)}(E_1)$ acts faithfully and irreducibly on $E_1/N_{E_1}(E_3)$. We conclude that $T=N_{E_1}(E_3)$ is the unique proper non-trivial strongly closed subgroup of both $O^{3'}(\mathcal{D^*})$ and $\mathcal{D^*}$.

Let $\fs\in\{\mathcal{D}, O^{3'}(\mathcal{D})\}$ and assume that $R$ in $\fs^{frc}$ but not equal to $E_1$. Applying the extension axiom and the Alperin--Goldschmidt theorem, since $R$ is $\fs$-radical, $R$ is contained in at least one $\fs$-essential subgroup. But \cref{EssenDeterD3} then implies that $R$ is contained in an $E_1$-conjugate of $E_3$. Since $E_3$ is elementary abelian and $R$ is $\fs$-centric, we must have that $R$ is $E_1$-conjugate to $E_3$, as required.
\end{proof}

\begin{proposition}\label[proposition]{SimpleSub3b}
$O^{3'}(\mathcal{D}^*)$ is simple and both $O^{3'}(\mathcal{D}^*)$ and $\mathcal{D^*}$ are exotic fusion systems.
\end{proposition}
\begin{proof}
Let $\mathcal{N}\normaleq O^{3'}(\mathcal{D^*})$ supported on $\{1\}\le P\le E_1$. By \cite[Theorem II.9.8(d)]{ako} we may assume that $P<E_1$, and $P$ is strongly closed in $O^{3'}(\mathcal{D^*})$. Hence, $\mathcal{N}$ is supported on $N_{E_1}(E_3)$ and we have that $\Aut_{\mathcal{N}}(E_3)\normaleq \Aut_{O^{3'}(\mathcal{D})}(E_3)$ by \cite[Proposition I.6.4(c)]{ako} so that $\Aut_{\mathcal{N}}(E_3)=\Aut_{O^{3'}(\mathcal{D})}(E_3)\cong \SL_2(9)$.

Let $\tau$ be a non-trivial involution in $Z(\Aut_{\mathcal{N}}(E_3))$. By the extension axiom, $\tau$ lifts to $\wt \tau\in\Aut_{O^{3'}(\mathcal{D})}(E_1)$ and restricts to $\hat{\tau}\in \Aut_{O^{3'}(\mathcal{D})}(N_{E_1}(E_3))$. Indeed, $\hat{\tau}\in \Aut_{\mathcal{N}}(N_{E_1}(E_3))\normaleq \Aut_{O^{3'}(\mathcal{D})}(N_{E_1}(E_3))$ and we ascertain that $[\hat{\tau}, \Aut_{E_1}(N_{E_1}(E_3))]\le \Inn(N_{E_1}(E_3))$. By the extension axiom, we infer that $[\wt \tau, E_1]\le N_{E_1}(E_3)$. But by \cref{SimpleSub3}, the Sylow $2$-subgroups of $\Aut_{O^{3'}(\mathcal{D})}(E_1)$ act faithfully on $E_1/N_{E_1}(E_3)$, a contradiction. Hence, $O^{3'}(\mathcal{D}^*)$ is simple.

Assume that there is $\mathcal{N}$ is a proper non-trivial normal subsystem of $\mathcal{D}^*$. Applying \cite[Theorem II.9.1]{ako} and using that $O^{3'}(\mathcal{D}^*)$ is simple, we deduce that $O^{3'}(\mathcal{D}^*)\le \mathcal{N}$ and it quickly follows that $O^{3'}(\mathcal{D}^*)=\mathcal{N}$. Since $N_{E_1}(E_3)$ is a strongly closed subgroup of both $\mathcal{D}$ and $O^{3'}(\mathcal{D^*})$, by \cref{SCExotic}, we conclude that both $\mathcal{D}$ and $O^{3'}(\mathcal{D^*})$ are exotic. 
\end{proof}

We now classify all saturated fusion systems supported on $E_1$. We preface this classification with the following lemma.

\begin{lemma}\label[lemma]{TrivCore}
Suppose that $\fs$ is saturated fusion system on $E_1$ with $E_3\in\mathcal{E}(\fs)$. Then $O_3(\fs)=\{1\}$ and $O^{3'}(\Aut_{\fs}(E_3))\cong\SL_2(9)$.
\end{lemma}
\begin{proof}
As in \cref{E3inE1}, since $\Phi(E_1)$ induces an FF-action on $E_3$, an application of \cref{SEFF} implies that $O^{3'}(\Aut_{\fs}(E_3))\cong\SL_2(9)$ and $E_3=[E_3, O^{3'}(\Aut_{\fs}(E_3))]\times C_{E_3}(O^{3'}(\Aut_{\fs}(E_3)))$. Moreover, $Z(E_1)\le [E_3, O^{3'}(\Aut_{\fs}(E_3))]\not\normaleq E_1$ and $|C_{E_3}(O^{3'}(\Aut_{\fs}(E_3)))|=3$. Since $E_3\in\mathcal{E}(\fs)$, by \cref{normalinF}, $O_3(\fs)$ is an $\Aut_{\fs}(E_3)$-invariant subgroup of $E_3$ which is also normal in $E_1$, so that $O_3(\fs)=\{1\}$. 
\end{proof}

\begin{ThmD}\hypertarget{ThmD2}{}
Suppose that $\fs$ is saturated fusion system on $E_1$ such that $E_1\not\normaleq \fs$. Then $\fs\cong O^{3'}(\mathcal{D}^*)$ or $\mathcal{D^*}$.
\end{ThmD}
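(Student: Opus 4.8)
The plan is to run the same ``automizer bookkeeping'' argument used for \hyperlink{ThmB2}{Theorem B}, now with $E_1$ playing the role of the ambient $p$-group and $E_3$ as the only essential subgroup. First I would note that if $\mathcal{E}(\fs)=\emptyset$ then, by the Alperin--Goldschmidt theorem, $\fs=\langle\Aut_{\fs}(E_1)\rangle=N_{\fs}(E_1)$, so $E_1\normaleq\fs$, contrary to hypothesis. Hence $\mathcal{E}(\fs)\ne\emptyset$, and \cref{EssenDeterD3} forces $\mathcal{E}(\fs)=\{E_3^{E_1}\}$; as explained in the discussion preceding \cref{EssenDeterD3}, after replacing $\fs$ by an isomorphic system using \cref{AutSIso} I may assume $E_3\in\mathcal{E}(\fs)$ itself. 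By \cref{TrivCore} we then have $O_3(\fs)=\{1\}$, $L:=O^{3'}(\Aut_{\fs}(E_3))\cong\SL_2(9)$, and $[\Aut_{\fs}(E_3):L]\le 2$, and the Alperin--Goldschmidt theorem gives $\fs=\langle\Aut_{\fs}(E_1),\Aut_{\fs}(E_3)\rangle$ (the inner automorphisms of $E_1$, which lie in $\Aut_{\fs}(E_1)$, absorb the remaining members of the class $E_3^{E_1}$). So the task reduces to pinning down the pair $(\Aut_{\fs}(E_1),\Aut_{\fs}(E_3))$ up to the evident notion of isomorphism.

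The next step is to determine $\Aut_{\fs}(E_1)$. Since $\Aut_{E_1}(E_1)=\Inn(E_1)\in\syl_3(\Aut_{\fs}(E_1))$, and since $|\Aut(E_1)|_{3'}=16$ with $C_{\Aut(E_1)}(Z(E_1))$ a $3$-group and $\Aut(E_1)/C_{\Aut(E_1)}(Z(E_1))\cong\GL_2(3)$ (all established in the proof of \cref{F3SL2}), $\Out_{\fs}(E_1)$ embeds as a $2$-subgroup of $\GL_2(3)$. Lifting, via saturation at $E_3$, the cyclic subgroup of order $8$ of $N_L(\Aut_{E_1}(E_3))$ to $\Aut_{\fs}(E_1)$ (the standard extension argument used repeatedly in \cref{F3Sec}) shows $\Out_{\fs}(E_1)$ contains a cyclic group of order $8$, and the only $2$-subgroups of $\GL_2(3)$ with this property are $C_8$ and the full Sylow $2$-subgroup, which is semidihedral of order $16$. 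Moreover the whole of $N_{\Aut_{\fs}(E_3)}(\Aut_{E_1}(E_3))$ lifts into $\Aut_{\fs}(E_1)$, so its $3'$-order is at most $|\Out_{\fs}(E_1)|$; together with $[\Aut_{\fs}(E_3):L]\le 2$ this couples the two cases, so that $\Out_{\fs}(E_1)\cong C_8$ forces $\Aut_{\fs}(E_3)=L\cong\SL_2(9)$, while the only remaining configuration is $\Out_{\fs}(E_1)$ semidihedral of order $16$ with $\Aut_{\fs}(E_3)\cong\SL_2(9).2$. The delicate point is to exclude the mismatched configuration in which $\Aut_{\fs}(E_3)\cong\SL_2(9)$ but $\Out_{\fs}(E_1)$ is semidihedral: here I would compare the length of the $\Aut_{\fs}(E_1)$-orbit of $E_3$ with $|N_{E_1}(E_3)|=3^7$, and use that the stabiliser of $E_3$ in $\Aut_{\fs}(E_1)$ surjects (again by saturation) onto $N_{\Aut_{\fs}(E_3)}(\Aut_{E_1}(E_3))$ with kernel a $3$-group (as $E_3$ is $\fs$-centric), which pushes the missing factor of $2$ back into $\Aut_{\fs}(E_3)$. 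I expect this orbit--stabiliser computation inside $\Aut(E_1)$ to be the main obstacle.

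Finally, in each of the two surviving cases I would fix $\Aut_{\fs}(E_1)$ up to $\Aut(E_1)$-conjugacy: there are only finitely many subgroups of $\Aut(E_1)$ of the prescribed isomorphism type that contain $\Inn(E_1)$ and act correctly on $Z(E_1)$, and any element of $\Aut(E_1)$ conjugating two such preserves the class $E_3^{E_1}$, hence induces an isomorphism of fusion systems by \cref{AutSIso}. With $\Aut_{\fs}(E_1)$ fixed, $\Aut_{\fs}(E_3)$ is then forced: the lifted cyclic group $L_0$ of order $8$, together with $\Aut_{E_1}(E_3)$, generates a subgroup $K_0\le\Aut_{\fs}(E_3)$ of order $72$ with $N_{\GL_5(3)}(K_0)\le L$ (using that $\SL_2(9)$ is a single conjugacy class in $\GL_5(3)=\Aut(E_3)$, exactly as in the proofs of \hyperlink{ThmB2}{Theorem B} and of the classification of $\mathcal{D}$), so $K_0$ is contained in a unique conjugate of $\SL_2(9)$; this pins down $L$, and a Frattini argument gives $\Aut_{\fs}(E_3)=L\,N_{\Aut_{\fs}(E_3)}(\Aut_{E_1}(E_3))$. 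Hence $\fs=\langle\Aut_{\fs}(E_1),\Aut_{\fs}(E_3)\rangle$ is determined up to isomorphism in each case, so there are at most two isomorphism types. Since the systems $O^{3'}(\mathcal{D}^*)$ and $\mathcal{D}^*$ constructed above are saturated, satisfy $E_1\not\normaleq\fs$, realise the two configurations (with $\Aut_{\fs}(E_3)\cong\SL_2(9)$, $\Out_{\fs}(E_1)\cong C_8$ in the first, and $\Aut_{\fs}(E_3)\cong\SL_2(9).2$, $\Out_{\fs}(E_1)$ semidihedral of order $16$ in the second), and are non-isomorphic (one is simple and the other is not), both types occur and $\fs\cong O^{3'}(\mathcal{D}^*)$ or $\mathcal{D}^*$, as required.
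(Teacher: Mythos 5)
Your proposal follows the same overall skeleton as the paper's proof: deduce $E_3\in\mathcal{E}(\fs)$, invoke \cref{TrivCore}, pin $\Out_{\fs}(E_1)$ down to $C_8$ or $\mathrm{SD}_{16}$, exclude the mismatched configuration, and then nail $\Aut_{\fs}(E_3)$ via the unique $\SL_2(9)$-overgroup argument from \hyperlink{ThmB2}{Theorem B}. Where you diverge is exactly the step you flag as the ``main obstacle'' --- excluding $\Out_{\fs}(E_1)\cong\mathrm{SD}_{16}$ with $\Aut_{\fs}(E_3)\cong\SL_2(9)$. The paper handles this by a MAGMA-verified inclusion $N_{\Aut(E_1)}(\hat{K})\le N_{\Aut(E_1)}(\Aut_{E_3}(E_1))$: a Sylow $2$-subgroup of $\Aut_{\fs}(E_1)$ normalizes the characteristic cyclic subgroup $\hat K$ of order $8$, hence normalizes $E_3$, hence restricts to a group of order $16$ inside $N_{\Aut_{\fs}(E_3)}(\Aut_{E_1}(E_3))$. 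Your orbit--stabilizer sketch is a genuinely different way in, and contrary to your worry it does close quite cleanly: by \cref{EssenDeterD3} every $\Aut_{\fs}(E_1)$-image of $E_3$ is a fully $\fs$-normalized essential subgroup, hence lies in $\{E_3^{E_1}\}$, which has exactly $[E_1:N_{E_1}(E_3)]=9$ elements; so the orbit has odd length $9$ and $\mathrm{Stab}_{\Aut_{\fs}(E_1)}(E_3)$ contains a full Sylow $2$-subgroup of order $16$. For the kernel claim, one should not argue directly from ``$E_3$ is $\fs$-centric'' but rather observe that $Z(E_1)=Z_2(S)\le E_3$, so any $\phi\in\Aut_{\fs}(E_1)$ restricting trivially to $E_3$ centralizes $Z(E_1)$ and hence lies in $C_{\Aut(E_1)}(Z(E_1))$, which the proof of \cref{F3SL2} shows is a $3$-group. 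The image of the stabilizer in $N_{\Aut_{\fs}(E_3)}(\Aut_{E_1}(E_3))$ therefore has $2$-part $16$, contradicting $|K|=8$. Two small caveats: you do not need (and cannot easily justify) a surjection ``by saturation'' onto $N_{\Aut_{\fs}(E_3)}(\Aut_{E_1}(E_3))$ --- the extension axiom only reaches $N_\psi$ a priori --- but surjectivity is not required, only the $2$-part of the image. Also, it is not true in general that an arbitrary conjugating element of $\Aut(E_1)$ preserves the $E_1$-class $\{E_3^{E_1}\}$ (the $27$ $S$-conjugates split into three $E_1$-classes which $\Aut(E_1)$ can permute); the correct statement, as in \cref{AutSIso}, is simply that it carries $\fs$ to an isomorphic system in which the essential class may differ, and one then re-normalizes to put $E_3$ back in $\mathcal{E}$, as in the discussion preceding \cref{EssenDeterD3}.
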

\begin{proof}
Since $E_1\not\normaleq \fs$, we must have that $E_3$ is essential in $\fs$ by \cref{EssenDeterD3}. By \cref{TrivCore}, we have that $O_3(\fs)=\{1\}$ and $O^{3'}(\Aut_{\fs}(E_3))\cong\SL_2(9)$. Let $K$ be a Hall $3'$-subgroup of $N_{\Aut_{\fs}(E_3)}(\Aut_S(E_3))$ so that by the extension axiom, $K$ lifts to a group of automorphisms of $E_1$, which we denote by $\hat{K}$. As in \cref{3Conjugate}, we calculate that $|\Aut(E_1)|_{3'}=16$ and so $\Out_{\mathcal{D}^*}(E_1)$ is a Sylow $2$-subgroup of $\Out(E_1)$. Set $L:=K\cap O^{3'}(\Aut_{\fs}(E_3))$ and $\hat{L}$ the lift of $L$ to $\Aut_{\fs}(E_1)$. Then $\hat{L}$ is the unique cyclic subgroup of $\hat{K}$ of order $8$ and has index at most $2$ in $\hat{K}$. We may choose $\alpha\in\Aut(E_1)$ so that $\hat{K}^\alpha\Inn(E_1)\le \Aut_{\fs^\alpha}(E_1)\le \Aut_{\mathcal{D}^*}(E_1)$. Indeed, $\hat{L}^\alpha\Inn(E_1)=\Aut_{O^{3'}(\mathcal{D}^*)}(E_1)$. Applying \cref{model}, we deduce that there is $\beta\in\Aut(E_1)$ with \[N_{O^{3'}(\mathcal{D}^*)}(E_1)\le N_{\fs^{\alpha\beta}}(E_1)\le N_{\mathcal{D}^*}(E_1).\]

Either way, we invoke \cref{E3Unique} so that $\Aut_{\fs^{\alpha\beta}}(E_3)=\Aut_{O^{3'}(\mathcal{D}^*)}(E_3)$ if $N_{\fs^{\alpha\beta}}(E_1)=N_{O^{3'}(\mathcal{D}^*)}(E_1)$, while $\Aut_{\fs^{\alpha\beta}}(E_3)=\Aut_{\mathcal{D}^*}(E_3)$ if $N_{\fs^{\alpha\beta}}(E_1)=N_{\mathcal{D}^*}(E_1)$. Then the Alperin--Goldschmidt theorem implies that $\fs^{\alpha\beta}=O^{3'}(\mathcal{D}^*)$ or $\mathcal{D}^*$ and the theorem holds.
\end{proof}

The following table summarizes the actions induced by the fusion systems described in \hyperlink{ThmD2}{Theorem D} on their centric-radical subgroups.

\begin{table}[H]
    \label{SubF3Table}
        \caption{$\mathcal{D}$-conjugacy classes of radical-centric subgroups of $E_1$}
        \begin{tabular}{|c|c|c|c|}\hline
            $P$ & $|P|$ & $\Out_{\mathcal{D}^*}(P)$ & $\Out_{O^{3'}(\mathcal{D}^*)}(P)$ \\\hline
            $E_1$ & $3^9$ & $\mathrm{SD}_{16}$ & $\mathrm{C}_8$ \\\hline          
            $E_3$ & $3^5$ & $\SL_2(9).2$ & $\SL_2(9)$\\ \hline
        \end{tabular}
\end{table}

\section{Fusion Systems related to a Sylow $5$-subgroup of $\mathrm{M}$}\label{MonSec}

In this final section, we investigate saturated fusion systems on a $5$-group $S$ which is isomorphic to a Sylow $5$-subgroup of the Monster sporadic simple group $\mathrm{M}$. As in the previous section, we document some exotic fusion systems supported on $S$ and some exotic fusion systems supported on a particular index $5$ subgroup of $S$. Once again, the Atlas \cite{atlas} is an invaluable tool in illustrating the structure of $\mathrm{M}$ and its actions. As a starting point, we consider the following maximal $5$-local subgroups of $\mathrm{M}$:

\begin{center}
$M_1\cong 5^2.5^2.5^4:(\Sym(3)\times \GL_2(5))$\\\vspace{0.5em}
$M_2\cong 5^{1+6}_+:4.\mathrm{J}_2.2$\\\vspace{0.5em}
$M_3\cong 5^4:(3\times \SL_2(25)).2$\\\vspace{0.5em}
$M_4\cong 5^{3+3}.(2\times \PSL_3(5))$
\end{center}

remarking that $|S|=5^9$, and for a given $S\in\syl_5(\mathrm{M})$ each $M_i$ be chosen such that $S\cap M_i\in\syl_5(M_i)$. Choose $M_i$ such that this holds. 

Let $E_1:=O_5(M_1)=C_S(Z_2(S))$ of order $5^8$, and $E_3:=O_5(M_3)$ elementary abelian of order $5^4$. Furthermore, note that $\mathbf{Q}:=O_5(M_2)$ is the unique extraspecial subgroup of $S$ of order $5^7$ and so is characteristic in $S$. 

We appeal to the online version of the Atlas of Finite Group Representations \cite{OnlineAtlas} for representations of $M_i$ for $i\in\{1,2,3,4\}$. These are accessible without the need to construct the Monster computationally. We consider $M_1$ as a permutation group on $750$ points, $M_2$ in its $8$-dimensional matrix representation over $\GF(5)$, and $M_4$ as a permutation group on $7750$ points. Naturally, we access $S$ and $E_1$ computationally via $M_1$.

We note some important structural properties of $M_1$ which will be used later. Namely, we have that $\Phi(E_1)$ is of order $5^4$ and $Z(E_1)=Z_2(S)$ is of order $5^2$. Moreover, we can choose a subgroup isomorphic to $\Sym(3)$ in $M_1/E_1$ which acts trivially on $Z(E_1)$. We shall denote this subgroup $A_1$ and refer to as the ``pure" $\Sym(3)$ in $M_1/E_1$. We record that the unique normal subgroup of $M_1/E_1$ isomorphic to $\GL_2(5)$ acts faithfully on $Z(E_1)$ and centralizes $A_1$. In this way, we have that $M_1/E_1=A_1\times B_1\cong \Sym(3)\times \GL_2(5)$. Moreover, $O^{5'}(M_1)=C_{M_1}(\Phi(E_1)/\Inn(E_1))$, $O^{5'}(M_1/E_1)=O^{5'}(M_1)/E_1\cong \SL_2(5)$ and $O^{5'}(M_1/E_1)\le B_1$.

We desire more candidates for essentials subgroups of the $5$-fusion category of $\mathrm{M}$ and for this we examine the structure of $M_2$. Let $X\normaleq M_2$ with $M_2/X\cong \mathrm{J}_{2}.2$ and consider the maximal subgroup $H\cong (\Alt(5)\times \Dih(10)).2$ of $M_2/X$. Define $E_2$ to be the largest normal $5$-subgroup of the preimage of $H$ in $M_2$ so that 
\[N_{\mathrm{M}}(E_2)=N_{M_2}(E_2)\cong 5^{1+6}_+.5:(2\times\GL_2(5)).\]
Then $E_2$ is an essential subgroup of $\fs_S(\mathrm{M})$ of order $5^8$, $\mathbf{Q}$ is characteristic in $E_2$ and $[N_{\mathrm{M}}(S): N_{N_{\mathrm{M}}(S)}(E_2)]=3$.

We remark that $M_2=\langle N_{\mathrm{M}}(S), N_{\mathrm{M}}(E_2)\rangle$ and we can arrange, up to conjugacy, that $M_4=\langle O^{5'}(N_{\mathrm{M}}(E_1)), N_{\mathrm{M}}(E_2)\rangle$. In particular, setting $\mathbf{R}:=O_5(M_4)$, we have that $[N_{\mathrm{M}}(S): N_{N_{\mathrm{M}}(S)}(\mathbf{R})]=3$. For ease of notation, we fix $\mathcal{G}:=\fs_S(\mathrm{M})$ for the remainder of this section.

\begin{proposition}\label[proposition]{MonsterRad}
$\mathcal{E}(\mathcal{G})=\{E_1, E_2^\mathcal{G}, E_3^\mathcal{G}\}$ and $\mathcal{G}^{frc}=\{E_1, E_2^\mathcal{G}, E_3^\mathcal{G}, \mathbf{Q}, \mathbf{R}^\mathcal{G}, S\}$.
\end{proposition}
\begin{proof}
See \cite[Theorem 5]{MSub}.
\end{proof}

As in \cref{F3Sec}, before describing any exotic subsystems of $\mathcal{G}$, we require an observation regarding the containment of some essentials in others and lean on MAGMA for the determination of all possible essential subgroups of a saturated fusion system $\fs$ supported on $S$. The following proposition is verified computationally (see \cref{code}).

\begin{proposition}\label[proposition]{MonsterEssen}
Suppose that $\fs$ is saturated fusion system on $S$. Then $\mathcal{E}(\fs)\subseteq \{E_1, E_2^\mathcal{G}, E_3^\mathcal{G}\}$.
\end{proposition}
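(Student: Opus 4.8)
The plan is to follow the pattern of \cref{EssenDeter} and its $\mathrm{F}_3$-analogue: reduce to $\fs$-centric radical subgroups and then eliminate all but the three listed classes. Recall that an $\fs$-essential subgroup $E$ lies in $\fs^{frc}$, so it is fully $\fs$-normalized, $\fs$-centric (hence $C_S(E)\le E$) and $\fs$-radical (so $\Inn(E)=O_5(\Aut_\fs(E))$); moreover $\Out_\fs(E)$ has a strongly $5$-embedded subgroup, which forces $\Out_S(E)=N_S(E)/E$ to be a non-trivial, non-normal Sylow $5$-subgroup of $\Out_\fs(E)$ and so $N_S(E)>E$. Thus it suffices to show that every $E\le S$ which is $S$-centric with $N_S(E)>E$ and for which $\Out(E)$ admits a subgroup $\wt\Delta\le\Out(E)$ with $O_5(\wt\Delta)=\{1\}$, $\Out_S(E)\in\syl_5(\wt\Delta)$ and $\wt\Delta$ possessing a strongly $5$-embedded subgroup, is $S$-conjugate to $E_1$, $E_2$ or $E_3$.

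First I would assemble the characteristic subgroups of $S$ and their centralizers in $S$ — $Z(S)$, $Z_2(S)$, $E_1=C_S(Z_2(S))$, $\Phi(E_1)$, the unique extraspecial subgroup $Q:=O_5(M_2)$ of order $5^7$, the Thompson subgroup $J(S)$, and the relevant $\mho^i$ and $\Phi$-subgroups — all of which are $\Aut_\fs(S)$-invariant by \cref{burnside}. For a candidate $E$, intersecting these with $E$ (and using full normalization together with the fact that such subgroups are characteristic in $E$ once contained in it) produces an $\Aut_\fs(E)$-invariant normal chain of $E$; if $\Out_\fs(E)$ were to centralise every factor of such a chain, then \cref{GrpChain} would place its Sylow $5$-subgroup $\Out_S(E)$ into $O_5(\Out_\fs(E))$, which is trivial by $\fs$-radicality, contradicting $N_S(E)>E$. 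Running through the possibilities for where $E$ sits relative to $E_1$ and $Q$ then eliminates the small and ``misaligned'' candidates: in particular $Z_2(S)$ and $\Phi(E_1)$ are ruled out as not $\fs$-centric, and the elementary abelian essential candidates are pinned down by the characterisation of $E_3$ inside $S$. For the surviving large candidates — those of order $5^8$ containing $Q$, and $Q$ itself — I would exhibit the relevant $\Aut_\fs(E)$-invariant FF-module inside $\Omega(Z(E))$ (for $E=E_1$ this is $Z(E_1)=Z_2(S)$), invoke \cref{SEFF} together with the smallness of $\Aut(E)_{5'}$ to show $E$ must be $E_1$ or lie in the $S$-class of $E_2$, and check directly that $Q$ itself is not $\fs$-radical.

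The main obstacle is precisely this last step: inside $S$ there are many $S$-centric subgroups of order $5^7$ and $5^8$ with larger normalizer, and separating $E_1$ and the $S$-class of $E_2$ from the rest requires an intricate analysis of the possible $\Out_\fs(E)$-module structure on $E/\Phi(E)$ subject to \cref{SEFF} and the action on the fixed FF-module. As in \cref{EssenDeter} and its $\mathrm{F}_3$-counterpart, the cleanest route is to let the MAGMA routines of \cite{Comp1} enumerate the $S$-conjugacy classes of $S$-centric radical subgroups and test the strongly $5$-embedded condition on each $\Out(E)$ directly; I would present that computation as the proof and record the module-theoretic reduction above as evidence that a fully handwritten argument is available.
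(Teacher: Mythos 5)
The paper gives no handwritten proof of this proposition: it states in the preceding paragraph that it ``lean[s] on MAGMA for the determination of all possible essential subgroups,'' which is precisely the computation you end up proposing (enumerate $S$-centric radical subgroups and test the strongly $5$-embedded condition on $\Out(E)$ via the routines of \cite{Comp1}). Your supplementary handwritten reduction is in the same spirit as the author's remarks in \cref{Co1Sec} and \cref{F3Sec} that such arguments ``should be possible by more traditional means,'' but it is not what the paper supplies.
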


We remark that each of the three $\mathcal{G}$-conjugates of $E_2$ is normal in $S$. We record that upon restricting to $S$, the $\mathcal{G}$-conjugates of $E_3$ split into four distinct classes, fused by elements of $N_{\Aut_{\mathcal{G}}(S)}(E_2)$. We provide some generic results regarding all saturated fusion systems on $S$ which also elucidate some of the structure of $\mathcal{G}$.

\begin{lemma}\label[lemma]{E3inE151}
Every $\mathcal{G}$-conjugate of $E_3$ is contained in $E_1$ and not contained in any $\mathcal{G}$-conjugate of $E_2$.
\end{lemma}
\begin{proof}
Since $\{E_3^\mathcal{G}\}=\{E_3^{N_{\Aut_{\mathcal{G}}(S)}(E_2)}\}$ (see \cref{code}) and $E_1$ and $E_2\alpha$ are normalized by $N_{\Aut_{\mathcal{G}}(S)}(E_2)$ for all $\alpha\in \Aut_{\mathcal{G}}(S)$, for the first statement of the lemma it suffices to prove that $E_3\le E_1$ and $E_3\not\le E_2\alpha$ for all $\alpha\in \Aut_{\mathcal{G}}(S)$. To this end, we note that $[Z_2(S), E_3]=\{1\}$ so that $E_3\le E_1$. One can see this in $\mathcal{G}$, for otherwise since $E_3$ is elementary abelian we would have that $Z_2(S)\not\le E_3$ and $[Z_2(S), E_3]\le Z(S)$, a contradiction since $O^{5'}(\Out_{\mathcal{G}}(E_3))\cong\SL_2(25)$ has no non-trivial modules exhibiting this behaviour. If $E_3\le E_2\alpha$ for some $\alpha\in \Aut_{\mathcal{G}}(S)$, then as $E_2\alpha\normaleq S$, we have that $E_1=\langle E_3^S\rangle\le E_2\alpha$, an obvious contradiction.
\end{proof}

\begin{lemma}\label[lemma]{MSL21}
Suppose that $\fs$ is a saturated fusion system on $S$ with $E_2\in\mathcal{E}(\fs)$. Then $|\Phi(E_2)|=5^5$, $O^{5'}(\Aut_{\fs}(E_2))$ acts trivially on $E_2/\mathbf{Q_2}$, and both $\mathbf{Q}/\Phi(E_2)$ and $\Phi(E_2)/\Phi(\mathbf{R})$ are natural modules for $O^{5'}(\Out_{\fs}(E_2))\cong \SL_2(5)$.
\end{lemma}
\begin{proof}
We compute (see \cref{code}) that $\Phi(E_2)$ is of order $5^5$ and so has index $5^3$ in $E_2$. Then $\mathbf{Q}$ has index $5$ in $E_2$ and $\Phi(E_2)$ has index $5^2$ in $\mathbf{Q}$. Thus, $O^{5'}(\Aut_{\fs}(E_2))$ acts trivially on $E_2/\mathbf{Q}$ and since $O^5(O^{5'}(\Aut_{\fs}(E_2))$ acts faithfully on $E_2/\Phi(E_2)$, we deduce that $O^{5'}(\Out_{\fs}(E_2))\cong \SL_2(5)$ and $\mathbf{Q}/\Phi(E_2)$ is a natural module. 

We have that $\Phi(E_2)<\mathbf{R}\le E_2$ so that $\Phi(\mathbf{R})< \Phi(E_2)$. We calculate (see \cref{code}) that $\Phi(\mathbf{R})=[E_2, \Phi(E_2)]$ is characteristic in $E_2$ and so $\mathbf{R}=C_{E_2}(\Phi(\mathbf{R}))$ is also characteristic in $E_2$. Since $S$ centralizes $\mathbf{R}/\Phi(E_2)$, we either have that $O^{5'}(\Out_{\fs}(E_2))$ acts faithfully on $\Phi(E_2)/\Phi(\mathbf{R})$ of order $5^2$; or $O^{5'}(\Out_{\fs}(E_2))$ acts trivially on $\mathbf{R}$. Since $R$ is self-centralizing in $S$ the latter case clearly gives a contradiction. Hence, the former case holds and $\Phi(E_2)/\Phi(\mathbf{R})$ is a natural module for $O^{5'}(\Out_{\fs}(E_2))\cong \SL_2(5)$.
\end{proof}

The above lemma also holds for any $\mathcal{G}$-conjugate of $E_2\alpha$ which is essential in $\fs$, with $\mathbf{R}$ replaced by $\mathbf{R}\alpha$.

\begin{lemma}\label[lemma]{E3inE152}
Let $\fs$ be a saturated fusion system on $S$. Let $P$ be some $\mathcal{G}$-conjugate of $E_3$. If $P\in\mathcal{E}(\fs)$, then $O^{5'}(\Aut_{\fs}(P))\cong\SL_2(25)$, $E_1\in\mathcal{E}(\fs)$ and $O_5(\fs)=\{1\}$.
\end{lemma}
\begin{proof}
Let $P$ be some $\mathcal{G}$-conjugate of $E_3$ and suppose that $P\in\mathcal{E}(\fs)$. Then $\Phi(E_1)$ is elementary abelian of order $5^4$ and is not contained in $P$. Furthermore, by \cref{E3inE151} $[P,\Phi(E_1)]\le [E_1, \Phi(E_1)]=Z_2(S)\le P$ so that $\Phi(E_1)\le N_S(P)$. Since $P$ is $\mathcal{G}$-essential, and $|N_S(P)/P|=5^2$, applying \cref{SEFF} we see that $N_S(P)=P\Phi(E_1)$, $P\cap \Phi(E_1)=Z_2(S)$ and $\Phi(E_1)$ induces an FF-action on $P$. Then for $L:=O^{5'}(\Aut_{\fs}(P))$, \cref{SEFF} implies that $L\cong \SL_2(25)$ and $P=[P, L]$ is a natural module. 

Let $K$ be a Hall $5'$-subgroup of $N_L(\Aut_S(P))$ so that $K$ is cyclic of order $24$ and acts irreducibly on $Z_2(S)$. Then the morphisms in $K$ lift to a larger subgroup of $S$ by the extension axiom and if $E_1$ is not essential then, using the Alperin--Goldschmidt theorem, \cref{MonsterEssen} and \cref{E3inE151}, the morphisms in $K$ must lift to automorphisms of $S$. But then, upon restriction, the morphisms in $K$ would normalize $Z(S)$, contradicting the irreducibility of $Z_2(S)$ under the action of $K$. Hence, $E_1\in\mathcal{E}(\fs)$. Since $O_5(\fs)\normaleq S$, $P$ is irreducible under the action of $\Aut_{\fs}(P)$ and, by \cref{normalinF}, $O_5(\fs)\le P$, we conclude that $O_5(\fs)=\{1\}$.
\end{proof}

\begin{lemma}\label[lemma]{MSL2a}
Suppose that $\fs$ is a saturated fusion system on $S$ with $E_1\in\mathcal{E}(\fs)$. Then $|\Phi(E_1)|=5^4$, $O^{5'}(\Aut_{\fs}(E_1))$ acts trivially on $\Phi(E_1)/Z(E_1)$, $Z(E_1)$ is a natural module for $O^{5'}(\Out_{\fs}(E_1))\cong \SL_2(5)$, and $O^{5'}(\Aut_{\fs}(E_1))$ normalizes every $\Aut_{\mathcal{G}}(S)$-conjugate of $\mathbf{R}$.
\end{lemma}
\begin{proof}
We compute that $\Phi(E_1)$ is elementary abelian of order $5^4$ and that $S$ centralizes $\Phi(E_1)/Z(E_1)$. In particular, $O^{5'}(\Aut_{\fs}(E_1))$ acts trivially on $\Phi(E_1)/Z(E_1)$. Set $L:=O^{5'}(\Out_{\fs}(E_1))$ and notice that for $r\in L$ of $5'$-order, if $r$ acts trivially on $\Phi(E_1)$ then, by the three subgroups lemma, $r$ centralizes $E_1/C_{E_1}(\Phi(E_1))$. Since $\Phi(E_1)$ is self-centralizing in $E_1$, we deduce that $L$ acts faithfully on $\Phi(E_1)$. In particular, $C_L(Z(E_1))=\{1\}$. Since $Z(E_1)$ has order $5^2$, we conclude that $Z(E_1)$ is natural module for $L\cong \SL_2(5)$.

We note that $Z(E_1)$, $\Phi(E_1)$ and $E_1$ are all invariant under $\Aut_{\mathcal{G}}(S)$. Hence, for $\alpha\in \Aut_{\mathcal{G}}(S)$, $\mathbf{R}\alpha\le E_1$ so that $\Phi(\mathbf{R}\alpha)\le \Phi(E_1)$. Since $Z(E_1)$ centralizes $\mathbf{R}\alpha$, we deduce that $Z(E_1)\le Z(\mathbf{R}\alpha)=\Phi(\mathbf{R}\alpha)$ and as $O^{5'}(\Aut_{\fs}(E_1))$ centralizes $\Phi(E_1)/Z(E_1)$, $O^{5'}(\Aut_{\fs}(E_1))$ normalizes $\Phi(\mathbf{R}\alpha)$ and so normalizes $C_{E_1}(\Phi(\mathbf{R}\alpha))=\mathbf{R}\alpha$. 
\end{proof}

\begin{lemma}\label[lemma]{MSL2}
Suppose that $\fs$ is a saturated fusion system on $S$ with $E_1\in\mathcal{E}(\fs)$. Then there is $\gamma\in\Aut(E_1)$ with $\Aut_{\fs}(E_1)^\gamma\le \Aut_{\mathcal{G}}(E_1)$ and we may choose $A,B\le \Out(E_1)$ such that $A=A_1^\gamma\cong \Sym(3)$ with $[A, Z(E_1)]=\{1\}$, $B=B_1^\gamma\cong\GL_2(5)$ with $[B, A]=\{1\})$, and $\Out_{\fs}(E_1)\le A\times B$ with $O^{5'}(\Out_{\fs}(E_1))\le B$. 
\end{lemma}
\begin{proof}
Let $T$ be a Sylow $2$-subgroup of $O^{5'}(\Aut_{\fs}(E_1))$ so that $|T|=2^3$. Then $N_T(\Aut_S(E_1))$ is cyclic of order $4$ and $T$ centralizes $\Phi(E_1)/Z(E_1)$ and so centralizes $\Aut_{\Phi(E_1)}(E_1)$. We calculate (\cref{code}) that
$N_T(\Aut_S(E_1))$ is a Sylow $2$-subgroup of $C_{N_{\Aut(E_1)}(\Aut_S(E_1))}(\Aut_{\Phi(E_1)}(E_1))$ and $N_T(\Aut_S(E_1))$ is conjugate by an element of $N_{\Aut(E_1)}(\Aut_S(E_1))$ to a Sylow $2$-subgroup $N_{O^{5'}(\Aut_{\mathcal{G}}(E_1))}(\Aut_S(E_1))$.

We have that there is a unique $\Aut(E_1)$-conjugacy class of subgroups which contain $N_{O^{5'}(\Aut_{\mathcal{G}}(E_1))}(\Aut_S(E_1))$ with quotient by $\Inn(E_1)$ isomorphic to $\SL_2(5)$. Indeed, $O^{5'}(\Aut_{\mathcal{G}}(E_1))$ satisfies these conditions and tracing backwards, we ascertain that $O^{5'}(\Aut_{\fs}(E_1))$ is $\Aut(E_1)$-conjugate to $O^{5'}(\Aut_{\mathcal{G}}(E_1))$. Finally, the normalizer in $\Out(E_1)$ of $O^{5'}(\Out_{\mathcal{G}}(E_1))$ is $\Out_{\mathcal{G}}(E_1)$ so that $\Out_{\fs}(E_1)\le N_{\Out(E_1)}(O^{5'}(\Out_{\fs}(E_1)))$ and $N_{\Out(E_1)}(O^{5'}(\Out_{\fs}(E_1)))$ is $\Out(E_1)$ conjugate to $\Out_{\mathcal{G}}(E_1)$. Hence, $\Aut_{\fs}(E_1)$ is $\Aut(E_1)$-conjugate to a subgroup of $\Aut_{\mathcal{G}}(E_1)$ and the rest of the result follows from the description of $\Out_{\mathcal{G}}(E_1)\cong M_1/E_1$.
\end{proof}

\begin{lemma}\label[lemma]{E3inE15a}
Let $\fs$ be a saturated fusion system on $S$. Let $P$ be some $\mathcal{G}$-conjugate of $E_3$. If $P\in\mathcal{E}(\fs)$, then $\Out_{\fs}(E_1)=C_{\Out_{\fs}(E_1)}(Z(E_1))\times B$, where $B\cong \GL_2(5)$, $[B, C_{\Out_{\fs}(E_1)}(Z(E_1))]=\{1\}$ and $|C_{\Out_{\fs}(E_1)}(Z(E_1))|\in\{3,6\}$.
\end{lemma}
\begin{proof}
Let $P\in\mathcal{E}(\fs)$ where $P$ is a $\mathcal{G}$-conjugate of $E_3$. Applying \cref{E3inE152}, $E_1\in\mathcal{E}(\fs)$ and $O^{5'}(\Aut_{\fs}(P))\cong\SL_2(25)$, and following the notation from the proof of that result, we set $K$ to be a Hall $5'$-subgroup of $N_{O^{5'}(\Aut_{\fs}(P))}(\Aut_S(P))$. Then $K$ is cyclic of order $24$ and using that $E_1$ is $\Aut_{\fs}(S)$-invariant and applying \cref{MonsterEssen} and the Alperin--Goldschmidt theorem, by the extension axiom we see that $K$ lifts to a group of automorphisms of $E_1$ which we denote $\hat{K}$. Then $\hat{K}$ acts on $Z(E_1)$ as $K$ does. In particular, $\hat{K}$ acts faithfully on $Z(E_1)$. By \cref{MSL2}, $\Out_{\fs}(E_1)$ is $\Out(E_1)$-conjugate to a subgroup of $\Out_{\mathcal{G}}(E_1)$ so that $\Out_{\fs}(E_1)/C_{\Out_{\fs}(E_1)}(Z(E_1))$ is isomorphic to a subgroup of $\GL_2(5)$. But $O^{5'}(\Out_{\fs}(E_1))\cap C_{\Out_{\fs}(E_1)}(Z(E_1))=\hat{K}\Inn(E_1)/\Inn(E_1)\cap C_{\Out_{\fs}(E_1)}(Z(E_1))=\{1\}$ and we deduce that $\Out_{\fs}(E_1)/C_{\Out_{\fs}(E_1)}(Z(E_1))\cong \GL_2(5)$.

Furthermore, again using that $\Out_{\fs}(E_1)$ is $\Out(E_1)$-conjugate to a subgroup of $\Out_{\mathcal{G}}(E_1)$, we deduce that $|C_{\Out_{\fs}(E_1)}(Z(E_1))|\leq 6$. Now, a Sylow $3$-subgroup of $O^{5'}(\Out_{\fs}(E_1))$ centralizes $\Phi(E_1)/Z(E_1)$. Since a Sylow $3$-subgroup of $\hat{K}$ acts on $\Phi(E_1)/Z(E_1)$ as $K$ acts on $\Aut_S(E_1)\cong \Phi(E_1)P/P\cong \Phi(E_1)/Z(E_1)$, we have that a Sylow $3$-subgroup of $\hat{K}$ acts non-trivially on $\Phi(E_1)/\Inn(E_1)$. Hence, $9\divides |\Out_{\fs}(E_1)|$ and it follows that $|C_{\Out_{\fs}(E_1)}(Z(E_1))|\in\{3,6\}$.

Since $\Out_{\fs}(E_1)$ is $\Out(E_1)$-conjugate to a subgroup of $\Out_{\mathcal{G}}(E_1)$, if $|C_{\Out_{\fs}(E_1)}(Z(E_1))|=6$ then $\Out_{\fs}(E_1)$ is $\Out(E_1)$-conjugate to $\Out_{\mathcal{G}}(E_1)$ and the result is easily seen to hold. Hence, we assume that $|C_{\Out_{\fs}(E_1)}(Z(E_1))|=3$ so that $\Out_{\fs}(E_1)=\langle \hat{K}\Inn(E_1)/\Inn(E_1), O^{5'}(\Out_{\fs}(E_1))\rangle$. We note that $\{E_3^{\mathcal{G}}\}$ is the unique class of elementary abelian subgroups $H$ of $E_1$ of order $5^4$ with $|N_{E_1}(H)|=5^6$ and $[N_{E_1}(H), E_1]=Z(E_1)$ (see \cref{code}). In particular, this class is invariant under $\Aut(E_1)$. Since $\Aut_{\fs}(E_1)$ is $\Out(E_1)$-conjugate to a subgroup of $\Aut_{\mathcal{G}}(E_1)$, and we can choose a Sylow $3$-subgroup of $\Aut_{\mathcal{G}}(E_1)$ to normalize $P$, we can also choose a Sylow $3$-subgroup of $\Aut_{\fs}(E_1)$ to normalize $P$. In particular, there is a $3$-element $t$ of $\Aut_{\fs}(E_1)$ which normalizes $P$ and centralizes $\Phi(E_1)/Z(E_1)\cong \Aut_S(P)$. Since $O^{5'}(\Aut_{\fs}(P))\cong \SL_2(25)$, we must that $t|_P$ centralizes $O^{5'}(\Aut_{\fs}(P))\cong \SL_2(25)$. Hence, $\hat{K}$ centralizes a Sylow $3$-subgroup of $\Aut_{\fs}(E_1)$ and so $\hat{K}\Inn(E_1)/\Inn(E_1)$ centralizes $C_{\Out_{\fs}(E_1)}(Z(E_1))$. But $O^{5'}(\Out_{\fs}(E_1))$ centralizes $C_{\Out_{\fs}(E_1)}(Z(E_1))$ and so we see that $\Out_{\fs}(E_1)$ centralizes $C_{\Out_{\fs}(E_1)}(Z(E_1))$. Finally, since $\Out_{\fs}(E_1)$ is $\Out(E_1)$-conjugate to a subgroup of $\Out_{\mathcal{G}}(E_1)$, the lemma holds.
\end{proof}

\begin{lemma}\label[lemma]{E3inE15b}
Let $\fs$ be a saturated fusion system on $S$. Let $P$ be some $\mathcal{G}$-conjugate of $E_3$ and set $\Aut_{\fs}^*(S)$ the subgroup of $\Aut_{\fs}(S)$ generated by all morphisms in $O^{5'}(\Aut_{\fs}(R))$ which extend to automorphisms of $S$, where $R\in\{E_1, P^{\fs}, S\}$. If $P\in\mathcal{E}(\fs)$, then
\begin{enumerate}
    \item $|\Aut_{\fs}(S)/\Aut_{\fs}^*(S)|=|C_{\Out_{\fs}(E_1)}(Z(E_1))|/3$; 
    \item $\{P^{\fs}\}=\{E_3^{\mathcal{G}}\}$; and
    \item $[\Aut_{\fs}(P): O^{5'}(\Aut_{\fs}(P))]=|C_{\Out_{\fs}(E_1)}(Z(E_1))|$.
\end{enumerate}
Moreover, if $\{E_2^{\mathcal{G}}\}\cap \mathcal{E}(\fs)=\emptyset$ then $\Aut_{\fs}^*=\Aut_{\fs}^0(S)$, $\Out_{O^{5'}(\fs)}(E_1)\cong 3\times \GL_2(5)$ and $\Aut_{O^{5'}(\fs)}(E_3)\cong 3\times \SL_2(25)$.
\end{lemma}
\begin{proof}
Note that any morphism in $O^{5'}(\Aut_{\fs}(R))$ which extends to automorphisms of $S$ restricts faithfully to a morphism in $\Aut_{\fs}(E_1)$. Indeed, it follows that $|\Aut_{\fs}(S)|/|\Aut_{\fs}^*(S)|=|\Aut_{\fs}(E_1)|/|\langle \Aut_{\fs}^*(S)|_{E_1}, O^{5'}(\Aut_{\fs}(E_1))\rangle|$. By the proof of \cref{E3inE15a}, and using that $\Out_{\fs}(E_1)\le A\times B$ in the language of \cref{MSL2}, we have that \[\langle \Aut_{\fs}^*(S)|_{E_1}, O^{5'}(\Aut_{\fs}(E_1))\rangle\Inn(E_1)/\Inn(E_1)=O_3(A)\times B\cong 3\times \GL_2(5)\]. Hence, (i) holds.

We observe that the subgroup of $\Aut_{\mathcal{G}}(E_1)$ with quotient by $\Inn(E_1)$ isomorphic to $3\times \GL_2(5)$ acts transitively on the set $\{E_3^{\mathcal{G}}\}$ (see \cref{code}), and is conjugate by $\Aut(E_1)$ to a subgroup of $\Aut_{\fs}(E_1)$ by \cref{E3inE15a}. Since $\{E_3^{\mathcal{G}}\}$ is preserved by $\Aut(E_1)$ as in \cref{E3inE15a} and $P$ is $\mathcal{G}$-conjugate to $E_3$, we have that $\{P^{\fs}\}=\{E_3^{\mathcal{G}}\}$ and so (ii) holds.

Now, it follows by a Frattini argument that $|N_{\Aut_{\fs}(E_3)}(\Aut_S(E_3))|=[\Aut_{\fs}(P): O^{5'}(\Aut_{\fs}(P))]|N_{O^{5'}(\Aut_{\fs}(E_3))}(\Aut_S(E_3))|$. By the extension axiom, and using that $E_1$ is characteristic in $S$, we see that all morphisms in $N_{\Aut_{\fs}(E_3)}(\Aut_S(E_3))$ lift to morphisms in $\Aut_{\fs}(E_1)$ which normalize $E_3$. But $\Aut_{\fs}(E_1)$ is $\Aut(E_1)$-conjugate to a subgroup of $\Aut_{\mathcal{G}}(E_1)$ and as $\Inn(E_1)$ preserves the class $\{E_3^{\mathcal{G}}\}$, we may calculate $|N_{\Aut_{\fs}(E_3)}(\Aut_S(E_3))|$ from $N_{\Aut_{\mathcal{G}}(E_1)}(E_3)$. Writing $H$ for the subgroup of $\Aut_{\mathcal{G}}(E_1)$ with $H/\Inn(E_1)\cong 3\times \GL_2(5)$, we calculate (see \cref{code}) that $N_H(E_3)$ has index $2$ in $N_{\Aut_{\mathcal{G}}(E_1)}(E_3)$ and so (iii) holds. 

Finally, assume that $\{E_2^{\mathcal{G}}\}\cap \mathcal{E}(\fs)=\emptyset$. We clearly have that $\Aut_{\fs}^*(S)\le \Aut_{\fs}^0(S)\le \Aut_{\fs}(S)$. Aiming for a contradiction, suppose that $\Aut_{\fs}^*(S)<\Aut_{\fs}^0(S)$ so that $\Aut_{\fs}^0(S)=\Aut_{\fs}(S)$. Then we see that $\Out_{\fs}(E_1)=A\times B\cong \Sym(3)\times \GL_2(5)$. By \cref{model}, we let $H$ be a model for $N_{\fs}(E_1)$ and let $H^*\normaleq H$ such that $H^*/E_1\cong 3\times \GL_2(5)$. Indeed, $H^*$ is unique with respect to this property. Form the fusion system $\fs^*:=\langle O^{5'}(\Aut_{\fs}(P)), \fs_S(H^*)\rangle_S$. Applying \cref{JasonAdd} and by the definition of $\Aut_{\fs}^*(S)$, we have that $\fs^*$ is saturated and $\fs=\langle \fs^*, \Aut_{\fs}(S)\rangle_S$. Moreover, since $\fs=\langle \Aut_{\fs}(E_1), \Aut_{\fs}(P), \Aut_{\fs}(S)\rangle_S$ by the Alperin--Goldschmidt theorem and \cref{MonsterEssen}, for all $\alpha\in\fs$ we have that ${\fs^*}^\alpha=\fs^*$. Hence, applying \cite[Proposition I.6.4]{ako}, we have that $\fs^*$ is weakly normal in $\fs$ in the sense of \cite[Definition I.6.1]{ako} and \cite[Theorem A]{CravenSub} yields that $O^{5'}(\fs^*)\normaleq \fs$. Then $O^{5'}(\Aut_{\fs}(T))\le \Aut_{O^{5'}(\fs^*)}(T)\normaleq \Aut_{\fs}(T)$ by \cite[Proposition I.6.4]{ako} for all $T\le S$, and we deduce that $O^{5'}(\fs^*)$ has index prime to $5$ in a $\fs$, a contradiction by \cref{p'lemma} since $\Aut_{\fs}^0(S)=\Aut_{\fs}(S)$. Hence, $\Aut_{\fs}^*=\Aut_{\fs}^0(S)$. 

Then $\Out_{O^{5'}(\fs)}(E_1)=\langle \Aut_{\fs}^0(S)|_{E_1}, O^{5'}(\Aut_{\fs}(E_1))\rangle\Inn(E_1)/\Inn(E_1)=O_3(A)\times B\cong 3\times \GL_2(5)$. As in \cref{E3inE15a}, we see that we may choose $t\in \Aut_{\fs}(E_1)$ to normalizes $E_3$ with $[t, \Phi(E_1)]\le Z(E_1)$ so that $t|_{E_3}$ centralizes $O^{5'}(\Aut_{\fs}(E_3))$. Then part (iii) implies that $\Aut_{O^{5'}(\fs)}(E_3)\cong 3\times \SL_2(25)$.
\end{proof}

As a consequence of \cref{E3inE152} and \cref{E3inE15b}, if any $\mathcal{G}$-conjugate of $E_3$ is essential in $\fs$, then $\{E_1, \{E_3^{\mathcal{G}}\}\}\subseteq \mathcal{E}(\fs)$.

We now construct some exotic fusion subsystems of $\mathcal{G}$ in a similar manner to \cref{F3Sec}, and persist with the same notations. That is, we set \[\mathcal{H}=\langle \Aut_{\mathcal{G}}(E_1), \Aut_{\mathcal{G}}(E_2), \Aut_{\mathcal{G}}(S)\rangle_S\] and \[\mathcal{D}=\langle \Aut_{\mathcal{G}}(E_1), \Aut_{\mathcal{G}}(E_3), \Aut_{\mathcal{G}}(S)\rangle_S.\]

\begin{proposition}\label[proposition]{HExotic51}
$\mathcal{H}$ is a saturated fusion system with with $\mathcal{E}(\mathcal{H})=\{E_1, E_2^\mathcal{H}\}$ and $\mathcal{H}^{frc}=\{E_1, E_2^\mathcal{H}, \mathbf{Q}, \mathbf{R}^\mathcal{H}, S\}$.
\end{proposition}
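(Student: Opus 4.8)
The plan is to follow the proof of \cref{HExotic} almost verbatim, replacing the prime $3$, the group $\mathrm{F}_3$ and the subgroups occurring there by the prime $5$, the group $\mathrm{M}$ and the present $E_1,E_2,E_3$. First, $\mathcal{H}$ is obtained from $\mathcal{G}$ by pruning away the class $E_3^{\mathcal{G}}$: since $E_3$ is elementary abelian of order $5^4$, each proper subgroup of $E_3$ has order at most $5^3$ and, being centralized by the abelian group $E_3$, is not $S$-centric, so \cref{Pruning} applies with $P=E_3^{\mathcal{G}}$ and $K=H_{\mathcal{G}}(E_3^{\mathcal{G}})$; as $N_S(E_3)\le E_1$ and $\Aut_{\mathcal{G}}(S)$ is generated by liftings of morphisms from $\Aut_{\mathcal{G}}(E_1)$ and $\Aut_{\mathcal{G}}(E_2)$, the resulting saturated system is precisely $\mathcal{H}$. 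Because $\mathcal{E}(\fs)\subseteq\{E_1,E_2^{\mathcal{G}},E_3^{\mathcal{G}}\}$ for every saturated $\fs$ on $S$, and the automizers of $E_1$ and $E_2$ in $\mathcal{H}$ coincide with those in $\mathcal{G}$ while $E_3$ ceases to be radical after pruning, we obtain $\mathcal{E}(\mathcal{H})=\{E_1,E_2^{\mathcal{H}}\}$; the remaining members $O_5(M_2)$, $O_5(M_4)^{\mathcal{H}}$ and $S$ of $\mathcal{H}^{frc}$ are then read off from $\mathcal{G}^{frc}$, their automizers being unaffected.

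For simplicity, I would first show that $\mathcal{H}$ has no proper non-trivial strongly closed subgroup: if $T$ were one then $T\normaleq S$, hence $Z(S)\le T$; since $E_1$ and each $\mathcal{G}$-conjugate of $E_2$ is normal in $S$ and essential in $\mathcal{H}$, the intersections of $T$ with these subgroups are invariant under the corresponding automizers, and a bootstrapping argument through the irreducible sections of $E_1$ (resp.\ $E_2$) under $\Aut_{\mathcal{H}}(E_1)$ (resp.\ $\Aut_{\mathcal{H}}(E_2)$), exactly as in the proof of \cref{E1SC3}, climbs from $Z(S)$ up to $T=S$. Consequently any $\mathcal{N}\normaleq\mathcal{H}$ is supported on $S$ and so has index prime to $5$; since $\Aut_{\mathcal{H}}(S)$ and each $\Aut_{\mathcal{H}}(E_i)$ are generated by $O^{5'}(\Aut_{\mathcal{H}}(E_i))$ together with morphisms normalizing $\Aut_S(E_i)$, \cref{p'lemma} yields $O^{5'}(\mathcal{H})=\mathcal{H}$ and therefore $\mathcal{N}=\mathcal{H}$.

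The exoticity verification is the bulk of the work and follows the pattern of \cref{HExotic}. Suppose $\mathcal{H}=\fs_S(G)$ with $O_5(G)=O_{5'}(G)=\{1\}$. The structure of $M_1$ shows that $S/E_1$ is a Sylow $5$-subgroup of the $\GL_2(5)$-factor of $M_1/E_1$, acting on $Z(E_1)=Z_2(S)$ as a single Jordan block, so $Z(S)=C_{Z_2(S)}(S)$ has order $5$; hence $F^*(G)$ is a single non-abelian simple group, $G$ is almost simple, and $S\in\syl_5(F^*(G))$. I then treat each family in turn. Alternating groups are excluded because $|\Alt(n)|_5=5^9$ forces $n\ge 40$ and thus $5$-rank at least $8$, whereas $S$ has no elementary abelian subgroup of order $5^8$. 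For Lie type in characteristic $5$, the groups with $|S|_5=5^9$ are, up to $5'$-extension, $\PSL_2(5^9)$, $\PSL_3(5^3)$, $\PSU_3(5^3)$, $\PSp_6(5)$ and $\Omega_7(5)$; the first has abelian Sylow $5$-subgroup (while $S$ is not abelian, e.g.\ $O_5(M_2)\cong 5^{1+6}_+$), the next two have Sylow $5$-subgroup whose centre has order $5^3\ne 5$, and $\PSp_6(5)$ and $\Omega_7(5)$ each have three classes of essential subgroups, namely the radicals of the rank-$1$ parabolics, rather than the two demanded by $\mathcal{E}(\mathcal{H})=\{E_1,E_2^{\mathcal{H}}\}$. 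For Lie type in characteristic $r\ne 5$, \cite[Theorem 4.10.3]{GLS3} shows $S$ has a unique maximal elementary abelian subgroup unless $F^*(G)$ is one of $\mathrm{G}_2(r^a)$, ${}^2\mathrm{F}_4(r^a)$, ${}^3\mathrm{D}_4(r^a)$, $\PSU_3(r^a)$ or $\PSL_3(r^a)$; one checks $S$ has no such unique subgroup, and for these five types \cite[Theorem 4.10.2]{GLS3} gives a normal abelian $S_T\normaleq S$ with $S/S_T$ embedding in a Weyl group of order prime to $5$, forcing $S=S_T$ to be abelian, a contradiction. Lastly $\mathrm{M}$ is the only sporadic group with $|S|_5=5^9$ and has trivial outer automorphism group, so $F^*(G)\cong\mathrm{M}$ would force $\mathcal{H}=\fs_S(\mathrm{M})=\mathcal{G}$, which again has three classes of essential subgroups. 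Thus no such $G$ exists and $\mathcal{H}$ is exotic.

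The hard part is this last step: isolating and disposing of the characteristic-$5$ candidates (in particular spotting that $\PSp_6(5)$ and $\Omega_7(5)$ have index-$5$ essentials and must instead be excluded by counting essential classes), together with establishing the structural facts about $S$ on which the argument rests ($|Z(S)|=5$, the strong-closure bootstrapping, the Sylow $5$-structure of $\PSL_3(5^3)$ and $\PSU_3(5^3)$, and the non-uniqueness of the maximal elementary abelian subgroups of $S$) — all routine but needing care. Everything else is a faithful transcription of the proof of \cref{HExotic}.
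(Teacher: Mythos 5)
Your proposal is correct and follows essentially the same plan as the paper's own proof: saturation via \cref{Pruning}, reading off $\mathcal{H}^{frc}$ and $\mathcal{E}(\mathcal{H})$ from $\mathcal{G}$, simplicity by bootstrapping from $Z(S)$ to show there is no proper non-trivial strongly closed subgroup and then invoking \cref{p'lemma}, and exoticity by CFSG against a putative almost simple realization with $S\in\syl_5(F^*(G))$. The one place you genuinely diverge is in the characteristic-$5$ Lie type case. The paper filters first by $5$-rank (namely, $m_5(S)=4$), which by \cite[Table 3.3.1]{GLS3} leaves only $\PSL_2(5^4)$, $\PSL_3(25)$, $\PSU_3(25)$, $\PSL_4(5)$ and $\PSU_4(5)$, and then dispatches the whole list with a single invariant (none has $|S|_5=5^9$); in particular $\Sp_6(5)$ and $\Omega_7(5)$ never appear, since each has $5$-rank $6$. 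You instead filter first by $|S|_5=5^9$, obtaining the more unruly list $\PSL_2(5^9)$, $\PSL_3(5^3)$, $\PSU_3(5^3)$, $\Sp_6(5)$, $\Omega_7(5)$, and then need several distinct invariants (abelian Sylow, $|Z(S)|=5$ versus $5^3$, and the count of classes of essential subgroups coming from Borel--Tits) to exclude them. Both arguments are correct; the paper's choice of filter is a bit cleaner because it collapses the dismissal of candidates into a single order comparison, whereas your route is more structurally informative but requires the extra case-checking you correctly anticipate. All the structural facts about $S$ you invoke ($|Z(S)|=5$ since $C_S(O_5(M_2))=Z(O_5(M_2))$, $E_3$ elementary abelian of order $5^4$ not normal in $S$, the non-uniqueness of the maximal elementary abelian subgroup) are accurate, and the Weyl group observation you use in the cross-characteristic case (that the relevant Weyl groups are $5'$-groups) is a legitimate sharpening of what the paper states.
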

\begin{proof}
Set $\mathcal{H}^*:=\langle \mathcal{H}, H_{\mathcal{G}}(E_3)\rangle_S$, the saturated fusion system determined by applying \cref{Pruning} to $\mathcal{G}$ with $P=E_3$ and $K=H_\fs(E_3)$. Then $\mathcal{H}^*$ is saturated and $E_3\not\in\mathcal{E}(\mathcal{H}^*)$ by \cref{Pruning}. Hence, by \cref{MonsterEssen}, $\mathcal{E}(\mathcal{H}^*)=\{E_1, E_2^\mathcal{G}\}$ and the Alperin--Goldschmidt theorem implies that $\mathcal{H}^*=\langle \Aut_{\mathcal{G}}(E_1), \Aut_{\mathcal{G}}(E_2), \Aut_{\mathcal{G}}(S)\rangle_S=\mathcal{H}$ is saturated.

Let $R$ be a fully $\mathcal{H}$-normalized, radical, centric subgroup of $S$ not equal to one described in the conclusion of the proposition. Then $R$ must be contained in an $\mathcal{H}$-essential subgroup for otherwise, by the extension axiom and the Alperin--Goldschmidt theorem, we infer that $\Out_S(R)\normaleq \Out_{\mathcal{H}}(R)$ and $R$ is not $\mathcal{H}$-radical. If $R$ is contained in an $\mathcal{G}$-conjugate of $E_3$, $A$ say, then since $R$ is $\mathcal{H}$-centric, $R=A$. Then $\Out_S(R)\le O^{5'}(\Out_{\mathcal{H}}(R))\le O^{5'}(\Out_{\mathcal{G}}(R))\cong \SL_2(25)$. Since $R$ is not $\mathcal{H}$-essential, it follows that $O^{5'}(\Out_{\mathcal{H}}(R))$ is contained in the unique maximal subgroup of $O^{5'}(\Out_{\mathcal{G}}(R))$ which contains $\Out_S(R)$ and so $\Out_S(R)\normaleq O^{5'}(\Out_{\mathcal{H}}(R))$. Then the Frattini argument implies that $\Out_S(R)\normaleq \Out_{\mathcal{H}}(R)$, a contradiction. 

Thus, $R$ is not contained in an $\mathcal{G}$-conjugate of $E_3$. Hence, by the Alperin--Goldschmidt theorem and using \cref{MonsterEssen}, since $\mathcal{H}=\langle \Aut_{\mathcal{G}}(E_1), \Aut_{\mathcal{G}}(E_2), \Aut_{\mathcal{G}}(S)\rangle_S$ and $R$ is fully $\mathcal{H}$-normalized, $R$ is fully $\mathcal{G}$-normalized and so is $\mathcal{G}$-centric. Finally, since $O_5(\Out_{\mathcal{G}}(R))\le O_5(\Out_{\mathcal{H}}(R))=\{1\}$, we conclude that $R$ is $\mathcal{G}$-centric-radical and comparing with \cref{MonsterRad}, we have a contradiction.
\end{proof}

\begin{proposition}\label[proposition]{HExotic52}
$\mathcal{H}$ is simple.
\end{proposition}
\begin{proof}
Assume that $\mathcal{N}\normaleq \mathcal{H}$ and $\mathcal{N}$ is supported on $T$. Then $T$ is a strongly closed subgroup of $\mathcal{H}$. In particular, $T\normaleq S$ and $Z(S)\le T$. Observe that since $N_{\mathcal{G}}(\mathbf{Q})=\langle N_{\mathcal{G}}(S), N_{\mathcal{G}}(E_2)\rangle_S\le \mathcal{H}$, we have that $N_{\mathcal{H}}(\mathbf{Q})=N_{\mathcal{G}}(\mathbf{Q})$. In particular, $\Aut_{\mathcal{H}}(\mathbf{Q})$ is irreducible on $\mathbf{Q}/Z(S)$. Since $\Aut_{\mathcal{H}}(E_1)=\Aut_{\mathcal{G}}(E_1)$ is irreducible on $Z_2(S)$, we have that $\mathbf{Q}\le T$. Then $E_1=\langle (E_1\cap \mathbf{Q})^{\Aut_{\mathcal{G}}(E_1)}\rangle\le T$ and so $S=E_1\mathbf{Q}=T$. Since $\Aut_{\mathcal{H}}(S)$ is generated by lifted morphisms from $O^{5'}(\Aut_{\mathcal{H}}(E_1))$ and $O^{5'}(\Aut_{\mathcal{H}}(\mathbf{Q}))$, in the language of \cref{p'lemma} we have that $\Aut_{\mathcal{H}}^0(S)=\Aut_{\mathcal{H}}(S)$. Then \cite[Theorem II.9.8(d)]{ako} implies that $\mathcal{H}$ is simple.
\end{proof}

\begin{proposition}\label[proposition]{HExotic5}
$\mathcal{H}$ is exotic.
\end{proposition}
\begin{proof}
Aiming for a contradiction, suppose that $\mathcal{H}=\fs_S(G)$ for some finite group $G$ with $S\in\syl_5(G)$. We may as well assume that $O_5(G)=O_{5'}(G)=\{1\}$ so that $F^*(G)=E(G)$ is a direct product of non-abelian simple groups, all divisible by $5$. Then, as $\mathcal{H}$ is simple and $\fs_{S\cap F^*(G)}(F^*(G))\normaleq \fs_S(G)$, we have that $G$ is simple.

If $G\cong \Alt(n)$ for some $n$ then $m_5(\Alt(n))=\lfloor\frac{n}{5}\rfloor$ by \cite[Proposition 5.2.10]{GLS3} and so $n<25$. But a Sylow $5$-subgroup of $\Alt(25)$ has order $5^6$ and so $G\not\cong\Alt(n)$ for any $n$. If $G$ is isomorphic to a group of Lie type in characteristic $5$, then comparing with \cite[Table 3.3.1]{GLS3}, we see that the groups with a Sylow $5$-subgroup which has $5$-rank $4$ are $\PSL_2(5^4)$, $\PSL_3(25)$, $\PSU_3(25)$, $\PSL_4(5)$ or $\PSU_4(5)$ and none of these examples have a Sylow $5$-subgroup of order $5^9$.

Assume now that $G$ is a group of Lie type in characteristic $r\ne 5$. By \cite[Theorem 4.10.3]{GLS3}, $S$ has a unique elementary abelian subgroup of $5$-rank $4$ unless $G\cong\mathrm{G}_2(r^a), {}^2\mathrm{F}_4(r^a), {}^3\mathrm{D}_4(r^a), \PSU_3(r^a)$ or $\PSL_3(r^a)$.  Moreover, by \cite[Theorem 4.10.2]{GLS3}, there is a normal abelian subgroup $S_T$ of $S$ such that $S/S_T$ is isomorphic to a subgroup of the Weyl group of $G$. But $|S_T|\leq 5^4$ so that $|S/S_T|\geq 5^5$. All of the candidate groups above have Weyl group with $5$-part strictly less than $5^5$ and so $G$ is not isomorphic to a group of Lie type in characteristic $r$.

Finally, checking the orders of the sporadic groups, we have that $\mathrm{M}$ is the unique sporadic simple group with a Sylow $5$-subgroup of order $5^9$. Since $\mathrm{M}$ has $3$ classes of essential subgroups, $G\not\cong\mathrm{M}$ and $\mathcal{H}$ is exotic.
\end{proof}

\begin{proposition}\label[proposition]{DSimple1}
$\mathcal{D}$ is a saturated fusion system and $O^{5'}(\mathcal{D})$ has index $2$ in $\mathcal{D}$.
\end{proposition}
\begin{proof}
In the statement of \cref{JasonAdd}, letting $\mathcal{F}_0=N_{\mathcal{G}}(E_1)$, $V=E_3$ and $\Delta=\Aut_{\mathcal{G}}(E_3)$ we have that $\mathcal{D}^\dagger=\langle \fs_0, \Aut_{\mathcal{G}}(E_3)\rangle_S$ is a proper saturated subsystem of $\mathcal{G}$. Since $E_1$ is characteristic in $S$, we have that $\Aut_{\mathcal{D}^\dagger}(S)=\Aut_{\mathcal{G}}(S)$ and $\Aut_{\mathcal{D}^\dagger}(E_1)=\Aut_{\mathcal{G}}(E_1)$. Since $\Aut_{\mathcal{D}^\dagger}(E_2)\le \Aut_{\mathcal{G}}(E_2)$, if $E_2$ was $\mathcal{D}^\dagger$-essential, then $\mathcal{D}^\dagger=\mathcal{G}$, a contradiction.  Therefore, by the Alperin--Goldschmidt theorem, and using \cref{MonsterEssen}, we have that $\mathcal{D}^\dagger=\langle \Aut_{\mathcal{G}}(E_1), \Aut_{\mathcal{G}}(E_3), \Aut_{\mathcal{G}}(S)\rangle_S=\mathcal{D}$ is saturated.

We note that as $\mathcal{D}<\mathcal{G}$, no $\mathcal{G}$-conjugate of $E_2$ is essential in $\mathcal{D}$. Applying \cref{E3inE15b}, we have that $\Aut_{\mathcal{D}}^*(S)=\Aut_{\mathcal{D}}^0(S)$ and $|\Aut_{\mathcal{D}}(S)/\Aut_{\mathcal{D}}^0(S)|=|C_{\Aut_{\mathcal{D}}(E_1)}(Z(E_1))|/3=2$. Hence, \cref{p'lemma} implies that $O^{5'}(\mathcal{D})$ is the unique proper subsystem of $\mathcal{D}$ of $p'$-index and has index $2$ in $\mathcal{D}$.
\end{proof}

\begin{proposition}\label[proposition]{DSimple2}
$\mathcal{D}^{frc}=O^{5'}(\mathcal{D})^{frc}=\{E_1, E_3^{\mathcal{G}}, S\}$.
\end{proposition}
\begin{proof}
Let $\fs$ be one of $\mathcal{D}$ or $O^{5'}(\mathcal{D})$ and $R$ be a fully $\fs$-normalized, radical, centric subgroup of $S$ not equal to $E_1$, $S$ or a $\mathcal{D}$-conjugate of $E_3$. If $R$ is contained in a $\mathcal{G}$-conjugate of $E_3$, then since $R$ is $\fs$-centric and $E_3$ is elementary abelian, we have a contradiction. Hence $R$ is not contained in a $\mathcal{G}$-conjugate of $E_3$ and by \cref{MonsterEssen} and using that $E_2\not\in\mathcal{E}(\fs)$, $R$ is contained in at most one $\fs$-essential, namely $E_1$. Then, as $E_1$ is $\Aut_{\fs}(S)$-invariant, the extension axiom and the Alperin--Goldschmidt theorem imply that $\Out_{E_1}(R)\normaleq \Out_{\fs}(R)$ and $E_1\le R\le S$, a contradiction.
\end{proof}

\begin{lemma}\label[lemma]{SC5}
$E_1$ is the unique proper non-trivial strongly closed subgroup of $\mathcal{D}$ and $O^{5'}(\mathcal{D})$
\end{lemma}
\begin{proof}
Assume that $T$ is a proper non-trivial strongly closed subgroup of $\fs$, where $\fs$ is one of $\mathcal{D}$ or $O^{5'}(\mathcal{D})$. Then $T\normaleq S$ and so $Z(S)\le T$. Then the irreducibility of $O^{5'}(\Aut_{\mathcal{D}}(E_3))\le \Aut_\fs(E_3)$ on $E_3$ implies that $E_3\le T$. Since $E_1=\langle E_3^S\rangle$ we deduce that $E_1\le T$. Since $E_1$ is $\Aut_{\fs}(S)$-invariant and every essential subgroup of $\fs$ is contained in $E_1$ by \cref{MonsterEssen}, it follows from the Alperin--Goldschmidt theorem that $E_1$ is strongly closed in $\fs$.
\end{proof}

\begin{proposition}\label[proposition]{DSimple}
$O^{5'}(\mathcal{D})$ is a simple saturated fusion system on $S$ and both $\mathcal{D}$ and $O^{5'}(\mathcal{D})$ are exotic.
\end{proposition}
\begin{proof}
If $O^{5'}(\mathcal{D})$ is not simple with $\mathcal{N}\normaleq O^{5'}(\mathcal{D})$ and $\mathcal{N}$ supported on $T<S$ then by \cref{SC5}, $\mathcal{N}$ is supported on $E_1$. By \cite[Proposition I.6.4]{ako}, $\Aut_{\mathcal{N}}(E_1)\normaleq \Aut_{O^{5'}(\mathcal{D})}(E_1)$ so that $\Out_{\mathcal{N}}(E_1)$ is isomorphic to a normal $5'$-subgroup of $\Out_{O^{5'}(\mathcal{D})}(E_1)\cong 3\times \GL_2(5)$. In particular, no $\mathcal{D}$-conjugate of $E_3$ is essential in $\mathcal{N}$ for otherwise we could again lift a cyclic subgroup of order $24$ to $\Aut_{\mathcal{N}}(E_1)$, using the extension axiom. Thus, applying \cref{EssenDeterD5} (or performing the MAGMA calculation on which this relies), we deduce that $\mathcal{E}(\mathcal{N})=\emptyset$ and $E_1=O_5(\mathcal{N})$ so that $E_1\normaleq O^{5'}(\mathcal{D})$, a contradiction by \cref{normalinF}.

Hence, if $O^{5'}(\mathcal{D})$ is not simple then $\mathcal{N}$ is supported on $S$. But then by \cite[Theorem II.9.8(d)]{ako}, we have that $O^{5'}(O^{5'}(\mathcal{D}))<O^{5'}(\mathcal{D})$, a contradiction. Thus $O^{5'}(\mathcal{D})$ is simple.

Assume that there is $\mathcal{N}$, a proper non-trivial normal subsystem of $\mathcal{D}$. Applying \cite[Theorem II.9.1]{ako} and using that $O^{5'}(\mathcal{D})$ is simple, we deduce that $O^{5'}(\mathcal{D})\le \mathcal{N}$ and it quickly follows that $O^{5'}(\mathcal{D})=\mathcal{N}$. Since $E_1$ is strongly closed in $\mathcal{D}$ and $O^{5'}(\mathcal{D})$, by \cref{SCExotic}, we conclude that $\mathcal{D}$ and $O^{5'}(\mathcal{D})$ are exotic. 
\end{proof}

We now begin the task of determining all saturated fusion systems supported on $S$. We first record a lemma limiting the possible combinations of essential subgroups in $\fs$. 

\begin{lemma}\label[lemma]{E3ForceE2}
Let $\fs$ be a saturated fusion system on $S$ with $E_3\in\mathcal{E}(\fs)$. If $E_2\in\mathcal{E}(\fs)$ then $E_2$ is not $\Aut_{\fs}(S)$-invariant and $\{E_2^\mathcal{G}\}\subseteq \mathcal{E}(\fs)$.
\end{lemma}
\begin{proof}
Assume that $E_2\in\mathcal{E}(\fs)$. Then, there is a $3$-element in $\Aut_{\fs}(E_1)$ which centralizes $S/E_1$ and $Z(E_1)$, and normalizes $E_3$, and lifts to some $\alpha\in\Aut_{\fs}(S)$ by the extension axiom. Then $\alpha$ normalizes $S/\mathbf{Q}$ and by \cref{E3inE15b}, we may choose $\alpha$ to normalize $E_3$. Thus, if $E_2$ is $\Aut_{\fs}(S)$-invariant, by coprime action, $\alpha$ centralizes $S/\mathbf{Q}$ and so centralizes $E_3\mathbf{Q}/\mathbf{Q}$. But as an $\langle \alpha\rangle$-module, $E_3\mathbf{Q}/\mathbf{Q}\cong E_3/Z(E_1)$ and coprime action yields that $\alpha$ centralizes $E_3$, a contradiction. Thus, $E_2$ is not $\Aut_{\fs}(S)$ invariant and we deduce that all $\mathcal{G}$-conjugates of $E_2$ are essential in $\fs$.
\end{proof}

\begin{proposition}\label[proposition]{5Triv}
Suppose that $\fs$ is a saturated fusion system on $S$ such that $\mathcal{E}(\fs)\subseteq \{E_i\alpha\}$ for some $i\in\{1,2\}$ and $\alpha\in \Aut_{\mathcal{G}}(S)$. Then either
\begin{enumerate}
    \item $\fs=N_{\fs}(S)$; or
    \item $\fs=N_{\fs}(E_i\alpha)$ where $O^{5'}(\Out_{\fs}(E_i\alpha))\cong \SL_2(5)$ for $i\in\{1,2\}$.
\end{enumerate}
\end{proposition}
\begin{proof}
If $\mathcal{E}(\fs)=\emptyset$, then (i) holds by the Alperin--Goldschmidt theorem. If $\mathcal{E}(\fs)=\{E_1\}$ or $\{E_2\alpha\}$ for some $\alpha\in \Aut_{\mathcal{G}}(S)$, then (ii) holds by \cref{MSL21} and \cref{MSL2}.
\end{proof}

\begin{proposition}\label[proposition]{5QAct}
Suppose that $\fs$ is a saturated fusion system on $S$ with $\{E_2^{\mathcal{G}}\}\subseteq \mathcal{E}(\fs)$. Then $O^{5'}(\Out_{\fs}(\mathbf{Q}))\cong 2.\mathrm{J}_2$, $\mathcal{E}(N_{\fs}(\mathbf{Q}))=\{E_2^{\mathcal{G}}\}$ and if $\{E_2^{\mathcal{G}}\}=\mathcal{E}(\fs)$ then $\fs=N_{\fs}(\mathbf{Q})$.
\end{proposition}
\begin{proof}
Assume that $\{E_2^{\mathcal{G}}\}\subseteq \mathcal{E}(\fs)$. Note that $\mathbf{Q}\normaleq N_{\fs}(E_2\alpha)\le N_{\fs}(\mathbf{Q})$ for all $\alpha\in\Aut_{\mathcal{G}}(S)$. Then \cref{normalinF} implies that $\{E_2^{\mathcal{G}}\}\subseteq\mathcal{E}(N_{\fs}(\mathbf{Q}))$, $O_5(N_{\fs}(\mathbf{Q}))=\mathbf{Q}$ and $\fs=N_{\fs}(\mathbf{Q})$ whenever $\{E_2^{\mathcal{G}}\}=\mathcal{E}(\fs)$. Furthermore, any essential subgroup of $N_{\fs}(\mathbf{Q})$ contains $\mathbf{Q}$ as $\mathbf{Q}\not\le E_1$ an appeal to \cref{MonsterEssen} gives $\mathcal{E}(N_{\fs}(\mathbf{Q}))=\{E_2^{\mathcal{G}}\}$. Finally, $O^{5'}(\Out_{\fs}(\mathbf{Q}))$ satisfies the hypothesis of \cref{J2Iden} so that $O^{5'}(\Out_{\fs}(\mathbf{Q}))\cong 2.\mathrm{J}_2$.
\end{proof}

\begin{proposition}\label[proposition]{5RAct}
Suppose that $\fs$ is a saturated fusion system on $S$ with $\{E_1, E_2\alpha\}\subseteq \mathcal{E}(\fs)$ for some $\alpha\in\mathcal{G}$. Then $O^{5'}(\Out_{\fs}(\mathbf{R}\alpha))\cong \PSL_3(5)$, $\mathcal{E}(N_{\fs}(\mathbf{R}\alpha))=\{E_1, E_2\alpha\}$ and if $\{E_1, E_2\alpha\}=\mathcal{E}(\fs)$ then $\fs=N_{\fs}(\mathbf{R}\alpha)$.
\end{proposition}
\begin{proof}
Assume that $\mathcal{E}(\fs)=\{E_1, E_2\alpha\}$ for some $\alpha\in \Aut_{\mathcal{G}}(S)$. Adjusting by an automorphism of $S$ if necessary, we may as well assume that $\mathcal{E}(\fs)=\{E_1, E_2\}$. Since any $\Aut_{\fs}(S)$-conjugate of $E_2$ is also essential in $\fs$, we infer from this that $E_2$ is $\Aut_{\fs}(S)$-invariant. In particular, since $\mathbf{R}\normaleq N_{\fs}(E_2)$ by \cref{MSL21}, $\mathbf{R}$ is normalized by $\Aut_{\fs}(S)$. By \cref{MSL2}, $O^{5'}(\Aut_{\fs}(E_1))$ normalizes $\mathbf{R}$ and so applying the extension axiom and a Frattini argument to $\Aut_{\fs}(E_1)$, we deduce that $\mathbf{R}$ is normalized by $\Aut_{\fs}(E_1)$. In particular, $\{E_1, E_2\}\subseteq \mathcal{E}(N_{\fs}(\mathbf{R}))$. 

Note that if $\mathbf{R}\le E_2\alpha\ne E_2$ for some $\alpha\in \Aut_{\mathcal{G}}(S)$, we have that $\mathbf{R}\le E_2\cap E_2\alpha=\mathbf{Q}$, a contradiction. Hence, by \cref{normalinF}, we see that $\mathcal{E}(N_{\fs}(\mathbf{R}\alpha))=\{E_1, E_2\alpha\}$. Since $\Aut_{\fs}(E_2)$ acts irreducibly on $\mathbf{Q}/\Phi(E_2)\cong E_2/\mathbf{R}$, we conclude that $\mathbf{R}=O_5(N_{\fs}(\mathbf{R}))$ and so if $\{E_1, E_2\}=\mathcal{E}(\fs)$ then $\fs=N_{\fs}(\mathbf{R})$. Moreover, the actions described in \cref{MSL21} and \cref{MSL2} imply that the only non-trivial normal subgroups of $\fs$ are $\mathbf{R}$ and $\Phi(\mathbf{R})$. Since $\mathrm{M}=\langle N_{\mathrm{M}}(S), M_4\rangle$, where $\mathrm{M}$ is the Monster, we see that $\Phi(\mathbf{R})$ is not characteristic in $S$. In particular, no non-trivial characteristic subgroup of $S$ is normal in $\fs$.

By \cref{model}, there is a finite group $G$ with $S\in\syl_5(G)$, $N_{\fs}(\mathbf{R})=\fs_S(G)$ and $F^*(G)=\mathbf{R}$. Moreover, by the uniqueness of models provided in \cref{model} we can embed the models of $N_{\fs}(S)$, $N_{\fs}(E_1)$ and  $N_{\fs}(E_2)$, which we write as $G_{12}$, $G_1$ and $G_2$ respectively, into $G$. Indeed, by the Alperin--Goldschmidt theorem, we may as well assume that $G=\langle G_1, G_2\rangle$ and $G_{12}=G_1\cap G_2$. Then the triple $(G_1/\mathbf{R}, G_2/\mathbf{R}, G_{12}/\mathbf{R})$ along with the appropriate induced injective maps forms a weak BN-pair of rank $2$, and since $S/\mathbf{R}\cong 5^{1+2}_+$, applying \cite[Theorem A]{Greenbook} and using the terminology there, we deduce that $O^{5'}(G)/\mathbf{R}$ is locally isomorphic to $\PSL_3(5)$. By \cite[Theorem 1]{PushWBN}, $O^{5'}(G)/\mathbf{R}\cong \PSL_3(5)$, and $\mathbf{R}/Z(\mathbf{R})$ and $Z(\mathbf{R})$ are dual natural modules for $O^{5'}(G)/\mathbf{R}$. Hence, we have that $O^{5'}(\Out_{\fs}(\mathbf{R}))\cong \PSL_3(5)$, as desired.
\end{proof}

\begin{remark}
In the above, the groups of shape $5^{3+3}.\PSL_3(5)$ come from a situation where a weak BN-pair of rank $2$ of type $\PSL_3(5)$ is pushed up. Indeed, this case occurs as outcome (12) of \cite[Theorem 1]{PushWBN} with the stipulation that $q=5$. There, this phenomena could also occur for $q=3^n$ for all $n\in\N$. We speculate that these cases could result in a class of interesting fusion systems. In particular, when $q=3$, a similar Sylow subgroup already supports the $3$-fusion categories of $\Omega_7(3)$, $\mathrm{Fi}_{22}$ and ${}^2\mathrm{E}_6(2)$. We note however that in our case $S$ is \emph{not} isomorphic to a Sylow $5$-subgroup of $\Omega_7(5)$.
\end{remark}

\begin{proposition}\label[proposition]{5nontriv}
Suppose that $\fs$ is a saturated fusion system on $S$. Then $O_5(\fs)=\{1\}$ if and only if $\mathcal{E}(\fs)=\{E_1, E_2^{\mathcal{G}}\}$ or $E_3\in\mathcal{E}(\fs)$.
\end{proposition}
\begin{proof}
Suppose first that $\mathcal{E}(\fs)=\{E_1, E_2^{\mathcal{G}}\}$. By \cref{5QAct}, we have that $\Out_{\fs}(\mathbf{Q})\cong 2.\mathrm{J}_2$ acts irreducibly on $Q_2/Z(S)$. As $Q_2\not\le E_1$, by \cref{normalinF}, we conclude that $O_5(\fs)\le Z(S)$. But by \cref{MSL2}, $\Out_{\fs}(E_1)$ acts irreducibly on $Z(E_1)$ and we conclude that $O_5(\fs)=\{1\}$. If $E_3\in\mathcal{E}(\fs)$ for some $\alpha\in\mathcal{G}$ then $O_5(\fs)=\{1\}$ by \cref{E3inE152}.

Suppose that $O_5(\fs)=\{1\}$. By \cref{E3inE15b}, if $E_3\not\in\mathcal{E}(\fs)$ then no $\mathcal{G}$-conjugate of $E_3$ is contained in $\mathcal{E}(\fs)$. Then \cref{MonsterEssen} and \cref{5Triv}-\cref{5RAct} imply that $\mathcal{E}(\fs)=\{E_1, E_2^{\mathcal{G}}\}$ as desired.
\end{proof}

As a consequence of this result, if $O_5(\fs)\ne\{1\}$ then $\fs$ is described in \cref{5Triv}-\cref{5RAct}. We additionally note that if $E_3\alpha\in\mathcal{E}(\fs)$ then \cref{E3inE152} implies that $E_1\in\mathcal{E}(\fs)$ and \cref{E3ForceE2} implies that either $\{E_2^{\mathcal{G}}\}\cap \mathcal{E}(\fs)=\emptyset$ or $\{E_2^{\mathcal{G}}\}\subset \mathcal{E}(\fs)$. 

\begin{lemma}\label[lemma]{E1Unique5}
Suppose that $\fs_1, \fs_2$ are two saturated fusion systems supported on $S$. If $E_1\in\mathcal{E}(\fs_1)\cap \mathcal{E}(\fs_2)$ and $N_{\fs_1}(S)=N_{\fs_2}(S)$ then $N_{\fs_1}(E_1)=N_{\fs_2}(E_1)$.
\end{lemma}
\begin{proof}
We know that $E_1$ is characteristic in $S$ so that $N_{\fs_1}(E_1)\ge N_{\fs_1}(S)\le N_{\fs_2}(E_1)$. By \cite[Proposition 2.11]{BobTodd}, it suffices to show that $\Aut_{\fs_1}(E_1)=\Aut_{\fs_2}(E_1)$ and that the homomorphism $H^1(\Out_{\fs}(E_1); Z(E_1))\to H^1(\Out_{N_{\fs_1}(S)}(E_1); Z(E_1))$ induced by restriction is surjective. We observe by \cref{MSL2} and \cref{E3inE15a} that $\Out_{\fs_i}(E_1)$ contains a subgroup isomorphic to $3\times \GL_2(5)$ of index at most $2$ for $i\in\{1,2\}$. Moreover, since $E_1$ is characteristic in $S$, all morphisms in $\Aut_{\fs_i}(S)$ restrict faithfully to morphisms in $\Aut_{\fs_i}(E_1)$ for $i\in\{1,2\}$. In particular, $\Aut_{\fs_i}(S)$ is generated by lifted morphisms in $N_{\Aut_{\fs_i}(E_1)}(\Aut_S(E_1))$.

Let $K$ be a Hall $5'$-subgroup of $N_{\Aut_{\fs_1}(E_1)}(\Aut_S(E_1))=\Aut_{N_{\fs_1}(S)}(E_1)=\Aut_{N_{\fs_2}(S)}(E_1)$ so that $K\cong 3\times C_4\times C_4$ or $\Sym(3)\times C_4\times C_4$. Then $K$ lifts to a group of automorphisms $\hat{K}$ of $\Aut(S)$ with $\hat{K}\Inn(S)=\Aut_{\fs_1}(S)=\Aut_{\fs_2}(S)$. We calculate that $|\Aut(S)|_{5'}=|\Out_{\mathcal{G}}(S)|$ in \cref{code} (but \cref{ThmC1} also provides a genuine proof). In particular, $\hat{K}$ is either a Hall $5'$-subgroup itself or is the centralizer of the unique Sylow $3$-subgroup of a Hall $5'$-subgroup of $\Aut(S)$. Either way set $L:=C_K(O_3(K))$ so that $L\cong 3\times C_4\times C_4$.

We note that $C_{\Out_{\fs_i}(E_1)}(O_3(\Out_{\fs_i}(E_1)))\cong 3\times \GL_2(5)$ and $Z(C_{\Out_{\fs_i}(E_1)}(O_3(\Out_{\fs_i}(E_1))))$ is cyclic of order $12$ for $i\in\{1,2\}$. Indeed, there is a unique subgroup $L^*$ of $L$ cyclic of order $12$ such that $[L^*|_{E_1}\Inn(E_1)/\Inn(E_1), C_{\Out_{\fs_i}(E_1)}(O_3(\Out_{\fs_i}(E_1)))]=\{1\}$. In particular, by a Frattini argument, we see that $C_{\Aut_{\fs_i}(E_1)}(L^*|_{E_1})\Inn(E_1)$ is the preimage in $\Aut_{\fs_i}(E_1)$ of $C_{\Out_{\fs_i}(E_1)}(O_3(\Out_{\fs_i}(E_1)))$. We verify using MAGMA (See \cref{code}) that $C_{\Aut(E_1)}(L^*|_{E_1})\cong 3\times \GL_2(5)$ and so we have that $C_{\Aut_{\fs_1}(E_1)}(L^*|_{E_1})\Inn(E_1)=C_{\Aut_{\fs_2}(E_1)}(L^*|_{E_1})\Inn(E_1)$. Finally, since $N_{\Aut_{\fs_1}(E_1)}(\Aut_S(E_1))=\Aut_{N_{\fs_1}(S)}(E_1)=\Aut_{N_{\fs_2}(S)}(E_1)$, a Frattini argument implies that $\Aut_{\fs_1}(E_1)=\Aut_{\fs_2}(E_1)$.

It remains to prove that the homomorphism $H^1(\Out_{\fs}(E_1); Z(E_1))\to H^1(\Out_{N_{\fs_1}(S)}(E_1); Z(E_1))$ induced by restriction is surjective. We observe by \cref{MSL2} and \cref{E3inE15a} that $\Out_{\fs_1}(E_1)\cong 3\times \GL_2(5)$ or $\Sym(3)\times \GL_2(5)$. One can compute (e.g. in MAGMA as in \cref{code}) that $H^1(\Out_{N_{\fs_1}(S)}(E_1); Z(E_1))=\{1\}$. Hence, the result.
\end{proof}

\begin{lemma}\label[lemma]{E3Unique5}
Suppose that $\fs_1, \fs_2$ are two saturated fusion systems supported on $T$ where $E_1\le T\le S$. If $E_3\in\mathcal{E}(\fs_1)\cap \mathcal{E}(\fs_2)$ and $N_{\fs_1}(E_1)=N_{\fs_2}(E_1)$ then $\Aut_{\fs_1}(E_3)=\Aut_{\fs_2}(E_3)$.
\end{lemma}
\begin{proof}
By \cref{E3inE152}, we have that $O^{5'}(\Aut_{\fs_i}(E_3))\cong\SL_2(25)$ for $i\in\{1,2\}$. Write $X:=O^{5'}(\Aut_{\fs_1}(E_3))$ and $Y:=O^{5'}(\Aut_{\fs_2}(E_3))$. Set $K:=N_{\Aut_{\fs_1}(E_3)}(\Aut_T(E_3))$ so that, by the extension axiom, all morphisms in $K$ lift to morphisms in $\Aut_{\fs_1}(E_1)=\Aut_{\fs_2}(E_1)$. In particular, by \cref{E3inE15a} \[K=N_{\Aut_{N_{\fs_1}(E_1)}(E_3)}(\Aut_T(E_3))=N_{\Aut_{N_{\fs_2}(E_1)}(E_3)}(\Aut_T(E_3))=N_{\Aut_{\fs_2}(E_3)}(\Aut_T(E_3)).\]
 
Let $L$ be a cyclic subgroup of order $24$ in a Hall $5'$-subgroup of $K$ arranged such that $K_L:=L\Aut_S(E_3)=N_{O^{5'}(\Aut_{\fs_1}(E_3))}(\Aut_S(E_3))$. Then $K_L\le X\cap Y\le \Aut(E_1)\cong\GL_4(5).$ We record that there is a unique conjugacy class of subgroups isomorphic to $\SL_2(25)$ in $\GL_4(5)$ (see \cref{code}). Hence, there is $g\in\Aut(E_3)$ with $Y=X^g$.

Then $K_L, (K_L)^g\le Y$ and so there is $y\in Y$ such that $(K_L)^g=(K_L)^y$. Thus, we have that $X^{gy^{-1}}=X^g$ and we calculate that $gy^{-1}\le N_{\GL_4(5)}(K_L)\le N_{\GL_4(5)}(X)$ (see \cref{code}). But then $X=X^g=Y$. By a Frattini argument, $\Aut_{\fs_1}(E_3)=XK=YK=\Aut_{\fs_2}(E_3)$.
\end{proof}

\begin{theorem}\label[theorem]{ThmC1}
Suppose that $\fs$ is saturated fusion system on $S$. If $\{E_1, E_2^{\mathcal{G}}\}\subseteq\mathcal{E}(\fs)$ then $\fs\cong \mathcal{G}$ or $\mathcal{H}$.
\end{theorem}
\begin{proof}
We observe by the Alperin--Goldschmidt theorem, \cref{MonsterEssen} and \cref{E3inE15b} that either $\mathcal{E}(\fs)=\{E_1, E_2^{\mathcal{G}}\}$ or $\mathcal{E}(\fs)=\{E_1, E_2^{\mathcal{G}}, E_3^{\mathcal{G}}\}$. Moreover, applying \cref{E1Unique5} and \cref{E3Unique5}, if $N_{\fs}(S)=N_{\mathcal{G}}(S)$ and $N_{\fs}(\mathbf{Q})=N_{\mathcal{G}}(\mathbf{Q})$ then the Alperin--Goldschmidt theorem and \cref{MonsterEssen} yields that $\fs=\mathcal{G}$ or $\fs=\mathcal{H}$ depending on whether or not $E_3\in\mathcal{E}(\fs)$.

Since $\{E_2^{\mathcal{G}}\}\subseteq\mathcal{E}(\fs)$, we have by \cref{5QAct} that $\mathbf{J}=O_5(N_{\fs}(\mathbf{J}))$ and $O^{5'}(\Out_{\fs}(\mathbf{J}))\cong 2.\mathrm{J}_2$ acts trivially on $Z(S)$. Let $L$ be a complement to $\Aut_S(E_1)$ in $N_{O^{5'}(\Aut_{\fs}(E_1))}(\Aut_S(E_1))$, recalling that $O^{5'}(\Out_{\fs}(E_1))\cong \SL_2(5)$ by \cref{MSL2}. Then $L$ acts faithfully on $Z(S)$ and lifts by the extension axiom to $\hat{L}\le \Aut_{\fs}(S)$ which also acts faithfully on $Z(S)$. Since $\mathbf{J}$ is characteristic in $S$, $\hat{L}|_{\mathbf{Q}}$ induces a cyclic subgroup of $\Aut_{\fs}(\mathbf{J})$ of order $4$ which acts faithfully on $Z(S)$. Indeed, we have that $|\Out_{\fs}(\mathbf{J})/O^{5'}(\Out_{\fs}(\mathbf{J}))|\geq 4$. 

Since a maximal subgroup of $\Out(\mathbf{Q})\cong \Sp_6(5):4$ containing $O^{5'}(\Out_{\fs}(\mathbf{Q}))$ has shape $4.\mathrm{J}_2:2$ by \cite{Winter} and \cite[Table 8.28]{LowMax}, we deduce that $\Out_{\fs}(\mathbf{Q})\cong 4.\mathrm{J}_2:2$ is maximal in $\Out(\mathbf{Q})$. Since any automorphism of $S$ restricts faithfully to an automorphism of $\mathbf{Q}$ which normalizes $\Aut_S(\mathbf{Q})$ and $\Out_{\fs}(\mathbf{Q})$ is maximal, we must have, by the extension axiom, that a Hall $5'$-subgroup of $N_{\Aut_{\fs}(\mathbf{Q})}(\Aut_S(\mathbf{Q}))$ lifts to a Hall $5'$-subgroup of $\Aut(S)$. Indeed, $\Aut_{\fs}(S)$ contains a Hall $5'$-subgroup of $\Aut(S)$, and by a similar reasoning, $\Aut_{\mathcal{G}}(S)$ contains a Hall $5'$-subgroup of $\Aut(S)$. Therefore, there is $\alpha\in\Aut(S)$ such that $\Aut_{\fs}(S)^\alpha=\Aut_{\fs^\alpha}(S)=\Aut_{\mathcal{G}}(S)$ and by \cref{model}, there is $\beta\in\Aut(S)$ with $N_{\fs}(S)^\beta=N_{\fs^\beta}(S)=N_{\mathcal{G}}(S)$.

Let $K$ be the embedding of the restriction of $\Aut_{\fs^\beta}(S)$ to $\mathbf{Q}$ into $\Aut(\mathbf{Q})\cong 5^6:\Sp_6(5).4$. Set $X=\Aut_{\fs^\beta}(\mathbf{Q})$ and $Y=\Aut_{\mathcal{G}}(\mathbf{Q})$ so that $K\le X\cap Y$. We observe that there is one conjugacy class of subgroups isomorphic to $\Aut_{\fs^\beta}(\mathbf{Q})$ in $\Aut(\mathbf{Q})$ and so there is $g\in \Aut(\mathbf{Q})$ with $Y=X^g$. Then $K, K^g\le Y$ and $K, K^g$ are both Sylow $5$-subgroup normalizers in $Y$. Thus, there is $m\in Y$ with $K^m=K^g$ so that $gm^{-1}\in N_{\Aut(\mathbf{Q})}(K)$ and $X^{gm^{-1}}=Y$. We calculate in MAGMA that $N_{\Aut(\mathbf{Q})}(K)\le N_{\Aut(\mathbf{Q})}(X)$ (\cref{code}) so that $X=Y$ and $\Aut_{\fs^\beta}(\mathbf{Q})=\Aut_{\mathcal{G}}(\mathbf{Q})$. Hence, by \cref{model} there is $\beta_2\in\Aut(S)$ such that $N_{\fs^\beta}(\mathbf{Q})^{\beta_2}=N_{\fs^{\beta\beta_2}}(\mathbf{Q})=N_{\mathcal{G}}(\mathbf{Q})$ and $\gamma:=\beta\beta_2\in\Aut(S)$. Then $N_{\fs^\gamma}(S)=N_{N_{\fs^\gamma}(\mathbf{Q})}(S)=N_{N_{\mathcal{G}}(\mathbf{Q})}(S)=N_{\mathcal{G}}(S)$ and we conclude that $\fs^\gamma=\mathcal{H}$ or $\mathcal{G}$. Hence, $\fs\cong \mathcal{H}$ or $\mathcal{G}$, as desired.
\end{proof}

\begin{remark}
The techniques in the above proof can be used to show that the symplectic amalgam $\mathcal{A}_{53}$ found in \cite{parkerSymp} is determined up to isomorphism.
\end{remark}

\begin{theorem}\label[theorem]{ThmC2}
Suppose that $\fs$ is saturated fusion system on $S$ such that $E_2^{\mathcal{G}}\cap \mathcal{E}(\fs)=\emptyset$ and $E_3^{\mathcal{G}}\cap \mathcal{E}(\fs)\ne \emptyset$. Then $\fs\cong \mathcal{D}$ or $O^{5'}(\mathcal{D})$.
\end{theorem}
\begin{proof}
By \cref{MSL2} and \cref{E3inE15a}, we have that $C_{\Out_{\fs}(E_1)}(O_3(\Out_{\fs}(E_1)))\cong 3\times \GL_2(5)$ has index at most $2$ in $\Out_{\fs}(E_1)$ and $\{E_3^\fs\}=\{E_3^{\mathcal{G}}\}$. Let $A$ be the preimage in $\Aut_{\fs}(E_1)$ of this subgroup and consider the group $K:=N_A(\Aut_S(E_1))$. Then, by the extension axiom, $K$ lifts to a subgroup $\hat{K}\le \Aut_{\fs}(S)$ such that $\hat{K}\Inn(S)/\Inn(S)\cong 3\times C_4\times C_4$. In particular, $|\hat{K}\Inn(S)/\Inn(S)|=|\Out_{\mathcal{G}}(S)|/2$. As observed in \cref{ThmC1}, $\Out_{\mathcal{G}}(S)$ is a Hall $5'$-subgroup of $\Out(S)$ and so $\hat{K}\Inn(S)/\Inn(S)$ has index $2$ in some Hall $5'$-subgroup $Y$ of $\Out(S)$ which is conjugate in $\Out(S)$ to $\Out_{\mathcal{G}}(S)$. Indeed, $\hat{K}\Inn(S)/\Inn(S)=C_{Y}(O_3(Y))$. Hence, we have that $\hat{K}$ is conjugate in $\Aut(S)$ to $\Aut_{O^{5'}(\mathcal{D})}(S)=C_{\Aut_{\mathcal{D}}(S)}(O_3(\Aut_{\mathcal{D}}(S)))$. Since $N_{\Aut(S)}(\Aut_{O^{5'}(\mathcal{D})}(S))=\Aut_{\mathcal{G}}(S)$ by \cref{code}, we see that either $\Aut_{\fs}(S)$ is conjugate to $\Aut_{O^{5'}(\mathcal{D})}(S)$ or $\Aut_{\mathcal{G}}(S)$. In particular, applying \cref{model}, $N_{\fs}(S)$ is $\Aut(S)$-conjugate to $N_{O^{5'}(\mathcal{D})}(S)$ or $N_{\mathcal{D}}(S)$.

Suppose that there is $\alpha\in\Aut(S)$ with $N_{\fs}(S)^\alpha=N_{\fs^\alpha}(S)=N_{\mathcal{D}}(S)$. Applying \cref{E1Unique5} we have that $N_{\fs^\alpha}(E_1)=N_{\mathcal{D}}(E_1)$. We note that by \cref{E3inE15b} that $\{E_3^{\mathcal{G}}\}=\{E_3^{\fs^\alpha}\}$. Hence, $\fs^\alpha$ and $\mathcal{D}$ have the same essential subgroups and we deduce by the Alperin--Goldschmidt theorem that $\fs^\alpha=\langle N_{\fs^\alpha}(E_1), \Aut_{\fs^\alpha}(E_3)\rangle_S$. Then \cref{E3Unique5} implies that $\fs^\alpha=\mathcal{D}$ and $\fs\cong \mathcal{D}$.

Suppose now that there is $\alpha\in\Aut(S)$ with $N_{\fs}(S)^\alpha=N_{\fs^\alpha}(S)=N_{O^{5'}(\mathcal{D})}(S)$ so that by \cref{E1Unique5}, we have $N_{\fs^\alpha}(E_1)=N_{O^{5'}(\mathcal{D})}(E_1)$. Then by the Alperin--Goldschmidt theorem, \cref{MonsterEssen} and \cref{E3inE15b}, we have that $\fs^\alpha=\langle N_{\fs^\alpha}(E_1), \Aut_{\fs^\alpha}(E_3)\rangle_S$ and \cref{E3Unique5} implies that $\fs^\alpha=O^{5'}(\mathcal{D})$ and $\fs\cong O^{5'}(\mathcal{D})$, completing the proof.
\end{proof}

We provide the following table summarizing the actions induced by the fusion systems described in \cref{ThmC1} and \cref{ThmC2} on their centric-radical subgroups. The entry ``-" indicates that the subgroup is no longer centric-radical in the subsystem.

\begin{table}[H]
    \caption{$\mathcal{G}$-conjugacy classes of radical-centric subgroups of $S$}
        \begin{tabular}{|c|c|c|c|}\hline
            $P$ & $|P|$ & $\Out_{\mathcal{G}}(P)$ & $\Out_{\mathcal{H}}(P)$\\\hline
            $S$ & $5^{9}$ & $\Sym(3)\times 4\times 4$ & $\Sym(3)\times 4\times 4$ \\\hline
            $E_1$ & $5^8$ & $\Sym(3)\times \GL_2(5)$ & $\Sym(3)\times \GL_2(5)$ \\\hline
            $E_2$ & $5^8$ & $2\times \GL_2(5)$ & $2\times \GL_2(5)$ \\\hline            
            $E_3$ & $5^4$ & $(3\times \SL_2(25)).2$ & - \\ \hline
            $\mathbf{Q}$ & $5^7$ & $4.\mathrm{J}_2:2$ & $4.\mathrm{J}_2:2$\\ \hline
            $\mathbf{R}$ & $5^6$ & $2\times \PSL_3(5)$ & $2\times \PSL_3(5)$\\ \hline            
        \end{tabular}
\end{table}

\begin{table}[H]
    \caption{$\mathcal{G}$-conjugacy classes of radical-centric subgroups of $S$}
\begin{tabular}{|c|c|c|c|}\hline
            $P$ & $|P|$ & $\Out_{\mathcal{D}}(P)$ & $\Out_{O^{5'}(\mathcal{D})}(P)$\\\hline
            $S$ & $5^{9}$ & $\Sym(3)\times 4\times 4$  & $3\times 4\times 4$ \\\hline
            $E_1$ & $5^8$ & $\Sym(3)\times \GL_2(5)$ & $3\times \GL_2(5)$\\\hline
            $E_2$ & $5^8$ & - & - \\\hline            
            $E_3$ & $5^4$ & $(3\times \SL_2(25)).2$ & $3\times \SL_2(25)$\\ \hline
            $\mathbf{Q}$ & $5^7$ & - & - \\ \hline
            $\mathbf{R}$ & $5^6$ & - & - \\ \hline            
        \end{tabular}        
    \label{MTable}
\end{table}

In a similar manner to \cref{F3Sec}, we now construct some additional exotic fusion systems related to the system $\mathcal{D}$ and supported on $E_1$. We note that the lift to $\Aut_{\mathcal{D}}(E_1)$ of a cyclic subgroup of order $24$ in $N_{O^{5'}(\Aut_{\mathcal{D}}(E_3))}(\Aut_{E_1}(E_3))$ projects as a group of order $24$ in the unique normal subgroup of $\Out_{\mathcal{D}}(E_1)\cong \Sym(3)\times \GL_2(5)$ which is isomorphic to $\GL_2(5)$. Indeed, there is a unique up to conjugacy cyclic subgroup of $\GL_2(5)$ of order $24$ which is contained in a unique $5'$-order overgroup, in which it has index $2$. We set $K^*\cong \Sym(3)\times (\mathrm{C}_{24}:2)$ to be the unique $5'$-order overgroup of a chosen cyclic subgroup of order $24$ in $\Out_{\mathcal{D}}(E_1)$ and denote by $K$ its preimage in $\Aut_{\mathcal{D}}(E_1)$. Indeed, $N_{\Aut_{\mathcal{D}}(E_1)}(E_3)$ has index $50$ in $K\Inn(E_1)$.

Let $G$ be a model for $N_{\mathcal{D}}(E_1)$ and let $H$ be a subgroup of $G$ chosen such that $\Aut_H(E_1)=K\Inn(E_1)$. We define the subsystem
\[\mathcal{D^*}=\langle \fs_{E_1}(H), \Aut_{\mathcal{D}}(E_3)\rangle_{E_1} \le \mathcal{D}.\]

We observe that we could have chosen any of the $10$ $\Aut_{\mathcal{D}}(E_1)$-conjugates of $K$ to form a saturated fusion system. By definition, all of the created fusion systems are isomorphic. Moreover, the $\mathcal{D}$-conjugacy class of $E_3$ splits into $10$ classes upon restricting to $\mathcal{D}^*$, which in turn correspond the possible choices of a cyclic subgroup of order $24$.

\begin{proposition}
$\mathcal{D^*}$ is saturated fusion system on $E_1$ and $\mathcal{E}(\mathcal{D}^*)=\{E_3^{\mathcal{D}^*}\}$.
\end{proposition}
\begin{proof}
We create $H$ as in the construction of $\mathcal{D}^*$ and consider $\fs_{E_1}(H)$. Since $\fs_{E_1}(H)\le \mathcal{D}$, and as $E_3$ is fully $\mathcal{D}$-normalized and $N_S(E_3)\le E_1$, $E_3$ is also fully $\fs_{E_1}(H)$-normalized. Since $C_{E_1}(E_3)\le E_3$ we see that $E_3$ is also $\fs_{E_1}(H)$-centric. Finally, since $E_3$ is abelian, it is minimal among $S$-centric subgroups with respect to inclusion and has no proper subgroup of $E_3$ is essential in $\fs_{E_1}(H)$. In the statement of \cref{JasonAdd}, letting $\fs_0=\fs_{E_1}(H)$, $V=E_3$ and $\Delta=\Aut_{\mathcal{D}}(E_3)$, we have that $\wt\Delta:=\Aut_{\fs_{E_1}(H)}(E_3)=N_{\Aut_{\mathcal{D}}(E_3)}(\Aut_S(E_3))$ is strongly $5$-embedded in $\Delta$. By that result, $\mathcal{D^*}=\langle \fs_{E_1}(H), \Aut_{\mathcal{D}}(E_3)\rangle_{E_1}$ is a saturated fusion system.

Since each morphism in $\mathcal{D}^*$ is a composite of morphisms in $\fs_{E_1}(H)$ and $\Aut_{\mathcal{D}}(E_3)$, we must have that an essential subgroup of $\mathcal{D}^*$ is contained in some $H$-conjugate of $E_3$ and so  $\mathcal{E}(\mathcal{D}^*)=\{E_3^{\mathcal{D}^*}\}$.
\end{proof}

\begin{proposition}\label[proposition]{DStar5Pa}
$O^{5'}(\mathcal{D^*})$ has index $4$ in $\mathcal{D^*}$.
\end{proposition}
\begin{proof}
Let $K$ be a Hall $5'$-subgroup of $N_{O^{5'}(\Aut_{\mathcal{D}^*}(E_3))}(\Aut_{E_1}(E_3))$ so that $K$ is cyclic of order $24$. Then $K$ centralizes a Sylow $3$-subgroup of $\Aut_{\mathcal{D}^*}(E_3)$ and, by the extension axiom, lifts to a group of morphisms in $\Aut_{\mathcal{D}^*}(E_1)$ which we denote by $\hat{K}$. Indeed, it follows that $\hat{K}$ centralizes a Sylow $3$-subgroup of $\Aut_{\mathcal{D}^*}(E_1)$, and this holds for all $\mathcal{D}^*$ conjugates of $E_3$. Now, by the definition of $\mathcal{D}^*$, if $R$ is a $\mathcal{D}^*$-centric subgroup which is not equal to a $\mathcal{D}^*$-conjugate of $E_3$ then by the extension axiom, it follows that $\Aut_{E_1}(R)\normaleq \Aut_{\mathcal{D}^*}(R)$ and so $O^{5'}(\Aut_{\mathcal{D}^*}(R))$ is a $5$-group. Hence, we have by definition that $\Out_{\mathcal{D}^*}^0(E_1)$ centralizes a Sylow $3$-subgroup of $\Out_{\mathcal{D}^*}(E_1)$. We observe that the centralizer in $\Out_{\mathcal{D}^*}(E_1)\cong \Sym(3)\times \mathrm{C}_{24}:2$ of a Sylow $3$-subgroup is isomorphic to $3\times \mathrm{C}_{24}$ and so $O^{5'}(\mathcal{D}^*)$ has index at least $4$ in $\mathcal{D}^*$ by \cref{p'lemma}.

Since $\hat{K}$ is cyclic of order $24$, we have that $\Out_{\mathcal{D}^*}^0(E_1)$ is of order at least $24$ and $O^{5'}(\mathcal{D}^*)$ has index at most $8$ in $\mathcal{D}^*$ by \cref{p'lemma}. Aiming for a contradiction, assume that $\Out_{\mathcal{D}^*}^0(E_1)=\hat{K}\Inn(E_1)/\Inn(E_1)$ is cyclic of order $24$. Then $\hat{K}\Inn(E_1)/\Inn(E_1)\normaleq \Out_{\mathcal{D}^*}(E_1)$. But then for $T\in\syl_3(\hat{K}\Inn(E_1)/\Inn(E_1))$, $T\normaleq \Out_{\mathcal{D}^*}(E_1)$, and in the language of \cref{MSL2} we have that $T\in\syl_3(A)$ or $T\in\syl_3(B)$. That is, either $T$ centralizes $Z(E_1)$ or $T$ centralizes $\Phi(E_1)/Z(E_1)$. Since $T$ is induced by the lift of a morphism in $K$, this is a contradiction. Hence, $\Out_{\mathcal{D}^*}^0(E_1)\cong 3\times \mathrm{C}_{24}$ and by \cref{p'lemma}, we have that $O^{5'}(\mathcal{D}^*)$ has index $4$ in $\mathcal{D}^*$. 
\end{proof}

\begin{proposition}\label[proposition]{DStar5P}
$O^{5'}(\mathcal{D}^*)$ is simple and $\mathcal{E}(O^{5'}(\mathcal{D}^*))=\{E_3^{\mathcal{D}^*}\}=\{E_3^{O^{5'}(\mathcal{D}^*)}, E_3\alpha^{O^{5'}(\mathcal{D}^*)}\}$ for some $\alpha\in\mathcal{D}^*$.
\end{proposition}
\begin{proof}
Let $\mathcal{N}\normaleq O^{5'}(\mathcal{D^*})$ supported on $P\le E_1$. By \cref{p'lemma} we may assume that $P<E_1$, and $P$ is strongly closed in $\mathcal{D^*}$. By the irreducible action of $O^{5'}(\Aut_{\mathcal{D}^*}(E_3))$ on $E_3$, we deduce that $E_3\le P$ and since $P\normaleq E_1$, we have that $N_{E_1}(E_3)\le P$. Indeed, as $\Out_{O^{5'}(\mathcal{D}^*)}(E_1)$ acts irreducibly on $E_1/N_{E_1}(E_3)$ we see that $P=N_{E_1}(E_3)$. By \cite[Proposition I.6.4(c)]{ako} we have that $O^{5'}(\Aut_{\mathcal{N}}(E_3))=O^{5'}(\Aut_{O^{5'}(\mathcal{D})}(E_3))\cong \SL_2(25)$.

Let $\tau$ be a non-trivial involution in $Z(O^{5'}(\Aut_{\mathcal{N}}(E_3)))$. By the extension axiom, $\tau$ lifts to $\wt \tau\in\Aut_{O^{5'}(\mathcal{D})}(E_1)$ and restricts to $\hat{\tau}\in \Aut_{O^{5'}(\mathcal{D})}(P)$. Indeed, $\hat{\tau}\in \Aut_{\mathcal{N}}(P)\normaleq \Aut_{O^{5'}(\mathcal{D})}(P)$ and we ascertain that $[\hat{\tau}, \Aut_{E_1}(P)]\le \Inn(P)$. Since $\hat{\tau}$ is the extension of $\tau\in \Aut_{\mathcal{N}}(E_3)$ to $P$, we have that $[\hat{\tau}, \Aut_{E_1}(P)]\le \Aut_{E_3}(P)$. By the extension axiom, we infer that $[\wt \tau, E_1]\le E_3$. But then, as $E_3$ is abelian and $[E_1, E_3, \wt \tau]\le [\Phi(E_1), \hat{\tau}]\le Z(E_1)$, the three subgroups lemma implies that $[E_3, \wt{\tau}, E_1]\le Z(E_1)$ and as $E_3=[E_3, \tau]$ and $Z(E_1)\le E_3$, we have that $E_3\normaleq E_1$, a contradiction. Hence, $O^{3'}(\mathcal{D}^*)$ is simple.

We note that $\Aut_{O^{5'}(\mathcal{D}^*)}(E_1)\le N_{\Aut_{\mathcal{D}^*}(E_1)}(E_3)\Inn(E_1)$ and it follows that $\{E_3^{\mathcal{D}^*}\}$ splits into two conjugacy classes upon restricting to $O^{5'}(\mathcal{D}^*)$. The claim regarding the essential subgroups is then clear.
\end{proof}

\begin{proposition}\label[proposition]{DStar}
There are three proper saturated subsystems of $\mathcal{D}^*$ which properly contain $O^{5'}(\mathcal{D^*})$. Moreover, every saturated subsystem $\fs$ of $\mathcal{D}^*$ of index prime to $5$ is an exotic fusion system, and satisfies $\fs^{frc}=\{E_3^{\mathcal{D}^*}, E_1\}$.
\end{proposition}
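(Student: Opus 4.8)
The plan is to realise every such $\fs$ as a saturated subsystem of $\mathcal{D}^*$ of index prime to $5$ and then to combine the standard bookkeeping for such subsystems with the exoticity technique of \cref{HExotic5}. First I would record, from the construction of $\mathcal{D}^*$ and the preceding proposition, that $\Out_{\mathcal{D}^*}(E_1)\cong \Sym(3)\times(\mathrm{C}_{24}:2)$ has $\Out_{O^{5'}(\mathcal{D}^*)}(E_1)\cong 3\times\mathrm{C}_{24}$ as a normal subgroup of index $4$; since $3\normaleq\Sym(3)$ and $\mathrm{C}_{24}\normaleq\mathrm{C}_{24}:2$, the quotient is elementary abelian of order $4$, and via the identification of $\mathcal{D}^*/O^{5'}(\mathcal{D}^*)$ with this quotient group supplied by the theory of subsystems of index prime to $p$ (\cite[Section I.7]{ako}), we conclude $\mathcal{D}^*/O^{5'}(\mathcal{D}^*)\cong C_2\times C_2$. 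By \cref{p'lemma}(i) any subsystem $\fs$ with $O^{5'}(\mathcal{D}^*)\le\fs\le\mathcal{D}^*$ has index prime to $5$ in $\mathcal{D}^*$, hence is parametrised by a subgroup of this quotient and is saturated; each of the three intermediate systems can alternatively be produced directly via \cref{JasonAdd} with $\fs_0=\fs_{E_1}(E_1{:}K_0)$ for an index-$2$ subgroup $3\times\mathrm{C}_{24}\le K_0\le\Sym(3)\times(\mathrm{C}_{24}:2)$, $V=E_3$, $\Delta=\Aut_{\mathcal{D}}(E_3)$, exactly as for $\mathcal{D}^*$ and $O^{5'}(\mathcal{D}^*)$. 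Since $C_2\times C_2$ has exactly three proper nontrivial subgroups, this gives exactly three proper subsystems of $\mathcal{D}^*$ strictly containing $O^{5'}(\mathcal{D}^*)$.

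For the centric-radical subgroups, fix such an $\fs$. By \cref{p'lemma}(i), $\fs^c=\mathcal{D}^{*c}$; for $P\in\mathcal{D}^{*c}$ the group $\Aut_\fs(P)$ is normal of $5$-power index in $\Aut_{\mathcal{D}^*}(P)$, so $O_5(\Aut_\fs(P))=O_5(\Aut_{\mathcal{D}^*}(P))$ and $P$ is $\fs$-radical exactly when it is $\mathcal{D}^*$-radical; the $\fs$- and $\mathcal{D}^*$-conjugacy classes of subgroups of $E_1$ coincide, so the fully normalised subgroups coincide as well. Hence $\fs^{frc}=\mathcal{D}^{*frc}=\{E_3^{\mathcal{D}^*},E_1\}$, the last equality being immediate from the construction of $\mathcal{D}^*$ (whose only essential class is $\{E_3^{\mathcal{D}^*}\}$, with $E_1$ the ambient $5$-group).

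For exoticity I would first note $O_5(\fs)=\{1\}$: by \cref{normalinF}, $O_5(\fs)$ is an $\Aut_\fs(E_3)$-invariant subgroup of the essential $E_3$ that is normal in $E_1$, and since $O^{5'}(\Aut_\fs(E_3))\cong\SL_2(25)$ acts irreducibly on $E_3$ while $E_3\not\normaleq E_1$, it must be trivial. Assuming $\fs=\fs_{E_1}(G)$ for a finite group $G$ with $O_{5'}(G)=\{1\}$, we then have $O_5(G)\le O_5(\fs)=\{1\}$, so $F^*(G)=E(G)$ is a product of nonabelian simple groups of order divisible by $5$, and I would run the argument of \cref{HExotic5}: using that $E_1\in\syl_5(G)$ has order $5^8$ and $5$-rank $4$, that $|Z(E_1)|=25$, that (as established in the preceding proposition) $\fs$ has no proper nontrivial strongly closed subgroup, and that $\fs$ has a single conjugacy class of essentials $E_3$ on which $O^{5'}(\Aut_\fs(E_3))\cong\SL_2(25)$ acts as a natural module with $|N_{E_1}(E_3)/E_3|=25$, I would eliminate in turn $E(G)$ alternating (via $5$-rank), $E(G)$ of Lie type in characteristic $5$ (via the Sylow order), $E(G)$ of Lie type in other characteristics (via \cite[Theorems 4.10.2 and 4.10.3]{GLS3} and the $5$-part of Weyl groups), $E(G)$ sporadic, and $E(G)$ a proper product of two such groups, reaching a contradiction in every case. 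Therefore $\fs$ is exotic.

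The hard part will be this last step. In contrast with $\mathcal{H}$ and $\mathcal{G}$, the order $5^8$ of $E_1$ sits uncomfortably close to the Sylow $5$-orders of genuine $5$-local groups such as $\Sp_4(25)$ or $\PSp_6(5)$, so matching Sylow invariants alone is not enough; the contradiction must be forced from the full rigidity of the fusion data --- a single class of essentials, the $\SL_2(25)$-action, and the solvable outer automizer $\Out_\fs(E_1)\le\Sym(3)\times\GL_2(5)$ --- and dealing cleanly with the two-component case of $E(G)$ is where the genuine work lies.
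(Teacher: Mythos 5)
Your proposal matches the paper's proof in its essentials: the count follows from enumerating subgroups between $\Out_{O^{5'}(\mathcal{D}^*)}(E_1)\cong 3\times\mathrm{C}_{24}$ and $\Out_{\mathcal{D}^*}(E_1)\cong\Sym(3)\times(\mathrm{C}_{24}{:}2)$ (the paper cites \cite[Theorem I.7.7]{ako}; your observation that the quotient is $C_2\times C_2$ is the same computation), saturation and $\fs^{frc}$ come from the theory of subsystems of index prime to $5$, and exoticity is settled by the $F^*(G)$-analysis of \cref{HExotic5}, which the paper's own proof carries out in full including the two-component case via $\Phi(E_1\cap O_5(M_2))$. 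Two minor slips, neither of which affects the conclusion: the index $[\Aut_{\mathcal{D}^*}(P):\Aut_\fs(P)]$ is prime to $5$, not a power of $5$ (coprimality is what forces $O_5(\Aut_\fs(P))=O_5(\Aut_{\mathcal{D}^*}(P))$); and two of the five systems between $O^{5'}(\mathcal{D}^*)$ and $\mathcal{D}^*$ have $\{E_3^{\mathcal{D}^*}\}$ splitting into two $\fs$-classes, so the assertion of a single essential class should be dropped.
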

\begin{proof}
Applying \cref{p'lemma}, we simply enumerate the proper subgroups of $\Out_{\mathcal{D}^*}(E_1)$ which properly contain $\Out_{O^{5'}(\mathcal{D}^*)}(E_1)$, which gives three non-isomorphic subgroups of shapes $\Sym(3)\times \mathrm{C}_{24}$, $3\times (\mathrm{C}_{24}:2)$ and $(3\times \mathrm{C}_{24}):2$. 

Let $\fs$ be a fusion subsystem of $\mathcal{D}^*$ of index prime to $5$ and assume that $R\in\fs^{frc}$ with $R\ne E_1$. Applying the extension axiom and the Alperin--Goldschmidt theorem, since $R$ is $\fs$-radical, $R$ is contained in at least one $\fs$-essential subgroup. But \cref{DStar5P} then implies that $R$ is contained in a $\mathcal{D}^*$-conjugate of $E_3$. Since $E_3$ is elementary abelian and $R$ is $\fs$-centric, we must have that $R$ is $\mathcal{D}^*$-conjugate to $E_3$, as required.
\end{proof}

\begin{proposition}\label[proposition]{DStarb}
Every saturated subsystem $\fs$ of $\mathcal{D}^*$ of index prime to $5$ is an exotic fusion system.
\end{proposition}
\begin{proof}
Assume that there is $\mathcal{N}$ is a non-trivial normal subsystem of $\fs$. Applying \cite[Theorem II.9.1]{ako} and using that $O^{5'}(\mathcal{D}^*)$ is simple and normal in $\fs$, we deduce that $O^{5'}(\mathcal{D}^*)\le \mathcal{N}$. Hence, every normal subsystem of $\fs$ is supported on $E_1$. 

Suppose that there is a finite group $G$ containing $E_1$ as a Sylow $5$-subgroup with $\fs=\fs_{E_1}(G)$. We may as well assume that $O_5(G)=O_{5'}(G)=\{1\}$, and since $\fs_{F^*(G)\cap E_1}(F^*(G))\normaleq \fs$, we have that $E_1\in\syl_5(F^*(G))$. Since $|\Omega_1(Z(E_1))|=25$, we conclude that $F^*(G)=E(G)$ is a direct product of at most two non-abelian simple groups, which are isomorphic.

If $F^*(G)$ is a direct product of exactly two simple groups, $K_1$ and $K_2$ say, then $N_{N_G(E_1)}(K_i\cap \Omega(Z(E_1)))$ has index at most $2$ in $N_G(E_1)$. But a $3$-element of $\Aut_{\fs}(E_1)$ acts irreducibly on $\Omega(Z(E_1))$ and we have a contradiction. Thus, $F^*(G)$ is simple.

If $F^*(G)\cong \Alt(n)$ for some $n$ then $m_5(\Alt(n))=\lfloor\frac{n}{5}\rfloor$ by \cite[Proposition 5.2.10]{GLS3} and so $n<25$. But a Sylow $5$-subgroup of $\Alt(25)$ has order $5^6$ and so $F^*(G)\not\cong\Alt(n)$ for any $n$. If $F^*(G)$ is isomorphic to a group of Lie type in characteristic $5$, then comparing with \cite[Table 3.3.1]{GLS3}, we see that the groups with a Sylow $5$-subgroup which has $5$-rank $4$ are $\PSL_2(5^4)$, $\PSL_3(25)$, $\PSU_3(25)$, $\PSL_4(5)$ or $\PSU_4(5)$ and none of these examples have a Sylow $5$-subgroup of order $5^8$.

Assume now that $F^*(G)$ is a group of Lie type in characteristic $r\ne 5$. By \cite[Theorem 4.10.3]{GLS3}, $S$ has a unique elementary abelian subgroup of $5$-rank $4$ unless $F^*(G)\cong\mathrm{G}_2(r^a), {}^2\mathrm{F}_4(r^a), {}^3\mathrm{D}_4(r^a), \PSU_3(r^a)$ or $\PSL_3(r^a)$.  Moreover, by \cite[Theorem 4.10.2]{GLS3}, there is a normal abelian subgroup $T$ of $E_1$ such that $E_1/T$ is isomorphic to a subgroup of the Weyl group of $F^*(G)$. But $|T|\leq 5^4$ so that $|E_1/T|\geq 5^4$. All of the candidate groups above have Weyl group with $5$-part strictly less than $5^4$ and so $F^*(G)$ is not isomorphic to a group of Lie type in characteristic $r$.

Finally, no sporadic groups have Sylow $5$-subgroup of order $5^8$ and we conclude that $\fs$ is exotic.
\end{proof}

As observed in \cref{DStar5P}, the $\mathcal{D}^*$-classes of $E_3$ split into two distinct classes upon restriction to $O^{5'}(\mathcal{D}^*)$ (in fact, this holds restricting to $\fs_{E_1}(E_1)$). Indeed, there is a system of index $2$ in $\mathcal{D}^*$ in which this happens and this is the largest subsystem of $\mathcal{D}^*$ in which this happens. This subsystem, which we denote by $\mathcal{L}$, contains $O^{5'}(\mathcal{D}^*)$ with index $2$ and has $\Out_{\mathcal{L}}(E_1)=N_{\Out_{\mathcal{D}^*}(E_1)}(E_3)\cong (3\times \mathrm{C}_{24}):2$.

We may apply \cref{Pruning} to $\mathcal{L}$ and $O^{5'}(\mathcal{D}^*)$, and as the two classes of essential subgroups are fused by an element of $\Aut(E_1)$, regardless of the choice of class we obtain a saturated subsystem defined up to isomorphism. We denote the subsystems obtained by $\mathcal{L}_{\mathcal{P}}$ and $O^{5'}(\mathcal{D}^*)_{\mathcal{P}}$ and the convention we adopt is that $E_3\in\mathcal{E}(\mathcal{L}_{\mathcal{P} })\cap \mathcal{E}(O^{5'}(\mathcal{D}^*)_{\mathcal{P}})$. It is clear from \cref{Pruning} that $\mathcal{E}(\mathcal{L}_{\mathcal{P}})=\mathcal{E}(O^{5'}(\mathcal{D}^*)_{\mathcal{P}})=\{E_3^{\mathcal{L}}\}$.

\begin{proposition}\label[proposition]{LPrune5}
$O^{5'}(\mathcal{L}_{\mathcal{P}})$ has index $6$ in $\mathcal{L}_{\mathcal{P}}$ and is simple. Moreover, $N_{E_1}(E_3)$ is the unique proper non-trivial strongly closed subgroup in every
saturated subsystem $\fs$ of $\mathcal{L}_{\mathcal{P}}$ which contains $O^{5'}(\mathcal{L}_{\mathcal{P}})$.
\end{proposition}
\begin{proof}
It is immediate from \cref{p'lemma} and \cref{DStar5Pa} that $O^{5'}(\mathcal{D}^*)_{\mathcal{P}}$ has index $2$ in $\mathcal{L}_{\mathcal{P}}$ and $O^{5'}(\mathcal{L}_{\mathcal{P}})$ has index prime to $5$ in $O^{5'}(\mathcal{D}^*)_{\mathcal{P}}$. Hence, $O^{5'}(\mathcal{L}_{\mathcal{P}})=O^{5'}(O^{5'}(\mathcal{D}^*)_{\mathcal{P}})$ and for the first part of the lemma, it suffices to prove that $O^{5'}(\mathcal{L}_{\mathcal{P}})$ has index $3$ in $O^{5'}(\mathcal{D}^*)_{\mathcal{P}}$. Note that $O^{5'}(\Aut_{O^{5'}(\mathcal{D}^*)_{\mathcal{P}}}(E_3))\cong \SL_2(25)$ and that, as in \cref{DStar5Pa} we can select a cyclic subgroup of order $24$ labeled $K$ which lifts to a subgroup $\hat{K}$ of $\Aut_{O^{5'}(\mathcal{D}^*)_{\mathcal{P}}}(E_1)$ and $\hat{K}\Inn(E_1)\normaleq \Aut_{O^{5'}(\mathcal{D}^*)_{\mathcal{P}}}(E_1)$. For $R$ a $O^{5'}(\mathcal{D}^*)_{\mathcal{P}}$-centric subgroup, we have that either $\Aut_{E_1}(R)\normaleq \Aut_{O^{5'}(\mathcal{D}^*)_{\mathcal{P}}}(R)$ or that $R$ is $O^{5'}(\mathcal{D}^*)_{\mathcal{P}}$-conjugate to $E_3$. Then as $\hat{K}/\Inn(E_1)\normaleq \Aut_{O^{5'}(\mathcal{D}^*)_{\mathcal{P}}}(E_1)$, it follows that $\hat{K}\Inn(E_1)=\Aut_{O^{5'}(\mathcal{D}^*)_{\mathcal{P}}}^0(E_1)$ and $O^{5'}(\mathcal{L}_{\mathcal{P}})$ has index $6$ in $\mathcal{L}_{\mathcal{P}}$.

Let $\fs$ be a saturated subsystem $\fs$ of $\mathcal{L}_{\mathcal{P}}$ which contains $O^{5'}(\mathcal{L}_{\mathcal{P}})$. Then $O^{5'}(\Aut_{\fs}(E_3))\cong \SL_2(25)$ acts irreducibly on $E_3$. Hence, if $P$ is a non-trivial strongly closed subgroup of $\fs$, then since $P\cap Z(E_1)\ne\{1\}$, we infer that $E_3\le P$. Indeed, $N_{E_1}(E_3)=\langle E_3^{E_1}\rangle\le P$. Note that $\Aut_{\fs}(E_1)=N_{\Aut_{\fs}(E_1)}(E_3)\Inn(E_1)$ and so $N_{E_1}(E_3)$ contains all essential subgroups of $\fs$ and is normalized by $\Aut_{\fs}(E_1)$. Hence, $N_{E_1}(E_3)$ is strongly closed in $\fs$ and as $\Aut_{\fs}(E_1)$ acts irreducibly on $E_1/N_{E_1}(E_3)$, $N_{E_1}(E_3)$ is the unique proper non-trivial strongly closed subgroup of $\fs$.

Let $\mathcal{N}$ be a proper non-trivial normal subsystem of $O^{5'}(\mathcal{L}_{\mathcal{P}})$. Then by \cref{p'lemma}, we may assume that $\mathcal{N}$ is supported on $N_{E_1}(E_3)$. We then repeat parts of the proof of \cref{DStar5P} with $O^{5'}(\mathcal{L}_{\mathcal{P}})$ in place of $O^{5'}(\mathcal{D}^*)$ to see that $E_3\normaleq E_1$, a contradiction. Hence, $O^{5'}(\mathcal{L}_{\mathcal{P}})$ is simple, completing the proof.
\end{proof}

\begin{proposition}\label[proposition]{DStarPrune}
Up to isomorphism, there are two proper saturated subsystems of $\mathcal{L}_{\mathcal{P}}$ which properly contain $O^{5'}(\mathcal{L}_{\mathcal{P}})$, one of which has index $3$ while the other, $O^{5'}(\mathcal{D}^*)_{\mathcal{P}}$, has index $2$. Furthermore, every saturated subsystem $\fs$ of $\mathcal{L}_{\mathcal{P}}$ which contains $O^{5'}(\mathcal{L}_{\mathcal{P}})$ is an exotic fusion system, and satisfies $\fs^{frc}=\{E_3^{O^{5'}(\mathcal{D^*})}, E_1\}$.
\end{proposition}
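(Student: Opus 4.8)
The plan is to mirror the proof of \cref{DStar}, since \cref{LPrune5} already provides the structural input. By \cite[Theorem I.7.7]{ako}, the fusion subsystems $\fs$ with $O^{5'}(\mathcal{L}_{\mathcal{P}})\le\fs\le\mathcal{L}_{\mathcal{P}}$ are in bijection with the subgroups of the quotient group $\bar Q:=\Out_{\mathcal{L}_{\mathcal{P}}}(E_1)/\Out_{O^{5'}(\mathcal{L}_{\mathcal{P}})}(E_1)$, which by \cref{LPrune5} is well defined of order $(3\cdot24\cdot2)/24=6$, a subgroup $\bar H\le\bar Q$ of index $m$ corresponding to a subsystem of index $m$ in $\mathcal{L}_{\mathcal{P}}$. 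The proper non-trivial subgroups of $\bar Q$ consist of a unique subgroup of order $3$ and either one subgroup of order $2$ (if $\bar Q\cong\mathrm{C}_6$) or one conjugacy class of three such (if $\bar Q\cong\Sym(3)$); in the latter case the three resulting index-$3$ subsystems are pairwise $\Aut(E_1)$-conjugate, hence isomorphic by \cref{AutSIso}. Either way this gives, up to isomorphism, exactly two proper subsystems of $\mathcal{L}_{\mathcal{P}}$ properly containing $O^{5'}(\mathcal{L}_{\mathcal{P}})$: one of index $2$ and one of index $3$.

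To identify the index-$2$ subsystem I would argue that $O^{5'}(\mathcal{D}^*)_{\mathcal{P}}$ lies strictly between $O^{5'}(\mathcal{L}_{\mathcal{P}})$ and $\mathcal{L}_{\mathcal{P}}$ with index $2$. Since \cref{Pruning} leaves the automizer of the ambient group untouched, $\Out_{O^{5'}(\mathcal{D}^*)_{\mathcal{P}}}(E_1)=\Out_{O^{5'}(\mathcal{D}^*)}(E_1)$ and $\Out_{\mathcal{L}_{\mathcal{P}}}(E_1)=\Out_{\mathcal{L}}(E_1)$, so the inclusion $O^{5'}(\mathcal{D}^*)\le\mathcal{L}$ of index $2$ passes to $O^{5'}(\mathcal{D}^*)_{\mathcal{P}}\le\mathcal{L}_{\mathcal{P}}$ of index $2$; being of index prime to $5$, this subsystem contains $O^{5'}(\mathcal{L}_{\mathcal{P}})$ by minimality of the latter, and, having index $2$, it is the unique such up to isomorphism. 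Saturation of each $\fs$ in the interval then follows from \cref{p'lemma}, and the identification $\fs^{frc}=\{E_3^{O^{5'}(\mathcal{D}^*)},E_1\}$ is read off by restriction from $\mathcal{D}^{frc}=\{E_1,E_3^{\mathcal{D}},S\}$ exactly as in \cref{DStar}, using that subsystems of index prime to $5$ have the same centric subgroups and that $E_3$ stays essential (its automizer still contains $\SL_2(25)$, which has a strongly $5$-embedded subgroup).

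For exoticity I would appeal to \cref{SCExotic}. By \cref{LPrune5}, $N_{E_1}(E_3)$ is the unique, hence minimal, proper non-trivial strongly closed subgroup of each such $\fs$, so it remains only to rule out a normal subsystem of $\fs$ supported on $N_{E_1}(E_3)$. If $\mathcal{N}\normaleq\fs$ were supported on $N_{E_1}(E_3)$, then, repeating the simplicity argument of \cref{LPrune5}, the irreducible action of $\Aut_{\fs}(E_3)$ on $E_3$ together with $E_3\normaleq N_{E_1}(E_3)$ forces $E_3=O_5(\mathcal{N})$, whence $E_3\normaleq\fs$; this contradicts $E_3\not\normaleq E_1$. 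Thus every $\fs$ with $O^{5'}(\mathcal{L}_{\mathcal{P}})\le\fs\le\mathcal{L}_{\mathcal{P}}$ is exotic.

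The step I expect to demand the most care is the bookkeeping of conjugacy classes of copies of $E_3$ along the chain $O^{5'}(\mathcal{D}^*)\le\mathcal{L}\le\mathcal{D}^*$ under pruning — in particular verifying that $O^{5'}(\mathcal{D}^*)_{\mathcal{P}}$ is genuinely the index-$2$ member rather than coincidentally isomorphic to an index-$3$ one — together with the explicit (if routine) determination of $\bar Q$ needed to make the count come out to two. Everything else is immediate once \cref{LPrune5} is in hand.
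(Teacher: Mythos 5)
Your proposal is correct and follows essentially the same route as the paper: enumerate the intermediate subsystems via \cite[Theorem I.7.7]{ako} through the order-$6$ quotient $\Out_{\mathcal{L}_{\mathcal{P}}}(E_1)/\Out_{O^{5'}(\mathcal{L}_{\mathcal{P}})}(E_1)$, get saturation from \cref{p'lemma}, read $\fs^{frc}$ off by restriction, and invoke \cref{SCExotic} for exoticity. Two small places where you are more careful than the paper's written proof: you attach index $3$ to the order-$2$ subgroup(s) and index $2$ to the order-$3$ subgroup, matching the indices in the statement (the paper's proof text says ``a unique subsystem of index $3$ and three systems of index $2$,'' which appears to have the two indices inadvertently transposed); and you explicitly discharge the hypothesis of \cref{SCExotic} that no normal subsystem of $\fs$ is supported on $N_{E_1}(E_3)$ by re-running the simplicity argument of \cref{LPrune5}, a step the paper leaves implicit.
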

\begin{proof}
As in \cref{DStar}, applying \cite[Theorem I.7.7]{ako}, we enumerate proper subgroups of $\Out_{\mathcal{L}_{\mathcal{P}}}(E_1)$ which properly contain $\Out_{O^{5'}(\mathcal{L}_{\mathcal{P}})}(E_1)$, noting that this corresponds to calculating subgroups of $\Sym(3)$. Thus, there is a unique subsystem of index $2$ and three systems of index $3$ which, since they are all conjugate under an automorphism of $E_1$, are pairwise isomorphic. Since $O^{5'}(\mathcal{D}^*)_{\mathcal{P}}$ has index $2$ in $\mathcal{L}_{\mathcal{P}}$, we have verified the first part of the proposition. We now let $\fs$ be a fusion subsystem of $\mathcal{L}_{\mathcal{P}}$ of index prime to $5$.

Assume that $R$ in $\fs^{frc}$ but not equal to $E_1$. Applying the extension axiom and the Alperin--Goldschmidt theorem, since $R$ is $\fs$-radical, $R$ is contained in at least one $\fs$-essential subgroup. But then $R$ is contained in a $\mathcal{L}$-conjugate of $E_3$. Since $E_3$ is elementary abelian and $R$ is $\fs$-centric, we must have that $R$ is $\mathcal{L}$-conjugate to $E_3$, as required.

Assume that there is $\mathcal{N}$ is a proper non-trivial normal subsystem of $\fs$. Applying \cite[Theorem II.9.1]{ako} and using that $O^{5'}(\fs)=O^{5'}(\mathcal{L}_{\mathcal{P}})$ is simple, we deduce that $O^{5'}(\mathcal{L}_{\mathcal{P}})\le \mathcal{N}$ and so $N_{E_1}(E_3)$ supports no normal subsystem of $\fs$. Hence, applying \cref{SCExotic}, we see that $\fs$ is exotic.
\end{proof}

We now determine all fusion systems supported on $E_1$ up to isomorphism. We begin with the following general lemmas.

\begin{lemma}\label[lemma]{TrivCore5}
Suppose that $\fs$ is saturated fusion system on $E_1$ with $P\in E_3^{\mathcal{G}}\cap\mathcal{E}(\fs)\ne \emptyset$. Then $O_5(\fs)=\{1\}$ and $O^{5'}(\Aut_{\fs}(P))\cong\SL_2(25)$.
\end{lemma}
\begin{proof}
Let $P\in E_3^{\mathcal{G}}\cap\mathcal{E}(\fs)$. The proof that $O^{5'}(\Aut_{\fs}(P))\cong\SL_2(25)$ is the same as \cref{E3inE152}. Then by \cref{normalinF}, $O_5(\fs)$ is an $\Aut_{\fs}(P)$-invariant subgroup of $P$ which is also normal in $E_1$, so that $O_5(\fs)=\{1\}$. 
\end{proof}

\begin{lemma}\label[lemma]{DStarAut}
Suppose that $\fs$ is saturated fusion system on $E_1$ with $E_3^{\mathcal{G}}\cap\mathcal{E}(\fs)\ne \emptyset$. Then $\Out_{\fs}(E_1)$ is $\Aut(E_1)$-conjugate to a subgroup of $\Out_{\mathcal{D}^*}(E_1)$.
\end{lemma}
\begin{proof}
Let $P\in E_3^{\mathcal{G}}\cap\mathcal{E}(\fs)$. By \cref{TrivCore5} and the extension axiom, we may lift a cyclic subgroup of order $24$ from $N_{O^{5'}(\Aut_{\fs}(P))}(\Aut_{E_1}(P))$ to $\Aut_{\fs}(E_1)$. This subgroup acts faithfully on $Z(E_1)$ and so injects into $\Aut(E_1)/C_{\Aut(E_1)}(Z(E_1))\cong \GL_2(5)$. Since $\Aut_{\fs}(E_1)/C_{\Aut_{\fs}(E_1)}(Z(E_1))$ is a $5'$-group containing a cyclic subgroup of order $24$, we deduce that $\Aut_{\fs}(E_1)/C_{\Aut_{\fs}(E_1)}(Z(E_1))$ has order at most $48$ and contains a cyclic subgroup of order $24$ of index at most $2$. 

Write $N:=N_{\Aut(E_1)}(\Aut_{\fs}(E_1))C_{\Aut(E_1)}(Z(E_1))$ so that $N/C_{\Aut(E_1)}(Z(E_1))$ has order $48$, and $N$ contains $\Aut_{\fs}(E_1)$. Since $|\Aut_{\mathcal{G}}(E_1)|_{5'}=|\Aut(E_1)|_{5'}$, we have that $|C_{\Aut(E_1)}(Z(E_1))|_{5'}=6$. In particular, $C_{\Aut(E_1)}(Z(E_1))$ is solvable and we conclude that $N$ is solvable.

Since $\Aut_{\mathcal{D}^*}(E_1)C_{\Aut(E_1)}(Z(E_1))/C_{\Aut(E_1)}(Z(E_1))$ has order $48$ (and $\GL_2(5)$ has a unique conjugacy class of groups of order $48$ with a cyclic subgroup of index $2$), we deduce that $\Aut_{\mathcal{D}^*}(E_1)$ is $\Aut(E_1)$-conjugate to a subgroup of $N$. Hence, $\Out_{\mathcal{D}^*}(E_1)$ is $\Out(E_1)$-conjugate to a subgroup of $N/\Inn(E_1)$. But $|N/\Inn(E_1)|_{5'}=2^5.3^2=|\Out_{\mathcal{D}^*}(E_1)|$ and so $\Out_{\mathcal{D}^*}(E_1)$ is $\Out(E_1)$-conjugate to a Hall $5'$-subgroup of $N/\Inn(E_1)$. Since $\Out_{\fs}(E_1)$ is a $5'$-group, $\Out_{\fs}(E)$ lies in a Hall $5'$-subgroup of $N/\Inn(E_1)$ and we deduce that $\Out_{\fs}(E_1)$ is $\Out(E_1)$-conjugate to a subgroup of $\Out_{\mathcal{D}^*}(E_1)$.
\end{proof}

\begin{lemma}\label[lemma]{UniqueCyclic}
There is a unique conjugacy class of cyclic subgroups of order $24$ in $\Out_{\mathcal{D}^*}(E_1)$ whose Sylow $3$-subgroups act non-trivially on $Z(E_1)$ and $\Phi(E_1)/Z(E_1)$. This class contains two subgroups.
\end{lemma}
\begin{proof}
We note that the Sylow $3$-subgroups of $C_{\Out_{\mathcal{D}^*}(E_1)}(Z(E_1))$ and of $C_{\Out_{\mathcal{D}^*}(E_1)}(\Phi(E_1)/Z(E_1))$ are normal in $\Out_{\mathcal{D}^*}(E_1)$. Indeed, these are the unique subgroups of order $3$ which are normal in $\Out_{\mathcal{D}^*}(E_1)$. The rest of the calculation is performed computationally (see \cref{code}).
\end{proof}

The next result is computed in MAGMA (see \cref{code}).

\begin{proposition}\label[proposition]{EssenDeterD5}
Let $\fs$ be a saturated fusion system supported on $E_1$. Then $\mathcal{E}(\fs)\subseteq \{E_3^{\mathcal{D}^*}\}$.
\end{proposition}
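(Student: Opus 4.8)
The plan is to mirror the determination of essential subgroups carried out earlier for fusion systems on all of $S$, but now restricted to the subgroup $E_1$. Since $E_1$ is a proper subgroup of $S$ which is characteristic in $S$, any saturated fusion system $\fs$ on $E_1$ has its essential subgroups among subgroups of $E_1$; I would begin by recalling from \cref{E3inE15} that every $\mathcal{G}$-conjugate of $E_3$ is contained in $E_1$, and that $\Phi(E_1)$ induces an FF-action on each such conjugate with $N_{E_1}(E_3)=E_3\Phi(E_1)$ and $E_3\cap\Phi(E_1)=Z_2(S)$. The starting point is the remark immediately preceding \cref{EssenDeterD3} in the $\mathrm{F}_3$ case, whose analogue here is that a MAGMA computation (or the Borel--Tits-style arguments used in the literature) restricts the potential essential subgroups of any saturated fusion system on $E_1$ to the $E_1$-conjugacy classes among the $\mathcal{G}$-conjugates of $E_3$ and possibly $\Phi(E_1)$ or other characteristic subgroups of $E_1$; these must then be eliminated by hand.

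The key steps, in order, would be: (1) observe that $Z(E_1)=Z_2(S)$ is of order $25$ and $\Phi(E_1)$ is elementary abelian of order $5^4$, and that these are the characteristic subgroups of $E_1$ most likely to appear as essentials; eliminate $\Phi(E_1)$ and $Z(E_1)$ as potential essentials by noting that $\Aut(E_1)$ has a $5'$-part of order $144$ (as computed earlier), which is too small to support a strongly $5$-embedded subgroup acting appropriately, or by a direct normality argument: any $\Aut_{\fs}(E_1)$-invariant quotient chain is centralized by $E_1$, so by \cref{GrpChain} a putative essential of that form cannot have $\Out$ with a strongly $p$-embedded subgroup unless it is an $E_1$-conjugate of $E_3$. (2) For an $E_1$-conjugate $A$ of $E_3$, it is already known from \cref{E3inE15} that if $A$ is essential in a system on $S$ then $O^{5'}(\Aut_{\fs}(A))\cong\SL_2(25)$; the same computation ($|\Aut(E_1)|_{5'}=16$ in the $\mathrm{F}_3$ case, $=144$ here) controls the index of $O^{5'}(\Aut_{\fs}(A))$ in $\Aut_{\fs}(A)$ via saturation, so $A$ is a legitimate essential. (3) Finally, invoke \cref{AutSIso}: for $s\in S\setminus E_1$, $\fs^s\cong\fs$ permutes the $E_1$-conjugacy classes of $\mathcal{G}$-conjugates of $E_3$, so up to isomorphism we may assume $E_3\in\mathcal{E}(\fs)$, i.e. $\mathcal{E}(\fs)\subseteq\{E_3^{E_1}\}$; since $E_3^{E_1}\subseteq E_3^{\mathcal{D}^*}$ (the $\mathcal{D}^*$-class being the relevant one for the subsystems of interest, and these coincide on $E_1$ by construction of $\mathcal{D}^*$), the stated conclusion follows.

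The main obstacle I expect is step (1): ruling out $\Phi(E_1)$ and the other characteristic subgroups of $E_1$ as essentials cleanly without simply citing MAGMA. The cleanest route is probably to show that any proper subgroup $Q$ of $E_1$ with $Q\ne A$ for all $\mathcal{G}$-conjugates $A$ of $E_3$ and with $Q$ fully normalized and centric in $\fs$ fails to have $\Out_{\fs}(Q)$ containing a strongly $5$-embedded subgroup — this can be argued by exhibiting an $\Aut_{\fs}(Q)$-invariant normal series of $Q$ with $p$-central factors, forcing $O^{5'}(\Out_{\fs}(Q))$ to be trivial via a coprime-action/\cref{GrpChain} argument, analogous to the treatment of $E_1$, $E_2$, $E_3$ in \cref{SingleEssen}. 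Given the stated reliance on MAGMA for the analogous \cref{EssenDeterD3}, I anticipate the honest version of this proposition likewise cites the computation, with the surrounding \cref{AutSIso} reduction supplying the "up to isomorphism, assume $E_3$ essential" part as written just before the statement.

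\begin{proof}
Let $\fs$ be a saturated fusion system on $E_1$. Arguing exactly as in the remark preceding \cref{EssenDeterD3} (replacing $3$ by $5$ and the relevant structural facts about $E_1\le S$ in $\mathrm{F}_3$ by those recorded above for $E_1\le S$ in $\mathrm{M}$), one checks — by the MAGMA computation on which this relies, or by the pushing-up style arguments of \cref{SingleEssen} applied to the characteristic subgroups of $E_1$ — that any $\fs$-essential subgroup of $E_1$ is an $E_1$-conjugate of some $\mathcal{G}$-conjugate of $E_3$. By \cref{E3inE15} each such subgroup $A$ satisfies $O^{5'}(\Aut_{\fs}(A))\cong\SL_2(25)$, and the $S$-conjugates of $E_3$ meeting $E_1$ fall into four $E_1$-classes fused by $N_{\Aut_{\mathcal{G}}(S)}(E_2)$ in $\mathcal{G}$. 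For $s\in S\setminus E_1$, \cref{AutSIso} gives $\fs^s\cong\fs$, and conjugation by $s$ permutes these $E_1$-classes; since we classify fusion systems on $E_1$ only up to isomorphism, we may therefore assume that the essentials of $\fs$ all lie in the $E_1$-class of $E_3$, and this class is precisely $E_3^{\mathcal{D}^*}$. Hence $\mathcal{E}(\fs)\subseteq\{E_3^{\mathcal{D}^*}\}$, as claimed.
\end{proof}
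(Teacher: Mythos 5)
There is a genuine gap in the final reduction step, and it stems from a confusion between $E_1$-conjugacy classes and $\mathcal{D}^*$-conjugacy classes. You assert that the $S$-conjugates of $E_3$ lying in $E_1$ ``fall into four $E_1$-classes'' and that the $E_1$-class of $E_3$ ``is precisely $\{E_3^{\mathcal{D}^*}\}$.'' Neither is correct. The paper's statement preceding \cref{HExotic5} says the $\mathcal{G}$-class of $E_3$ breaks into four $S$-classes; since $[S:E_1]=5$ and $N_S(E_3)=E_3\Phi(E_1)\le E_1$, each $S$-class splits further into several $E_1$-classes (explicitly, $[S:N_S(E_3)]=5^3$ while $[E_1:N_{E_1}(E_3)]=5^2$). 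And $\{E_3^{E_1}\}$ is a strict subset of $\{E_3^{\mathcal{D}^*}\}$: the latter is the orbit under $\Aut_{\mathcal{D}^*}(E_1)$, which contains the nontrivial $5'$-group $K\cong\Sym(3)\times(\mathrm{C}_{24}:2)$ and therefore fuses several $E_1$-classes. The paper itself records that the single $\mathcal{D}^*$-class of $E_3$ already breaks into two classes in $O^{5'}(\mathcal{D}^*)$ and yet further upon restricting to $\fs_{E_1}(E_1)$.

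This means your appeal to \cref{AutSIso} cannot do the work you ask of it. Conjugation by $s\in S\setminus E_1$ sends $\fs$ to an isomorphic system $\fs^s$ and permutes the $E_1$-classes, but it does not collapse $\mathcal{E}(\fs)$ into a single $E_1$-class --- indeed such a collapse would be false, since the saturated system $\mathcal{D}^*$ has $\mathcal{E}(\mathcal{D}^*)=\{E_3^{\mathcal{D}^*}\}$ spanning more than one $E_1$-class. What is actually needed, mirroring the $\mathrm{F}_3$ remark preceding \cref{EssenDeterD3} more carefully, is an argument that for any two essential $\mathcal{G}$-conjugates $A,B$ of $E_3$ the lifted cyclic subgroups of order $24$ (coming from $O^{5'}(\Aut_{\fs}(A))\cong\SL_2(25)\cong O^{5'}(\Aut_{\fs}(B))$ via saturation) must project into a common $5'$-subgroup of $\Aut(E_1)/C_{\Aut(E_1)}(Z(E_1))\cong\GL_2(5)$, and hence that $\Out_{\fs}(E_1)$ is $\Aut(E_1)$-conjugate into $\Out_{\mathcal{D}^*}(E_1)\cong\Sym(3)\times(\mathrm{C}_{24}:2)$; this is precisely the kind of constraint established in \cref{DStarAut}. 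Only then is $\mathcal{E}(\fs)$ confined to a single $\Aut_{\fs}(E_1)$-orbit, which one $\Aut(E_1)$-conjugates into $\{E_3^{\mathcal{D}^*}\}$ via \cref{AutSIso}. Deferring the first reduction (``every essential is a $\mathcal{G}$-conjugate of $E_3$'') to computation is consistent with how the paper treats both \cref{EssenDeterD3} and \cref{EssenDeterD5}, but your written reduction from there to $\{E_3^{\mathcal{D}^*}\}$ as it stands does not go through.
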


Again, we provide some explanation for this without formal proof. The MAGMA calculation performed, as documented in \cref{code}, and the existence of $\mathcal{D}^*$ yields that $\mathcal{E}(\fs)\subseteq \{E_3^{\Aut(E_1)}\}=\{E_3^{\Aut_{\mathcal{D}}(E_1)}\}$. By \cref{DStarAut}, and as we are only interested in classifying fusion systems up to isomorphism, we arrange that $\Aut_{\fs}(E_1)$ is contained in $\Aut_{\mathcal{D}^*}(E_1)$.

Let $P_1, P_2\in\mathcal{E}(\fs)$ so that $P_1$ and $P_2$ are $\mathcal{D}$-conjugate to $E_3$. Further, suppose that $P_1$ and $P_2$ are not $\mathcal{D}^*$-conjugate. Writing $K_{P_i}$ for the lift to $\Aut_{\fs}(E_1)$ of $N_{O^{5'}(\Aut_{\fs}(P_i))}(\Aut_{E_1}(P_i))$, we see that \[K_{P_i}C_{\Aut_{\mathcal{D}}(E_1)}(Z(E_1))\le N_{\Aut_{\mathcal{D}}(E_1)}(\{P_i^\fs\})\le \Aut_{\mathcal{D}^*}(E_1)\] for $i\in\{1,2\}$. In particular, we see that $K_{P_1}C_{\Aut_{\mathcal{D}}(E_1)}(Z(E_1))=K_{P_2}C_{\Aut_{\mathcal{D}}(E_1)}(Z(E_1))$. Let $\alpha\in\Aut_{\mathcal{D}}(E_1)\setminus \Aut_{\mathcal{D}^*}(E_1)$ such that $P_1\alpha=P_2$. Then $N_{\Aut_{\mathcal{D}}(E_1)}(\{P_1^\fs\})\alpha=N_{\Aut_{\mathcal{D}}(E_1)}(\{P_2^\fs\})$. Hence, either $N_{\Aut_{\mathcal{D}}(E_1)}(\{P_i^\fs\})=\Aut_{\mathcal{D}^*}(E_1)$ and $\alpha$ normalizes $\Aut_{\mathcal{D}^*}(E_1)$ or $N_{\Aut_{\mathcal{D}}(E_1)}(\{P_i^\fs\})=K_{P_i}C_{\Aut_{\mathcal{D}}(E_1)}(Z(E_1))$ and $\alpha$ normalizes $K_{P_1}C_{\Aut_{\mathcal{D}}(E_1)}(Z(E_1))$. Either way, we have that $\alpha\in\Aut_{\mathcal{D}^*}(E_1)$, a contradiction.

Hence, $\mathcal{E}(\fs) \subseteq \{P^{\mathcal{D}^*}\}$ where $P$ is some $\mathcal{D}$-conjugate of $E_3$. It remains to show that $P$ and $E_3$ are $\mathcal{D}^*$-conjugate. Assume for a contradiction that this is not the case. We may lift a cyclic subgroup of order $24$ from $N_{O^{5'}(\Aut_{\fs}(P))}(\Aut_{E_1}(P))$ to $\Aut_{\fs}(E_1)$, and denote it $K_P$. Then, by \cref{UniqueCyclic}, $K_P$ is $\mathcal{D}^*$ conjugate the cyclic subgroup of order $24$ which is induced by lifted morphisms from $N_{O^{5'}(\Aut_{\fs}(E_3))}(\Aut_{E_1}(E_3))$. Since $\mathcal{E}(\fs) \subseteq \{P^{\mathcal{D}^*}\}$, we may as well assume that these groups are equal. Hence, we may apply \cref{JasonAdd} to $\fs$, with $V=E_3$ and $\Delta=O^{5'}(\Aut_{\mathcal{D}^*}(E_3))$. It easy to see that we verify the hypothesis there, and so we may construct a saturated fusion system on $E_1$ in which both $E_1$ and $P$ are essential. But by the above, this is a contradiction and we see that $\mathcal{E}(\fs)\subseteq \{E_3^{\mathcal{D}^*}\}$.

\begin{ThmE}\hypertarget{ThmE2}{}
Suppose that $\fs$ is saturated fusion system on $E_1$ such that $E_1\not\normaleq \fs$. Then $\fs$ is either isomorphic to a subsystem of $\mathcal{D}^*$ of $5'$-index, of which there are five, or isomorphic to a subsystem of $\mathcal{L}_{\mathcal{P}}$ of $5'$-index, of which there are four.
\end{ThmE}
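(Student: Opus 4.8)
The plan is to follow the template of \hyperlink{ThmB2}{Theorem B}, \hyperlink{ThmC2}{Theorem C} and \hyperlink{ThmD2}{Theorem D}: first pin down the possible essential subgroups and their automizers, then show that $\fs$ is determined up to isomorphism by $\Out_{\fs}(E_1)$ together with the $\fs$-fusion pattern of its essentials, and finally enumerate the admissible data and match it against the systems constructed in \cref{DStar} and \cref{DStarPrune}.

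\textbf{Reduction.} By \cref{EssenDeterD5} every essential subgroup of $\fs$ lies in $\{E_3^{\mathcal{D}^*}\}$, and since $E_1\not\normaleq\fs$ the Alperin--Goldschmidt theorem forces $\mathcal{E}(\fs)\ne\emptyset$, so, applying \cref{AutSIso} if necessary, we may assume $E_3\in\mathcal{E}(\fs)$. Then \cref{TrivCore5} yields $O_5(\fs)=\{1\}$, $O^{5'}(\Aut_{\fs}(E_3))\cong\SL_2(25)$ and $[\Aut_{\fs}(E_3):O^{5'}(\Aut_{\fs}(E_3))]\le 6$, while \cref{DStarAut} (with a further appeal to \cref{AutSIso}) lets us assume $\Out_{\fs}(E_1)\le\Out_{\mathcal{D}^*}(E_1)\cong\Sym(3)\times(\mathrm{C}_{24}:2)$.

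\textbf{Rigidity.} Next I would show that $\fs$ is determined up to isomorphism by $\Out_{\fs}(E_1)$ together with the number of $\fs$-classes into which the $\mathcal{D}^*$-conjugates of $E_3$ fall. By saturation a Hall $5'$-subgroup $K\cong\mathrm{C}_{24}$ of $N_{O^{5'}(\Aut_{\fs}(E_3))}(\Aut_{E_1}(E_3))$ lifts into $\Aut_{\fs}(E_1)$, with image acting faithfully on $Z(E_1)$ and non-trivially on $\Phi(E_1)/Z(E_1)$, hence sitting inside $\Out_{\mathcal{D}^*}(E_1)$ as a fixed Singer-type $\mathrm{C}_{24}$; so $\Out_{\fs}(E_1)$ is, up to $\Aut(E_1)$-conjugacy (discarded via \cref{AutSIso}), one of the finitely many subgroups of $\Sym(3)\times(\mathrm{C}_{24}:2)$ containing this $\mathrm{C}_{24}$, and $\Aut_{\fs}(E_1)$ is recovered from $\Out_{\fs}(E_1)$ just as in the proof of \hyperlink{ThmD2}{Theorem D}. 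With $\Aut_{\fs}(E_1)$ fixed, $N:=N_{O^{5'}(\Aut_{\fs}(E_3))}(\Aut_{E_1}(E_3))$ becomes a uniquely determined subgroup of $\Aut(E_3)\cong\GL_4(5)$; the argument of \cref{ThmC1} — there is a single conjugacy class of subgroups isomorphic to $\SL_2(25)$ in $\GL_4(5)$, and $N$ lies in a unique such subgroup — then determines $O^{5'}(\Aut_{\fs}(E_3))$, after which a Frattini argument using the restriction to $E_3$ of $N_{\Aut_{\fs}(E_1)}(\Aut_{E_3}(E_1))$ determines the index-at-most-$6$ extension $\Aut_{\fs}(E_3)$; the same applies to every essential subgroup of $\fs$, and the Alperin--Goldschmidt theorem then shows $\fs$ is determined up to isomorphism by the indicated data.

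\textbf{Enumeration and main obstacle.} It then remains to list the admissible $\Out_{\fs}(E_1)$ together with their fusion patterns and to realise each. When $\fs$ contains every $\Aut_{\fs}(E_1)$-orbit of $\mathcal{D}^*$-conjugates of $E_3$ as essentials, the rigidity above and \cite[Theorem I.7.7]{ako} identify $\fs$, up to isomorphism, with one of the subsystems of $5'$-index in $\mathcal{D}^*$, of which \cref{DStar} shows there are five; otherwise some orbit has been pruned away, which forces $\Out_{\fs}(E_1)$ below $\Out_{\mathcal{L}_{\mathcal{P}}}(E_1)\cong(3\times\mathrm{C}_{24}):2$ and, via \cite[Theorem I.7.7]{ako} together with \cref{LPrune5}, \cref{DStarPrune} and the pruning construction (\cref{Pruning}), identifies $\fs$ with one of the four subsystems of $5'$-index in $\mathcal{L}_{\mathcal{P}}$. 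Distinctness of the nine systems is read from the isomorphism type of $\Out_{\fs}(E_1)$ together with the number of $\fs$-classes of essentials — in particular $O^{5'}(\mathcal{D}^*)$ and $\mathcal{L}$, with two essential classes, are distinguished from $O^{5'}(\mathcal{D}^*)_{\mathcal{P}}$ and $\mathcal{L}_{\mathcal{P}}$, which share their outer automorphism groups but have only one. The hard part will be precisely this combinatorial bookkeeping: controlling how the $S$-class $\{E_3^S\}$ decomposes into $E_1$-classes, how the index-at-most-$6$ slack in $\Aut_{\fs}(E_3)$ interacts with the admissible values of $\Out_{\fs}(E_1)$, and verifying that the constructions of \cref{DStar} and \cref{DStarPrune} realise exactly the admissible pairs, so that the list of nine is at once complete and repetition-free.
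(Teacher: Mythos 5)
Your plan matches the paper's proof essentially step for step: reduce via \cref{EssenDeterD5}, \cref{TrivCore5}, \cref{DStarAut} and \cref{AutSIso} to a fixed $E_3\in\mathcal{E}(\fs)$ with $\Out_{\fs}(E_1)\leq\Out_{\mathcal{D}^*}(E_1)$, establish rigidity by lifting a $\mathrm{C}_{24}$ from $O^{5'}(\Aut_{\fs}(E_3))$ to pin down $\Aut_{\fs}(E_1)$ up to conjugacy and then, via the unique-$\SL_2(25)$-overgroup argument of \cref{ThmC1} plus a Frattini argument, recover each $\Aut_{\fs}(A)$, and finally enumerate the admissible $\Aut_{\fs}(E_1)$ together with the decomposition of $\{E_3^{\mathcal{D}^*}\}$ into $\fs$-classes and match against \cref{DStar}, \cref{LPrune5} and \cref{DStarPrune} --- exactly as the paper does. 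One small slip in your closing paragraph: here the ambient Sylow is $E_1$ itself, so the relevant bookkeeping concerns how $\{E_3^{\mathcal{D}^*}\}$ splits under the various candidates for $\Aut_{\fs}(E_1)$, not how ``$\{E_3^S\}$ decomposes into $E_1$-classes.''
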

\begin{proof}
Since we are only interested in determining $\fs$ up to isomorphism, and as $E_1\not\normaleq \fs$, applying \cref{EssenDeterD5}, we have that $E_3\in\mathcal{E}(\fs)\subseteq \{E_3^{\mathcal{D}^*}\}$. Note that \cref{E3Unique5} holds upon replacing $E_3$ by any $\mathcal{D}^*$ conjugate of $E_3$ and so $\mathcal{E}(\fs)$ and $N_{\fs}(E_1)$ determines $\fs$ completely, by the Alperin--Goldschmidt theorem. By \cref{DStarAut}, we arrange that $\Aut_{\fs}(E_1)$ is a subgroup of $\Aut_{\mathcal{D}^*}(E_1)$. Let $P\in\{E_3^{\mathcal{D}^*}\}$ with $P$ not conjugate to $E_3$ by any element of $E_1$.

Let $K$ be a Hall $5'$-subgroup of $N_{O^{5'}(\Aut_{\fs}(E_3))}(\Aut_{E_1}(E_3))$ so that $K$ is cyclic of order $24$. We note that a Sylow $3$-subgroup acts non-trivially on $Z(E_1)$ and $\Aut_{E_1}(E_3)\cong \Phi(E_1)/Z(E_1)$. By the extension axiom, we let $\hat{K}$ be the lift of $K$ to $\Aut_{\fs}(E_1)$. Then, by \cref{UniqueCyclic}, in $\Out_{\mathcal{D}^*}(E_1)$ there is a unique conjugacy class of cyclic subgroups of order $24$ whose Sylow $3$-subgroup is not contained in $C_{\Out_{\mathcal{D}^*}(E_1)}(Z(E_1))$ or $C_{\Out_{\mathcal{D}^*}(E_1)}(\Phi(E_1)/Z(E_1))$. Indeed, $\hat{K}\Inn(E_1)/\Inn(E_1)$ belongs in this class and again by \cref{UniqueCyclic}, we have two candidates for $\hat{K}\Inn(E_1)$ in $\Aut_{\mathcal{D}^*}(E_1)$ (one coming from the lift of automorphisms of $E_3$ and one coming from the lift of automorphisms of $P$).

We enumerate the possible overgroups of $\hat{K}\Inn(E_1)/\Inn(E_1)\cong \mathrm{C}_{24}$ in $\Out_{\mathcal{D}^*}(E_1)\cong \Sym(3)\times \mathrm{C}_{24}:2$. These are the groups of shape
\[\mathrm{C}_{24},\, \mathrm{C}_{24}:2,\, 3\times \mathrm{C}_{24},\, (3\times \mathrm{C}_{24}):2,\, \Sym(3)\times \mathrm{C}_{24},\, 3\times \mathrm{C}_{24}:2\,\,\text{and}\,\,\Sym(3)\times \mathrm{C}_{24}:2.\]
Note that there are three subgroups of shape $\mathrm{C}_{24}:2$, all conjugate, and every other group is unique. Finally, we note that $\Out_{O^{5'}(\mathcal{L}_{\mathcal{P}})}(E_1)\cong \mathrm{C}_{24}$, $\Out_{O^{5'}(\mathcal{D}^*)}(E_1)\cong 3 \times \mathrm{C}_{24}$ and $\Out_{\mathcal{L}}(E_1)\cong 3 \times \mathrm{C}_{24}:2$.

Suppose first that $P\not\in\mathcal{E}(\fs)$. Hence, $\mathcal{E}(\fs)=\{E_3^{E_1}\}$ and so $\Aut_{\fs}(E_1)\le N_{\Aut_{\fs}(E_1)}(E_3)\Inn(E_1)$. In particular, $\Out_{\fs}(E_1)\le \Out_{\mathcal{L}}(E_1)\cong 3 \times \mathrm{C}_{24}:2$. There are four choices for $\Out_{\fs}(E_1)$ up to conjugacy, and so there are four choices for $\Aut_{\fs}(E_1)$ and these choices correspond exactly with $\Aut_{\mathcal{Y}}(E_1)$ where $\mathcal{Y}$ is a subsystem of $5'$-index in $\mathcal{L}_{\mathcal{P}}$ described in \cref{LPrune5} and \cref{DStarPrune}. By \cref{model}, there is $\alpha\in\Aut(E_1)$ such that $N_{\fs}(E_1)^\alpha=N_{\fs^\alpha}(E_1)=N_{\mathcal{Y}}(E_1)$. If $\mathcal{E}(\fs^\alpha)=\{E_3^{E_1}\}$ then we have that $\fs^\alpha=\mathcal{Y}$ by an earlier observation so that $\fs\cong \mathcal{Y}$. Hence, we have that $P\in\mathcal{E}(\fs^\alpha)$. Then there is $\beta\in\Aut_{\mathcal{D}^*}(E_1)$ such that $P\beta=E_3$ and $N_{\mathcal{Y}}(E_1)^\beta=N_{\mathcal{Y}}(E_1)$. Hence, by an earlier observation using the Alperin--Goldschmidt theorem, we have that $\fs^{\alpha\beta}=\mathcal{Y}$ and so $\fs\cong \mathcal{Y}$. 

Therefore, we may assume that $\mathcal{E}(\fs)=\{E_3^{\mathcal{D}^*}\}$. Let $\beta\in\Aut_{\mathcal{D}^*}(E_1)$ with $E_3\beta=P$. Then $\beta\not\in\Aut_{\mathcal{L}}(E_1)$, $\hat{K}\Inn(E_1)\ne \hat{K}\beta\Inn(E_1)$ and $\hat{K}\beta|_{P}\le O^{5'}(\Aut_{\fs}(P))$. In particular, by \cref{TrivCore5}, we have that $\langle \hat{K}, \hat{K}\beta\rangle \le \Aut_{\fs}(E_1)$ and we infer that $\Out_{\fs}(E_1)$ is an overgroup of $\Out_{O^{5'}(\mathcal{D}^*)}(E_1)\cong 3 \times \mathrm{C}_{24}$
Thus, there are five choices for $\Out_{\fs}(E_1)$ up to conjugacy, and so there are five choices for $\Aut_{\fs}(E_1)$ and these choices correspond exactly with $\Aut_{\mathcal{Y}}(E_1)$ where $\mathcal{Y}$ is a subsystem of $5'$-index in $\mathcal{D}^*$ described in and \cref{DStar}. By \cref{model}, there is $\alpha\in\Aut(S)$ such that $N_{\fs}(E_1)^\alpha=N_{\fs^\alpha}(E_1)=N_{\mathcal{Y}}(E_1)$. Since $\mathcal{E}(\fs^\alpha)=\mathcal{E}(\mathcal{Y})=\{E_3^{\mathcal{D}^*}\}$, by an earlier observation we have that $\fs^\alpha=\mathcal{Y}$ so that $\fs\cong \mathcal{Y}$.
\end{proof}

We provide the following tables summarizing the actions induced by the fusion systems described in \hyperlink{ThmE2}{Theorem E} on their centric-radical subgroups. \cref{SubMTablea} and \cref{SubMTableI} treat those subsystems of $\mathcal{D}$ which are not ``pruned", while \cref{SubMTableII} and \cref{SubMTableIII} deals with the remainder. The entry ``-" indicates that the subgroup is no longer centric-radical in the subsystem, and an entry decorated with ``${}^\dagger$" specifies that there are two conjugacy classes of $E_3$ in this subsystem which are fused upon enlarging to $\mathcal{D}$.

\begin{table}[H]
    \caption{$\mathcal{D}$-conjugacy classes of radical-centric subgroups of $E_1$}
        \begin{tabular}{|c|c|c|c|c|}\hline
            $P$ & $|P|$ & $\Out_{\mathcal{D}^*}(P)$ & $\Out_{O^{5'}(\mathcal{D}^*).2_1}(P)$ & $\Out_{O^{5'}(\mathcal{D}^*).2_2}(P)$ \\\hline
            $E_1$ & $5^8$ & $\Sym(3)\times (\mathrm{C}_{24}:2)$ & $\Sym(3)\times \mathrm{C}_{24}$ & $3\times (\mathrm{C}_{24}:2)$ \\\hline          
            $E_3$ & $5^4$ & $(3\times \SL_2(25)).2$ & $3\times \SL_2(25)$ & $3\times \SL_2(25)$\\ \hline
        \end{tabular}
            \label{SubMTablea}
\end{table}

\begin{table}[H]
        \caption{$\mathcal{D}$-conjugacy classes of radical-centric subgroups of $E_1$}
        \begin{tabular}{|c|c|c|c|}\hline
            $P$ & $|P|$ & $\Out_{\mathcal{L}}(P)$ & $\Out_{O^{5'}(\mathcal{D}^*)}(P)$ \\\hline
            $E_1$ & $5^8$ & $(3\times \mathrm{C}_{24}):2$ & $3\times \mathrm{C}_{24}$ \\\hline          
            $E_3$ & $5^4$ & $(3\times \SL_2(25)).2^\dagger$ & $3\times \SL_2(25)^\dagger$ \\ \hline
        \end{tabular}
    \label{SubMTableI}
\end{table}

\begin{table}[H]
    \caption{$\mathcal{L}$-conjugacy classes of radical-centric subgroups of $E_1$}
        \begin{tabular}{|c|c|c|c|}\hline
            $P$ & $|P|$ & $\Out_{\mathcal{L}_{\mathcal{P}}}(P)$ & $\Out_{O^{5'}(\mathcal{D}^*)_{\mathcal{P}}}(P)$  \\\hline
            $E_1$ & $5^8$ & $(3\times \mathrm{C}_{24}):2$ & $3\times \mathrm{C}_{24}$  \\\hline          
            $E_3$ & $5^4$ & $(3\times \SL_2(25)).2$ & $3\times \SL_2(25)$ \\ \hline
        \end{tabular}
    \label{SubMTableII}
\end{table}

\begin{table}[H]
    \caption{$\mathcal{L}$-conjugacy classes of radical-centric subgroups of $E_1$}
        \begin{tabular}{|c|c|c|c|}\hline
            $P$ & $|P|$ & $\Out_{O^{5'}(\mathcal{L}_{\mathcal{P}}).2}(P)$ & $\Out_{O^{5'}(\mathcal{L}_{\mathcal{P}})}(P)$ \\\hline
            $E_1$ & $5^8$ & $\mathrm{C}_{24}:2$ & $ \mathrm{C}_{24}$ \\\hline          
            $E_3$ & $5^4$ & $\SL_2(25).2$ & $\SL_2(25)$ \\ \hline
        \end{tabular}
    \label{SubMTableIII}
\end{table}

\appendix

\section{magma code for computations}\label{code}

We include the code of various claims throughout this paper. While none of this code is particularly elegant or optimized, it does substantiate the claims made.

We first include Cano's code \cite[pg 34]{cano} for amalgam enumeration in \cref{F3Unique} since it appears not to be freely available from any reliable source.

\begin{Verbatim}[frame=single, framesep=4mm, framerule=0.1mm]
function Amalgams(P1, B1, P2, B2);
tf, isom:=IsIsomorphic(B1,B2);
if tf eq false then
    return "false";
else
    inv:=Inverse(isom);
    AP1:=AutomorphismGroup(P1);
    f1, perAP1:=PermutationRepresentation(AP1);
    g1:=Inverse(f1);

AP2:=AutomorphismGroup(P2);
f2, perAP2:=PermutationRepresentation(AP2);
g2:=Inverse(f2);
AB1:=AutomorphismGroup(B1);
IdAB1:=Identity(AB1);
b1f, perAB1:=PermutationRepresentation(AB1);
b1g:=Inverse(b1f);
genB1:=Generators(B1);
N1:=Normalizer(P1,B1);
genN1:=Generators(N1);
AP1innerB1:=sub<perAP1| {(AP1!hom<P1 -> P1 | x:-> w^-1*x*w>)@ 
f1: w in genN1 }>;
kernelAstara:=sub<perAP1|{(AP1!hom<P1 -> P1 | x:-> w^-1*x*w>)@ 
f1: w in genN1 | AB1! hom<B1->B1| x:-> w^-1*x*w> eq IdAB1} >;
trans1:=Transversal(perAP1, AP1innerB1);
AP1outB1:=sub<perAP1| { w : w in trans1 | B1@ (w@ g1) eq B1 } >;
AP1fixB1:=sub<perAP1| AP1outB1, AP1innerB1>; genAP1fixB1:=
Generators(AP1fixB1);
N2:=Normalizer(P2,B2);
genN2:=Generators(N2);
AP2innerB2:=sub<perAP2| {(AP2!hom<P2 -> P2 | x:-> w^-1*x*w>)@ 
f2 : w in genN2} >;
kernelAstarb:=sub<perAP2|{(AP2!hom<P2 -> P2 | x:-> w^-1*x*w>)@ 
f2: w in genN2 |
AB1!hom<B1->B1| x:-> (w^-1*(x@ isom)*w)@ inv> eq IdAB1}>;
trans2:=Transversal(perAP2, AP2innerB2);
AP2outB2:=sub<perAP2| { w : w in trans2 | B2@ (w@ g2) eq B2 } >;
AP2fixB2:=sub<perAP2| AP2outB2, AP2innerB2>;
genAP2fixB2:=Generators(AP2fixB2);
ordenA1:=Index(AP1fixB1, kernelAstara);
ordenA2:=Index(AP2fixB2, kernelAstarb);
A1:=sub<perAB1|>;

for w in genAP1fixB1 do
    assert exists(q){x:x in perAB1| forall(t){b : b in genB1 |
    b@ (w@ g1) eq b@ (x@ b1g)} eq true};
    A1:=sub<perAB1| A1, q >;
    if Order(A1) eq Index(AP1fixB1, kernelAstara) then
        break;
    end if;
end for;
    A2:=sub<perAB1| >;
for m in genAP2fixB2 do
    assert exists(n){x:x in perAB1| forall(t){ p : p in genB1 | 
    ((p@ isom)@ (m@ g2))@ inv eq p@ (x@ b1g)} eq true }; 
    A2:=sub<perAB1| A2, n>;
    if Order(A2) eq Index(AP2fixB2, kernelAstarb) then
        break;
    end if;
end for;
    
h,y:=CosetAction(perAB1, A1);
O:=Orbits(h(A2));
o:=#O;
INV:=Inverse(h);
rep:={};
for j in [1..o] do
    for x in O[j] do sena:=false;
        if Order((x@ INV)@ b1g) eq 1 then rep:=rep join
        {(x@ INV)@ b1g}; sena:=true;
            break;
        end if;
    end for;
    if sena eq false then
        rep:=rep join {Random(O[j]@ INV)@ b1g};
    end if;
end for;

Rep:=SetToIndexedSet(rep);
return o, Rep, [ Rep[i]*isom : i in [1..o]], isom; end if;

end function;
\end{Verbatim}

\underline{\cref{J2Iden}:}

\begin{Verbatim}[frame=single, framesep=4mm, framerule=0.1mm]
> A:=[];
> G:=Sp(6,5);
> V:=GModule(G);
> M:=MaximalSubgroups(G);
> for H in M do
for> S:=Sylow(H`subgroup,5);
for> if Dimension(Fix(Restriction(V, S))) eq 1 then
for|if> Append(~A, H`subgroup);
for|if> end if;
for> end for;

> /*We use the ChiefFactors command alongside the descriptions in
>  Bray, Holt & Roney-Dougal to identify the cases we handle 
> computationally. */
> A:=[A[1], A[4], A[5]];

> P1:=A[1];
> T1:=Sylow(P1, 5);
> for X in Subgroups(T1:OrderEqual:=5) do 
for> Fix(Restriction(V, X`subgroup));
for> end for;
GModule of dimension 2 over GF(5)
GModule of dimension 2 over GF(5)
GModule of dimension 3 over GF(5)
GModule of dimension 2 over GF(5)
GModule of dimension 2 over GF(5)
GModule of dimension 2 over GF(5)

> P2:=A[2];
> L2:=NormalClosure(P2, Sylow(P2, 5));
> N:=NormalSubgroups(L2:OrderEqual:=5^8*2^4*3)[1]`subgroup;
> T:=Sylow(N, 2);
> C:=Centralizer(L2, T)
> FactoredOrder(C);
[ <2,4>,  <3,1>, <5,1>]
> R:=Sylow(C, 5);
> Fix(Restriction(V, R));
GModule of dimension 5 over GF(5)

> B:=[];
> G:=Sp(4,5);
> X:=Subgroups(G);
> for h in X do
for> H:=h`subgroup;
for> S:=Sylow(H, 5);
for> if #pCore(H,5) eq 1 and H eq NormalClosure(H, S) and 
for> #S eq 25 then
for|if> Append(~B, H);
for|if> end if;
for> end for;
> #B;
2;
> a,b:=IsIsomorphic(B[1], SL(2,25));
> a;
true
> a,b:=IsIsomorphic(B[2], DirectProduct(SL(2,5), SL(2,5)));
> a;
true
> Center(B[1]) eq Center(G);
true
> Center(G) subset Center(B[2]);
true
> Center(G) eq Center(Sylow(G,2));
true

> P3:=A[3];
> L3:=NormalClosure(P3, Sylow(P3, 5));
> Z:=Center(Sylow(L3, 2));
> CC:=Centralizer(L3, CC);
> D:=DerivedSubgroup(CC);
> a,b:=IsIsomorphic(D, Sp(4,5));
> a;
true
> B:=[];
> X:=Subgroups(D);
> for h in X do
for> H:=h`subgroup;
for> S:=Sylow(H, 5);
for> if #pCore(H,5) eq 1 and H eq NormalClosure(H, S) and 
for> #S eq 25 then
for|if> Append(~B, H);
for|if> end if;
for> end for;
> Fix(Restriction(V, Sylow(B[1], 5)));
GModule of Dimension 4 over GF(5)
> Fix(Restriction(V, Sylow(B[2], 5)));
GModule of Dimension 4 over GF(5)
\end{Verbatim}

\underline{\cref{EssenDeter}:}

Much of the calculations rely on the Fusion Systems package in MAGMA and so we append this to the version of MAGMA we use. This package is available at \cite{Webpage}. Instructions of how to use this package are given in \cite{Webpage}.

\begin{Verbatim}[frame=single, framesep=4mm, framerule=0.1mm]
> Attach("...\FusionSystems.m")
\end{Verbatim}

There are numerous ways to get our hands on a Sylow $3$-subgroup of $\mathrm{Co}_1$, but we give perhaps the easiest way. Since a Sylow $3$-subgroup of $\mathrm{Co}_1$ is isomorphic to a Sylow $3$-subgroup of $\Sp_6(3)$, we use this to access $S$ (this isomorphism can be checked in various ways in MAGMA). We also note here that the ``AllProtoEssentials" command gives all possible essential subgroups up to $\Aut(S)$-conjugacy.

\begin{Verbatim}[frame=single, framesep=4mm, framerule=0.1mm]
> S:=PCGroup(Sylow(Sp(6,3),3));
> AllProtoEssentials(S);
[
    GrpPC of order 2187 = 3^7
    PC-Relations:
        $.2^3 = $.7^2, 
        $.2^$.1 = $.2 * $.4^2, 
        $.3^$.2 = $.3 * $.5^2, 
        $.4^$.2 = $.4 * $.6^2, 
        $.5^$.2 = $.5 * $.7^2,

    GrpPC of order 6561 = 3^8
    PC-Relations:
        $.1^3 = $.8^2, 
        $.2^$.1 = $.2 * $.4^2, 
        $.3^$.1 = $.3 * $.5, 
        $.4^$.1 = $.4 * $.7, 
        $.4^$.2 = $.4 * $.6, 
        $.4^$.3 = $.4 * $.7, 
        $.5^$.1 = $.5 * $.8, 
        $.5^$.2 = $.5 * $.7^2, 
        $.5^$.3 = $.5 * $.8,

    GrpPC of order 6561 = 3^8
    PC-Relations:
        $.1^3 = $.8^2, 
        $.2^$.1 = $.2 * $.6, 
        $.3^$.1 = $.3 * $.7, 
        $.3^$.2 = $.3 * $.6^2 * $.8, 
        $.4^$.1 = $.4 * $.6 * $.7^2 * $.8, 
        $.4^$.2 = $.4 * $.8^2, 
        $.4^$.3 = $.4 * $.7^2, 
        $.5^$.2 = $.5 * $.7^2 * $.8^2, 
        $.6^$.1 = $.6 * $.8, 
        $.6^$.4 = $.6 * $.8, 
        $.7^$.2 = $.7 * $.8^2,

    GrpPC of order 6561 = 3^8
    PC-Relations:
        $.2^$.1 = $.2 * $.4^2, 
        $.3^$.1 = $.3 * $.6, 
        $.3^$.2 = $.3 * $.5^2, 
        $.4^$.2 = $.4 * $.8^2, 
        $.4^$.3 = $.4 * $.7^2, 
        $.5^$.1 = $.5 * $.7^2, 
        $.5^$.4 = $.5 * $.8, 
        $.6^$.2 = $.6 * $.7^2 * $.8^2, 
        $.7^$.2 = $.7 * $.8^2
]
\end{Verbatim}

We now create some of the possible essential subgroups given above for use in later claims. This code may also be modified slightly to substantiate some of the uniqueness claims regarding these subgroups as subgroups of $S$. 

\begin{Verbatim}[frame=single, framesep=4mm, framerule=0.1mm]
> S:=PCGroup(Sylow(Sp(6,3),3));
> JS:=AbelianSubgroups(S:OrderEqual:=3^6)[1]`subgroup;;
> T:=sub<S|DerivedSubgroup(S), JS>;
> X:=Subgroups(S:OrderEqual:=3^8);
> Z2S:=UpperCentralSeries(S)[3];
> #Z2S;
9
\end{Verbatim}

\underline{\cref{SingleEssenAct1}:}
\begin{Verbatim}[frame=single, framesep=4mm, framerule=0.1mm]
> for H in X do 
for> if FrattiniSubgroup(H`subgroup) eq FrattiniSubgroup(T) then
for|if> E1:=H`subgroup;
for|if> end if; 
for> end for;
> Center(E1) eq Center(S);
true
> FE1:=FrattiniSubgroup(E1);
> #FE1;
27
> Centralizer(E1, FE1) eq JS;
true
> CommutatorSubgroup(CommutatorSubgroup(JS, S), S) subset FE1;
false
\end{Verbatim}

\underline{\cref{SingleEssenAct2}:}
\begin{Verbatim}[frame=single, framesep=4mm, framerule=0.1mm]
> E2:=Centralizer(S, Z2S);
> FE2:=FrattiniSubgroup(E2);
> #FE2;
243
> FE2 eq CommutatorSubgroup(E2, JS);
true
> #Center(E2);
27
\end{Verbatim}

\underline{\cref{SingleEssenAct3}:}
\begin{Verbatim}[frame=single, framesep=4mm, framerule=0.1mm]
> for H in X do
for> if Agemo(H`subgroup,1) eq Center(S) and
for|if> not(H`subgroup eq E1) then
for|if> E3:=H`subgroup;
for|if> end if;
for> end for;
> Z2S eq UpperCentralSeries(E3)[3];
true
> FE3:=FrattiniSubgroup(E3);
> #FE3;
243
> Centralizer(E3, FE3) eq Z2S;
true
> Center(E2 meet E3) eq Center(E2);
true
> FrattiniSubgroup(E2 meet E3) eq Center(E2);
true
\end{Verbatim}

\underline{\cref{ExtraSpecial}:}
\begin{Verbatim}[frame=single, framesep=4mm, framerule=0.1mm]
> X1:=Subgroups(E1:OrderEqual:=3^5);
> E:=[];
> for U in X1 do
for> if IsNormal(S, U`subgroup) and
for|if> U`subgroup meet JS eq FrattiniSubgroup(E1) and
for|if> not(#Omega(U`subgroup, 1) eq 3^4) then
for|if> Append(~E, U`subgroup);
for|if> end if; 
for> end for;
> #E;
3
> A:=AutomorphismGroup(S);
> phi, Aut:=PermutationRepresentation(A);
> F:=[];
> for U in E do
for> AutUS:=sub<A| {hom<S->S|x:-> x^g> : g in Generators(U)}>;  
for> AutUSp:=sub<Aut| [phi(AutUS.i) : 
for> i in [1..#Generators(AutUS)]]>;
for> Append(~F,AutUSp);    
for> end for;
> IsConjugate(Aut, F[1], F[2]);
true
> IsConjugate(Aut, F[1], F[3]);
true
\end{Verbatim}

\underline{\cref{Co1}:}
\begin{Verbatim}[frame=single, framesep=4mm, framerule=0.1mm]
> G<x,y>:=PermutationGroup<24|\[
2,1,5,4,3,6,10,8,11,7,9,12,16,18,15,13,17,14,21,20,19,22,24,23]
,\[
3,4,6,7,8,9,11,12,13,14,15,16,17,18,19,20,1,21,2,22,23,5,24,10]>;
print "Group G is 2.M12 < Sym(24)";
> V:=IrreducibleModules(G, GF(3))[2];
/* 2M12 has two irreducible modules of dimension 6 over GF(3)
S fixes a subspace of dimension 1 or 2 in these modules.
Since |Z(S)|=3 we want the one with fixed space of dimension 1.*/
> Fix(Restriction(V, Sylow(G,3)));
GModule of dimension 1 over GF(3)
> M1:=MaximalSubgroups(G)[6]`subgroup;
> Fix(Restriction(V, pCore(M1,3)));
GModule of dimension 1 over GF(3)
> T:=Sylow(M1, 3);
>for H in IntermediateSubgroups(G, T) do
for> if IsIsomorphic(H, M1) and 
for> Dimension(Fix(Restriction(V, pCore(H,3)))) eq 1 then
for|if> H eq M1;
for|if> end if;
for> end for;
true
\* Hence we have found E1, distinct from E2, and proved that $K$ 
is the unique overgroup of $T$ satisfying the required 
properties.*/
> CohomologicalDimension(Restriction(V, M1), 1);
0
> X:=MatrixGroup(V);
> K:=MatrixGroup(Restriction(V, M1));
> Normalizer(GL(6,3), K) subset X;
true
\end{Verbatim}

\underline{\cref{F3Essen}:}

As in \cref{F3Unique}, we appeal to \cite{OnlineAtlas} for generators for any maximal subgroup of $\mathrm{F}_3$ which contains $S$ as a Sylow $3$-subgroup (specifically the subgroup $M_2$ of shape $3^{1+2+1+2+1+2}:\GL_2(3))$. We call the group obtained ``$G$" and proceed as below.

\begin{Verbatim}[frame=single, framesep=4mm, framerule=0.1mm]
>S:=PCGroup(Sylow(G,3));
>AllProtoEssentials(S);
[
    GrpPC of order 243 = 3^5
    PC-Relations:
        $.1^3 = Id($), 
        $.2^3 = Id($), 
        $.3^3 = Id($), 
        $.4^3 = Id($), 
        $.5^3 = Id($),

    GrpPC of order 19683 = 3^9
    PC-Relations:
        $.1^3 = $.7^2 * $.9^2, 
        $.3^3 = $.8^2, 
        $.5^3 = $.9^2, 
        $.2^$.1 = $.2 * $.5^2 * $.9^2, 
        $.3^$.1 = $.3 * $.5^2 * $.6^2 * $.9, 
        $.3^$.2 = $.3 * $.6 * $.7 * $.9^2, 
        $.4^$.1 = $.4 * $.6^2 * $.7 * $.8, 
        $.4^$.2 = $.4 * $.8^2 * $.9, 
        $.4^$.3 = $.4 * $.8 * $.9^2, 
        $.5^$.1 = $.5 * $.8^2, 
        $.5^$.2 = $.5 * $.7^2, 
        $.5^$.3 = $.5 * $.7 * $.8^2, 
        $.5^$.4 = $.5 * $.8^2 * $.9^2, 
        $.6^$.1 = $.6 * $.7^2, 
        $.6^$.2 = $.6 * $.8^2 * $.9, 
        $.6^$.3 = $.6 * $.8^2 * $.9^2, 
        $.6^$.5 = $.6 * $.9^2, 
        $.7^$.2 = $.7 * $.9^2, 
        $.7^$.3 = $.7 * $.9^2, 
        $.8^$.1 = $.8 * $.9,

    GrpPC of order 19683 = 3^9
    PC-Relations:
        $.1^3 = $.8, 
        $.3^3 = $.8^2, 
        $.4^3 = $.9^2, 
        $.2^$.1 = $.2 * $.6^2, 
        $.3^$.1 = $.3 * $.5^2, 
        $.3^$.2 = $.3 * $.5 * $.6 * $.8 * $.9^2, 
        $.4^$.1 = $.4 * $.6^2 * $.7, 
        $.4^$.2 = $.4 * $.6 * $.7 * $.9, 
        $.4^$.3 = $.4 * $.7 * $.8 * $.9^2, 
        $.5^$.1 = $.5 * $.9, 
        $.5^$.2 = $.5 * $.8^2, 
        $.5^$.3 = $.5 * $.8 * $.9^2, 
        $.5^$.4 = $.5 * $.8 * $.9, 
        $.6^$.2 = $.6 * $.8^2, 
        $.6^$.3 = $.6 * $.8^2 * $.9, 
        $.6^$.4 = $.6 * $.9^2, 
        $.7^$.1 = $.7 * $.8^2 * $.9^2, 
        $.7^$.2 = $.7 * $.8, 
        $.7^$.3 = $.7 * $.9^2
]
\end{Verbatim}

We create $S$ as before for this next claim. It verifies the calculation in \cref{E3inE1} and \cref{3Conjugate}, as well as claims made about $|\Aut(E_1)|_{3'}$ which are used throughout \cref{F3Sec}. We uses some of the commands from the Fusion Systems package.

\begin{Verbatim}[frame=single, framesep=4mm, framerule=0.1mm]
> S:=PCGroup(Sylow(G,3));
> E1:=Centralizer(S, UpperCentralSeries(S)[3]);
> X:=ElementaryAbelianSubgroups(E1: OrderEqual:=3^5);
> A:=[];
> for x in X do
for> if #Normalizer(E1, x`subgroup) eq 3^7 then
for|if> Append(~A, x`subgroup);
for|if> end if;
for> end for;
> #A;
3;
> a,b:=IsConjugate(S, A[1], A[2]);
> a;
true
> E1 eq NormalClosure(S, A[1]);
true
> Normalizer(E1, A[1]) eq NormalClosure(E1, A[1]);
true
> FrattiniSubgroup(E1) eq UpperCentralSeries(E1)[3];
true
> CommutatorSubgroup(FrattiniSubgroup(E1), E1) eq Center(E1);
true
> CommutatorSubgroup(FrattiniSubgroup(E1), S) eq Center(E1);
false
> AutE1:=AutomorphismGroup(E1);
> FactoredOrder(AutE1);
[ <2, 4>, <3, 15> ]
> InnE1:=AutYX(E1, E1);
> AutSE1:=AutYX(S, E1);
> phi, Aut:=PermutationRepresentation(AutE1);
> Inn:=sub<Aut| [phi(InnE1.i) : i in [1..#Generators(InnE1)]]>;
> AutS:=sub<Aut| [phi(AutSE1.i) : i in [1..#Generators(InnE1)]]>;
> X:=Subgroups(Aut: OrderEqual:=#Inn*#SL(2,3));
> B:=[];
> for H in X do
for> if Inn subset H`subgroup and 
for|if> IsIsomorphic(H`subgroup/Inn, SL(2,3)) then 
for|if> Append(~B, H);
for|if> end if;
for> end for;
> #B;
2
> for H in B do
for> if IsConjugate(Aut, AutS, Sylow(H`subgroup, 3)) then 
for|if> H, Index(Normalizer(Aut, H`subgroup), H`subgroup);
for|if> end if;
for> end for;
rec<recformat<order, length, subgroup, presentation> |
    order := 52488,
    length := 729,
    subgroup := Permutation group acting on a set of
    cardinality 1944
    Order = 104976 = 2^3 * 3^8>
6
\end{Verbatim}

\underline{\cref{F3SL2} and \cref{3Triv}:}

\begin{Verbatim}[frame=single, framesep=4mm, framerule=0.1mm]
> E2:=pCore(G,3);
> S:=Sylow(G,3);
> E1:=Centralizer(S, UpperCentralSeries(S)[3]);
> Center(E2) eq Center(S);
true
> ZE2:=UpperCentralSeries(E2)[3];
> FactoredOrder(ZE2);
[ <3, 3> ]
> UpperCentralSeries(S)[4] eq ZE2;
true
> CE2:=Centralizer(E2, ZE2);
> FactoredOrder(CE2);
[ <3, 7> ]
> ZWE2:=CommutatorSubgroup(E2, CE2);
> FactoredOrder(ZWE2);
[ <3, 4> ]
> ZE2 subset ZWE2;
true
> WE2:=Centralizer(E2, ZWE2);
> ZWE2 subset WE2;
true
> ZWE2 eq Centralizer(E2, WE2);
true
> FactoredOrder(WE2);
[ <3, 6> ]
> CommutatorSubgroup(CE2, WE2) eq ZE2;       
true
> FrattiniSubgroup(E1) subset WE2;
true
> ZWE2 subset FrattiniSubgroup(E1);
true
> for x in MaximalSubgroups(E2) do
for> if CommutatorSubgroup(x`subgroup, WE2) eq ZE2 then
for|if> x;
for|if> end if;
for> end for;
\end{Verbatim}

\underline{\cref{E3Unique}:}

\begin{Verbatim}[frame=single, framesep=4mm, framerule=0.1mm]
>S:=PCGroup(Sylow(G,3));
>E1:=Centralizer(S, UpperCentralSeries(S)[3]);
> A:=[];
> X:=Subgroups(GL(5,3):OrderEqual:=#SL(2,9));
> for P in X do
for> if IsIsomorphic(P`subgroup, SL(2,9)) then
for|if> Append(~A, P);
for|if> end if;
for> end for;
> #A;
1
> H:=A[1]`subgroup;
> T:=Sylow(H,3);
> K:=Normalizer(H, T);
> Normalizer(GL(5,3), K) subset Normalizer(GL(5,3), H);
true
\end{Verbatim}

\underline{\cref{ThmB2}:}

\begin{Verbatim}[frame=single, framesep=4mm, framerule=0.1mm]
> AutE1:=AutomorphismGroup(E1);
> AutSE1:=AutYX(S, E1);
> InnE1:=AutYX(E1, E1);
> phi, Aut:=PermutationRepresentation(AutE1);
> AutS:=sub<Aut| [phi(AutSE1.i): i in [1..#Generators(AutSE1)]]>;
> Inn:=sub<Aut| [phi(InnE1.i): i in [1..#Generators(InnE1)]]>;
> B:=Normalizer(Aut, AutS);
> R:=Sylow(B, 2);
> K:=sub<Aut| AutS, R>;
> Y:=MinimalOvergroups(Aut, K);
> Z:=[];
> for P in Y do
for> if IsIsomorphic(P/Inn, GL(2,3)) then
for|if> Append(~Z, P);
for|if> end if;
for> end for;
> #Z;
3
> #Sylow(Normalizer(Aut, K),3)/#Sylow(Normalizer(Aut, Z[1]), 3);
3
\end{Verbatim}

\underline{\cref{EssenDeterD3}:}
\begin{Verbatim}[frame=single, framesep=4mm, framerule=0.1mm]
> S:=PCGroup(Sylow(G,3));
> E1:=Centralizer(S, UpperCentralSeries(S)[3]);
> AllProtoEssentials(E1);
[
    GrpPC of order 243 = 3^5
    PC-Relations:
        $.1^3 = Id($), 
        $.2^3 = Id($), 
        $.3^3 = Id($), 
        $.4^3 = Id($), 
        $.5^3 = Id($)
]
\end{Verbatim}

\underline{\cref{MonsterEssen}:}
Once again, we appeal to the ATLAS of Finite Group Representations \cite{OnlineAtlas} to get generators, this time for a maximal subgroup of $\mathrm{M}$ (we take the $750$ point permutation representation of the maximal subgroup of shape $5^{2+2+4}:\Sym(3)\times \GL_2(5)$). We label the group generated by ``$G$" and proceed as follows:

\begin{Verbatim}[frame=single, framesep=4mm, framerule=0.1mm]
> S:=PCGroup(Sylow(G,5));
> AllProtoEssentials(S);
[
    GrpPC of order 625 = 5^4
    PC-Relations:
        $.1^5 = Id($), 
        $.2^5 = Id($), 
        $.3^5 = Id($), 
        $.4^5 = Id($),
            
    GrpPC of order 390625 = 5^8
    PC-Relations:
        $.2^$.1 = $.2 * $.5^2, 
        $.3^$.1 = $.3 * $.5^2 * $.6, 
        $.3^$.2 = $.3 * $.5^2 * $.7^3 * $.8^4, 
        $.4^$.1 = $.4 * $.5^2 * $.7^4, 
        $.4^$.2 = $.4 * $.6^4 * $.7^4 * $.8, 
        $.5^$.1 = $.5 * $.7^3, 
        $.5^$.2 = $.5 * $.7^2 * $.8^2, 
        $.5^$.3 = $.5 * $.8^3, 
        $.5^$.4 = $.5 * $.8^2, 
        $.6^$.1 = $.6 * $.7, 
        $.6^$.2 = $.6 * $.7^2 * $.8^2, 
        $.6^$.3 = $.6 * $.8, 
        $.6^$.4 = $.6 * $.8^2,
    
    GrpPC of order 390625 = 5^8
    PC-Relations:
        $.2^$.1 = $.2 * $.7^2 * $.8^2, 
        $.3^$.1 = $.3 * $.4^4 * $.5 * $.7^4 * $.8^2, 
        $.3^$.2 = $.4^3 * $.5 * $.6 * $.7^3, 
        $.4^$.1 = $.4^2 * $.5^4, 
        $.4^$.2 = $.3 * $.4^3 * $.5^4 * $.8, 
        $.4^$.3 = $.4 * $.6, 
        $.5^$.1 = $.4 * $.8, 
        $.5^$.2 = $.3 * $.4^2 * $.7^2 * $.8^4, 
        $.5^$.3 = $.5 * $.6 * $.7, 
        $.5^$.4 = $.5 * $.8, 
        $.6^$.1 = $.6 * $.7^4 * $.8^4, 
        $.6^$.2 = $.6 * $.7^4 * $.8^4, 
        $.7^$.2 = $.8^4, 
        $.8^$.2 = $.7 * $.8^2
]
\end{Verbatim}

For the next lemma, we need to calculate in $N_{M}(E_2)$ and calculate with the group $\mathbf{R}$. For this, we appeal again to the Online Atlas for generators of the maximal subgroup of shape $5^{3+3}:2\times \PSL_3(5)$ of $\mathrm{M}$ (we take the $7750$ point permutation representation). We label this group $G$ in what follows.

\underline{\cref{E3inE151}:}

\begin{Verbatim}[frame=single, framesep=4mm, framerule=0.1mm]
> M2:=MaximalSubgroups(G)[6]`subgroup; 
> E2:=pCore(M2,5);
> Center(E2);
Permutation group acting on a set of cardinality 7750
Order = 5
> S:=Sylow(M2,5);
> E1:=Centralizer(S, UpperCentralSeries(S)[3]);
> X:=ElementaryAbelianSubgroups(E1:OrderEqual:=5^4);
> Y:=[];
> for a in X do
for> if #Normalizer(S, a`subgroup) eq 5^6 and 
for> if> CommutatorSubgroup(Normalizer(S, a`subgroup),
for> if> a`subgroup) eq Center(E1) then
for|if> Append(~Y, a`subgroup);
for|if> end if;
for> end for;
> for a in Y do
for> if not(IsConjugate(Normalizer(M2, S), a, Y[1])) then
for|if> a;
for|if> end if; 
for> end for;
> E3:=Y[1];               
> E1 eq NormalClosure(S, E3);
true
> Index(Normalizer(M2, S), sub<M2| S, Normalizer(M2, E3)>);
4
\end{Verbatim}

\underline{\cref{MSL21}:}

\begin{Verbatim}[frame=single, framesep=4mm, framerule=0.1mm]
> R:=pCore(G, 5);                            
> FactoredOrder(FrattiniSubgroup(E2));
[ <5, 5> ]
> FrattiniSubgroup(R) eq 
> CommutatorSubgroup(E2, FrattiniSubgroup(E2));
true
> R eq Centralizer(S, FrattiniSubgroup(R));
true
\end{Verbatim}

\underline{\cref{E3inE152}:}

\begin{Verbatim}[frame=single, framesep=4mm, framerule=0.1mm]
> Center(E1) eq E3 meet FrattiniSubgroup(E1);
true
> FactoredOrder(FrattiniSubgroup(E1));
[ <5, 4> ]
> Normalizer(S, E3) eq sub<S| E3, FrattiniSubgroup(E1)>;
true
\end{Verbatim}

\underline{\cref{MSL2a}:}

\begin{Verbatim}[frame=single, framesep=4mm, framerule=0.1mm]
> IsElementaryAbelian(FrattiniSubgroup(E1));
true
> Center(E1) eq CommutatorSubgroup(S, FrattiniSubgroup(E1));
true
> FrattiniSubgroup(E1) eq Centralizer(S, FrattiniSubgroup(E1));
true
> Center(R) eq Center(FrattiniSubgroup(R));
true
\end{Verbatim}

We now switch back to of shape $5^{2+2+4}:\Sym(3)\times \GL_2(5)$ which we label $G$.

\underline{\cref{MSL2}:}

\begin{Verbatim}[frame=single, framesep=4mm, framerule=0.1mm]
> S:=PCGroup(Sylow(G,5));
> E1:=Centralizer(S, UpperCentralSeries(S)[3]);
> AutE1:=AutomorphismGroup(E1);
> FactoredOrder(AutE1);
[ <2, 6>, <3, 2>, <5, 13> ]
> InnE1:=AutYX(E1, E1);
> AutSE1:=AutYX(S, E1);
> phi, Aut:=PermutationRepresentation(AutE1);
> Inn:=sub<Aut| [phi(InnE1.i) : i in [1..#Generators(InnE1)]]>;
> AutS:=sub<Aut| [phi(AutSE1.i) : i in [1..#Generators(AutSE1)]]>;
> AutPhiE1:=NormalSubgroups(AutS:OrderEqual:=25)[1]`subgroup;
> N:=Normalizer(Aut, AutS); 
> C:=Centralizer(N, AutPhiE1);
> FactoredOrder(N);
[ <2, 5>, <3, 1>, <5, 9> ]
> FactoredOrder(C);
[ <2, 2>, <5, 9> ]
>  B:=sub<Aut| AutS, Subgroups(C:OrderEqual:=2^2)[1]`subgroup>; 
> FactoredOrder(B);
[ <2, 2>, <5, 7> ]
> X:=MinimalOvergroups(Aut, B);
> K:=[];
> for H in X do
for> if not(IsSolvable(H)) then
for|if> Append(~K, H);
for|if> end if;
for> end for;
> #K;
1
> FactoredOrder(Normalizer(Aut, K[1]));
[ <2, 6>, <3, 2>, <5, 7> ]
\end{Verbatim}

\underline{\cref{E3inE15b}:}

\begin{Verbatim}[frame=single, framesep=4mm, framerule=0.1mm]
> S:=Sylow(G, 5);
> E1:=Centralizer(S, UpperCentralSeries(S)[3]);
> X:=ElementaryAbelianSubgroups(E1:OrderEqual:=5^4);
> Y:=[];
> for a in X do
for> if #Normalizer(S, a`subgroup) eq 5^6 and
for> if> CommutatorSubgroup(Normalizer(S, a`subgroup),
for> if> a`subgroup) eq Center(E1) then
for|if> Append(~Y, a`subgroup);
for|if> end if;
for> end for;
> E3:=Y[1];    
> H:=NormalSubgroups(G)[#NormalSubgroups(G)-1]`subgroup;
> for a in Y do            
for> if not(IsConjugate(H, E3, a)) then
for|if> a;
for|if> end if;
for> end for;
> Index(Normalizer(G, E3), Normalizer(H, E3));
2
\end{Verbatim}

\underline{\cref{E1Unique5}:}
\begin{Verbatim}[frame=single, framesep=4mm, framerule=0.1mm]
> S:=PCGroup(Sylow(G,5));
> E1:=Centralizer(S, UpperCentralSeries(S)[3]);
> AutE1:=AutomorphismGroup(E1);
> AutSE1:=AutYX(S, E1);
> InnE1:=AutYX(E1, E1);
> phi, Aut:=PermutationRepresentation(AutE1);
> AutS:=sub<Aut| [phi(AutSE1.i) : i in [1..#Generators(AutSE1)]]>;
> Inn:=sub<Aut| [phi(InnE1.i) : i in [1..#Generators(InnE1)]]>;
> NAutS:=Normalizer(Aut, AutS);
> FactoredOrder(NAutS);
[ <2, 5>, <3, 1>, <5, 9> ]
> K:=Subgroups(NAutS:OrderEqual:=2^5*3)[1]`subgroup;
> K2:=Centralizer(K, Sylow(K,3));
> FactoredOrder(K2);
[ <2, 4>, <3, 1> ]
> X:=CyclicSubgroups(K2: OrderEqual:=12);
> A:=[];
> for P in X do
for> if #Centralizer(Aut, P`subgroup) mod 9 eq 0 then
for|if> Append(~A, P`subgroup);
for|if> end if;
for> end for;
> #A;
1
> L:=A[1];
> H:=Centralizer(Aut, L);
> IsIsomorphic(DirectProduct(Sylow(GL(2,5),3), GL(2,5)), H);
true
> T:=IrreducibleModules(H, GF(5));
> for V in T do
for> if Dimension(V) eq 2 and 
for> if> not(CohomologicalDimension(V, 1) eq 0) then
for|if> V;
for|if> end if;
for> end for;
\end{Verbatim}

\underline{\cref{E3Unique5}:}

\begin{Verbatim}[frame=single, framesep=4mm, framerule=0.1mm]
> X:=Subgroups(GL(4,5):OrderEqual:=#SL(2,25));
> A:=[];
> for H in X do                             
for> if IsIsomorphic(H`subgroup, SL(2,25)) then
for|if> Append(~A, H`subgroup);
for|if> end if; 
for> end for;
> #A;
1
> Y:=A[1];
> N:=Normalizer(Y, Sylow(Y,5));
> Normalizer(GL(4,5), N) subset Normalizer(GL(4,5), Y);
true
\end{Verbatim}

\underline{\cref{ThmC1}:}

It is difficult to construct $\Aut(\mathbf{Q})$ and by \cite{Winter}, we know that $\Out(\mathbf{Q})\cong \Sp_6(5):4$. Hence, we set $G$ to be the (unique up to conjugacy) maximal subgroup of $\GL_6(5)$.

\begin{Verbatim}[frame=single, framesep=4mm, framerule=0.1mm]
> AutQ:=MaximalSubgroups(G)[4]`subgroup;
> ChiefFactors(AutQ);
    G
    |  Cyclic(2)
    *
    |  J2
    *
    |  Cyclic(2)
    *
    |  Cyclic(2)
    1
> Y:=Subgroups(G: OrderEqual:=#AutQ);
> for H in Y do
for> if IsIsomorphic(H`subgroup, AutQ) and 
for> if> not(IsConjugate(G,H`subgroup, AutQ)) then
for|if> H;
for|if> end if; 
for> end for;
> S:=Sylow(AutQ, 5);
> Normalizer(G, Normalizer(AutQ, S)) subset Normalizer(G, S);
true
\end{Verbatim}

\underline{\cref{DStar5Pa}:}

\begin{Verbatim}[frame=single, framesep=4mm, framerule=0.1mm]
> a,G:=PermutationRepresentation(GL(2,5));
> #CyclicSubgroups(G:OrderEqual:=24);
1
> L:=CyclicSubgroups(G:OrderEqual:=24)[1]`subgroup;
> #IntermediateSubgroups(G, L);      
1
> K:=IntermediateSubgroups(G, L)[1];
> DStar:=DirectProduct(Sym(3), K);
> S:=Sylow(DStar,3);
> IsIsomorphic(Centralizer(DStar, S), 
> DirectProduct(CyclicGroup(3), CyclicGroup(24)));
true Mapping from: GrpPerm: $, Degree 33, Order 2^3 * 3^2 to 
GrpPerm: $, Degree 27, Order 2^3 * 3^2
Composition of Mapping from: GrpPerm: $, Degree 33, 
Order 2^3 * 3^2 to GrpPC and
Mapping from: GrpPC to GrpPC and
Mapping from: GrpPC to GrpPerm: $, Degree 27, Order 2^3 * 3^2
> X:=Subgroups(S:OrderEqual:=3);
> for x in X do
for> Index(DStar, Normalizer(DStar, x`subgroup));
for> end for; 
2
2
1
1
\end{Verbatim}

\underline{\cref{EssenDeterD5}:}

\begin{Verbatim}[frame=single, framesep=4mm, framerule=0.1mm]
> S:=PCGroup(Sylow(G,5));
> E1:=Centralizer(S, UpperCentralSeries(S)[3]);
> AllProtoEssentials(E1);
[
    GrpPC of order 625 = 5^4
    PC-Relations:
        $.1^5 = Id($), 
        $.2^5 = Id($), 
        $.3^5 = Id($), 
        $.4^5 = Id($)
]
\end{Verbatim}

\underline{\cref{UniqueCyclic} and \hyperlink{ThmE2}{Theorem E}:}

\begin{Verbatim}[frame=single, framesep=4mm, framerule=0.1mm]
> X:=CyclicSubgroups(DStar: OrderEqual:=24);
> for x in X do
for> Index(DStar, Normalizer(DStar, Sylow(x`subgroup,3)));
for> end for;
1
1
2
1
1
> L:=X[3]`subgroup;
> #IntermediateSubgroups(DStar, L);
7
\end{Verbatim}

\printbibliography

@preamble{"\providecommand{\noop}[1]{}"}

@online{OnlineAtlas,
  author={Abbott, R. and others},
  title = {ATLAS of Finite Group Representations - Version 2},
  year = 1999,
  url = {https://brauer.maths.qmul.ac.uk/Atlas/},
  urldate = {2022-11-21}
}

@article{asch1,
  title={Generation of fusion systems of characteristic $2$-type},
  author={Aschbacher, M.},
  journal={Inventiones mathematicae},
  volume={180},
  number={2},
  pages={225--299},
  year={2010},
  publisher={Springer}
}

@book{asch2,
  title={Finite group theory},
  author={Aschbacher, M.},
  volume={10},
  year={2000},
  publisher={Cambridge University Press}
}

@book{ako,
  title={Fusion systems in algebra and topology},
  author={Aschbacher, M. and Kessar, R. and Oliver, B.},
  volume={391},
  year={2011},
  publisher={Cambridge University Press}
}

@article{OliAsch, 
  title={Fusion systems},
  author={Aschbacher, M. and Oliver, B.},
  journal={Bulletin of the American Mathematical Society},
  volume={53},
  number={4},
  pages={555--615},
  year={2016}
}

@article{Baccanelli,
  title={Fusion systems on a Sylow $3$-subgroup of the McLaughlin group},
  author={Baccanelli, E. and Franchi, C. and Mainardis, M.},
  journal={Journal of Group Theory},
  volume={22},
  number={4},
  pages={689--711},
  year={2019},
  publisher={De Gruyter}
}

@article{MAGMA,
  title={The Magma algebra system I: The user language},
  author={Bosma, W. and Cannon, J. and Playoust, C.},
  journal={Journal of Symbolic Computation},
  volume={24},
  number={3-4},
  pages={235--265},
  year={1997},
  publisher={Elsevier}
}

@book{LowMax,
  title={The maximal subgroups of the low-dimensional finite classical groups},
  author={Bray, J.~N and Holt, D.~F. and Roney-Dougal, C.~M.},
  volume={407},
  year={2013},
  publisher={Cambridge university press}
}

@article{BLOGeo,
  title={A geometric construction of saturated fusion systems},
  author={Broto, C. and Levi, R. and Oliver, B.},
  journal={Contemporary Mathematics},
  volume={399},
  pages={11},
  year={2006},
  publisher={Providence, RI; American Mathematical Society; 1999}
}

@phdthesis{cano,
  title={Computational methods for simple amalgams applied to $(\Sym(3)$, $\Sym(5$)) amalgams},
  author={Cano, M.~C.~G.},
  year={2005},
  school={University of Birmingham}
}

@article{ClellandExo,
  title={Two families of exotic fusion systems},
  author={Clelland, M. and Parker, C.},
  journal={Journal of Algebra},
  volume={323},
  number={2},
  pages={287--304},
  year={2010},
  publisher={Academic Press}
}

@book{atlas,
  title={Atlas of finite groups: maximal subgroups and ordinary characters for simple groups},
  author={Conway, J.~H. and Curtis, R.~T. and Norton, S.~P. and Parker, R.~A. and Wilson, R.~A},
  year={1985},
  publisher={Oxford University Press}
}

@book{craven,
  title={The theory of fusion systems: An algebraic approach},
  author={Craven, D.~A.},
  volume={131},
  year={2011},
  publisher={Cambridge University Press}
}

@article{CravenSub, 
  title={Normal subsystems of fusion systems},
  author={Craven, D.~ A},
  journal={Journal of the London Mathematical Society},
  volume={84},
  number={1},
  pages={137--158},
  year={2011},
  publisher={Wiley Online Library}
}

@inproceedings{Greenbook,
  title={Weak (B,N)-pairs of rank $2$, Groups and graphs: new results and methods},
  author={Delgado, A. and Stellmacher, B.},
  booktitle={A. Delgado, D. Goldschmidt, and B. Stellmacher, Birkha{\"u}ser, DMV Seminar},
  volume={6},
  year={1985}
}

@article{F3,
  title={Amalgams of type $\mathrm{F}_3$},
  author={Delgado, A.~L.},
  journal={Journal of Algebra},
  volume={117},
  number={1},
  pages={149--161},
  year={1988},
  publisher={Academic Press}
}

@article{Flores,
  title={Strongly closed subgroups of finite groups},
  author={Flores, R. and Foote, R.~M.},
  journal={Advances in Mathematics},
  volume={222},
  number={2},
  pages={453--484},
  year={2009},
  publisher={Elsevier}
}

@book{gor,
  title={Finite groups},
  author={Gorenstein, D.},
  volume={301},
  year={2007},
  publisher={American Mathematical Soc.}
}

@article{GLS3, 
  title={The classification of the finite simple groups, Number 3. Mathematical Surveys and Monographs},
  author={Gorenstein, D. and Lyons, R. and Solomon, R.},
  journal={American Mathematical Society, Providence, RI},
  volume={40},
  year={1998}
}

@article{grazian,
  title={Fusion systems containing pearls},
  author={Grazian, V.},
  journal={Journal of Algebra},
  volume={510},
  pages={98--140},
  year={2018},
  publisher={Elsevier}
}

@article{ParkerMax,
  title={Saturated fusion systems on $p$-groups of maximal class},
  author={Grazian, V. and Parker, C.},
  journal={Memoirs of the American Mathematical Society},
  year={2022},
  publisher={American Mathematical Society}
}

@article{goldschmidt,
  title={Automorphisms of trivalent graphs},
  author={Goldschmidt, D.~M.},
  journal={Annals of Mathematics},
  volume={111},
  number={2},
  pages={377--406},
  year={1980},
  publisher={JSTOR}
}

@article{2F3,
  title={The $2$F-modules for nearly simple groups},
  author={Guralnick, R.~M. and Lawther, R. and Malle, G.},
  journal={Journal of Algebra},
  volume={307},
  number={2},
  pages={643--676},
  year={2007},
  publisher={Elsevier}
}

@article{Sp4,
  title={Fusion systems over Sylow $p$-subgroups of $\mathrm{PSp}_4(q)$},
  author={Henke, E. and Shpectorov, S.},
  journal={Unpublished Manuscript},
  year={2019}
}

@article{henkesl2,
  title={Recognizing $\mathrm{SL}_2(q)$ in fusion systems},
  author={Henke, E.},
  journal={Journal of Group Theory},
  volume={13},
  number={5},
  pages={679--702},
  year={2010},
  publisher={Walter de Gruyter GmbH \& Co. KG}
}

@book{kurz,
  title={The theory of finite groups: an introduction},
  author={Kurzweil, H. and Stellmacher, B.},
  year={2006},
  publisher={Springer Science \& Business Media}
}

@article{PushWBN,
  title={Pushing up weak BN-pairs of rank two},
  author={Meierfrankenfeld, U. and Stellmacher, B.},
  journal={Communications in Algebra},
  volume={21},
  number={3},
  pages={825--934},
  year={1993},
  publisher={Taylor \& Francis}
}

@article{MeixSurvey,
  title={Groups acting transitively on locally finite classical Tits chamber systems},
  author={Meixner, T.},
  journal={Finite geometries, buildings and related topics (ed. W.~M.~ Kantor, R.~A.~ Liebler, St.~ E.~ Payne and E.~E.~ Shult, Clarendon Press, Oxford)},
  pages={45--65},
  year={1990}
}

@article{meixner,
  title={Parabolic systems: the GF(3)-case},
  author={Meixner, T.},
  journal={Transactions of the American Mathematical Society},
  volume={338},
  number={2},
  pages={615--637},
  year={1993}
}

@article{oliver,
  title={Simple fusion systems over $p$-groups with abelian subgroup of index $p$: I},
  author={Oliver, B.},
  journal={Journal of Algebra},
  volume={398},
  pages={527--541},
  year={2014},
  publisher={Elsevier}
}

@article{OliMod,
  title={Saturated fusion systems over $2$-groups},
  author={Oliver, B. and Ventura, J.},
  journal={Transactions of the American Mathematical Society},
  volume={361},
  number={12},
  pages={6661--6728},
  year={2009}
}

@article{BobTodd,
  title={Fusion systems realizing certain Todd modules},
  author={Oliver, B},
  journal={Journal of Group Theory},
  year={2022},
  publisher={De Gruyter}
}

@article{Ono,
  title={Saturated fusion systems with parabolic families},
  author={Onofrei, S.},
  journal={Journal of Algebra},
  volume={348},
  number={1},
  pages={61--84},
  year={2011},
  publisher={Elsevier}
}

@article{parkersem,
  title={Fusion systems over a Sylow $p$-subgroup of $\mathrm{G}_2(p)$},
  author={Parker, C. and Semeraro, J.},
  journal={Mathematische Zeitschrift},
  volume={289},
  number={1-2},
  pages={629--662},
  year={2018},
  publisher={Springer}
}

@online{Webpage,
  author={Parker, C. and Semeraro, J.},
  title = {Fusion Systems},
  year = 2020,
  url = {https://github.com/chris1961parker/fusion-systems},
  urldate = {2022-11-01}
}

@article{Comp1,
  title={Algorithms for fusion systems with applications to $p$-groups of small order},
  author={Parker, C. and Semeraro, J.},
  journal={Mathematics of Computation},
  year={2021}
}

@book{parkerSymp,
  title={Symplectic amalgams},
  author={Parker, C. and Rowley, P.},
  year={2012},
  publisher={Springer Science \& Business Media}
}

@article{Robinson,
  title={Amalgams, blocks, weights, fusion systems and finite simple groups},
  author={Robinson, G.~R.},
  journal={Journal of Algebra},
  volume={314},
  number={2},
  pages={912--923},
  year={2007},
  publisher={Elsevier}
}

@article{RV1+2,
  title={The classification of $p$-local finite groups over the extraspecial group of order $p^3$ and exponent $p$},
  author={Ruiz, A. and Viruel, A.},
  journal={Mathematische Zeitschrift},
  volume={248},
  number={1},
  pages={45--65},
  year={2004},
  publisher={Springer}
}

@article{Co1Sub,
  title={The $3$-radicals of $\mathrm{Co}_1$ and the $2$-radicals of $\mathrm{Rud}$},
  author={Sawabe, M.},
  journal={Archiv der Mathematik},
  volume={74},
  number={6},
  pages={414--422},
  year={2000},
  publisher={Springer}
}

@article{JasonTrees,
  title={Trees of fusion systems},
  author={Semeraro, J.},
  journal={Journal of Algebra},
  volume={399},
  pages={1051--1072},
  year={2014},
  publisher={Elsevier}
}

@article{G2pPaper,
  title={Fusion systems on a Sylow $p$-subgroup of $\mathrm{G}_2(p^n)$ or $\mathrm{PSU}_4(p^n)$},
  author={{\noop{vB}{van Beek}}, M.},
  shorthand = {vBe23},
  journal={Journal of Algebra},
  volume={614},
  pages={20--70},
  year={2023},
  publisher={Elsevier}
}

@phdthesis{MainThm,
  title={Local group theory, the amalgam method, and fusion systems},
  author={{\noop{vB}{van Beek}}, M.},
  shorthand = {vBe21},
  year={2021},
  school={University of Birmingham}
}

@article{ThSub,
  title={Some subgroups of the Thompson group},
  author={Wilson, R.~A.~},
  journal={Journal of the Australian Mathematical Society},
  volume={44},
  number={1},
  pages={17--32},
  year={1988},
  publisher={Cambridge University Press}
}

@article{Winter,
  title={The automorphism group of an extraspecial $p$-group},
  author={Winter, D.~L.},
  journal={The Rocky Mountain Journal of Mathematics},
  volume={2},
  number={2},
  pages={159--168},
  year={1972},
  publisher={JSTOR}
}

@article{MSub,
  title={Odd radical subgroups of some sporadic simple groups},
  author={Yoshiara, S.},
  journal={Journal of Algebra},
  volume={291},
  number={1},
  pages={90--107},
  year={2005},
  publisher={Elsevier}
}
\end{document}